\documentclass[12pt,a4paper]{article}
\usepackage{latexsym,amssymb,amsfonts,amsmath,amsthm,nccmath,float,enumitem,setspace,bm,authblk,longtable,cite}
\usepackage[usenames,dvipsnames]{xcolor}
\usepackage[hidelinks]{hyperref}
\usepackage[margin=2cm]{geometry}
\usepackage{tikz}

\setlist{topsep=3pt,partopsep=0pt,itemsep=1pt,parsep=0pt}

\hypersetup{
    colorlinks,
    linkcolor={red!80!black},
    citecolor={blue!80!black},
    urlcolor={blue!80!black}
}

\numberwithin{equation}{section}
\newtheorem{Theorem}{Theorem}[section]

\newtheorem{Corollary}[Theorem]{Corollary}

\newtheorem{Remark}[Theorem]{Remark}
\newtheorem{Lemma}[Theorem]{Lemma}
\newtheorem{Example}[Theorem]{Example}
\newtheorem{Definition}[Theorem]{Definition}

\newtheorem{Construction}[Theorem]{Construction}

\newcommand{\Z}{\mathbb{Z}}

\def \leq {\leqslant}
\def \geq {\geqslant}

\def \mod#1{{\:({\rm mod}\ #1)}}

\let\oldproofname=\proofname
\renewcommand{\proofname}{\rm\bf{\oldproofname}}

\allowdisplaybreaks[4]

\begin{document}

\title{Constructions of two-dimensional optical orthogonal codes of weight three}

\author[a]{Xiuling Shan}
\author[b]{Lidong Wang}
\author[c]{Yanxun Chang}
\author[d]{Xiaomiao Wang}

\affil[a]{School of Mathematical Sciences, Hebei Normal University, Shijiazhuang, 050024, China}
\affil[b]{School of Intelligence Policing, China People's Police University, Langfang, 065000, China}
\affil[c]{School of Mathematics and Statistics, Beijing Jiaotong University, Beijing 100044, China}
\affil[d]{School of Mathematics and Statistics, Ningbo University, Ningbo 315211, China}
\affil[ ]{xiulingshan@hebtu.edu.cn; wanglidong@cppu.edu.cn; yxchang@bjtu.edu.cn; wangxiaomiao@nbu.edu.cn}

\renewcommand*{\Affilfont}{\small\it}
\renewcommand\Authands{ and }
\date{}

\maketitle

\footnotetext{Supported by Natural Science Foundation of Hebei Province under Grant A2025205023 (X. Shan), S\&T Program of Hebei under Grant A$2023507001$ and NSFC under Grant $12571346$ (L. Wang), NSFC under Grant $12371326$ (Y. Chang), Ningbo Natural Science Foundation under Grant 2024J018 (X. Wang).}

\begin{abstract}
The study of optical orthogonal codes has been motivated by an application in an optical code-division multiple access system. This paper focuses on optimal two-dimensional optical orthogonal codes with autocorrelation and cross-correlation both equal to $1$.
By examining the structures of $n$-cyclic group divisible packings and semi-cyclic incomplete holey group divisible designs, we present new combinatorial constructions for two-dimensional $(m\times n,k,1)$-optical orthogonal codes. As a consequence, the exact number of codewords of an optimal two-dimensional $(m\times n,3,1)$-optical orthogonal code is determined for any positive integers $m$ and $n$.
\end{abstract}

\noindent {\bf Keywords}: optical orthogonal code; two-dimensional; optimal; group divisible packing; holey group divisible design.

\section{Introduction}\label{sec:1}

An optical orthogonal code is a family of sequences with good auto- and cross-correlation
properties. Its study has been motivated by an application in an optical code-division multiple access (OCDMA) system. OCDMA is one of the most important techniques supporting many simultaneous users in shared media so as to increase the transmission capacity of an optical fibre. For related details, the reader may refer to \cite{kpp,ml,mmc,sa,sb}.

\begin{Definition}
Let $m$, $n$, $k$, and $\lambda$ be positive
integers. A {\em two-dimensional $(m\times n,k,\lambda)$ optical
orthogonal code} $($briefly $2$-D $(m\times n,k,\lambda)$-OOC$)$,
$\cal{C}$, is a family of $m\times n$ $(0, 1)$-matrices $($called {\em
codewords}$)$ of Hamming weight $k$ satisfying the following two properties:
\begin{enumerate}
\item[$(1)$] the auto-correlation property: for each matrix ${\mathbf{A}}=(a_{ij})\in\cal{C}$ and each integer $\tau$, $\tau\not\equiv 0\pmod n$,
$\sum_{i=0}^{m-1}\sum_{j=0}^{n-1}a_{ij}a_{i,j+\tau}\leq\lambda;$
\item[$(2)$] the cross-correlation property: for each matrix ${\mathbf{A}}=(a_{ij})\in\cal{C}$, ${\mathbf{B}}=(b_{ij})\in\cal{C}$ with ${\mathbf{A}}\neq {\mathbf{B}}$, and each integer $\tau$, $\sum_{i=0}^{m-1}\sum_{j=0}^{n-1}a_{ij}b_{i,j+\tau}\leq\lambda$,
\end{enumerate}
where all subscripts are reduced modulo $n$. When $m=1$, a two-dimensional $(1\times n,k,\lambda)$ optical
orthogonal code is said to be a one-dimensional $(1\times n,k,\lambda)$ optical
orthogonal code, denoted by $1$-D $(n,k,\lambda)$-OOC.
\end{Definition}

The number of codewords of a $2$-D $(m\times n,k,\lambda)$-OOC is called its \emph{size}. From a practical point of view, a code with a large size is required \cite{sb}. Let $\Phi(m\times n,k,\lambda)$ denote the largest possible size of a $2$-D $(m\times n,k,\lambda)$-OOC. A $2$-D $(m\times n,k,\lambda)$-OOC with $\Phi(m\times n,k,\lambda)$ codewords is said to be \emph{optimal}. By the well-known Johnson bound, Yang and Kwong \cite{yk} presented the following upper bound.

\begin{Lemma}\rm{\cite{yk}}\label{Johnson bound}
$\Phi(m\times n,k,\lambda)\leq J(m\times n,k,\lambda)$, where $$J(m\times n,k,\lambda)=\left\lfloor\frac{m}{k}\left\lfloor\frac{(mn-1)}{k-1} \cdots\left\lfloor\frac{(mn-\lambda)}{k-\lambda}\right\rfloor\cdots\right\rfloor\right\rfloor.$$
\end{Lemma}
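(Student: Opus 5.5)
We prove the Johnson-type bound of Lemma~\ref{Johnson bound} by a double-counting argument that peels off one coordinate at a time, exactly as in the classical Johnson bound for constant-weight codes, but run over the set $X=\Z_m\times\Z_n$ of $mn$ cells with the cyclic shift in the second coordinate as a symmetry.

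\textbf{Step 1: from codewords to sets.} Identify each codeword $\mathbf{A}$ with its support $B_{\mathbf A}\subseteq X$, a $k$-set. Let $\mathcal{F}$ be the family of all translates $B_{\mathbf A}+(0,t)$, $t\in\Z_n$, over all $\mathbf A\in\mathcal C$. The auto- and cross-correlation conditions say precisely that
\begin{enumerate}
\item[(i)] any two distinct members of $\mathcal F$ meet in at most $\lambda$ points;
\item[(ii)] every codeword contributes exactly $n$ distinct members to $\mathcal F$.
\end{enumerate}
For (ii), if $B+(0,\tau)=B$ with $\tau\not\equiv0$, then the autocorrelation at $\tau$ would equal $k>\lambda$. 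This assumes $k>\lambda$; if $k\le\lambda$ the statement is vacuous or trivial, and we treat that degenerate case separately. Hence $|\mathcal F|=n\,|\mathcal C|$.

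\textbf{Step 2: the classical Johnson recursion.} Let $A(v,k,\lambda)$ be the maximum size of a family of $k$-subsets of a $v$-set with pairwise intersections at most $\lambda$. We show by induction on $\lambda$ that
$$A(v,k,\lambda)\le\Big\lfloor\frac vk A(v-1,k-1,\lambda-1)\Big\rfloor,$$
with $A(v,k-\lambda,0)\le\lfloor v/(k-\lambda)\rfloor$. The argument is the usual one. Fix a point $x$ and delete it from the blocks through $x$; this gives a family on $v-1$ points with block size $k-1$ and intersections at most $\lambda-1$. Then count pairs (point, block containing it) in two ways.

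\textbf{Step 3: exploiting the cyclic structure.} Applying Step 2 directly would give $n|\mathcal C|\le \lfloor\frac{mn}{k}\lfloor\frac{mn-1}{k-1}\cdots\rfloor\rfloor$, which has the outer floor in the wrong place. So we work more carefully at the outermost level. For a fixed row $i$, the number $r_i$ of members of $\mathcal F$ containing the point $(i,0)$ equals the number containing $(i,j)$ for every $j$, because $\mathcal F$ is shift invariant. The members of $\mathcal F$ through $(i,0)$, with that point deleted, form a family of $(k-1)$-sets on $mn-1$ points with intersections at most $\lambda-1$. Hence
$$r_i\le R:=\Big\lfloor\frac{mn-1}{k-1}\Big\lfloor\cdots\Big\lfloor\frac{mn-\lambda}{k-\lambda}\Big\rfloor\cdots\Big\rfloor\Big\rfloor.$$
Double counting incidences gives $k|\mathcal F|=\sum_{i,j}r_i=n\sum_i r_i\le nmR$. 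Dividing by $n$ yields $k|\mathcal C|\le mR$, so $|\mathcal C|\le\lfloor mR/k\rfloor=J(m\times n,k,\lambda)$.

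\textbf{Main obstacle.} The key points are Step 1(ii), that no translate coincides with another, and the shift-invariance step in Step 3, which lets us divide by $n$ before taking the outer floor. The inner nested floors are just the standard Johnson bound applied to the derived family, so they need no new idea.
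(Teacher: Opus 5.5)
Your argument is correct and is essentially the route the paper has in mind. The paper gives no proof and simply attributes the bound to the classical Johnson bound: the $n|\mathcal{C}|$ cyclic shifts of the codewords form a family of $k$-subsets of the $mn$ cells with pairwise intersections at most $\lambda$, and the Johnson recursion is applied to that family. To get $|\mathcal{F}|=n|\mathcal{C}|$ you should also state that shifts of two \emph{distinct} codewords never coincide; this follows from the cross-correlation condition with $k>\lambda$.

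Your Step 3 refinement is valid but not needed, so the ``outer floor in the wrong place'' is not a real obstacle. The direct bound $n|\mathcal{C}|\le\lfloor mnR/k\rfloor$, together with integrality of $|\mathcal{C}|$ and the identity $\lfloor\lfloor x\rfloor/n\rfloor=\lfloor x/n\rfloor$, already gives $|\mathcal{C}|\le\lfloor mR/k\rfloor=J(m\times n,k,\lambda)$.
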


In an OCDMA system, performance analysis shows that OOCs with $\lambda\leq3$ are the most
desirable. Especially, from a multiaccess and synchronization point of view, the most desirable
on-of signature sequences are OOCs with $\lambda=1$ \cite{ms}. In this paper we only focus on the
case $\lambda=1$. The reader is referred to \cite{am1,am,bj,bt,cc1,cc,ck,dx,fcj,fcj1,fc,lwl,mms}
and \cite{am2,am3,am4,cc2,yang,okeb} for constructions of OOCs with $\lambda=2$ and general $\lambda$, respectively.

$1$-D $(n,3,1)$-OOCs were investigated systematically by Brickell and Wei \cite{bw} and Chung, Salehi and Wei \cite{csw}.

\begin{Lemma}\label{1D}\rm{\cite{bw,csw}}
Let $n$ be a positive integer. Then there exists an optimal $1$-D $(n,3,1)$-OOC with $J(1\times n,3,1)-\delta$ codewords, i.e., $\Phi(1\times n,3,1)=J(1\times n,3,1)-\delta$, where $$\delta=\left\{
\begin{array}{lll}
1, &  n\equiv 14,20\ ({\rm mod}\ 24),\\
0, &  {\rm otherwise.\ }\\
\end{array}
\right.
$$
\end{Lemma}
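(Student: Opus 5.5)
The statement has two halves. The upper bound $\Phi(1\times n,3,1)\le J(1\times n,3,1)-\delta$ is a counting argument. The lower bound is an explicit construction of difference families, which is what \cite{bw,csw} supply.

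For the upper bound, identify a $1$-D $(n,3,1)$-OOC with a family of base blocks $\{0,a,b\}\subseteq\Z_n$. The condition $\lambda=1$ means that the multiset of differences $\pm a,\pm b,\pm(b-a)$ over all codewords has no repeated element and avoids $0$. A codeword whose orbit is full contributes $6$ distinct nonzero differences. The only exception is the short orbit $\{0,n/3,2n/3\}$ when $3\mid n$, which contributes only the $2$ differences $\pm n/3$. Lemma~\ref{Johnson bound} with $m=1$ gives $\lfloor\lfloor (n-1)/2\rfloor/3\rfloor$.

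The improvement by $\delta$ for $n\equiv14,20\pmod{24}$ comes from a parity argument.
\begin{itemize}
\item For such $n$ we have $3\nmid n$, $n$ is even, and $n/2$ is a difference of order $2$ that can never be used: if $b-a=n/2$, then $-(b-a)=n/2$ as well, violating $\lambda=1$.
\item So every codeword uses $6$ differences from $\Z_n\setminus\{0,n/2\}$, a set of size $n-2$. Since $n-2\equiv 0\pmod 6$, the Johnson bound is attained only if the differences exactly partition $\Z_n\setminus\{0,n/2\}$.
\item Count the odd differences. The number of odd elements of $\Z_n$ is $n/2$, and $n/2\equiv 7,10\pmod{12}$.
\item In each triple $\{0,a,b\}$ the three differences $a$, $b$, $b-a$ contain an even number of odd values, so each codeword covers $0$ or $4$ odd differences (counting the $\pm$ pairs).
\item Hence the total number of odd differences covered is $\equiv 0\pmod 4$. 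But $n/2$ is $\equiv 3$ or $2\pmod 4$, a contradiction. This gives the loss of at least one codeword.
\end{itemize}
I expect this residue bookkeeping to be routine but fiddly.

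For the lower bound, use Skolem-type sequences. When $n\equiv 1,3\pmod 6$, the Peltesohn cyclic Steiner triple systems (with the short orbit for $n\equiv 3\pmod 6$, excluding $n=9$, handled directly) give a partition of all differences $1,\dots,\lfloor (n-1)/2\rfloor$ into triples $\{x,y,x+y\}$ or $\{x,y,x+y\equiv -z\}$. For the remaining residues, use Skolem and hooked Skolem sequences, together with Langford sequences, to partition $\{1,\dots,t\}$ minus one or two leftover elements into triples $a_i+b_i=c_i$. Here $t=3\lfloor\lfloor(n-1)/2\rfloor/3\rfloor$ or nearby, and each triple yields the codeword $\{0,a_i,a_i+b_i\}$.

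The main obstacle is the case analysis modulo $24$. For each residue one must check that some (hooked) Skolem or Langford partition of the required interval exists, with the leftover differences landing where permitted, and small $n$ need direct computation. In the exceptional classes, one uses a partition of size one less that avoids $n/2$.
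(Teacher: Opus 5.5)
The paper gives no proof of this lemma; it is quoted from \cite{bw,csw}. The closest argument in the paper is Lemma~\ref{bd1} at $m=1$. There every codeword is of Type~1, so the sum of the representatives in $(0,n/2)$ of all differences is even. A code of size $J$, however, would force this sum to be $n(n-2)/8$, which is odd for $n\equiv 14,20\pmod{24}$.

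Your count of odd differences modulo $4$ is an equally valid, slightly more elementary form of the same parity obstruction. Two points need fixing.

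\textbf{The odd-difference count.} For $n\equiv 14\pmod{24}$ the element $n/2$ is itself odd and is not covered. So the number of odd differences to be covered is $n/2-1\equiv 2\pmod 4$, not $n/2\equiv 3\pmod 4$. For $n\equiv 20\pmod{24}$ the count is $n/2\equiv 2\pmod 4$. The contradiction survives in both cases.

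\textbf{The short orbit.} Your claim that $A=\{0,n/3,2n/3\}$ is a codeword contributing only $\pm n/3$ is false for OOCs. Since $A+n/3=A$, we get $|A\cap(A+n/3)|=3$; equivalently, each of $\pm n/3$ occurs three times in $\Delta(A)$. So for $n\equiv 3\pmod 6$ you may use only the $t$ full base blocks of a cyclic STS$(6t+3)$; including the short block would give $t+1>J$ codewords. For $n=9$ the single codeword $\{0,1,3\}$ suffices.

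\textbf{The lower bound.} Your plan is sound but much heavier than needed. No Peltesohn systems, Langford sequences, mod-$24$ casework or direct computation for small $n$ is required.
Let $t=\lfloor (n-1)/6\rfloor=J(1\times n,3,1)$, so $6t+1\le n\le 6t+6$. Take a sequence $\{(a_i,b_i)\}$ of order $t$ from Lemma~\ref{Skolem}: a Skolem sequence if $t\equiv 0,1\pmod 4$, a hooked one if $t\equiv 2,3\pmod 4$.
The codewords $\{0,i,b_i+t\}$, $1\le i\le t$, have differences $\pm i,\pm(a_i+t),\pm(b_i+t)$. These are exactly $\pm D$, each element of $D$ once, where $D=\{1,\dots,3t\}$ in the Skolem case and $D=\{1,\dots,3t+1\}\setminus\{3t\}$ in the hooked case.
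Hence a repeated difference can only come from $x+y=n$ with $x,y\in D$. Every such sum is at most $6t$, except $2(3t+1)=6t+2$ in the hooked case. Since $n\ge 6t+1$, a collision occurs only when the sequence is hooked and $n=6t+2$, where $x=y=3t+1=n/2$.
That is precisely the case $n\equiv 14,20\pmod{24}$. There, deleting the block containing $3t+1$ leaves $t-1=J-1$ valid codewords. This settles the lower bound uniformly for all $n$.
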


Since then there are many researches
on $1$-D OOCs (see, e.g., \cite{ab,be,b2,b3,b4,bp,bp1,bs,csfw,csfw1,cfm,cj,cm1,cy,cgz,fm,fmy,gms,gy,km,mc1,mc2,wc2,yin,yyl}). Despite the vast amount of energy spent on $1$-D OOCs, the problem of determining the sizes of optimal $1$-D $(n,k,1)$-OOCs with $k\geq4$ is far from
being settled, even for $k=4$. Very recently, by utilizing the efficient direct construction method presented in \cite{zfw}, it was shown in \cite{zzcfwz,zcf} that
there exists an optimal $1$-D $(n,4,1)$-OOC with $J(1\times n,4,1)$ codewords for any positive integer $n$ with the only definite exception of $n=25$.

If we turn to the case of $m>1$, relatively little is known. A special kind of optimal $2$-D $(m\times n,k,1)$-OOCs with size $m(mn-1)/k(k-1)$
are said to be {\em perfect}. The existence spectrum for
perfect $2$-D $(m\times n,3,1)$-OOCs was completely solved by Cao and Wei \cite{cw}.

\begin{Lemma}\label{perfect}\rm{\cite{cw}}
There exists a perfect $2$-D $(m\times n,3,1)$-OOC if and only if $m,n$ are odd and $m(mn-1)\equiv0\pmod6$.
\end{Lemma}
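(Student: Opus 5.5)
Since this lemma is quoted from Cao and Wei \cite{cw}, the plan below is how I would prove it from scratch. The idea is to translate the statement into design language. A perfect $2$-D $(m\times n,3,1)$-OOC is the same object as a Steiner triple system $\mathrm{STS}(mn)$ on the point set $I_m\times\Z_n$, with $I_m=\{0,\dots,m-1\}$, satisfying two conditions:
\begin{itemize}
\item the translation $(i,x)\mapsto(i,x+1)$ is an automorphism;
\item every block orbit under this translation has full length $n$.
\end{itemize}
The codewords correspond to base blocks of these orbits. Auto-correlation $\leq 1$ is exactly what rules out short orbits. Perfectness means that every "mixed difference" $(i,j,d)$ with $(i,j,d)\neq(i,i,0)$ is covered exactly once.

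\textbf{Necessity.} The $\mathrm{STS}(mn)$ has $mn(mn-1)/6$ blocks. These split into full orbits of length $n$, so $m(mn-1)/6$ must be an integer, i.e.\ $m(mn-1)\equiv 0\pmod 6$. Each point lies on $(mn-1)/2$ blocks, so $mn$ is odd, and hence both $m$ and $n$ are odd.

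\textbf{Sufficiency, step 1 (the case $n\equiv 1\pmod 6$).} For each triple of rows $\{a,b,c\}$, the $n$ base blocks
\[
\{(a,0),(b,t),(c,2t)\},\qquad t\in\Z_n,
\]
cover every difference between distinct rows $a,b,c$ exactly once. The row differences are $t$, $t$ and $2t$, and $t\mapsto 2t$ is a bijection because $n$ is odd. So I would take an $\mathrm{STS}(m)$ on the rows, apply this construction to each of its triples, and fill every row with a cyclic $(n,3,1)$ difference family. This settles the case where $n\equiv 1\pmod 6$ and $m\equiv 1,3\pmod 6$.

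\textbf{Sufficiency, step 2 (general $m$ and $n$).} Writing out the arithmetic, the conditions on $(m,n)$ are:
\begin{itemize}
\item if $3\mid m$: $m,n$ odd and nothing more;
\item if $3\nmid m$: $mn\equiv 1\pmod 3$.
\end{itemize}
For these I would replace the row $\mathrm{STS}(m)$ by more general objects:
\begin{itemize}
\item PBDs or $3$-GDDs on the $m$ rows, using the same "$\{(a,0),(b,t),(c,2t)\}$" inflation on their triples;
\item small \emph{ingredient} semi-cyclic designs on $I_{m_0}\times\Z_n$ for the blocks or groups, which play the role of holes.
\end{itemize}
The ingredients would be built directly as difference families over $I_{m_0}\times\Z_n$, for small $m_0$ such as $3,5,7,9$ and each residue of $n$ modulo $6$. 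For $m_0=3$ and $n\equiv 3,5\pmod 6$, the row-internal differences cannot be handled row by row. I would instead mix the rows, for instance by identifying $I_3\times\Z_n$ with $\Z_{3n}$ when $3\nmid n$, and then use a cyclic near-Steiner structure.

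\textbf{Main obstacle.} Step 2 is where I expect the difficulty: producing these direct small-case ingredient families for $n\not\equiv 1\pmod 6$, and then verifying that the PBD-closure of the admissible set of $m$ values is exactly what the congruence conditions demand. I would attack it first by computer search for cyclic-in-$n$ base blocks, and then by uniform algebraic patterns in $n$.
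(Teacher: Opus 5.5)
The paper gives no proof of this lemma; it quotes it from \cite{cw}. Your translation into $\mathbb{Z}_n$-invariant Steiner triple systems with no short block orbits is correct, and so are the necessity argument and Step~1.

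The gap is in sufficiency: Step~2 is a plan, not a proof. After Step~1 the open admissible pairs are $(m,n)\equiv(3,3),(3,5),(5,5)\pmod 6$. For none of them do you give an ingredient, a recursive construction, or the closure computation you mention. These are exactly the classes with $n-1\not\equiv 0\pmod 6$, where no row can absorb its own pure differences. Those differences must be traded against mixed differences through Type~2 codewords, so a PBD-closure on the rows with independent ingredients on the blocks does not work as stated. Your CRT idea does give the $m_0=3$ ingredient when $3\nmid n$: the short orbit $\{0,n,2n\}$ of a cyclic $\mathrm{STS}(3n)$ has stabilizer of order $3$, which meets the subgroup of order $n$ trivially. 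But nothing is offered for $m_0=3$, $n\equiv 3\pmod 6$, or for $m\equiv 5\pmod 6$.

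The missing idea is that no PBD or GDD machinery is needed. Lemma~\ref{Lem:2to2} unfolds $\mathbb{Z}_{n_1n_2}$ along its subgroup of order $n_2$. It turns a perfect $(m\times n_1n_2)$ code into one with $n_1\cdot m(mn_1n_2-1)/6=mn_1(mn_1n_2-1)/6$ codewords, which is a perfect $(mn_1\times n_2)$ code.

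If $3\nmid m$, admissibility forces $mn\equiv 1\pmod 6$. Then a perfect $1$-D $(mn,3,1)$-OOC exists by Lemma~\ref{1D}, and unfolding with $n_1=m$ gives the $(m\times n)$ code.

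If $m=3m'$, it suffices to have a perfect $(3\times N)$ code for every odd $N$ and unfold it with $N=m'n$, $n_1=m'$. Such a code is explicit. Let $h=(N+1)/2$ and take $\{(l,0),(l,i),(l+1,hi)\}$ for $l\in\mathbb{Z}_3$, $1\leq i\leq (N-1)/2$, together with $\{(0,0),(1,0),(2,0)\}$. Row $l$ receives the pure differences $\pm i$, which cover $\mathbb{Z}_N\setminus\{0\}$. Since $hi-i=-hi$, the $(l+1,l)$-differences are $0$ together with the pairs $\{hi,-hi\}$, and these partition $\mathbb{Z}_N$. This settles all three open classes, including $n\equiv 3\pmod 6$ where your CRT route is unavailable.
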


Clearly, the optimal codes provided by Lemma \ref{perfect} are limited. Wang, Shan and Yin \cite{wsy} focused on the combinatorial constructions for general optimal $2$-D $(m\times n,3,1)$-OOCs when $n$ is odd.

\begin{Lemma}\label{2D:m odd}\rm{\cite{wsy}}
Let $m$ be a positive integer and $n$ be an odd integer. Then there exists an optimal $2$-D $(m\times n,3,1)$-OOC with $J(m\times n,3,1)-\eta$ codewords, i.e., $\Phi(m\times n,3,1)=J(m\times n,3,1)-\eta$, where $$\eta=\left\{
\begin{array}{lll}
1, &  m\equiv 5\ ({\rm mod}\ 6)\ {\rm  and\ } n=1,\\
0, &  {\rm otherwise.\ }\\
\end{array}
\right.
$$
\end{Lemma}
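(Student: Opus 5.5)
This lemma is quoted from \cite{wsy}, so a proof would reconstruct that paper's argument. It has two halves: an upper bound refining the Johnson bound in the one exceptional case, and constructions attaining the bound everywhere else.

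I would first translate codes into designs. A $2$-D $(m\times n,3,1)$-OOC is equivalent to a set of base blocks of size $3$ on $I_m\times\Z_n$, with $\Z_n$ acting on the second coordinate. The condition is that all differences $(i,a)-(j,b)$, recorded as the pair of rows together with $a-b \bmod n$, occurring within the blocks are distinct. Equivalently, the orbits of the blocks form an $n$-cyclic packing of triples on $mn$ points in which no pair is covered twice. Counting differences then gives the bound of Lemma~\ref{Johnson bound}: each point lies in at most $\lfloor (mn-1)/2\rfloor$ triples of the packing.

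For the exception $n=1$, $m\equiv5\pmod 6$, the code is simply a packing of triples on $m$ points, with no cyclic structure. The leave graph must have every vertex of even degree, since $m-1$ is even. The Johnson count would make it a $4$-edge graph, because $\binom{m}{2}\equiv 4 \pmod 6$ leaves $4$ uncovered pairs. An even-degree graph with $4$ edges must be a $4$-cycle, and this forces the leave size up to $10$. That loses exactly one triple, so $\eta=1$. The classical packing numbers of Sch\"onheim and Spencer show this value is attained.

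For the constructions with $n$ odd, I would split on $m \bmod 6$ (and on $n \bmod 6$ where needed). The main steps are as follows.
\begin{enumerate}
\item Build small ingredient codes directly, for small $m$ and all odd $n$, using difference triples in $\Z_n$. Lemma~\ref{1D} handles $m=1$, and Lemma~\ref{perfect} covers the perfect cases.
\item Combine the ingredients recursively. Take a group divisible design, or a $3$-GDD of type $g^u$ with a hole, on the row index set, and inflate it by $\Z_n$. Each block of the GDD is replaced by an $n$-cyclic $3$-GDD of type $n^3$. This exists for odd $n$: use the blocks $\{(0,x),(1,y),(2,x+y)\}$, a Latin square from $\Z_n$, taken modulo $n$ in the second coordinate. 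The groups are then filled with optimal codes on $g\times n$ (or $(g+h)\times n$ with a hole), which come from the ingredients.
\end{enumerate}
Finally I would check that the leaves add up correctly, so that the total size equals $J(m\times n,3,1)$.

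The hardest part is the non-perfect residues, where $m(mn-1)\not\equiv 0 \pmod 6$ or $mn$ is even. There the leave of each filled group must be controlled so that the cumulative deficiency stays within the floors of the Johnson bound. In practice this needs incomplete designs whose hole absorbs the whole leave, together with direct constructions for many small $m$ with general odd $n$. Those direct constructions are the main obstacle. I would attack them with parametrized difference families in $\Z_n$ that work uniformly in $n \bmod 6$.
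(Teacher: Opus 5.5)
Your proposal does not yet prove the lemma. The paper gives no proof of its own; it imports the result from \cite{wsy}. The constructive half, which is the entire substance of the result, is only a plan in your write-up. You never fix, for each residue class of $m$ (and of $n$), which $3$-GDD with which group type and hole is inflated. You never say which ingredient codes fill the groups, or why the resulting number of codewords equals $J(m\times n,3,1)$. You say yourself that the direct constructions are ``the main obstacle'' and leave them open.

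This is not a detail. Filling each group with an individually optimal code loses a floor in every group. So outside the perfect classes, the recursion can reach $J(m\times n,3,1)$ only if you have holey codes on $(g+h)\times n$, with an $h\times n$ hole, that meet the corresponding difference-counting bound exactly. You need these for the relevant $(g,h)$ and for \emph{every} odd $n$, together with the small base cases. Producing them is exactly what has to be proved, and nothing in the proposal does it.

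Even the one explicit ingredient you give is wrong. The block set $\{(0,x),(1,y),(2,x+y)\}$ is a transversal design, but it is not invariant under $(i,z)\mapsto(i,z+1)$: the third coordinate would have to move by $2$. It also never uses that $n$ is odd. An $n$-cyclic $3$-GDD of type $n^3$ is given instead by the base blocks $\{(0,0),(1,y),(2,2y)\}$, $y\in\Z_n$. Their $(1,0)$-, $(2,0)$- and $(2,1)$-differences are $y,2y,y$, and each runs over $\Z_n$ precisely because $2$ is invertible when $n$ is odd.

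The upper-bound argument for $n=1$, $m\equiv5\pmod 6$ is also incorrect as written, even though $\eta=1$ is the right answer. For $m=6k+5$ one has $J(m\times1,3,1)=\lfloor\binom{m}{2}/3\rfloor$ and $\binom{m}{2}-3J(m\times1,3,1)=1$. So the Johnson count leaves a \emph{single} uncovered pair, not four. Indeed $\binom{m}{2}\equiv4\pmod 6$ fails for $m\equiv11\pmod{12}$, e.g.\ $\binom{11}{2}=55$.

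The correct argument runs as follows. A one-edge leave has two vertices of odd degree, contradicting the evenness you observed, so $\Phi(m\times1,3,1)\leq J(m\times1,3,1)-1$. The leave size is $\equiv1\pmod 3$, so the next candidate is $4$ edges, i.e.\ a $4$-cycle. A $4$-cycle is admissible and is realized by the Sch\"onheim--Spencer maximum packings. Your claim that a $4$-cycle ``forces the leave size up to $10$'' is false. Measured against your own count of $4$ uncovered pairs, it would also cost two triples rather than one.
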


This paper is a continuation of the study of \cite{wsy}. As the main result of this paper, we are to prove the following theorem.

\begin{Theorem}\label{main}
Let $m,n$ be positive integers. Then there exists an optimal $2$-D $(m\times n,3,1)$-OOC with $J(m\times n,3,1)-\mu$ codewords, i.e., $\Phi(m\times n,3,1)=J(m\times n,3,1)-\mu$, where
$$
    \mu=\left\{
          \begin{array}{ll}
               1,
               &{\rm{if}}\ m\equiv5\ ({\rm mod}\ 6),\ n=1;\\
               &{\rm{or}}\ m\equiv4\ ({\rm mod}\ 6), \ n=4;\\
               &{\rm{or}}\ m\equiv5,8\ ({\rm mod}\ 12), \ n=2;\\
               &{\rm{or}}\ n\equiv0\ ({\rm mod}\ 2), \ mn\equiv14,20\ ({\rm mod}\ 24);\\
               &{\rm{or}}\ n\equiv0\ ({\rm mod}\ 2), \ m\equiv0\ ({\rm mod}\ 3), \ mn\equiv6,12\ ({\rm mod}\ 24);\\
               0,
               &\rm{otherwise}.
          \end{array}
       \right.
$$
\end{Theorem}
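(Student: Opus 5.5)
By Lemma \ref{2D:m odd} the case $n$ odd is already settled, and its exceptional set coincides with the first line of $\mu$. So I will assume $n$ is even throughout. The proof then has two halves: upper bounds showing $\Phi(m\times n,3,1)\leq J(m\times n,3,1)-\mu$ in the listed exceptional classes, and constructions attaining this bound for every even $n$.

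\textbf{Upper bounds.} I will use the standard translation. A $2$-D $(m\times n,3,1)$-OOC is equivalent to an $n$-cyclic $3$-GDP of type $n^m$ on $I_m\times\Z_n$, with groups $\{i\}\times\Z_n$. Its base blocks cover each mixed difference $(i,j,d)$ and each pure difference in $\Z_n\setminus\{0\}$ at most once. A block whose orbit is short, of length $n/3$, covers only $2$ differences. In each exceptional class, $J$ forces the leave (the uncovered differences) to be very small. The argument is parity counting, as in the $1$-D proof of Lemma \ref{1D}.
\begin{itemize}
\item The pure difference $n/2$ in each row is an involution, and each row contributes an odd number of pure differences.
\item A leave of prescribed small size then cannot be realised by a graph of the required parity structure on $m$ vertices (rows).
\end{itemize}
Concretely, when $mn\equiv14,20\pmod{24}$, or $m\equiv0\pmod 3$ with $mn\equiv 6,12\pmod{24}$, the count $mn-1$ and the divisibility by $k(k-1)=6$ leave a residual of size $2$ or $4$. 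That residual would have to consist of the differences $\pm d$ together with the half-differences, which is impossible. For $n=2$ and $n=4$, I will reduce to packings and GDDs of type $2^m$ and $4^m$ with an extra cyclic involution and argue directly; for example, $n=4$ with $m\equiv4\pmod 6$ mimics the $m\equiv5\pmod 6$, $n=1$ obstruction.

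\textbf{Constructions.} I will build $n$-cyclic GDPs recursively from small ingredients.
\begin{itemize}
\item \emph{Base cases.} Direct (computer-found) optimal codes for small $m$, namely $m\le 12$ or so in each residue class mod $6$ or $12$, for the small even $n$: $n\in\{2,4,6,8,10,12,14,16,20,22\}$.
\item \emph{Large $n$.} For $m=1$ the optimal codes come from Lemma \ref{1D}. For general $m$ I will use a semi-cyclic incomplete holey GDD, i.e.\ an $n$-cyclic $3$-IHGDD of type $(n,t;u^m)$. Filling the holes with an optimal code of size $t\times n$ or $u$-row codes yields an optimal $m\times n$ code, and the counting makes the leave match the optimal one.
\item \emph{Recursion in $m$.} I will use Wilson-type fundamental constructions. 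Inflate a $3$-GDD of type $g^u$ (or a TD$(3,g)$) with weights, using semi-cyclic $3$-HGDDs of type $(n,3;w^3)$ as ingredients, which exist by standard results for $n$ even, with small exceptions. This gives an $n$-cyclic $3$-HGDD of type $(n, gw\text{-groups})$. Filling each group with an optimal $(gw+\epsilon)\times n$ code, with $\epsilon\in\{0,1\}$ extra rows, then reduces $m$ to the base range.
\item \emph{Recursion in $n$.} Products $n=n_1n_2$, with $n_2$ odd, handled by the cyclic ``multiplying'' construction and Lemma \ref{2D:m odd}, so that only $n=2^a\cdot(\text{small})$ needs direct work.
\end{itemize}

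\textbf{Main obstacle.} The hardest step is making the leave of the filled code exactly optimal in every residue class of $(m \bmod 12,\ n \bmod 24)$. This requires the right semi-cyclic IHGDDs, whose holes absorb the half-differences $n/2$ correctly, together with the right ingredient codes whose leaves are ``compatible''. I expect the residues $mn\equiv 2,8\pmod{24}$ with $m\equiv 2\pmod 3$ to need the most delicate direct constructions. My first attempt will be to find a single family of $n$-cyclic $3$-GDPs of type $n^{m_0}$, with leave a perfect matching on the $n/2$-differences, for $m_0\in\{4,5,7,8\}$, and then propagate it via the IHGDD filling.
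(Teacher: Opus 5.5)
Your upper-bound argument rests on a miscount. In the classes $mn\equiv14,20\pmod{24}$, and $m\equiv0\pmod 3$ with $mn\equiv6,12\pmod{24}$, one has $6\mid m(mn-2)$, so $J(m\times n,3,1)=m(mn-2)/6$ exactly. There are $m(m-1)n$ mixed and $m(n-2)$ usable pure differences, so a code of that size would leave uncovered exactly the $m$ forced half-differences $n/2$ and nothing else. There is no residual ``$\pm d$'' of size $2$ or $4$. This leave also passes every counting and degree-parity test: a codeword uses $0$, $4$ or $6$ differences at a row, and each row has the even number $2(m-1)n+n-2$ of usable ones. So it cannot be excluded the way the case $n=1$, $m\equiv5\pmod 6$ is.

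The missing idea is the invariant of Lemma \ref{bd1}. Write each codeword in one of three forms: $\{(i,0),(i,a),(i,b)\}$, $\{(i,0),(i,c_t),(j,d_t)\}$ with $0<c_t<n/2$, or a three-row block. Let $A$ be the sum of all mixed $(i,j)$-differences with $i>j$, and $B$ the sum of all pure differences below $n/2$, both taken as integers in $[0,n-1]$. Because $n$ is even, $A\equiv B\equiv\sum_t c_t\pmod 2$. Full coverage, however, forces $A=mn(m-1)(n-1)/4$ and $B=mn(n-2)/8$, which have opposite parities in these classes.

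Two further slips. A $2$-D OOC is not an $n$-cyclic GDP of type $n^m$ with groups $\{i\}\times\Z_n$; that structure forbids pure differences, so the identification is right only for $n=2$ (for $n=4$ the pure differences $\pm1$ are available). Also, short-orbit codewords cannot occur, since auto-correlation excludes them. Finally, for $n=4$ degree parity only disposes of a leftover mixed difference; a leftover pure difference needs the separate count of odd mixed differences in Lemma \ref{bd:n=4}.

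On the construction side you have no mechanism that works for all even $n$. Every design you name (row-grouped GDPs/GDDs, SCHGDDs, IHGDDs and their inflations) covers only mixed differences, so all pure differences fall to the fillers. Merely \emph{optimal} fillers are not enough. With $\epsilon\geq1$ shared rows, the differences inside those rows are covered once per group, so the result is not an OOC. And any filler that leaves some available differences uncovered adds its own leave, so the deficit grows with the number of groups.

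The paper's recursions work because its fillers are zero-slack codes with a hole: $2$-D $([v+r:r]\times n,3,1)$-OOCs with exactly $\Theta(v+r,r,n,3)$ codewords. The single final hole is then filled once (Construction \ref{filling2}). Building these holey codes for every $n$ is the real content, and it requires covering each row's pure differences jointly with mixed differences to extra rows. The paper does this in two ways:
\begin{itemize}
\item codewords $\{(i,0),(i,t),(\infty,a_t+t+\beta_i)\}$ built from extended Skolem sequences, with near-Skolem and Langford sequences handling the pure differences of the row $\infty$;
\item $g$-regular codes (from frame starters, or inflated strictly cyclic GDDs of type $g^{n/g}$) combined with Construction \ref{filling3}.
\end{itemize}

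Your proposal has no such device, and your recursion in $n$ cannot replace it. Lemma \ref{Lem:2to2} goes the other way: it trades columns for rows. A $g$-regular lift needs $\Upsilon$ to be an integer and a $g$-regular ingredient, which exists only under congruence conditions such as $n-g\equiv0\pmod 6$ and $n\geq4g$, or $n\equiv0,2\pmod 8$ for $g=2$. So it is not multiplication by an arbitrary odd factor. With direct codes only for $n\leq22$, small cases such as $m\in\{2,4,5,7,8,10,11\}$ would have no valid construction for infinitely many $n$. That is exactly where the paper needs its Skolem- and Langford-based infinite families.
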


The rest of the paper is organized as follows. Section $2$ gives an equivalent description of $2$-D $(m\times n,k,1)$-OOCs using set-theoretic notations. Section $3$ improves the upper bounds for the size of $2$-D $(m\times n,3,1)$-OOCs. Section $4$ introduces two special types of $2$-D OOCs with ``holes'' to build recursive constructions for $2$-D OOCs. The two special types of $2$-D OOCs are constructed in Sections $6$ and $7$, respectively. Before that, group divisible packings and incomplete holey group divisible designs are introduced in Section $5$. Our main result, Theorem \ref{main}, is proved in Section $8$. 

\section{Set-theoretic description}\label{sec:2}

Throughout this paper, we assume that $I_m=\{0,1,\ldots,m-1\}$ and denote by $\mathbb{Z}_n$ the additive
group of integers modulo $n$.

A convenient way of viewing two-dimensional optical orthogonal codes is from a set-theoretic perspective. Take a $2$-D $(m\times n,k,\lambda)$-OOC, $\cal C$. For each $m\times n$ $(0,1)$-matrix $M\in\cal C$, whose rows are indexed by $I_m$ and columns are indexed by $\mathbb{Z}_n$, construct a $k$-subset $B_M$ of $I_m\times\mathbb{Z}_n$ such that $(i,j)\in B_M$ if and only if $M$'s $(i,j)$ cell equals to $1$. Then $\{B_M: M\in {\cal C}\}$ is a set-theoretic representation of the $2$-D $(m\times n,k,\lambda)$-OOC.

\begin{Definition}\rm{\cite{hc}}
Let $m$, $n$ and $k$ be positive integers. A set $\cal C$ of $k$-subsets of $I_m\times\mathbb{Z}_n$ constitutes a $2$-D $(m\times n,k,\lambda)$-OOC if the following conditions hold:
\begin{enumerate}
\item[$(1')$] the auto-correlation property: $|A\bigcap(A+\tau)|\leq\lambda$ for any $A\in{\cal C}$ and any integer $\tau$, $\tau\not\equiv 0\pmod n$;
\item[$(2')$] the cross-correlation property: $|A\bigcap(B+\tau)|\leq\lambda$ for any $A,B\in{\cal C}$ with $A\neq B$ and any integer $\tau$,
\end{enumerate}
where $B+\tau=\{(i,x+\tau\pmod n):(i,x)\in B\}$.
\end{Definition}

\begin{Example}\label{(2,6)}
The following three $(0,1)$-matrices constitute a $2$-D $(2\times 6,3,1)$-OOC.
\begin{center}
\begin{tabular}{lll}
$\left(
  \begin{array}{cccccc}
    1 & 1& 0& 0& 0& 0 \\
    0 & 0& 1& 0& 0& 0 \\
  \end{array}
\right),$&
$\left(
  \begin{array}{cccccc}
    1 & 0& 1& 0& 0& 0 \\
    0 & 0& 0& 0& 0& 1 \\
  \end{array}
\right),$&

$\left(
  \begin{array}{cccccc}
    1 & 0& 0& 0& 0& 0 \\
    1 & 0& 0& 0& 1& 0 \\
  \end{array}
\right).$
\end{tabular}
\end{center}
In set notation, it consists of three $3$-subsets of $I_2\times\mathbb{Z}_6$:
\begin{center}
\begin{tabular}{lll}
$B_1=\{(0,0),(0,1),(1,2)\}$,
&$B_2=\{(0,0),(0,2),(1,5)\}$,
&$B_3=\{(0,0),(1,0),(1,4)\}$.
\end{tabular}
\end{center}
\end{Example}

For a given set $\mathcal{C}$ of $k$-subsets of $I_m\times\mathbb{Z}_n$, it is not convenient to check the correctness of Conditions $(1')$ and $(2')$. But fortunately, the pure and mixed difference method introduced by Bose \cite{b} is efficient to describe a $2$-D $(m\times n,k,1)$-OOC.

For $(i,x),(i,y)\in I_m\times\mathbb{Z}_n$ with $x\neq y$, the difference $x-y\pmod n$ is called a {\em pure $(i,i)$-difference}. For $(i,x),(j,y)\in I_m\times\mathbb{Z}_n$ with $i\neq j$, the difference $x-y\pmod n$ is called a {\em mixed $(i,j)$-difference}. Clearly, the total number of mixed differences and the total number of pure differences in $I_m\times\mathbb{Z}_n$ are $m(m-1)n$ and $m(n-1)$ respectively. Let $\mathcal{C}$ be a set of $k$-subsets of $I_m\times\mathbb{Z}_n$. Given $i,j\in I_m$, for every $B\in\mathcal{C}$, define a multi-set
$$\Delta_{ij}(B)=\{x-y \pmod n: (i,x),(j,y)\in B, (i,x)\neq(j,y)\},$$ and a multi-set $$\Delta_{ij}(\mathcal{C})=\bigcup_{B\in\mathcal{C}}\Delta_{ij}(B).$$
When $i=j$, $\Delta_{ii}(\mathcal{C})$ is the multi-set of all pure $(i,i)$-differences of $\mathcal{C}$. When $i\neq j$, $\Delta_{ij}(\mathcal{C})$ is the multi-set of all mixed $(i,j)$-differences of $\mathcal{C}$. Note that $\Delta_{ij}(\mathcal{C})$ is empty if $i$ or $j$ does not occur as the first component of the elements of any $B\in\mathcal{C}$.

\begin{Definition}\label{def-diff}
Let $m$, $n$ and $k$ be positive integers. A set $\cal C$ of $k$-subsets of $I_m\times\mathbb{Z}_n$ constitutes a $2$-D $(m\times n,k,1)$-OOC if
$\Delta_{ij}(\mathcal{C})$ covers every element of $\mathbb{Z}_n$ at most once for all $i,j\in I_m$.
\end{Definition}

\begin{Remark}
Let $\mathcal{C}$ be a $2$-D $(m\times n,k,1)$-OOC with $n\equiv0\pmod2$. We have
\begin{enumerate}
\item[$(1)$] The set $\mathbb{Z}_n\setminus\Delta_{ii}(\mathcal{C})$ must contain $\{0,n/2\}$ as a subset for all $i\in I_m$. Otherwise, $n/2$ will appear at least twice in a certain $\Delta_{ii}(\mathcal{C})$ for some $i\in I_m$.
\item[$(2)$] Since each codeword contributes $k(k-1)$ differences, then $$|\mathcal{C}|\leq\left\lfloor\frac{m(m-1)n+m(n-2)}{k(k-1)}\right\rfloor=\left\lfloor\frac{m(mn-2)}{k(k-1)}\right\rfloor.$$ Furthermore, if
    $|\mathcal{C}|=m(mn-2)/k(k-1)$, then $\mathbb{Z}_n\setminus\Delta_{ii}(\mathcal{C})=\{0,n/2\}$ for all $i\in I_m$.
\end{enumerate}
\end{Remark}

\begin{Example}
Using the pure and mixed difference method, we now check the three $3$-subsets of $I_2\times\mathbb{Z}_6$ shown in Example \ref{(2,6)}:
\begin{center}
\begin{tabular}{llll}
$\Delta_{00}(B_1)=\{1,5\}$,
&$\Delta_{11}(B_1)=\emptyset$,
&$\Delta_{01}(B_1)=\{4,5\}$,
&$\Delta_{10}(B_1)=\{1,2\}$,\\
$\Delta_{00}(B_2)=\{2,4\}$,
&$\Delta_{11}(B_2)=\emptyset$,
&$\Delta_{01}(B_2)=\{1,3\}$,
&$\Delta_{10}(B_2)=\{3,5\}$,\\
$\Delta_{00}(B_3)=\emptyset$,
&$\Delta_{11}(B_3)=\{2,4\}$,
&$\Delta_{01}(B_3)=\{0,2\}$,
&$\Delta_{10}(B_3)=\{0,4\}$.
\end{tabular}
\end{center}
It is readily checked that Definition \ref{def-diff} is satisfied. Thus the three codewords form a $2$-D $(2\times6,3,1)$-OOC.
\end{Example}

\section{Improved upper bounds for weight three}

In a $2$-D $(m\times n,3,1)$-OOC, each codeword is of the form $\{(i_1,x),(i_2,y),(i_3,z)\}$, where $i_1,i_2,i_3\in I_m$ and $x,y,z\in \mathbb{Z}_n$.
All its codewords can be divided into the following three types:
\begin{itemize}
\item Type $1$: $\{(i_t,0),(i_t,a_t),(i_t,b_t)\}$, $1\leq t\leq \alpha$;
\item Type $2$: $\{(i_{1t},0),(i_{1t},c_t),(i_{2t},d_t)\}$, $i_{1t}\neq i_{2t}$, $1\leq t\leq\beta$;
\item Type $3$: $\{(i_{1t},0),(i_{2t},e_t),(i_{3t},f_t)\}$, $i_{1t}<i_{2t}<i_{3t}$, $1\leq t\leq\gamma$.
\end{itemize}
Without loss of generality, we assume that $0<a_t,c_t<n/2$ and $a_t<b_t$. For Type $1$ and Type $2$, we can further assume that
\begin{center}
\begin{tabular}{lll}
$\left\{
\begin{array}{lll}
i_{1t}<i_{2t}, & {\rm{if}}\ 1\leq t\leq\beta_1;\\
i_{1t}>i_{2t}, & {\rm{if}}\ \beta_1+1\leq t\leq\beta.
\end{array}
\right.$&

$\left\{
\begin{array}{lll}
b_t<n/2, & {\rm{if}}\ 1\leq t\leq\alpha_1;\\
b_t>n/2, b_t-a_t<n/2, & {\rm{if}}\ \alpha_1+1\leq t\leq\alpha_2;\\
b_t>n/2, b_t-a_t>n/2, & {\rm{if}}\ \alpha_2+1\leq t\leq \alpha.
\end{array}
\right.$
\end{tabular}
\end{center}

\begin{Lemma}\label{bd1}
Assume that $n\equiv0\pmod{2}$. Then $\Phi(m\times n,3,1)\leq J(m\times n,3,1)-1$ if $m\equiv0\pmod3$ and $mn\equiv6,12\pmod{24}$, or
$mn\equiv14,20\pmod{24}$.
\end{Lemma}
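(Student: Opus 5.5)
The plan is a counting argument, in the style of Brickell and Wei for the 1-D case. First I show that in the listed cases the Johnson bound is attained only by a code with no slack at all. Then I derive a parity constraint that such a code must satisfy, and check that it fails.

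\textbf{Step 1: the bound is exact.} Since $n$ is even, $mn$ is even and $\lfloor (mn-1)/2\rfloor=(mn-2)/2$, so $J(m\times n,3,1)=\lfloor m(mn-2)/6\rfloor$.
\begin{itemize}
\item If $mn\equiv14,20\pmod{24}$, then $mn-2\equiv 12,18\pmod{24}$. Hence $6\mid mn-2$.
\item If $3\mid m$ and $mn\equiv 6,12\pmod{24}$, then $mn-2$ is even, so $6\mid m(mn-2)$.
\end{itemize}
In both cases $J=m(mn-2)/6$. Suppose a code $\mathcal C$ attains this size. By the Remark of Section~2, $\mathbb{Z}_n\setminus\Delta_{ii}(\mathcal C)=\{0,n/2\}$ for every $i$, and every mixed $(i,j)$-difference with $i\neq j$ occurs exactly once. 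I aim to contradict this exact covering.

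\textbf{Step 2: a parity invariant.} Because $n$ is even, the parity of an element of $\mathbb{Z}_n$ is well defined. I count odd differences codeword by codeword, using the three types above.
\begin{itemize}
\item \emph{Pure differences.} In row $i$ I record each pure difference by its representative in $\{1,\dots,n/2-1\}$; by exact covering, each such representative occurs exactly once per row.
  \begin{itemize}
  \item A Type~1 codeword $\{(i,0),(i,a),(i,b)\}$ gives three representatives $d_1,d_2,d_3$ with either $d_1+d_2=d_3$ or $d_1+d_2+d_3=n$. In both cases the number of odd ones is even.
  \item A Type~2 codeword $\{(i,0),(i,c),(j,d)\}$ gives the single representative $c$.
  \end{itemize}
  Hence $m\cdot o(n)\equiv N\pmod 2$. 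Here $o(n)$ is the number of odd integers in $\{1,\dots,n/2-1\}$, and $N$ is the number of Type~2 codewords with $c$ odd.
\item \emph{Mixed differences.} For each unordered pair $\{i,j\}$, $i<j$, the $(i,j)$-differences cover $\mathbb{Z}_n$ exactly once, so they contain exactly $n/2$ odd elements.
  \begin{itemize}
  \item A Type~2 codeword contributes the two $(i,j)$-differences $-d$ and $c-d$. Exactly one of them is odd when $c$ is odd; otherwise zero or two are odd.
  \item A Type~3 codeword contributes one difference to each of three pairs. These three values satisfy $x+y=z$ in $\mathbb{Z}_n$, so an even number of them are odd.
  \end{itemize}
  Hence $\binom m2\frac n2\equiv N\pmod 2$.
\end{itemize}
Comparing the two congruences gives the necessary condition
$$m\cdot o(n)\equiv \binom m2\frac n2 \pmod 2.$$

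\textbf{Step 3: the residue check (main obstacle).} It remains to show that this congruence fails in every listed residue class. Here $o(n)=\lfloor n/4\rfloor$ is determined by $n\bmod 8$, and the right-hand side by $m\bmod 4$ and $n\bmod 4$. Two sample cases:
\begin{itemize}
\item $m=7$, $n=2$: the left side is $0$ and the right side is $21$, which is odd.
\item $m=1$, $n=14$: the left side is $3$ and the right side is $0$.
\end{itemize}
I would tabulate $(m\bmod 8,\ n\bmod 8)$, together with the constraint $mn\bmod 24$ and, in the second case, $3\mid m$. The task is to confirm that the two sides always differ.

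If some subclass survives the parity test, I would refine the argument. Pure differences in a row of Type~1 codewords with $d_1+d_2+d_3=n$ behave differently modulo $4$, and counting classes modulo $4$ should supply the extra constraint. I expect this residue bookkeeping, not the structural counting, to be the delicate part. Once the contradiction holds in all cases, $\Phi(m\times n,3,1)\le J(m\times n,3,1)-1$ follows.
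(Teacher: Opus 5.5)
Your Steps 1 and 2 are the paper's argument in different language, and they are correct. The paper sums all mixed $(i,j)$-differences with $i>j$ and all pure differences below $n/2$. It uses the Type 1/2/3 structure to show both sums are $\equiv\sum_t c_t\pmod 2$, then compares them with $\binom m2\sum_{i=0}^{n-1}i$ and $m\sum_{i=1}^{(n-2)/2}i$. A sum is congruent mod $2$ to the number of its odd terms, so this is exactly your congruence $m\cdot o(n)\equiv\binom m2\frac n2\pmod 2$, with $N\equiv\sum_t c_t$.

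The only unfinished piece is Step 3, which you left open (the paper also just asserts it). It closes in every listed case and needs no mod-$4$ refinement:
\begin{itemize}
\item Write $n=2k$. Then $o(n)\equiv\sum_{i=1}^{k-1}i=\binom k2\pmod 2$, so your condition reads $m\binom k2\equiv k\binom m2\pmod 2$.
\item Since $\binom{mk}{2}=m\binom k2+k^2\binom m2$ and $k^2\equiv k$, the condition is equivalent to $\binom{mk}{2}$ being even, i.e.\ $mk\equiv 0,1\pmod 4$.
\item All four residues $mn\equiv 6,12,14,20\pmod{24}$ give $mn\equiv 4,6\pmod 8$, i.e.\ $mk\equiv 2,3\pmod 4$.
\end{itemize}
So the congruence always fails and the contradiction goes through. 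The conditions modulo $3$ are needed only to make $J=m(mn-2)/6$ exact, as in your Step 1.
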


\proof Suppose that there exists a $2$-D $(m\times n,3,1)$-OOC with $J(m\times n,3,1)$ codewords in both cases.
Now, we calculate the sum $A$ of all mixed $(i,j)$-differences in
such a code with $i,j\in I_m$ and $i>j$. Note that all
mixed differences are produced from codewords in Type $2$ and Type $3$. We have

\begin{eqnarray}
 A&=&\sum_{t=1}^{\beta_1}(d_t+d_t-c_t)+\sum_{t=\beta_1+1}^{\beta}(-d_t+c_t-d_t)+\sum_{t=1}^{\gamma}(e_t+f_t+f_t-e_t) \nonumber \\
 &=&2[\sum_{t=1}^{\beta_1}(d_t-c_t)-\sum_{t=\beta_1+1}^{\beta}d_t+\sum_{t=1}^{\gamma}f_t]+\sum_{t=1}^{\beta}c_t. \nonumber
\end{eqnarray}
Next, we calculate the sum $B$ of all pure $(i,i)$-differences less than $n/2$ in
such a code with $i\in I_m$. Note that all pure differences are produced from codewords in Type $1$ and Type $2$. We have

\begin{eqnarray}
 B&=&\sum_{t=1}^{\alpha_1}(a_t+b_t+b_t-a_t)+\sum_{t=\alpha_1+1}^{\alpha_2}(a_t-b_t+b_t-a_t)+\sum_{t=\alpha_2+1}^{\alpha}(a_t-b_t+a_t-b_t)+ \sum_{t=1}^{\beta}c_t\nonumber \\
 &=&2[\sum_{t=1}^{\alpha_1}b_t-\sum_{t=\alpha_2+1}^{\alpha}(a_t-b_t)]+\sum_{t=1}^{\beta}c_t. \nonumber
\end{eqnarray}
Note that $A$ and $B$ are calculated modulo $n$ and $n\equiv0\pmod2$, then we have $A\equiv B\pmod2$. On the other hand, since $J(m\times n,3,1)=m(mn-2)/6$, we know that there are exactly $m$ pure differences $n/2$ left. Thus
$$ A={m\choose2}\sum_{i=0}^{n-1}i=\frac{mn(m-1)(n-1)}{4}, B=m\sum_{i=1}^{(n-2)/2}i=\frac{mn(n-2)}{8}.$$
Therefore, $$\frac{mn(m-1)(n-1)}{4}\equiv\frac{mn(n-2)}{8}\pmod2,$$ which is
impossible when $m\equiv0\pmod{3}$ and $mn\equiv6,12\pmod{24}$, or $mn\equiv14,20\pmod{24}$. Hence, $\Phi(m\times n,3,1)\leq J(m\times n,3,1)-1$.
\qed

\begin{Lemma}\label{bd:n=2}
$\Phi(m\times 2,3,1)\leq J(m\times 2,3,1)-1$ if $m\equiv5,8\pmod{12}$.
\end{Lemma}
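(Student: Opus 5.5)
The plan is to reduce the case $n=2$ to a statement about triangle packings of a labelled multigraph, and then get a contradiction from degree parity and from a parity count of labels, in the same spirit as Lemma \ref{bd1}.

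\emph{Step 1: the size of the bound and the shape of the codewords.} First I compute $J(m\times 2,3,1)=\lfloor m\lfloor (2m-1)/2\rfloor/3\rfloor=\lfloor m(m-1)/3\rfloor$. For $m\equiv5,8\pmod{12}$ we have $m(m-1)\equiv2\pmod3$, so $J=(m(m-1)-2)/3$. Suppose, for a contradiction, that a $2$-D $(m\times2,3,1)$-OOC $\mathcal C$ has $J$ codewords. In $\mathbb{Z}_2$ the only nonzero element is $1=n/2$. A codeword with two points in the same row $i$ would contribute the pure difference $1$ twice to $\Delta_{ii}(\mathcal C)$, since $x-y=y-x$; this is also part (1) of the Remark after Definition \ref{def-diff}. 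Hence no codeword has two points in the same row, and every codeword is of Type $3$: $\{(i_1,x_1),(i_2,x_2),(i_3,x_3)\}$ with three distinct rows.

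\emph{Step 2: the multigraph model.} Fix an unordered pair of rows $\{i,j\}$. The mixed $(i,j)$-difference of a codeword meeting both rows determines the $(j,i)$-difference, because $-d=d$ in $\mathbb{Z}_2$. So the available resources are exactly the edges of $2K_m$ with the two copies of $\{i,j\}$ labelled $0$ and $1$. A codeword then corresponds to a triangle on $\{i_1,i_2,i_3\}$ whose edge labels are $x_{i_1}-x_{i_2}$, $x_{i_2}-x_{i_3}$ and $x_{i_1}-x_{i_3}$, all taken mod $2$. The first two labels sum to the third, so every triangle has an even number of edges labelled $1$. By Definition \ref{def-diff}, $\mathcal C$ is exactly an edge-disjoint family of such triangles in the labelled $2K_m$.

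\emph{Step 3: the leave.} The graph $2K_m$ has $m(m-1)$ edges, and the $J$ triangles use $m(m-1)-2$ of them, so the leave $L$ consists of exactly $2$ edges. Every vertex has even degree $2(m-1)$ in $2K_m$, and every triangle meets each vertex in even degree. Hence every vertex of $L$ has even degree, which forces $L$ to be a doubled edge $\{u,v\}$, that is, both copies labelled $0$ and $1$. So exactly one label-$1$ edge is left uncovered.

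\emph{Step 4: parity contradiction.} The number of label-$1$ edges in $2K_m$ is $\binom m2$. Each triangle uses an even number of them and $L$ contains exactly one, so $\binom m2$ must be odd. But $m\equiv5,8\pmod{12}$ gives $m\equiv0,1\pmod4$, and then $\binom m2=m(m-1)/2$ is even. This contradiction gives $\Phi(m\times2,3,1)\leq J(m\times2,3,1)-1$.

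The only delicate point is the correspondence in Step 2, specifically checking that every codeword has an even number of label-$1$ edges. The remaining steps are routine counting.
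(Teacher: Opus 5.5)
Your proposal is correct and follows the paper's proof essentially step by step. The labelled $2K_m$ is a graph-theoretic rephrasing of the paper's count of mixed $(i,j)$-differences with $i>j$. Your vertex-degree parity is the paper's observation that the unused differences involving any fixed row are even in number, which forces them onto a single pair of rows with labels $0$ and $1$. Your ``even number of label-$1$ edges per triangle'' is the paper's remark that each codeword contributes $0$ or $2$ mixed differences $1$, which leads to the same parity contradiction with $\binom{m}{2}-1$ being odd.
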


\proof Suppose there is a $2$-D $(m\times 2,3,1)$-OOC with $J(m\times 2,3,1)$ codewords.
Since $n=2$, there is no pure difference covered by any codeword and then all
codewords are Type $3$ and of the form $\{(i,0),(j,0),(k,0)\}$ or $\{(i,0),(j,0),(k,1)\}$ with distinct $i,j$ and $k$.

Consider the mixed $(i,j)$-differences with $i>j$ for all $i,j\in I_m$. Since the total number of such mixed differences is $m(m-1)$ and each codeword contributes three such mixed differences, then the total number of the unused mixed $(i,j)$-differences with $i>j$ for all $i,j\in I_m$ is $m(m-1)-3J(m\times 2,3,1)=2$ due to $m\equiv5,8\pmod{12}$.

For every $i_0\in I_m$, the number of mixed $(i,j)$-differences with $i>j$ and $i_0\in\{i,j\}$
is $2i_0+2(m-i_0-1)=2m-2$. On the other hand, each codeword contributes $0$ or $2$ such mixed differences, and then the number of unused mixed $(i,j)$-differences with $i>j$ and $i_0\in\{i,j\}$ must be even. Therefore, the two unused differences are all mixed $(i',j')$-differences for some fixed $i',j'\in I_m$, one of which is the mixed $(i',j')$-difference $0$, and the other is the mixed $(i',j')$-difference $1$.

Then the total number of mixed $(i,j)$-difference $1$ with $i>j$ covered by all codewords is $m(m-1)/2-1$, which is odd. However, each codeword of the form $\{(i,0),(j,0),(k,1)\}$ contributes $2$ mixed differences $1$, this creates a contradiction.\qed

\begin{Lemma}\label{bd:n=4}
$\Phi(m\times 4,3,1)\leq J(m\times 4,3,1)-1$ if $m\equiv4\pmod{6}$.
\end{Lemma}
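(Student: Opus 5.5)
The plan is to push the counting argument of Lemma \ref{bd:n=2} further: first show that a code meeting $J$ leaves exactly one unordered pair of differences unused, then rule out each possibility for that pair. Suppose, for a contradiction, that $\mathcal{C}$ is a $2$-D $(m\times 4,3,1)$-OOC with $J(m\times 4,3,1)$ codewords, where $m\equiv4\pmod 6$.

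\emph{Step 1: the size of $J$ and the structure of $\mathcal{C}$.} Since $m\equiv1\pmod3$, we have $\lfloor(4m-1)/2\rfloor=2m-1\equiv1\pmod 3$. Hence
$$J(m\times4,3,1)=\frac{m(2m-1)-1}{3}.$$
Any $3$-subset of $\mathbb{Z}_4$ contains two elements differing by $2$, so there are no Type $1$ codewords. By the Remark after Definition~\ref{def-diff}, pure differences can only be $1$ and $3$. The available differences therefore number $4m(m-1)+2m=4m^2-2m$. The codewords use $6J=4m^2-2m-2$ of them, so exactly two differences are unused. Differences come in pairs $\{d,-d\}$, with $(i,j)\leftrightarrow(j,i)$ for mixed ones. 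So the unused pair is either (a) the pure pair $\{1,3\}$ in some row $i_0$, or (b) a mixed $(i',j')$-difference $d$ together with the mixed $(j',i')$-difference $-d$ for some $i'>j'$.

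\emph{Step 2: case (a), via the parity argument of Lemma \ref{bd1}.} Let $A$ be the sum of all mixed $(i,j)$-differences with $i>j$, and $B$ the sum of all pure differences less than $n/2=2$, i.e.\ the number of pure differences equal to $1$. Exactly as in the proof of Lemma \ref{bd1}, $A\equiv\sum_t c_t\equiv B\pmod 2$. In case (a) all mixed differences are used, so
$$A=\binom m2(0+1+2+3)=3m(m-1),$$
which is even. On the other hand $B=m-1$, which is odd since $m$ is even. This is a contradiction.

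\emph{Step 3: case (b), via a count mod $4$ per row.} In case (b) the same parity identity gives $A=3m(m-1)-d'$ and $B=m$, where $d'\in\{d,-d\}$ is the value of the unused $(i',j')$-difference. Since $B=m$ is even, $d'$ must be even, so $d\in\{0,2\}$. This alone does not finish the proof.

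Instead, I count, for the row $i'$, all ordered differences having $i'$ as a coordinate:
\begin{itemize}
\item there are $2$ pure ones and $8(m-1)$ mixed ones, for a total of $8m-6$;
\item a codeword meeting row $i'$ in one point contributes $4$ of them;
\item a Type $2$ codeword meeting row $i'$ in two points contributes $6$.
\end{itemize}
Both pure differences in row $i'$ are used, and such a Type $2$ codeword uses both of them. Hence there is exactly one such codeword, and the remaining codewords contribute a multiple of $4$. Thus
$$8m-6-6-2\equiv0\pmod 4,$$
where the last $2$ counts the unused pair, which involves row $i'$. But $8m-14\equiv2\pmod4$, a contradiction. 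In fact this mod-$4$ count alone rules out case (b), so the parity step there is not needed.

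The main obstacle is Step 3. The parity identity of Lemma \ref{bd1} does not exclude case (b) by itself, and the extra row count mod $4$ depends on every pure difference in the row being used. That holds precisely because in case (b) the unused pair is mixed.
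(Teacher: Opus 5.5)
Your proof is correct and is essentially the paper's argument: the same count showing that a single $\pm$-pair of differences goes unused, then a parity contradiction in each case. In case (a), your identity $A\equiv B\pmod 2$ from Lemma~\ref{bd1} is the paper's count of odd mixed differences in another form. In case (b), your mod-$4$ count of all ordered differences at row $i'$ is the paper's mod-$2$ count of mixed differences in one orientation at that row, doubled and with the two pure differences added to both sides. If you count only the mixed differences (each codeword contributes $0$ or $4$ ordered ones at row $i'$), you no longer need the step showing that exactly one Type~$2$ codeword doubles row $i'$.
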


\proof
Suppose that there is a $2$-D $(m\times 4,3,1)$-OOC with $J(m\times 4,3,1)=(2m^2-m-1)/3$ codewords.
Since $n=4$, there is no codeword of Type $1$. Without loss of generality, we assume that each codeword of Type $2$ is of the form $\{(i,0),(i,1),(j,x)\}$.

Clearly, the total number of mixed $(i,j)$-differences with $i>j$ is $2m(m-1)$ and the total number of pure $(i,i)$-differences less than $2$ is $m$. Since each codeword contributes $3$ such differences, then the number of unused differences are $2m(m-1)+m-(2m^2-m-1)=1$.

If the unused difference is a pure difference, then the number of codewords in Type $2$ must be $m-1$. Note that each codeword of Type $2$ contributes an odd mixed difference and an even mixed difference. Thus the total number of odd mixed differences produced from codewords of Type $3$ are $m(m-1)-(m-1)=(m-1)^2$, which is odd. However, each codeword of Type $3$ contributes $0$ or $2$ odd mixed differences, this creates a contradiction.

If the unused difference is a mixed $(i_0,j_0)$-difference with $i_0>j_0$, then the total number of mixed $(i_0,j)$-difference for all $i_0>j$ and
mixed $(j,i_0)$-difference for all $j>i_0$ produced from all codewords must be $4(i_0-1)+4(m-i_0)-1=4m-5$, which is odd. However, the number of such mixed differences contributed by each codeword is $0$ or $2$, this also creates a contradiction.
\qed

\begin{Lemma}\label{bd}
Assume that $n\equiv0\pmod{2}$. Then $\Phi(m\times n,3,1)\leq J^*(m\times n,3,1)$, where $$
    J^*(m\times n,3,1)=\left\{
          \begin{array}{ll}
               J(m\times n,3,1)-1,
               &{\rm{if}}\ mn\equiv14,20\ ({\rm mod}\ 24);\\
               &{\rm{or}}\ m\equiv4\ ({\rm mod}\ 6), \ n=4;\\
               &{\rm{or}}\ m\equiv5,8\ ({\rm mod}\ 12), \ n=2;\\
               &{\rm{or}}\ m\equiv0\ ({\rm mod}\ 3), \ mn\equiv6,12\ ({\rm mod}\ 24);\\
               J(m\times n,3,1),
               &\rm{otherwise}.
          \end{array}
       \right.
$$
\end{Lemma}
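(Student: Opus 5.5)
This lemma simply collects the upper bounds already established in this section, so the proof is a case check against Lemmas~\ref{Johnson bound}, \ref{bd1}, \ref{bd:n=2} and \ref{bd:n=4}. We go through the defining cases of $J^*(m\times n,3,1)$ one by one and, in each, point to the earlier result that gives the stated bound.

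\emph{Generic case.} If $(m,n)$ falls under ``otherwise'', then $J^*(m\times n,3,1)=J(m\times n,3,1)$. The inequality $\Phi(m\times n,3,1)\leq J(m\times n,3,1)$ is then just the Johnson bound of Lemma~\ref{Johnson bound} with $k=3$ and $\lambda=1$.

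\emph{Exceptional cases with $n$ even.*} In each of these we claim $\Phi(m\times n,3,1)\leq J(m\times n,3,1)-1$.
\begin{itemize}
\item If $mn\equiv14,20\pmod{24}$, or if $m\equiv0\pmod3$ and $mn\equiv6,12\pmod{24}$, the bound is Lemma~\ref{bd1}; its only hypothesis is $n\equiv0\pmod2$, which holds here.
\item If $n=2$ and $m\equiv5,8\pmod{12}$, the bound is Lemma~\ref{bd:n=2}.
\item If $n=4$ and $m\equiv4\pmod6$, the bound is Lemma~\ref{bd:n=4}.
\end{itemize}

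Since the cases listed in $J^*$ are covered by the union of the hypotheses of these three lemmas, and every remaining pair $(m,n)$ is handled by the Johnson bound, $\Phi(m\times n,3,1)\leq J^*(m\times n,3,1)$ holds for all positive integers $m$ and even $n$.

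There is no real obstacle in this proof: it is bookkeeping that assembles the preceding lemmas. All the substantive work is in the parity arguments of Lemmas~\ref{bd1}--\ref{bd:n=4}, which we take as given.
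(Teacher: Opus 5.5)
Your proposal is correct and follows the paper's proof exactly: the paper proves this lemma by combining Lemmas~\ref{bd1}, \ref{bd:n=2} and \ref{bd:n=4}, and your case check confirms that their hypotheses cover precisely the exceptional cases in the definition of $J^*$. For the ``otherwise'' case, your explicit appeal to the Johnson bound (Lemma~\ref{Johnson bound}) states what the paper leaves implicit.
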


\proof Combine the results of Lemmas \ref{bd1}, \ref{bd:n=2} and \ref{bd:n=4} to complete the proof. \qed

\section{$2$-D $(m\times n,k,1)$-OOCs with ``holes''}

In this section, we introduce two special types of $2$-D $(m\times n,k,1)$-OOCs with ``holes'' to establish recursive constructions for $2$-D $(m\times n,k,1)$-OOCs.

\subsection{$2$-D $([m:r]\times n,k,1)$-OOCs}

Let $r>0$ and $R$ be any $r$-subset of $I_m$. If a $2$-D $(m\times n,k,1)$-OOC, $\cal B$, defined on $I_m\times\mathbb{Z}_n$, satisfies that for any $i,j\in R$, $\Delta_{ij}({\cal B})=\emptyset$, then we write $\cal B$ as a $2$-D $([m:r]\times n,k,1)$-OOC. For convenience, the set $R\times\mathbb{Z}_n$ is called {\em the hole} of the OOC. We also regard a $2$-D $(m\times n,k,1)$-OOC as a $2$-D $([m:0]\times n,k,1)$-OOC.
It is readily checked that the number of codewords of a
$2$-D $([m:r]\times n,k,1)$-OOC for $n\equiv0\pmod2$ does not exceed
$$\Theta(m,r,n,k):=\left\lfloor\frac{[m(m-1)-r(r-1)]n+(m-r)(n-2)}{k(k-1)}\right\rfloor=\left\lfloor\frac{(m^2-r^2)n-2(m-r)}{k(k-1)}\right\rfloor.$$

The following ``filling in hole" construction is straightforward.

\begin{Construction}\label{filling2}
Suppose that there exist
\begin{enumerate}
\item[$(1)$] a $2$-D $([m:r]\times n,k,1)$-OOC with $b_1$ codewords;
\item[$(2)$] a $2$-D $(r\times n,k,1)$-OOC with $b_2$ codewords.
\end{enumerate}
Then there exists a $2$-D $(m\times n,k,1)$-OOC with $b_1+b_2$ codewords.
\end{Construction}

We remark that Construction \ref{filling2} is the main tool of this paper.
In order to construct more $2$-D $([m:r]\times n,k,1)$-OOCs, in the next subsection we introduce our second type of $2$-D OOCs with ``holes''.

\subsection{$g$-regular $2$-D $([m:r]\times n,k,1)$-OOCs}

Let $H$ be the subgroup of order $g$ of $\mathbb{Z}_n$ and $R$ be any $r$-subset of $I_m$. If a $2$-D $(m\times n,k,1)$-OOC, $\mathcal{B}$, defined on $I_m\times\mathbb{Z}_n$, satisfies that
$$\Delta_{ij}(\mathcal{B})=\left\{
\begin{array}{lll}
\mathbb{Z}_{n}\setminus H, & \ \ {\rm if}\ \{i,j\}\not\subseteq R,\\
\emptyset, & \ \ {\rm otherwise},\\
\end{array}
\right.
$$
then we write $\cal B$ as a $g$-regular $2$-D $([m:r]\times n,k,1)$-OOC. For convenience, the set $R\times\mathbb{Z}_n$ is called {\em the hole}
and $I_m\times H$ is called {\em the forbidden set}. It is readily checked that the number of codewords of a
$g$-regular $2$-D $([m:r]\times n,k,1)$-OOC is
$$\Upsilon(m,r,n,g,k):=\frac{[m(m-1)-r(r-1)](n-g)+(m-r)(n-g)}{k(k-1)}=\frac{(m^2-r^2)(n-g)}{k(k-1)}.$$

When $R=\emptyset$, a $g$-regular $2$-D $([m:0]\times n,k,1)$-OOC will be denoted by a $g$-regular $2$-D $(m\times n,k,1)$-OOC.

\begin{Example}\label{m=6,2-regular}
There exists a $2$-regular $2$-D $(6\times n, 3, 1)$-OOC with $\Upsilon(6,0,n,2,3)$ codewords for any $n\equiv2\pmod{4}$ and $n\geq6$.
\end{Example}

\proof The required codes are constructed on $\mathbb{Z}_6\times\mathbb{Z}_n$. We list $n-2$ initial codewords below:

\begin{center}
\begin{tabular}{lll}
$\{(0,0),(1,i+1),(3,2+2i)\}$,
&$\{(0,0),(0,2i+2),(2,n/2+i+1)\}$, \\
$\{(0,0),(1,(n+2)/4+i),(3,1+2i)\}$,
&$\{(0,0),(0,2i+1),(1,(3n+2)/4+i)\}$,
\end{tabular}
\end{center}
where $0\leq i\leq (n-6)/4$. All the $\Upsilon(6,0,n,2,3)=6(n-2)$ codewords are obtained by developing these initial codewords by $(+1\pmod 6, -)$. \qed

\begin{Construction}\label{filling3}
Suppose that there exist
\begin{enumerate}
\item[$(1)$] a $g$-regular $2$-D $([m:r]\times n,k,1)$-OOC with $\Upsilon(m,r,n,g,k)$ codewords;
\item[$(2)$] a $2$-D $([m:r]\times g,k,1)$-OOC with $b$ codewords.
\end{enumerate}
Then there exists a $2$-D $([m:r]\times n,k,1)$-OOC with $\Upsilon(m,r,n,g,k)+b$ codewords.
\end{Construction}

\begin{proof}
Let $\mathcal{A}_1$ be a $g$-regular $2$-D $([m:r]\times n,k,1)$-OOC with $\Upsilon(m,r,n,g,k)$ codewords defined on $I_{m}\times\mathbb{Z}_{n}$ with $I_{r}\times\mathbb{Z}_{n}$ as the hole and $I_m\times H$ as the forbidden set, where $H$ is the subgroup of $\mathbb{Z}_n$ of order $g$.

Let $\mathcal{F}$ be a $2$-D $([m:r]\times g,k,1)$-OOC with $b$
codewords defined on $I_{m}\times\mathbb{Z}_g$ with $I_{r}\times\mathbb{Z}_g$ as the hole. For
each $D=\{(a_1,x_1),(a_2,x_2),\ldots,(a_k,x_k)\}\in\mathcal{F}$, let
$$D'=\{(a_1,x_1\cdot\frac{n}{g}),(a_2,x_2\cdot\frac{n}{g}),\ldots,(a_k,x_x\cdot\frac{n}{g})\}.$$ Let
$\mathcal{A}_2=\bigcup_{D\in\mathcal{F}}D'$.
Then $\mathcal{A}_1\bigcup\mathcal{A}_2$
forms the required $2$-D $([m:r]\times n,k,1)$-OOC, which is defined on $I_{m}\times\mathbb{Z}_{n}$ with $I_{r}\times\mathbb{Z}_{n}$ as the hole.
\end{proof}

The following example illustrates how to use Construction \ref{filling3}.

\begin{Example}\label{(6,n)}
There exists an optimal $2$-D $(6\times n,3,1)$-OOC with $6n-3$ codewords for any positive integer $n\equiv 2\pmod{4}$.
\end{Example}

\begin{proof}
For $n=2$, we list $3$ initial codewords on $\mathbb{Z}_6\times\mathbb{Z}_2$ below:
\begin{center}
\begin{tabular}{lll}
$\{(0,0),(1,0),(2,0)\}$,
&$\{(0,0),(3,0),(2,1)\}$,
&$\{(0,0),(3,1),(5,1)\}$.
\end{tabular}
\end{center}
All $9$ codewords are obtained by developing the initial codewords by $(+2\pmod 6, -)$. For any $n\equiv2\pmod{4}$ and $n\geq6$,
by Example \ref{m=6,2-regular}, there exists a $2$-regular $2$-D $(6\times n, 3, 1)$-OOC with $6(n-2)$ codewords.
Apply Construction \ref{filling3} with the aforementioned $2$-D $(6\times 2, 3, 1)$-OOC to obtain a $2$-D $(6\times n,3,1)$-OOC with $6n-3$ codewords, which is optimal by Lemma \ref{bd}.
\end{proof}

\subsection{More examples of $g$-regular $2$-D $([m:r]\times n,k,1)$-OOCs}

We introduce the concept of frame starters to present some more examples of $g$-regular $2$-D OOCs for immediately use in the next section.
We will still discuss recursive constructions for these codes in Section \ref{g-regular [m:r]}.

Let $G$ be an additive abelian group of order $n$ and $H$ a subgroup of $G$ of order $g$. A \emph{frame starter} of type $g^{n/g}$ in $G\setminus H$ is a set of $(n-g)/2$ pairs $\{\{x_i,y_i\}:1\leq i\leq(n-g)/2\}$ that satisfies the following two properties:

\begin{enumerate}
\item[$(1)$] $\bigcup_{i=1}^{(n-g)/2}\{x_i,y_i\}=G\setminus H$;
\item[$(2)$] $\{\pm(x_i-y_i):1\leq i\leq(n-g)/2\}=G\setminus H$.
\end{enumerate}


\begin{Lemma}\rm{\cite{rs}}\label{fs 2^n}
There exists a frame starter of type $2^{n/2}$ in $\mathbb{Z}_{n}\setminus\{0,n/2\}$ for any $n\equiv0,2\pmod{8}$.
\end{Lemma}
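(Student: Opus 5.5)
The plan is to reduce the statement to the existence of a Rosa-type (split) Skolem sequence and then to construct those explicitly. Write $n=2t$. A frame starter of type $2^{n/2}$ in $\mathbb{Z}_n\setminus\{0,n/2\}$ is a set of $t-1$ pairs that partition $\mathbb{Z}_{2t}\setminus\{0,t\}$ and whose differences $\pm(x_i-y_i)$ cover $\mathbb{Z}_{2t}\setminus\{0,t\}$ exactly once. Since $d$ and $2t-d$ give the same pair $\pm d$, condition (2) holds exactly when the $t-1$ pairs realise the differences $1,2,\ldots,t-1$ once each, up to sign.

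Identify $\mathbb{Z}_{2t}\setminus\{0,t\}$ with the integer set $\{1,\ldots,2t-1\}\setminus\{t\}$. It then suffices to partition $\{1,\ldots,2q+1\}\setminus\{q+1\}$, where $q=t-1$, into pairs $\{a_d,b_d\}$ with $b_d-a_d=d$ for $d=1,\ldots,q$. Integer differences in $\{1,\ldots,q\}$ are distinct as residues, and no such difference is $0$ or $t$ modulo $2t$. So such a partition directly gives the required frame starter.

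This object is a Rosa sequence (a split Skolem sequence) of order $q$. The hypothesis $n\equiv 0,2\pmod 8$ means $t\equiv 0,1\pmod 4$, i.e.\ $q\equiv 3,0\pmod 4$, which is exactly Rosa's existence range. A parity count confirms this range is natural. The sum of all elements, $\binom{2q+2}{2}-(q+1)=(q+1)(2q+1)-(q+1)=2q(q+1)$, must equal $\sum_d(2a_d+d)$, so $q(q+1)/2$ must be even. This forces $q\equiv 0,3\pmod 4$, and we construct in exactly these cases.

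For the construction itself, I would give explicit pairs in two cases, $q=4s$ and $q=4s+3$, modelled on the Skolem constructions of O'Keefe and Rosa.
\begin{itemize}
\item Take the large even differences as nested pairs $\{r,\,2\ell-r\}$ placed symmetrically around one point below the middle.
\item Take the large odd differences as nested pairs around one point above the middle, so that the middle point $q+1$ is skipped.
\item Patch the few leftover small differences, such as $1$, $2$ and one medium value, with a short explicit list, checked case by case.
\end{itemize}
The small orders $q=3$ (the pairs $\{2,3\},\{5,7\},\{1,4\}$ in $\{1,\ldots,7\}\setminus\{4\}$) and $q=4$ are handled by direct listing.

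The main obstacle is step three: choosing the nested families and the leftover pairs so that, for every $s$, the positions used are exactly $\{1,\ldots,2q+1\}\setminus\{q+1\}$ with no overlaps. I would first fix the families from Rosa's 1966 construction, verify coverage by computing the union of each arithmetic family as an interval, and check the boundary pairs separately. Alternatively one can simply cite Rosa, as the paper does via \cite{rs}.
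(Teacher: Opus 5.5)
The paper gives no argument for this lemma and simply cites Rees and Stinson \cite{rs}; note that \cite{rs} is Rees--Stinson, not Rosa. Your proposal therefore takes a genuinely different route, and its main reduction is correct. Since $\pm d$ for $d=1,\dots,t-1$ exhausts $\mathbb{Z}_{2t}\setminus\{0,t\}$, it suffices to partition $\{1,\dots,2q+1\}\setminus\{q+1\}$ (with $q=t-1$) into pairs with differences $1,\dots,q$. Such a partition is a frame starter with no wrap-around. It is exactly a $(q+1)$-extended Skolem sequence of order $q$, that is, a Rosa sequence.

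You do not need the explicit nested-pair construction. As written it is only a plan, and the step you call the main obstacle is never carried out, so it proves nothing by itself. The paper's own Lemma \ref{Skolem} (Baker) closes the argument. Take $v=q$ and $k=q+1$: existence holds when $q\equiv 0\pmod 4$ (so $k$ is odd) or $q\equiv 3\pmod 4$ (so $k$ is even). This is exactly $n\equiv 0,2\pmod 8$.

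So your route makes the lemma a one-line corollary of a result the paper already uses. The paper's route is shorter on the page but depends on an outside reference.

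There is one concrete error: your $q=3$ example is wrong. The pair $\{1,4\}$ uses the excluded point $4=q+1$; in $\mathbb{Z}_8$ this point lies in $H=\{0,4\}$. The point $6$ is also left uncovered. A valid choice is $\{1,2\},\{5,7\},\{3,6\}$, with differences $\pm1,\pm2,\pm3$ covering $\{1,2,3,5,6,7\}$.

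Your parity count is correct, but it shows necessity only for Rosa sequences, not for frame starters in general. In any case, the statement does not need it.
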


\begin{Example}\label{2regular [2:1]}
There exists a $2$-regular $2$-D $([2:1]\times n,3,1)$-OOC with $\Upsilon(2,1,n,2,3)$ codewords for $n\equiv0,2\pmod8$ and $n\geq8$.
\end{Example}

\begin{proof}
Let $H$ be a subgroup of $\mathbb{Z}_n$ of order $2$. By Lemma \ref{fs 2^n}, there exists a frame starter of type $2^{n/2}$ in $\mathbb{Z}_n\setminus H$. Let $S$ be the given frame starter. It is readily checked that $$\mathcal{B}=\{\{(0,0),(1,x),(1,y)\}:\{x,y\}\in S\}.$$
is a $2$-regular $2$-D $([2:1]\times n,3,1)$-OOC with $\Upsilon(2,1,n,2,3)=(n-2)/2$ codewords.
\end{proof}

\begin{Example}\label{m=3,regular}
There exists a $2$-regular $2$-D $(3\times n,3,1)$-OOC with $\Upsilon(3,0,n,2,3)$ codewords for $n\equiv0,2\pmod8$ and $n\geq8$.
\end{Example}

\begin{proof}
Let $H$ be a subgroup of $\mathbb{Z}_n$ of order $2$. By Lemma \ref{fs 2^n}, there exists a frame starter of type $2^{n/2}$ in $\mathbb{Z}_n\setminus H$. Let $S$ be the given frame starter. It is readily checked that $$\mathcal{B}=\{\{(i,0),(i+1,x),(i+1,y)\}:i\in\mathbb{Z}_3,\{x,y\}\in S\}.$$
is a $2$-regular $2$-D $(3\times n,3,1)$-OOC on $\mathbb{Z}_3\times\mathbb{Z}_n$ with forbidden set $\mathbb{Z}_3\times H$.
\end{proof}

\section{Auxiliary designs}

In order to apply Constructions \ref{filling2} and \ref{filling3}, we need to
construct $2$-D $([m:r], n, k,1)$-OOCs and $g$-regular $2$-D $([m:r]\times n,k,1)$-OOCs. To this end, we introduce three types of auxiliary designs: $n$-cyclic group divisible packings, $wt$-cyclic holey group divisible packings and semi-cyclic incomplete holey group divisible designs.

\subsection{$n$-cyclic group divisible packings}

Fix a set of positive integers $K$. A {\em $K$-group divisible packing} (or $K$-GDP) is a triple ($X, {\cal G},{\cal B}$) satisfying that (1) $\cal G$ is a partition of a finite set $X$ into subsets (called {\em groups}); (2) $\cal B$ is a set of subsets of $X$ (called {\em blocks}), each of size from $K$, such that every $2$-subset of $X$ is either contained in at most one block or in exactly one group, but not in both. Furthermore, if every two points from different groups appear in exactly one block, then the GDP is often called a {\em group divisible design} and denoted by a GDD.
The multi-set $T=\{|G|: G\in\mathcal{G}\}$ is called the \emph{group type} (or \emph{type}) of the GDP. We will use an ``exponential" notation to describe group types: type $g_1^{m_1}g_2^{m_2}\cdots g_r^{m_r}$ indicates that there are $m_i$ groups of size $g_i$ for $1\leq i\leq r$. If $K=\{k\}$, we simply write $k$-GDP instead of $K$-GDP.

We employ the pure and mixed difference method to construct a $k$-GDD of type $g^{u}$. Let $X=\mathbb{Z}_{gu}$ and $\mathcal{G}=\{\{i,u+i,\ldots,(g-1)u+i\}:0\leq i\leq u-1\}$. Let ${\cal B}^*$ be a set of $k$-subsets (called {\em base blocks}) of $X$. Define a multi-set
$\Delta(B)=\{x-y \pmod{gu}: x,y\in B, x\neq y\}$, and a multi-set $$\Delta(\mathcal{B}^*)=\bigcup_{B\in\mathcal{B}^*}\Delta(B).$$
If $\Delta(\mathcal{B}^*)=\mathbb{Z}_{gu}\setminus\{0,u,\ldots,(g-1)u\}$, then a $k$-GDD of type $g^u$ with the point set $X$ together with the group set $\cal G$ can be generated from ${\cal B}^*$. The required blocks are obtained by developing all base blocks of ${\cal B}^*$ by successively
adding $1$ to each point of these base blocks modulo $gu$. Usually a $k$-GDD obtained by this manner is said to be \emph{strictly-cyclic}.

\begin{Lemma}\label{3-sCGDD}{\rm\cite{wxc}}
There exists a strictly cyclic $3$-GDD of type $g^u$ if and only if
\begin{enumerate}
\item[$(1)$] $g(u-1)\equiv 0\ ({\rm mod}\ 6)$ and $u\geq 4;$
\item[$(2)$] $u\not\equiv 2,3\ ({\rm mod}\ 4)$ when $g\equiv 2\ ({\rm mod}\ 4).$
\end{enumerate}
\end{Lemma}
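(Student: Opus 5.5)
This lemma is quoted from \cite{wxc} and is used here only as an input, so the plan below outlines how one would re-derive it rather than anything specific to optical orthogonal codes. I would split it into necessity and sufficiency.

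\textbf{Necessity.} Let $H=\{0,u,\ldots,(g-1)u\}$ be the subgroup of order $g$ of $\mathbb{Z}_{gu}$.
\begin{itemize}
\item Since the GDD is strictly cyclic, every base block $B=\{0,a,b\}$ contributes exactly six differences $\pm a,\pm b,\pm(b-a)$. The condition $\Delta(\mathcal{B}^*)=\mathbb{Z}_{gu}\setminus H$ therefore gives $6\mid g(u-1)$.
\item For $g\equiv 2\pmod 4$, I would copy the parity argument of Lemma~\ref{bd1}. Write each base block so that its differences are $a$, $b$ and $b-a$ with all three taken in $\{1,\ldots,\lfloor gu/2\rfloor\}$. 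In each of the three sign patterns, the sum of these three representatives is even modulo $gu$, which is even. So the sum of all representatives of $\mathbb{Z}_{gu}\setminus H$ lying in $\{1,\ldots,gu/2\}$ must be even; the element $gu/2$ needs separate care, since $gu/2\in H$ exactly when $g$ is even.
\item That sum is an explicit quadratic in $g$ and $u$. Evaluating it for $g\equiv 2\pmod 4$ should show it is odd exactly when $u\equiv 2,3\pmod 4$, which gives condition (2).
\item To exclude $u=3$: every block must then meet each residue class mod $3$ exactly once. I would try a similar invariant, such as the sum of the elements of the base blocks modulo $3g$, or a count of the differences lying in a fixed coset of $3\mathbb{Z}_{3g}$, to reach a contradiction. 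I expect the $u=3$ exclusion to be the fiddliest part of necessity.
\end{itemize}

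\textbf{Sufficiency.} The plan is direct constructions for small parameters followed by recursion.
\begin{itemize}
\item \emph{Base cases.} For $g=1$ and $g=2$ these are cyclic Steiner triple systems and cyclic $3$-GDDs of type $2^u$. I would build them from Skolem and hooked Skolem sequences, or Langford sequences: a sequence of the appropriate order produces base blocks $\{0,i,b_i+n\}$ whose differences partition $\mathbb{Z}_{gu}\setminus H$.
\item \emph{Small multiples.} For $g=3$ and $g=6$ I would use analogous difference families, again obtained from extended or hooked Skolem-type sequences, together with explicit small examples.
\item \emph{Recursion.} Take a strictly cyclic $3$-GDD of type $g^u$. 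Inflate it by $\mathbb{Z}_t$, using a cyclic transversal design or a $3\times t$ Latin-square-type difference matrix over $\mathbb{Z}_t$ for odd $t$, to obtain a strictly cyclic GDD of type $(gt)^u$. This handles odd multipliers $t$. Even multipliers need separate base families in $\mathbb{Z}_{2^a}\times\mathbb{Z}_{\text{odd}}$.
\end{itemize}

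\textbf{Main obstacle.} The hardest step will be the case $g\equiv 2\pmod 4$ with $u\equiv 0,1\pmod 4$, and generally even $g$. There the parity constraint is tight, and the Skolem-type constructions must use precisely the sequences whose existence conditions match (2). I would first try to settle $g=2$ and $g=6$ completely by hooked Skolem and Langford sequences. I would then lift to all $g\equiv 2\pmod 4$ by the odd-$t$ inflation, and treat $g\equiv 0\pmod 4$ by separate inflation from the $g=4$ and $g=12$ cases.
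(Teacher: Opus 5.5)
The paper gives no proof of this lemma; it is imported from \cite{wxc}, so your proposal has to stand on its own. The necessity half does. For even $g$, the representatives of $\mathbb{Z}_{gu}\setminus H$ in $[1,gu/2-1]$ sum to $g^2u(u-1)/8$, and for $g\equiv 2\pmod 4$ this is odd exactly when $u\equiv 2,3\pmod 4$. The case $u=3$ is settled by a \emph{sum}, not a count. A block $\{x,y,z\}$ with $x\equiv 0$, $y\equiv 1$, $z\equiv 2\pmod 3$ has its differences $y-x$, $z-y$, $x-z$ in $1+3\mathbb{Z}_{3g}$, and they sum to $0$. These differences cover $1+3\mathbb{Z}_{3g}$ exactly once, but that coset's total $g+3g(g-1)/2$ is never $0$ modulo $3g$. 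The case $u=2$ is trivially impossible.

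The genuine gap is sufficiency at the prime $2$.
\begin{itemize}
\item Inflation by odd $t$ sends $g\mapsto gt$, so it never changes the $2$-adic valuation of $g$.
\item The per-block lift has no even analogue. Lifting a base block $\{0,a,b\}$ along $\mathbb{Z}_{gut}\to\mathbb{Z}_{gu}$ to $t$ blocks $\{0,\tilde a+gux_i,\tilde b+guy_i\}$ needs $x_i$, $y_i$ and $y_i-x_i$ each to run through $\mathbb{Z}_t$. For even $t$ this is impossible: $\sum_i(y_i-x_i)=0$, while the elements of $\mathbb{Z}_t$ sum to $t/2$.
\item Hence your bases $g\in\{1,2,3,4,6,12\}$ reach only $g$ with $8\nmid g$. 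Every admissible pair with $8\mid g$ gets no construction, starting already with $g=8$, $u=4$ in $\mathbb{Z}_{32}$.
\item The phrase ``separate base families in $\mathbb{Z}_{2^a}\times\mathbb{Z}_{\mathrm{odd}}$'' merely restates the problem for every $a$ and every admissible $u$.
\end{itemize}
You need either a direct construction uniform in $g$, or a genuinely new doubling step. A uniform construction would partition the non-multiples of $u$ in $[1,\lfloor gu/2\rfloor]$ into triples $\{a,b,c\}$ with $a+b=c$ or $a+b+c=gu$. Even the bases $g=4,6,12$ for all admissible $u$, which carry most of the work, are only asserted.

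A smaller point: realize the odd inflation as the lift $\{0,\tilde a+gui,\tilde b+2gui\}$, $i\in\mathbb{Z}_t$, inside $\mathbb{Z}_{gut}$. A product construction in $\mathbb{Z}_{gu}\times\mathbb{Z}_t$ is not cyclic when $\gcd(gu,t)>1$, e.g.\ $g=t=3$.
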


By the definitions of $g$-regular $2$-D OOCs and strictly-cyclic GDDs, it is readily checked that a $g$-regular $2$-D $(1\times n,k,1)$-OOC is equivalent to a strictly cyclic $k$-GDD of type $g^{n/g}$. Hence, we have the following corollary.

\begin{Corollary}\label{3-sCGDD to ooc}
There exists a $g$-regular $2$-D $(1\times n,3,1)$-OOC with $\Upsilon(1,0,n,g,3)$ codewords if and only if
\begin{enumerate}
\item[$(1)$] $n-g\equiv 0\ ({\rm mod}\ 6)$ and $n\geq 4g;$
\item[$(2)$] $n/g\not\equiv 2,3\ ({\rm mod}\ 4)$ when $g\equiv 2\ ({\rm mod}\ 4).$
\end{enumerate}
\end{Corollary}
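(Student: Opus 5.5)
The plan is to derive Corollary \ref{3-sCGDD to ooc} directly from Lemma \ref{3-sCGDD} via the equivalence mentioned just before the statement. That equivalence is between a $g$-regular $2$-D $(1\times n,3,1)$-OOC and a strictly cyclic $3$-GDD of type $g^{n/g}$. The first step is to make this equivalence precise.

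When $m=1$, the index set $I_1$ is $\{0\}$, so only $\Delta_{00}$ is relevant, and it consists entirely of pure differences. The hole is empty ($r=0$). By the definition, $g$-regularity says exactly that $\Delta_{00}(\mathcal{B})=\mathbb{Z}_n\setminus H$, where $H$ is the unique subgroup of $\mathbb{Z}_n$ of order $g$. Put $u=n/g$. Then $H=\{0,u,\ldots,(g-1)u\}$, which is precisely the set removed in the definition of a strictly cyclic $3$-GDD of type $g^u$ on $\mathbb{Z}_{gu}$.

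To translate in one direction, identify each codeword $\{(0,x),(0,y),(0,z)\}$ with the base block $\{x,y,z\}$. Then $\Delta_{00}(\mathcal{B})=\Delta(\mathcal{B}^*)$ as multisets. Hence $\Delta_{00}(\mathcal{B})=\mathbb{Z}_n\setminus H$, with every element covered once, holds if and only if $\Delta(\mathcal{B}^*)=\mathbb{Z}_{gu}\setminus\{0,u,\ldots,(g-1)u\}$. For the converse, take the base blocks of a strictly cyclic GDD and read them back as codewords.

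In both directions the sizes agree. The number of base blocks is $(n-g)/6=\Upsilon(1,0,n,g,3)$, and the OOC conditions of Definition \ref{def-diff} are automatically satisfied because each difference appears at most once. One small point needs checking: no base block can have short orbit. This holds because every difference in $\mathbb{Z}_n\setminus H$ is covered exactly once, which forces the base blocks to generate full orbits, exactly as in the GDD definition.

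The final step is to substitute $u=n/g$ into the conditions of Lemma \ref{3-sCGDD}. The condition $g(u-1)\equiv0\pmod 6$ becomes $n-g\equiv0\pmod6$. The condition $u\geq4$ becomes $n\geq4g$. The condition $u\not\equiv2,3\pmod4$ when $g\equiv2\pmod4$ becomes $n/g\not\equiv2,3\pmod4$. This yields both the necessity and the sufficiency claimed in the corollary.

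The only mildly delicate step is confirming that the "if and only if" carries over in both directions, that is, that every $g$-regular OOC really yields a strictly cyclic GDD. This follows from the exact-cover equality of differences shown above.
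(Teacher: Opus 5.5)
Your proposal is correct and is exactly the paper's argument. You identify each codeword $\{(0,x),(0,y),(0,z)\}$ with the base block $\{x,y,z\}$, so that $\Delta_{00}(\mathcal{B})=\Delta(\mathcal{B}^*)$ and $H=\{0,u,\ldots,(g-1)u\}$ with $u=n/g$, and then translate the conditions of Lemma~\ref{3-sCGDD}. (One caveat, shared with the paper: for $n=g$ the empty family is vacuously a $g$-regular code with $\Upsilon(1,0,g,g,3)=0$ codewords, so the ``only if'' direction holds only under the same nontriviality convention under which Lemma~\ref{3-sCGDD} excludes $u=1$.)
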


We can also employ the pure and mixed difference method to construct a $K$-GDP of type $(v_{1}n)^{m_1} (v_{2}n)^{m_2}\cdots (v_{r}n)^{m_r}$. Let $\{R_{ij}: 1\leq i\leq r, 1\leq j\leq m_i\}$ be a partition of $I_m$ with $|R_{ij}|=v_i$.
Let $X=I_m\times {\mathbb Z}_{n}$ and $\mathcal{G}=\{R_{ij}\times\mathbb{Z}_{n}:1\leq i\leq r,1\leq j\leq m_i\}$. Let ${\cal B}^*$ be a set of subsets (called {\em base blocks}) of $X$.
If for any $(a,b)\in I_m\times I_m$,

$$\Delta_{ab}(\mathcal{B}^*)\subseteq\left\{
\begin{array}{lll}
{\mathbb Z}_{n}, & \ \ {\rm if}\ \{a,b\}\not\subseteq R_{ij}\ {\rm for\ all}\ 1\leq i\leq r \ {\rm and}\ 1\leq j\leq m_i,\\
\emptyset, & \ \ {\rm otherwise},\\
\end{array}
\right.
$$
then a $K$-GDP of type $(v_{1}n)^{m_1} (v_{2}n)^{m_2}\cdots (v_{r}n)^{m_r}$ with the point set $X$ together with the group set $\cal G$ can be generated from ${\cal B}^*$. The required blocks are obtained by developing all base blocks of ${\cal B}^*$ by successively
adding $1$ to the second component of each point of these base blocks modulo $n$. Usually a $K$-GDP obtained by this manner is said to be \emph{$n$-cyclic}. Naturally we have the notion of an $n$-cyclic group divisible design (GDD).

\begin{Example}\label{6^32^1}
There exists a $2$-cyclic $3$-GDD of type $6^32^1$.
\end{Example}

\proof Let $X=(I_{9}\bigcup\{\infty\})\times\mathbb{Z}_2$ with $\mathcal{G}=\{\{i,i+3,i+6\}\times\mathbb{Z}_2:0\leq i\leq 2\}\bigcup\{\{\infty\}\times\mathbb{Z}_2\}$. All the $24$ base blocks are listed below.

\begin{center}
\begin{tabular}{lllll}
$\{(0,0),(1,0),(2,0)\}$,
&$\{(0,0),(\infty,0),(1,1)\}$,
&$\{(0,0),(7,0),(8,0)\}$,
&$\{(0,0),(4,0),(5,0)\}$,\\
$\{(0,0),(2,1),(4,1)\}$,
&$\{(0,0),(8,1),(\infty,1)\}$,
&$\{(0,0),(5,1),(7,1)\}$,
&$\{(1,0),(3,0),(5,0)\}$,\\
$\{(1,0),(6,0),(8,0)\}$,
&$\{(1,0),(\infty,0),(2,1)\}$,
&$\{(1,0),(3,1),(8,1)\}$,
&$\{(1,0),(5,1),(6,1)\}$,\\
$\{(2,0),(3,0),(7,0)\}$,
&$\{(2,0),(\infty,0),(3,1)\}$,
&$\{(2,0),(6,0),(7,1)\}$,
&$\{(5,0),(7,1),(\infty,1)\}$,\\
$\{(3,0),(4,0),(5,1)\}$,
&$\{(3,0),(\infty,0),(7,1)\}$,
&$\{(3,0),(4,1),(8,1)\}$,
&$\{(4,0),(\infty,0),(8,1)\}$,\\
$\{(6,0),(7,0),(8,1)\}$,
&$\{(5,0),(\infty,0),(6,1)\}$,
&$\{(2,0),(4,1),(6,1)\}$,
&$\{(4,0),(6,1),(\infty,1)\}$.
\end{tabular}
\end{center}

\begin{Remark}\label{n-cyclic gdd and ooc}
Recall that $2$-D OOCs can be viewed from the set-theoretic perspective. According to the pure and mixed difference method, each base block of an $n$-cyclic $k$-GDP of type $(v_{1}n)^{m_1} (v_{2}n)^{m_2}\cdots (v_{r}n)^{m_r}$ $($defined on $I_m\times {\mathbb Z}_{n}$, where $m=\sum_{i=1}^r v_i m_i$$)$ can be seen as a codeword of a $2$-D $(m\times n,k,1)$-OOC. Actually by the definition of $n$-cyclic GDPs, an $n$-cyclic GDP cannot produce pure $(i,i)$-differences for any $i\in I_m$, and each of its mixed $(i,j)$-differences belongs to $\mathbb{Z}_{n}$ for any $i\neq j$ occurs at most once, which coincide with the condition in Definition \ref{def-diff}.
\end{Remark}

\begin{Lemma}\rm{\cite[Corollary 2.7]{wc}}\label{h-cyclic gdd}
There is an $n$-cyclic $3$-GDD of type $(vn)^m$ if and only if $(1)$ when $m=3$, $n$ is odd, or $n$ is even and $v$ is even; $(2)$ when $m\geq 4$, $(m-1)vn\equiv0 \pmod{2}$,
$m(m-1)vn\equiv0 \pmod{3}$, and
$v\equiv0 \pmod{2}$ if
$m\equiv2,3 \pmod{4}$ and
$n\equiv2 \pmod{4}$.
\end{Lemma}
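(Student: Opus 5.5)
We would prove necessity by counting and parity arguments, and sufficiency by a fundamental-construction style inflation plus direct constructions for the small leftover cases.

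\textbf{Necessity.} Work on $I_{mv}\times\mathbb{Z}_n$ with groups $R_j\times\mathbb{Z}_n$, $|R_j|=v$. A fixed point lies in $(m-1)vn$ pairs with points outside its group, and each block through it covers two of them, so $(m-1)vn\equiv0\pmod2$. The total number of blocks is $mvn\cdot(m-1)vn/6$, which must be an integer; this forces $m(m-1)v^2n^2\equiv0\pmod 6$. Combined with the first condition, this gives $m(m-1)vn\equiv0\pmod3$.

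For the condition ``$v$ even when $m\equiv2,3\pmod4$ and $n\equiv2\pmod4$'', we would adapt the parity argument of Lemma~\ref{bd1}. Sum all mixed $(i,j)$-differences with $i>j$ over the base blocks. Each base block $\{(i_1,0),(i_2,e),(i_3,f)\}$ contributes $e+f+(f-e)\equiv 2f\equiv0\pmod 2$, where $i_1<i_2<i_3$ and the residues are taken in $\{0,\dots,n-1\}$. Short orbits when $3\mid n$ need a separate check. On the other hand, every mixed difference in $\mathbb{Z}_n$ occurs exactly once for each of the $\binom{m}{2}v^2$ ordered cross-group row pairs. The sum is therefore $\binom m2 v^2\cdot n(n-1)/2$, which is odd exactly when $v$ is odd, $m\equiv2,3\pmod 4$ and $n\equiv2\pmod4$.

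The case $m=3$ with $n$ even and $v$ odd is handled similarly. A $3$-GDD of type $(vn)^3$ is a transversal design, and summing its $(1,0)$, $(2,0)$ and $(2,1)$ differences gives parity $v^2\cdot n(n-1)/2\cdot 3$. This is odd when $v$ is odd and $n\equiv2\pmod4$. For $n\equiv0\pmod4$ we would use the finer argument counting the difference $n/2$, since there is no cyclic Latin square of even order.

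\textbf{Sufficiency.} Start from an ordinary $3$-GDD of type $v^m$, which exists under the standard conditions $(m-1)v\equiv0\pmod2$ and $m(m-1)v^2\equiv0\pmod 6$ with $m\geq3$. Replace each block $\{a,b,c\}$ by the base blocks of an $n$-cyclic $3$-GDD of type $n^3$ on $\{a,b,c\}\times\mathbb{Z}_n$. When $n$ is odd, the base blocks $\{(a,0),(b,x),(c,-x)\}$ for $x\in\mathbb{Z}_n$ cover each mixed difference exactly once, because $x\mapsto 2x$ is a bijection. This settles all odd $n$.

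For even $n$, the ingredient of type $n^3$ does not exist. Here we would instead inflate an $n$-cyclic GDD of type $(2n)^3$ or $(2n)^4$, or use strictly cyclic GDDs from Lemma~\ref{3-sCGDD}. Starters such as Lemma~\ref{fs 2^n} can supply pure-difference-free blocks.

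\textbf{Main obstacle.} We expect the hardest part to be the cases with $n$ even and $v$ odd, where $m\equiv0,1\pmod4$ or $n\equiv0\pmod4$. No cyclic type-$n^3$ ingredient is available there, so these will need direct difference constructions for small $m$, probably $m\in\{4,5,6,7\}$ and small $v$. We would then apply a recursive ``filling in groups'' step using GDDs of types $4^t$ and $4^t7^1$ to cover the rest.
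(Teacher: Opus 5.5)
\textbf{Necessity.} Your necessity argument is essentially right. The two counting conditions are correct, and the mod-$2$ sum of the $(i,j)$-differences with $i>j$ (adapting Lemma \ref{bd1}) does force $v$ even when $m\equiv2,3\pmod4$ and $n\equiv2\pmod4$. There are also no short orbits to check: the three points of a block lie in distinct rows, so no nonzero shift fixes a block.

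The case $m=3$, $n\equiv0\pmod4$, $v$ odd is not actually proved, however, and the reason you give is wrong. Cyclic Latin squares exist in every order; what fails for even $n$ is the existence of an orthomorphism of $\mathbb{Z}_n$, and that only covers $v=1$. The argument you need is a sum in $\mathbb{Z}_n$ oriented by groups. If a base block has points $(a,x),(b,y),(c,z)$ in the groups $G_0,G_1,G_2$, then $(y-x)+(z-y)-(z-x)=0$. Summing over all base blocks, each of the three group pairs contributes $v^2\sum_{d\in\mathbb{Z}_n}d\equiv v^2\cdot\frac n2\pmod n$. Hence $v^2\cdot\frac n2\equiv0\pmod n$, which is impossible for $v$ odd and every even $n$.

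\textbf{Sufficiency.} This is where the genuine gap lies. Inflating a $3$-GDD of type $v^m$ does \emph{not} settle all odd $n$. That master design needs $3\mid m(m-1)v$, whereas the lemma only requires $3\mid m(m-1)vn$. When $3\mid n$, $m\equiv2\pmod3$ and $3\nmid v$, the master does not exist but the conclusion still holds. For example, a $3$-cyclic $3$-GDD of type $3^5$ comes from the relative difference family $\{0,1,4\},\{0,2,8\}$ in $\mathbb{Z}_{15}$, which covers $\mathbb{Z}_{15}\setminus\{0,5,10\}$, via $\mathbb{Z}_{15}\cong\mathbb{Z}_5\times\mathbb{Z}_3$; yet no STS$(5)$ exists. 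These cases need their own base designs (say for $n=3$) followed by weighting as in Construction \ref{SCIGDD-weighting}.

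For even $n$ you give no construction at all. The $(2n)^3$-inflation needs a master of type $(v/2)^m$, which already fails for $m=4$, $v=2$, $n=2$; the $(2n)^4$ variant then just restates the target. The $(2n)^3$ ingredient itself is also never built.

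Above all, the case $v$ odd, $n$ even (e.g.\ $n$-cyclic type $n^m$), which you rightly call the core, is left as ``direct constructions for small $m$'' plus an unspecified recursion. For each fixed $m$ infinitely many $n$ occur, so you would need explicit infinite families (Skolem/Langford-type difference constructions), not finitely many examples. As written, the sufficiency half is a list of tools rather than a proof. Note that the paper does not prove this lemma either; it imports it from \cite[Corollary 2.7]{wc}.
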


\begin{Lemma}\rm{\cite[Theorem 5.18]{wc3}}\label{3-GDP}
There is an $n$-cyclic $3$-GDP of type $(vn)^{m}$ with
$\lfloor mv/3\lfloor vn(m-1)/2\rfloor\rfloor-\gamma$ base blocks for any positive
integers $v,n$ and $m\geq3$, where
$$
    \gamma=\left\{
          \begin{array}{ll}
               1, & {\rm{if}}\
               v\equiv1\hspace{-0.3cm}\pmod2,
               n\equiv0\hspace{-0.3cm}\pmod4,\\ & ~~~{\rm{and}}~ m=3,\\
                &~\hspace{-0.12cm}{\rm{or}}~v\equiv1\hspace{-0.3cm}\pmod2,n\equiv2\hspace{-0.3cm}\pmod4,
                \\ & ~~~{\rm{and}}~ m\equiv3,6,7,10\hspace{-0.3cm}\pmod{12},\\
                &~\hspace{-0.12cm}{\rm{or}}~vn\equiv6\hspace{-0.3cm}\pmod{12},n\equiv2\hspace{-0.3cm}\pmod{4},
                \\ & ~~~{\rm{and}}~ m\equiv2,11\hspace{-0.3cm}\pmod{12},\\
                &~\hspace{-0.12cm}{\rm{or}}~(m-1)v^2n\equiv4\hspace{-0.3cm}\pmod{6},
                \\ & ~~~{\rm{and}}~ m\equiv2\hspace{-0.3cm}\pmod{3},\\
                &~\hspace{-0.12cm}{\rm{or}}~n=2,m(m-1)v^2\equiv8\hspace{-0.3cm}\pmod{12},\\

               0, & \rm{otherwise}.
          \end{array}
       \right.
$$
\end{Lemma}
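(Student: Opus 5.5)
The proof of this lemma has two parts: an upper bound, with its refinements in the $\gamma=1$ cases, and constructions attaining it. I would work throughout on $X=I_{mv}\times\mathbb{Z}_n$, with the groups $R_j\times\mathbb{Z}_n$ for $|R_j|=v$, and use the pure and mixed difference language of Section 2.

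\textbf{Upper bound.} A base block of a $3$-GDP never contains two points of the same group, so all three of its rows are distinct. Hence every base block generates a full orbit of length $n$ and contributes exactly $6$ mixed differences.

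Fix a row $a\in R_j$. The mixed $(a,b)$- and $(b,a)$-differences with $b\notin R_j$ number $2vn(m-1)$. Each base block through row $a$ uses $4$ of them (two ordered pairs in each direction), and each value can be used at most once. So at most $\lfloor vn(m-1)/2\rfloor$ base blocks meet row $a$. Summing over the $mv$ rows counts each block three times, which gives the bound $\lfloor mv/3\,\lfloor vn(m-1)/2\rfloor\rfloor$.

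\textbf{The cases $\gamma=1$.} Each of these needs its own argument showing the bound cannot be attained; I would mimic Lemmas \ref{bd1}, \ref{bd:n=2} and \ref{bd:n=4}.
\begin{enumerate}
\item For $v$ odd and $n\equiv0,2\pmod4$, compare the parity of the sum of all mixed $(i,j)$-differences with $i>j$ against the contributions block by block, as in Lemma \ref{bd1}.
\item For $n=2$, count the mixed differences equal to $1$ modulo $2$, as in Lemma \ref{bd:n=2}.
\item For the case $(m-1)v^2n\equiv4\pmod 6$ with $m\equiv2\pmod3$, look at the leave. Attaining the bound forces a leave in which every row has a prescribed even deficiency while the total deficiency is too small. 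This forces the leave to be concentrated on a configuration that is impossible in the cyclic setting.
\end{enumerate}

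\textbf{Constructions.} When the divisibility conditions of Lemma \ref{h-cyclic gdd} hold, an $n$-cyclic $3$-GDD of type $(vn)^m$ already attains the bound, since then $\gamma=0$ and the floors are exact. For the remaining residue classes I would build the packings recursively. I would combine a suitable ``master'' $3$-GDD or GDP on $m$ groups with small $n$-cyclic ingredient GDDs or GDPs of type $(vn)^3$ or $(vn)^4$, giving each point weight $v$ (Wilson's fundamental construction in its cyclic version). I would also remove or adjust a few blocks near one group so that the leave has exactly the structure dictated by the counting argument. Small values of $m$ (roughly $m\le 10$) and small $v,n$ would be handled by direct constructions found by computer, with base blocks listed explicitly as in Example \ref{6^32^1}.

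\textbf{Expected obstacle.} I expect the hardest step to be the constructions in the residue classes where the leave must be a very specific small graph. Examples are $n\equiv2\pmod4$ with $v$ odd, and $m\equiv2\pmod3$. For these the standard recursions do not preserve the exact leave, so many base cases must be handled directly; I would tackle those first by computer search with a prescribed cyclic automorphism.
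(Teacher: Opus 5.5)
\textbf{The constructive half is missing.} The paper does not prove this lemma; it quotes it from \cite[Theorem 5.18]{wc3}. It is also only ever used as an existence statement (in Lemma \ref{n=2} and Case $(1)$ of Lemma \ref{mn=6,12mod24}). So what has to be supplied is the constructions, and your proposal gives a research programme rather than constructions.

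Outside the classes where Lemma \ref{h-cyclic gdd} already provides a GDD, you specify no master designs, ingredients or base blocks. The weighting route as you describe it also cannot reach $\lfloor mv/3\lfloor vn(m-1)/2\rfloor\rfloor-\gamma$, because inflating by weight $v$ multiplies the leave by $v^2$.
\begin{itemize}
\item For example, take an $n$-cyclic GDP of type $n^3$ with $n-1$ base blocks ($n$ even) and weight it by a Latin square of order $v$. This gives only $v^2(n-1)$ base blocks, not the required $v^2n-1$.
\item More generally, every pair of groups left uncovered by the master contributes all $v^2n$ of its mixed differences (in each direction) to the leave.
\item Adjusting a few blocks cannot repair this. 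A block covering an $(R_a,R_b)$-difference must also use a third group, and the differences between that group and $R_a$, $R_b$ have already been spent by the inflated master blocks.
\end{itemize}
Any recursion therefore needs base families whose leave is exactly right. These are needed for the small values of $m$ (e.g.\ $m=3$ with $v$ odd and $n$ even, $m\equiv2\pmod 3$, $m$ even with $vn$ odd), and for \emph{all} $v$ and $n$. They are infinite families, so computer search cannot provide them. They need explicit general constructions (Skolem-type sequences and the like), which is precisely what \cite{wc3} supplies.

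\textbf{The upper-bound half.} This is not needed for the statement as written, but for the record:
\begin{itemize}
\item The row-counting bound is correct, and so is the $n=2$ argument.
\item The case $m\equiv2\pmod3$, $(m-1)v^2n\equiv4\pmod 6$ is simpler than you suggest. The leave is a single pair: an $(i,j)$-difference $d$ and the $(j,i)$-difference $-d$. So rows $i$ and $j$ would have odd deficiency. But each row has an even number $v(m-1)n$ of available outgoing differences, and each block through it uses two of them.
\item Your item 1 fails for $m=3$, $v$ odd, $n\equiv0\pmod4$. There the sum of all mixed $(i,j)$-differences with $i>j$ is $3v^2\cdot n(n-1)/2$, which is even, so parity gives no contradiction.
\item The correct obstruction there is the orthomorphism-type signed sum. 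For each block $\{(a,x),(b,y),(c,z)\}$ with $a,b,c$ in the three groups, $(y-x)+(z-y)-(z-x)=0$. A full cover instead gives a total of $v^2n(n-1)/2\equiv n/2\pmod n$ for $v$ odd.
\item Alternatively, in the first three $\gamma=1$ cases the bound equals the number of base blocks of a GDD. The nonexistence direction of Lemma \ref{h-cyclic gdd} then settles them directly.
\end{itemize}
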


\begin{Construction}\label{SCIGDD-weighting}{\rm \cite[Lemma 4.8]{fwwzh}}
Suppose that there exists an $n$-cyclic $K$-GDD of type
$(v_{1}n)^{m_1} (v_{2}n)^{m_2}\cdots (v_{r}n)^{m_r}$. If there exists an $h$-cyclic $l$-GDD of type $(gh)^k$ for each $k\in K$, then there exists an $(hn)$-cyclic $l$-GDD of type ${(gv_{1}hn)}^{m_1}\cdots {(gv_{r}hn)}^{m_r}$.
\end{Construction}

The following construction is a minor modification of \cite[Construction 4.19]{wc}.

\begin{Construction}\label{GDD-filling}{\rm \cite[Construction 4.19]{wc}}
Suppose that there exists an $n$-cyclic $k$-GDD of type
$(v_{1}hn)^{m_1} (v_{2}hn)^{m_2}\cdots (v_{r}hn)^{m_r}$. If there exists an $n$-cyclic $k$-GDD of type $(hn)^{v_{i}}(tn)^1$ for each $1\leq i\leq r$, then there exists an $n$-cyclic $k$-GDD of type $(hn)^{\sum_{i=1}^rv_im_i}(tn)^1$.
\end{Construction}

\begin{Lemma}\label{3n^3n^1}
There exists an $n$-cyclic $3$-GDD of type $(3n)^{3}n^1$ for any positive integer $n\equiv2\pmod4$.
\end{Lemma}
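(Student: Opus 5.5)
I plan to give a recursive proof, with a direct construction for $n=2$ as the only base case. The point set is $(I_9\cup\{\infty\})\times\mathbb{Z}_n$. The three groups of size $3n$ are $\{i,i+3,i+6\}\times\mathbb{Z}_n$ for $i=0,1,2$, and the group of size $n$ is $\{\infty\}\times\mathbb{Z}_n$. A $3$-GDD of type $(3n)^3n^1$ has exactly $(3\cdot 9n^2+3\cdot 3n^2)/3=12n^2$ blocks. In an $n$-cyclic version each base block has a full orbit of length $n$, so we need exactly $12n$ base blocks.

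\textbf{Base case $n=2$.} Example \ref{6^32^1} already provides this design: it lists $24=12\cdot2$ base blocks on exactly this point set and group set.

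\textbf{Inflation to general $n$.} Write $n=2h$ with $h$ odd. I will apply Construction \ref{SCIGDD-weighting} to the $2$-cyclic $3$-GDD of type $6^32^1$ from Example \ref{6^32^1}, taking $K=\{3\}$, $g=1$ and $l=3$. This requires an $h$-cyclic $3$-GDD of type $h^3$. Lemma \ref{h-cyclic gdd} gives one, since for $m=3$ its condition is simply that the cyclic order is odd, and $h$ is odd.

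\textbf{Checking the output type.} In the language of Construction \ref{SCIGDD-weighting}, the input has cyclic order $2$ and type $(3\cdot2)^3(1\cdot2)^1$, so $v_1=3$, $m_1=3$, $v_2=1$, $m_2=1$. The output is a $(2h)$-cyclic $3$-GDD of type $(3\cdot h\cdot 2)^3(1\cdot h\cdot 2)^1=(3n)^3n^1$, which is what we want.

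\textbf{Main obstacle.} The one thing to verify is that the hypotheses of Construction \ref{SCIGDD-weighting} are met. In particular, the ingredient must be $h$-cyclic in the sense used there, which is the $h$-cyclic GDD of type $(1\cdot h)^3$ on $I_3\times\mathbb{Z}_h$. The weighting should identify $\mathbb{Z}_2\times\mathbb{Z}_h$ with $\mathbb{Z}_{2h}$, and this is legitimate because $h$ is odd. If the weighting construction turned out to need a different normalization, I would fall back on a direct difference-family construction. That would give $12n$ base blocks, developed modulo $n$, written as parametrized families indexed by $0\le i\le (n-6)/4$ in the style of Example \ref{m=6,2-regular}, with the case $n=2$ handled by Example \ref{6^32^1}.
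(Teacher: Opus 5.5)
Your proposal is correct and matches the paper's proof: apply Construction \ref{SCIGDD-weighting} to the $2$-cyclic $3$-GDD of type $6^32^1$ from Example \ref{6^32^1}, using an $(n/2)$-cyclic $3$-GDD of type $(n/2)^3$ from Lemma \ref{h-cyclic gdd}, which exists because $n/2$ is odd. You do not need $n=2$ as a separate base case, since it is the instance $h=1$ of the same weighting.
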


\proof By Example \ref{6^32^1}, there exists a $2$-cyclic $3$-GDD of type $6^{3}2^1$. Apply Construction \ref{SCIGDD-weighting} with an $(n/2)$-cyclic $3$-GDD of type $(n/2)^3$ (from Lemma \ref{h-cyclic gdd}) to obtain an $n$-cyclic $3$-GDD of type $(3n)^{3}n^1$.
\qed

\begin{Lemma}\label{3n^4t9n^1}
Let $m\equiv r\pmod{12}$ where $r\in\{1,2,5,8,9\}$ and $m\geq13$. Then there exists an $n$-cyclic $3$-GDD of type $(3n)^{(m-r)/3}(rn)^1$ for any $n\equiv2\pmod4$.
\end{Lemma}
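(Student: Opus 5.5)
The plan is to mirror the proof of Lemma \ref{3n^3n^1}. First build a small-$n$ base GDD, then inflate it with Construction \ref{SCIGDD-weighting} using an $(n/2)$-cyclic $3$-GDD of type $(n/2)^3$. Lemma \ref{h-cyclic gdd} supplies this ingredient since $n/2$ is odd. Concretely, the first target is a $2$-cyclic $3$-GDD of type $6^{(m-r)/3}(2r)^1$ for each admissible $r$ and $m$. Inflating each $k=3$ block by a factor $h=n/2$ with $g=1$ then gives an $n$-cyclic $3$-GDD of type $(3n)^{(m-r)/3}(rn)^1$. A second route, often easier, is to use Construction \ref{GDD-filling}, which reduces the problem to a master design plus small ``filling'' GDDs.

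\textbf{Route via Construction \ref{GDD-filling}.} I would start from an $n$-cyclic $3$-GDD of type $(v_i\cdot 3n)^{m_i}$ with few groups, taking $h=3$ and the hole size $t=r$. This needs, for each group-size multiplier $v_i$, an $n$-cyclic $3$-GDD of type $(3n)^{v_i}(rn)^1$.
\begin{itemize}
\item Write $(m-r)/3=4s$, a multiple of $4$ because $m\equiv r\pmod{12}$.
\item Take the master to be an $n$-cyclic $3$-GDD of type $(12n)^{s}$ (so $h=3$, $v=4$). Lemma \ref{h-cyclic gdd} gives it for $s\ge4$: the conditions $(s-1)\cdot12n\equiv0\pmod 2$ and $s(s-1)12n\equiv0\pmod3$ hold, and $v=4$ is even. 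For $s=3$ it exists because $v$ is even. For other small $s$ one would use a master with mixed group sizes $(12n)^{a}(15n)^{b}$, or handle them directly.
\item Inflate the master by one extra group of size $rn$ and fill each $(12n)$-group together with it. This requires an $n$-cyclic $3$-GDD of type $(3n)^4(rn)^1$ for $r\in\{1,2,5,8,9\}$.
\end{itemize}
In each case the counting conditions are those of a $3$-GDD of type $3^4r^1$ with $r$ odd or even appropriately; for instance $3^4 9^1$ exists since $9\le 2\cdot 3\cdot(4-1)/2$ and the divisibility conditions are satisfied.

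The ingredient GDDs of type $(3n)^4(rn)^1$ would again be produced by weighting. I would find $2$-cyclic $3$-GDDs of type $6^4(2r)^1$ by explicit base blocks on $(I_{12+r})\times\mathbb{Z}_2$, as in Example \ref{6^32^1}. Applying Construction \ref{SCIGDD-weighting} with an $(n/2)$-cyclic $3$-GDD of type $(n/2)^3$ then lifts them to type $(3n)^4(rn)^1$ for all $n\equiv2\pmod4$. Likewise, a master of type $(12n)^s$ for $n\equiv2\pmod 4$ can be obtained from Lemma \ref{h-cyclic gdd} directly, since $v=12$ is even, which removes the $n\equiv2\pmod4$ obstruction.

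\textbf{Main obstacle.} The hard step is exhibiting the five small $2$-cyclic base designs of type $6^4(2r)^1$, $r\in\{1,2,5,8,9\}$. For $r=9$ the hole is large relative to the groups, with $2r=18$ against groups of $6$. The necessary condition that the number of points off the large group be at least its size holds ($24\ge18$), but the design is tight and may need a computer search. If a case fails, I would instead split differently: use groups of $15n$ in the master, $(3n)^5(rn)^1$ fillers, or a master $(3n)^{u}$ from Lemma \ref{h-cyclic gdd} with $(rn)$ built as a union of existing groups. The small exceptional values of $s$ left uncovered by the master (e.g.\ $m=13+\dots$ with $s=1,2$) would be settled by direct base-block listings.
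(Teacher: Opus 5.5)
Your reduction is the paper's argument with the two operations done in the opposite order. The paper first builds a $2$-cyclic $3$-GDD of type $6^{4t}(2r)^1$. It does this directly for $t\in\{1,2\}$, and for $t\ge3$ via Construction~\ref{GDD-filling} with a $2$-cyclic master of type $24^t$ and the $t=1$ design as filler. Only then does it apply Construction~\ref{SCIGDD-weighting} once, with an $(n/2)$-cyclic $3$-GDD of type $(n/2)^3$. You weight the fillers first and then fill the $n$-cyclic master $(12n)^s$. That master is exactly what weighting $24^s$ produces, and Lemma~\ref{h-cyclic gdd} also gives it directly for $s\ge3$ (the multiplier to check there is $12$, not $4$, though both are even). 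Your case split is the paper's split: $s\ge3$ by the master, $s=1$ being the filler itself, $s=2$ by direct construction. The reordering gains and loses nothing.

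The genuine gap is that the proposal stops at this reduction. Everything rests on ten small designs: $2$-cyclic $3$-GDDs of types $6^4(2r)^1$ and $6^8(2r)^1$ for $r\in\{1,2,5,8,9\}$. You establish none of them, and no lemma in the paper yields them. The paper supplies them by explicit base-block listings in Appendix~\ref{6^4t12^1:t=1,2}. For $r=8$ it uses a $\mathbb{Z}_8$ or $\mathbb{Z}_{16}$ development, and for $r=9$ a $\mathbb{Z}_6$ development; in each case the subgroup of order $2$ fixes every group, so the designs are $2$-cyclic.

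Your fallbacks do not close this gap. A mixed master $(12n)^a(15n)^b$ needs $4a+5b=4s$, which forces $b\equiv0\pmod 4$, so it never helps for $s=2$. Making $rn$ a union of $(3n)$-groups is only possible for $r=9$, and would need a master of type $(3n)^{4s+3}$. Lemma~\ref{h-cyclic gdd} rules that master out, since $v=3$ is odd while $4s+3\equiv3\pmod4$ and $n\equiv2\pmod4$.

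Finally, for $r=9$ the correct necessary condition is $2r\le 6\cdot 3$, because a point of a size-$6$ group sees only $18$ points in the other small groups. This holds with equality and forces every block to meet the large group in exactly one point. The inequality $24\ge18$ you quote is not the relevant one.
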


\proof Let $m=12t+r$. We first construct a $2$-cyclic $3$-GDD of type $6^{4t}(2r)^1$. For $t\in\{1,2\}$,
see Appendix \ref{6^4t12^1:t=1,2}. For $t\geq3$, apply Construction \ref{GDD-filling} with a $2$-cyclic $3$-GDD of type $24^t$ (from Lemma \ref{h-cyclic gdd}) and a $2$-cyclic $3$-GDD of type $6^{4}(2r)^1$ to obtain a $2$-cyclic $3$-GDD of type $6^{4t}(2r)^1$. Further apply Construction \ref{SCIGDD-weighting} with an $(n/2)$-cyclic $3$-GDD of type $(n/2)^3$ (from Lemma \ref{h-cyclic gdd}) to obtain an $n$-cyclic $3$-GDD of type $(3n)^{4t}(rn)^1$.
\qed

\begin{Lemma}\label{scigdd}
There exists a $2$-cyclic $3$-GDD of type $2^{m-r}(2r)^1$ for any $m\equiv r\pmod{12}$, $m\geq 13$ and $r\in\{1,2,4,5,7,8,10,11\}$.
\end{Lemma}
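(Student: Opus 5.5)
The plan follows the pattern of the proof of Lemma \ref{3n^4t9n^1}: write $m=12t+r$ with $t\geq1$, build the small cases directly, and obtain all larger cases from Construction \ref{GDD-filling}.

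\textbf{Recursive step ($t\geq3$).} I would apply Construction \ref{GDD-filling} with $n=2$ and $h=1$. The master design is a $2$-cyclic $3$-GDD of type $24^t$, that is, type $(v_1hn)^{m_1}$ with $v_1=12$ and $m_1=t$. The filling ingredient is a $2$-cyclic $3$-GDD of type $(hn)^{v_1}(tn)^1=2^{12}(2r)^1$. The output is a $2$-cyclic $3$-GDD of type $2^{12t}(2r)^1=2^{m-r}(2r)^1$, as required.

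The master design comes from Lemma \ref{h-cyclic gdd}, taken with $v=12$, $n=2$ and $m=t$:
\begin{itemize}
\item For $t=3$, condition (1) asks that $n$ is even and $v$ is even, which holds.
\item For $t\geq4$, the conditions $(t-1)\cdot 24\equiv0\pmod2$ and $t(t-1)\cdot24\equiv0\pmod3$ hold trivially. The extra requirement when $n\equiv2\pmod4$ is only that $v$ be even, which is satisfied.
\end{itemize}
So the recursion reduces everything to the single ingredient $2^{12}(2r)^1$, i.e.\ the case $t=1$.

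\textbf{Base cases ($t=1,2$).} These must be handled by direct construction, since a type $24^2$ master design is unavailable (a GDD needs at least three groups). Before searching, I would check that the parameters are admissible, so the search is not doomed:
\begin{itemize}
\item Each point $(x,\epsilon)$ in a group of size $2$ must meet $2(m-1)$ other points, and each point in the big group must meet $2(m-r)$ points. Both numbers are even, as needed for blocks of size $3$.
\item The number of pairs between different groups is $\binom{2m}{2}-(m-r)-\binom{2r}{2}$. This must be divisible by $3$, and the base-block count is that number divided by $6$ (full orbits under $+1\pmod 2$). I expect this divisibility to be exactly what forces $r\in\{1,2,4,5,7,8,10,11\}$ together with $m\equiv r\pmod{12}$.
\end{itemize}
Then, on $(I_{m-r}\cup\{\infty_1,\dots,\infty_r\})\times\mathbb{Z}_2$, I would find base blocks whose mixed differences cover $\mathbb{Z}_2$ exactly once for every pair of rows not inside a common group. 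This is the same style as Example \ref{6^32^1}, and the lists would be placed in the appendix.

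\textbf{Main obstacle.} The real work is producing the sixteen explicit base designs: types $2^{12}(2r)^1$ and $2^{24}(2r)^1$ for each of the eight values of $r$. I would find these by a computer search (backtracking or hill-climbing on the mixed-difference conditions). To cut the search, I would impose an extra automorphism such as $i\mapsto i+2$ on the size-$2$ rows where possible, as in Example \ref{(6,n)}.

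If $t=2$ resists, a fallback would go through a $2$-cyclic $3$-GDD of type $6^{8}(2r)^1$ and refine each group of type $6$ into $2^3$. That would need a design of type $2^3(2r)^1$-like pieces, which are not always available, so direct search remains the primary plan.
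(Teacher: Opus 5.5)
Your recursion is exactly the paper's argument, but the paper uses it only for $r=8$. There it lists explicit $2$-cyclic designs of types $2^{12}16^1$ and $2^{24}16^1$ in an appendix, found with the extra automorphism $i\mapsto i+4t$ on the $12t$ small rows, much as you propose. Then, for $m\equiv 8\pmod{12}$ and $m\geq 44$, it applies Construction~\ref{GDD-filling} with a $2$-cyclic $3$-GDD of type $24^{(m-8)/12}$ from Lemma~\ref{h-cyclic gdd}.

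For the other seven residues $r\in\{1,2,4,5,7,10,11\}$ the paper constructs nothing. It quotes known existence results for $3$-SCIGDDs of type $2^{(m,r)}$ from \cite{wc,wc3,fwwzh}. Your route is therefore self-contained where the paper's is not, at the price of sixteen explicit base designs instead of two.

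As written it is only a plan. Until the designs of types $2^{12}(2r)^1$ and $2^{24}(2r)^1$ are actually exhibited and checked, the lemma is not proved for any $r$, because the recursion for $t\geq 3$ also rests on the $t=1$ ingredient. Your fallback through $6^8(2r)^1$ does not rescue this. It needs ingredients of type $2^3(2r)^1$, and these cannot exist once $2r>4$: a point in a size-$2$ group must meet the $2r$ points of the long group in distinct blocks, each using a distinct point of the other small groups, of which there are only $4$. So you genuinely need either the direct search results or the citation.

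One correction to your admissibility check: the number of cross-group pairs is $2(m-r)(m+r-1)$. This is divisible by $6$ for every $r$ as soon as $m\equiv r\pmod{12}$, so counting does not single out $r\in\{1,2,4,5,7,8,10,11\}$. That restriction only reflects what the paper needs later (the case $m\equiv 1,2\pmod 3$); it is not a necessary condition.
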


\proof
For $r\in\{1,2,4,5,7,10,11\}$, see \cite[Lemma 4.22]{wc}, \cite[Lemma 4.25]{wc3} and \cite[Lemma 4.11]{fwwzh}. Note that a $2$-cyclic $3$-GDD of type $2^{m-r}(2r)^1$ is often called a {\em semi-cyclic incomplete GDD} (briefly $3$-SCIGDD) of type $2^{(m,r)}$.

For $r=8$ and $m\in\{20,32\}$, see Appendix \ref{2^12t16^1:t=1,2}. For $m\equiv8\pmod{12}$ and $m\geq44$, apply Construction \ref{GDD-filling}
with a $2$-cyclic $3$-GDD of type $24^{(m-8)/12}$ (from Lemma \ref{h-cyclic gdd})
and a $2$-cyclic $3$-GDD of type $2^{12}16^1$ to obtain a $2$-cyclic $3$-GDD of type $2^{m-8}16^1$.
\qed

\subsection{$wt$-cyclic holey group divisible designs}

A {\em holey group divisible design} is a quadruple $(X,{\cal H},{\cal G},{\cal B})$ satisfying that (1) $X$ is a set of $mwt$ points;
(2) $\mathcal{G}$ is a partition of $X$ into $m$ subsets (called
\emph{groups}), each of size $wt$; (3) $\mathcal{H}$ is another partition of $X$ into $t$ subsets (called \emph{holes}), each of size $mw$ such that $|H\cap G|=w$
for each $H\in {\cal H}$ and $G\in {\cal G}$; (4) $\mathcal{B}$ is a collection of $k$-subsets of $X$ (called
\emph{blocks}), such that no block contains two distinct points of any group or any hole, but any other pair of distinct points of $X$ occurs in exactly one block of $\cal B$. Such a design is denoted by a $k$-HGDD of type $(m,w^t)$.

\begin{Lemma}\rm{\cite{w}}\label{hgdd}
There exists a $3$-HGDD of type $(m,w^t)$ if and
only if $m,t\geq 3$, $(m-1)(t-1)w\equiv0\ ({\rm mod}\ 2)$, and
$m(m-1)t(t-1)w^2\equiv0\ ({\rm mod}\ 6)$.
\end{Lemma}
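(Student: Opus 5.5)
We would split the proof of Lemma \ref{hgdd} into a counting argument for necessity and a recursive construction for sufficiency. Throughout we use the symmetry of the definition: interchanging the roles of groups and holes turns a $3$-HGDD of type $(m,w^t)$ into one of type $(t,w^m)$. So every argument only needs to be made once in $m$ and in $t$.

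\emph{Necessity.} If $t\le 2$, any block of size $3$ has two points in a common hole by pigeonhole, which is forbidden. Since pairs across distinct groups and holes must still be covered, no design exists; by symmetry $m\ge 3$ as well. Next fix a point $x$ in group $G$ and hole $H$. The points that must be paired with $x$ are those outside $G\cup H$, and there are $mwt-wt-mw+w=w(m-1)(t-1)$ of them. Each block through $x$ covers exactly two of these, so $(m-1)(t-1)w$ is even. Counting all pairs gives $mwt\cdot w(m-1)(t-1)/2$, and this must be divisible by $3$ because every block covers $3$ pairs. Hence $m(m-1)t(t-1)w^2\equiv 0\pmod 6$.

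\emph{Sufficiency.} We would use two standard recursive tools.
\begin{enumerate}
\item[(a)] Weighting. Inflate every point of a $3$-HGDD of type $(m,w_0^t)$ by a factor $s$, and replace each block by the blocks of a $3$-GDD of type $s^3$, which always exists (it is a Latin square of order $s$). This yields a $3$-HGDD of type $(m,(sw_0)^t)$.
\item[(b)] PBD/GDD product. Suppose there is a $K$-PBD (or $K$-GDD of type $1^m$) on the $m$ group indices, and a $3$-HGDD of type $(k,w^t)$ for every $k\in K$. Place one such HGDD on each block, keeping the holes consistent. This yields a $3$-HGDD of type $(m,w^t)$.
\end{enumerate}
The basic ingredient for (b) is the case $k=3$. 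A $3$-HGDD of type $(3,w^t)$ is exactly a holey Latin square of order $wt$ with $t$ disjoint $w\times w$ holes, i.e.\ an incomplete $\mathrm{TD}(3,wt)$ minus $t$ copies of $\mathrm{TD}(3,w)$. This is a frame Latin square, which exists for every $t\ge 3$ and every $w$.

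\emph{Reducing to base cases.} By (a), it suffices to treat the minimal weights. Write $w=sw_0$ with $w_0\in\{1,2,3,6\}$ chosen as the smallest value for which the two divisibility conditions hold for the given $m,t$. The conditions depend only on $w$ modulo $2$ and modulo $3$, so such a $w_0$ dividing $w$ always exists. Then:
\begin{itemize}
\item For $w_0=1$ we need $(m-1)(t-1)$ even and $mt(m-1)(t-1)\equiv0\pmod 6$.
\item For $w_0=2$ the parity condition is automatic.
\item For $w_0=3$ the mod-$3$ condition is automatic.
\item For $w_0=6$ both conditions are automatic. Here (b) with a $\{3,4,5\}$-PBD on $m$ points would do, once the types $(4,6^t)$ and $(5,6^t)$ are in hand.
\end{itemize}
For the other residue classes of $m,t$ modulo $6$ we would apply (b) using Steiner triple systems, $\{3\}$-GDDs, or $\{3,4\}$-PBDs on the group indices. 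This leaves the finite list of ingredient designs $(4,w_0^t)$ and $(5,w_0^t)$. These we would build from $4$- and $5$-holey structures, for example by taking a $3$-GDD of type $w_0^4$ or $w_0^5$ and adding holey Latin square pieces, or by direct cyclic difference constructions over $\mathbb{Z}_m\times\mathbb{Z}_{wt}$.

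\emph{Main obstacle.} The delicate step is the small cases: $m,t\in\{3,4,5,6\}$ where the residues force $w_0=1$ or $2$. There PBD closure gives nothing, since $m$ is too small to carry a nontrivial PBD. For these we would first try direct constructions with cyclic automorphism $\mathbb{Z}_t$ acting on holes (equivalently starter-adder-type differences), possibly aided by computer search. If that fails, we would fall back on the symmetry $(m,t)\leftrightarrow(t,m)$ and on the already established case $m=3$ to cover them.
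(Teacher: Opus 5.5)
The necessity half of your proposal is correct, but the sufficiency half has a genuine gap: the recursion never reaches base designs you actually have.

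\textbf{What is sound.} The recursive toolkit is fine: inflation by a $\mathrm{TD}(3,s)$, PBD-closure in $m$ (and in $t$ via the group/hole symmetry), and the reduction to the minimal admissible $w_0\in\{1,2,3,6\}$. The base family $(3,w^t)$ is also fine, obtained from an idempotent Latin square of order $t$ with its diagonal removed and then inflated. The paper itself proves nothing here; it only quotes Wei~\cite{w}.

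\textbf{Where the recursion gets stuck.} Construction (b) needs a PBD on $m$ points with all block sizes in $\{3,\dots,m-1\}$. None exists for $m\in\{4,5,6,8\}$; count pairs against the de Bruijn--Erd\H{o}s bound $b\geq m$ and use its equality case. So admissible parameters with both $m$ and $t$ in this set, and $w=w_0$, are reached neither by (a) nor by (b) in either coordinate. Examples:
\begin{itemize}
\item $(5,1^4)$ and $(6,1^5)$;
\item $(4,2^4)$ and $(6,2^6)$;
\item $(8,6^8)$. Here $m\equiv t\equiv 2\pmod 6$ forces $w_0=6$, which is exactly where your $\{3,4,5\}$-PBD step breaks, since no $\{3,4,5\}$-PBD on $8$ points exists.
\end{itemize}
Likewise, the ingredients $(4,w_0^t)$ and $(5,w_0^t)$ you defer to are infinite families in $t$. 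Closing them off by symmetry and closure in $t$ runs into the same exceptional orders and produces further base cases, such as $(4,2^8)$ and $(5,3^8)$.

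\textbf{Why the fallback does not help.} The symmetry $(m,w^t)\leftrightarrow(t,w^m)$ only trades one small pair for another. The case $m=3$ feeds into larger $m$ only through (b), i.e.\ through PBDs on $m$ points, which do not exist for these orders. So the sufficiency half is a plausible plan plus an unproved claim that a specific finite list of designs exists. Those small designs, built directly (e.g.\ cyclically over the holes) or by GDD constructions with adjoined points, are the real content of Wei's theorem. Without them, or an explicit appeal to \cite{w} as the paper makes, the proof is incomplete.

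\textbf{Minor point on necessity.} For $t=1$ (or $m=1$) no pair needs covering, so the empty block set is vacuously a design. Your sentence about pairs that ``must still be covered'' fails there, and the `only if' direction should be read for nontrivial designs.
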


We can employ the pure and mixed difference method to construct a $k$-HGDD of type $(m,(hw)^t)$. Let $H=\{0,t,\ldots,(w-1)t\}$ be the subgroup of order $w$ in $\mathbb{Z}_{wt}$, and $H_l=H+l$ be a coset of $H$ in $Z_{wt}$, $0\leq l\leq t-1$. Let $R_i$, $1\leq i\leq m$, be $m$ pairwise disjoint sets with $|R_i|=h$. Let $X=(\bigcup_{i=1}^{m}R_i)\times {\mathbb Z}_{wt}$, $\mathcal{G}=\{R_i\times {\mathbb Z}_{wt}:1\leq i\leq m\}$ and $\mathcal{H}=\{(\bigcup_{i=1}^{m}R_i)\times H_l:0\leq l\leq t-1\}$. Let ${\cal B}^*$ be a set of $k$-subsets (called {\em base blocks}) of $X$.
If for any $(a,b)\in (\bigcup_{i=1}^{m}R_i)\times(\bigcup_{i=1}^{m}R_i)$,

$$\Delta_{ab}(\mathcal{B}^*)=\left\{
\begin{array}{lll}
{\mathbb Z}_{wt}\setminus H, & \ \ {\rm if}\ \{a,b\}\not\subseteq R_i\ {\rm for\ all}\ 1\leq i\leq m,\\
\emptyset, & \ \ {\rm otherwise},\\
\end{array}
\right.
$$
then a $k$-HGDD of type $(m,(hw)^t)$ with the point set $X$ together with the group set $\cal G$ and the hole set $\cal H$ can be generated from ${\cal B}^*$. The required blocks are obtained by developing all base blocks of ${\cal B}^*$ by successively
adding $1$ to the second component of each point of these base
blocks modulo $wt$. Usually a $k$-HGDD obtained by this manner is said to be \emph{$wt$-cyclic}.
Note that each base block of ${\cal B}^*$ contributes $k(k-1)$ mixed differences, and since $|\Delta_{ab}({\cal B}^*)|=wt-w$ for any given $(a,b)\in (\bigcup_{i=1}^{m}R_i)\times(\bigcup_{i=1}^{m}R_i)$ and $\{a,b\}\not\subset R_i$ for $1\leq i\leq m$, the total number of mixed differences produced by ${\cal B}^*$ is $m(m-1)h^2(wt-w)$. Hence $|{\cal B}^*|=m(m-1)h^2w(t-1)/(k^2-k)$.



\begin{Construction}\label{regular OOC to HGDD}
Suppose that there exist
\begin{enumerate}
\item[$(1)$] a $w$-regular $2$-D $(h\times wt,k,1)$-OOC with $\Upsilon(h,0,wt,w,k)$ codewords, and
\item[$(2)$] a $k$-HGDD of type $(m,e^k)$ with $m(m-1)e^2$ blocks.
\end{enumerate}
Then there exists a $wt$-cyclic $k$-HGDD of type $(m,(hew)^t)$ with $m(m-1)(he)^2w(t-1)/(k^2-k)$ base blocks.
\end{Construction}

\begin{proof}
Let $\mathcal{B}$ be a $w$-regular $2$-D $(h\times wt,k,1)$-OOC with $\Upsilon(h,0,wt,w,k)=h^2w(t-1)/(k^2-k)$ codewords on $I_h\times\mathbb{Z}_{wt}$. For each $B\in\mathcal{B}$, construct a $k$-HGDD of type $(m,e^k)$ with $m(m-1)e^2$ blocks on $I_m\times I_e\times B$ with the group set $\{\{i\}\times I_e\times B: i\in I_m\}$ and the hole set $\{I_m\times I_e\times\{x\}:x\in B\}$. Denote its block set by $\mathcal{A}_B$.
It is readily checked that $\bigcup_{B\in\mathcal{B}}\mathcal{A}_B$ is
a $wt$-cyclic $k$-HGDD of type $(m,(hew)^t)$ on $I_m\times I_e\times I_h\times\mathbb{Z}_{wt}$ with the group set $\{\{i\}\times I_e\times I_h\times\mathbb{Z}_{wt}: i\in I_m\}$ and the hole set $\{I_m\times I_e\times I_h\times\{i,i+t,\ldots,i+(w-1)t\}:0\leq i\leq t-1\}$.
\end{proof}

\begin{Corollary}\label{h to mh}
Let $m$ and $e$ be positive integers with $m\geq3$. If there exists a $w$-regular $2$-D $(h\times wt,3,1)$-OOC with $\Upsilon(h,0,wt,w,3)$ codewords, then there exists a $wt$-cyclic $3$-HGDD of type $(m,(hew)^t)$.
\end{Corollary}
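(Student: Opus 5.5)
This follows directly from Construction \ref{regular OOC to HGDD} with $k=3$, once we have a suitable $3$-HGDD of type $(m,e^3)$. Take the given $w$-regular $2$-D $(h\times wt,3,1)$-OOC $\mathcal{B}$ with $\Upsilon(h,0,wt,w,3)$ codewords. For each codeword $B\in\mathcal{B}$ we will place a $3$-HGDD of type $(m,e^3)$ on $I_m\times I_e\times B$. Its groups are $\{i\}\times I_e\times B$ and its holes are $I_m\times I_e\times\{x\}$, exactly as in that proof. The union of all these designs is then the required $wt$-cyclic $3$-HGDD of type $(m,(hew)^t)$.

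So the main step is to verify, via Lemma \ref{hgdd}, that a $3$-HGDD of type $(m,e^3)$ exists for all $m\geq3$ and all $e\geq1$. With $t=3$ in Lemma \ref{hgdd}, the conditions are as follows.
\begin{enumerate}
\item[(a)] $m\geq3$ and $3\geq3$; both hold.
\item[(b)] $(m-1)(3-1)e\equiv0\pmod 2$; this holds since the factor $2$ is present.
\item[(c)] $m(m-1)\cdot3\cdot2\cdot e^2\equiv0\pmod 6$; this holds since the factor $6$ is present.
\end{enumerate}
Hence the HGDD exists for every such $m$ and $e$.

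We should also check the block count $m(m-1)e^2$ that Construction \ref{regular OOC to HGDD} requires. The design has $3me$ points. Two points in different groups and different holes number $3me\cdot(3me-me-3e+e)/2=3me\cdot e(2m-2)/2=3m(m-1)e^2$ pairs. Each block covers $3$ pairs, so there are $m(m-1)e^2$ blocks. Substituting into Construction \ref{regular OOC to HGDD} gives $m(m-1)(he)^2w(t-1)/6$ base blocks.

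The only potential subtlety is in Construction \ref{regular OOC to HGDD} itself, namely that the differences covered are exactly $\mathbb{Z}_{wt}\setminus H$ for the relevant pairs of rows. This follows from the $w$-regularity of $\mathcal{B}$, which gives $\Delta_{ab}(\mathcal{B})=\mathbb{Z}_{wt}\setminus H$ for all $a,b\in I_h$, together with the HGDD covering each cross pair exactly once. Since the construction is already established, nothing beyond the existence check above is needed.
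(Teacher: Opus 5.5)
Your proposal is correct and takes the same approach as the paper: obtain a $3$-HGDD of type $(m,e^3)$ from Lemma~\ref{hgdd} (with $t=3$ and group-hole intersection size $e$), then apply Construction~\ref{regular OOC to HGDD} with $k=3$. Your explicit check of the three conditions and of the block count $m(m-1)e^2$ only fills in details the paper leaves implicit.
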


\begin{proof}
By Lemma \ref{hgdd}, there exists a $3$-HGDD of type $(m,e^3)$ for any any positive integer $m\geq3$. Then apply Constructions \ref{regular OOC to HGDD} to obtain a $wt$-cyclic $k$-HGDD of type $(m,(hew)^t)$ as desired.
\end{proof}

\begin{Lemma}\label{cyclic 3hgdd}
Let $m$ and $e$ be positive integers with $m\geq3$. Then there exists an $n$-cyclic $3$-HGDD of type $(m,(3e\times g)^{n/g})$ for $g=2$, $n\equiv0,2\pmod8$ and $n\geq8$.
\end{Lemma}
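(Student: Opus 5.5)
The plan is to obtain the statement as a direct application of Corollary \ref{h to mh} with $h=3$, $w=g=2$ and $t=n/2$. The only input needed is a $2$-regular $2$-D $(3\times n,3,1)$-OOC with $\Upsilon(3,0,n,2,3)$ codewords. Example \ref{m=3,regular} provides exactly this for every $n\equiv0,2\pmod 8$ with $n\geq8$. It was built from a frame starter of type $2^{n/2}$ in $\mathbb{Z}_n\setminus\{0,n/2\}$, which exists by Lemma \ref{fs 2^n}.

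With this code in hand, Corollary \ref{h to mh} (which rests on Construction \ref{regular OOC to HGDD} together with the existence of a $3$-HGDD of type $(m,e^3)$ from Lemma \ref{hgdd}) yields an $n$-cyclic $3$-HGDD of type $(m,(3e\cdot 2)^{n/2})$. This is precisely the type $(m,(3e\times g)^{n/g})$ with $g=2$.

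The one step that needs checking is that Lemma \ref{hgdd} really gives a $3$-HGDD of type $(m,e^3)$ for all $m\geq3$ and all $e\geq1$. We verify its conditions with $t=3$, $w=e$.
\begin{itemize}
\item The inequalities $m,t\geq3$ hold.
\item $(m-1)(t-1)w=2(m-1)e$, which is even.
\item $m(m-1)t(t-1)w^2=6m(m-1)e^2\equiv0\pmod 6$.
\end{itemize}
So all conditions of Lemma \ref{hgdd} are met.

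Next we check that the parameters of Construction \ref{regular OOC to HGDD} match. The number of blocks of the HGDD of type $(m,e^3)$ is $m(m-1)e^2$. The resulting base block count is $m(m-1)(3e)^2\cdot2(n/2-1)/6$. Both agree with the formula given after the definition of $wt$-cyclic HGDDs.

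Finally, we confirm the cyclic structure. The hole set $\{I_m\times I_e\times I_3\times\{i,i+n/2\}\}$ corresponds to the cosets of the subgroup $H=\{0,n/2\}$ of order $2$, and the groups are $\{i\}\times I_e\times I_3\times\mathbb{Z}_n$, each of size $3en$. Development modulo $n$ preserves both partitions, since the OOC is $2$-regular with forbidden set $I_3\times H$.

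I do not expect any real obstacle, as the argument is a routine chaining of earlier results. The only point requiring care is matching the notation $(3e\times g)^{n/g}$ with $(hew)^t$, namely $hew=3e\cdot2$ and $t=n/2$.
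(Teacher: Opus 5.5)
Your proposal is correct and is the same as the paper's proof. Example \ref{m=3,regular} supplies the $2$-regular $2$-D $(3\times n,3,1)$-OOC, and Corollary \ref{h to mh} with $h=3$, $w=g=2$, $t=n/2$ then gives the design. Your checks of the conditions of Lemma \ref{hgdd} and of the base-block count are accurate, though they are already contained in that corollary.
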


\begin{proof}
By Example \ref{m=3,regular}, there exists a $g$-regular $2$-D $(3\times n,3,1)$-OOC with $\Upsilon(3,0,n,g,3)$ codewords. Then apply Corollary \ref{h to mh} with $h=3$, $w=g$ and $t=n/g$.
\end{proof}

\subsection{Semi-cyclic incomplete holey group divisible designs}

An {\em incomplete holey group divisible design} is a quintuple $(X,Y,{\cal H},{\cal G},{\cal B})$ satisfying that (1) $X$ is a set of $mwt$ points;
(2) $\mathcal{G}$ is a partition of $X$ into $m$ subsets (called
\emph{groups}), each of size $wt$; (3) $\mathcal{H}$ is another partition of $X$ into $t$ subsets (called \emph{holes}), each of size $mw$ such that $|H\cap G|=w$
for each $H\in {\cal H}$ and $G\in {\cal G}$; (4) $Y$ is an union of $r$ groups of $\mathcal{G}$; (5) $\mathcal{B}$ is a collection of $k$-subsets of $X$ (called
\emph{blocks}), such that no block contains two distinct points of any group, any hole or $Y$, but any other pair of distinct points of $X$ occurs in exactly one block of $\cal B$. Such a design is denoted by a $k$-IHGDD of type $(m,r,w^t)$. When $r\in\{0,1\}$, an IHGDD of type $(m,r,w^t)$ is nothing but an HGDD of type $(m,w^t)$.

We can also employ the pure and mixed difference method to construct $k$-SCIHGDDs of type $(m,r,w^t)$. Let $H=\{0,t,\ldots,(w-1)t\}$ be the subgroup of order $w$ in $\mathbb{Z}_{wt}$, and $H_l=H+l$ be a coset of $H$ in $\mathbb{Z}_{wt}$, $0\leq l\leq t-1$. Let $X=I_m\times \mathbb{Z}_{wt}$, $Y=I_r\times \mathbb{Z}_{wt}$, ${\cal G}=\{\{i\}\times \mathbb{Z}_{wt}:i\in I_m\}$, and ${\cal H}=\{I_m\times H_l:0\leq l\leq t-1\}$. Take a family ${\cal B}^*$ of some $k$-subsets (base blocks) of $X$. If for $(i,j)\in I_m\times I_m$,
$$\Delta_{ij}({\cal B}^*)=\left\{
\begin{array}{lll}
\mathbb{Z}_{wt}\setminus H, & \ \ i\neq j\ {\rm and}\ \{i,j\}\not\subset I_r,\\
\emptyset, & \ \ {\rm otherwise},\\
\end{array}
\right.
$$
then a $k$-IHGDD of type $(m,r,w^t)$ with the point set $X$ together with $Y\subset X$, the group set $\cal G$ and the hole set $\cal H$ can be generated from ${\cal B}^*$. All blocks can be obtained by developing all base blocks of ${\cal B}^*$ by successively
adding $1$ to the second component of each point of these base
blocks modulo $wt$. Usually a $k$-IHGDD obtained by this
manner is said to be \emph{semi-cyclic}. Note that each base block of ${\cal B}^*$ contributes $k(k-1)$ mixed differences, and since $|\Delta_{ij}({\cal B}^*)|=wt-w$ for any given $(i,j)\in I_m\times I_m$, $i\neq j$ and $\{i,j\}\not\subset I_r$, the total number of mixed differences produced by ${\cal B}^*$ is $(m(m-1)-r(r-1))(wt-w)$. Hence $|{\cal B}^*|=(m(m-1)-r(r-1))w(t-1)/(k^2-k)$. Naturally we have the notion of a semi-cyclic holey group divisible design (SCHGDD).

\begin{Lemma}\label{schgdd}\rm{\cite{fwc,fwwei,wfpw}}
There exists a $3$-SCHGDD of type $(m,w^t)$ if and only if $m,t\geq
3$, $(t-1)(m-1)w\equiv 0\pmod {2}$ and $(t-1)m(m-1)w\equiv 0\pmod6$
except when
\begin{enumerate}
\item[$(1)$] $m=t=3$, $w\equiv0\pmod2$;
\item[$(2)$] $(m,w,t)\in\{(5,1,4),(6,1,3)\}$;
\item[$(3)$] $m=3$, $w\equiv1\pmod2$ and $t\equiv 0\pmod2$;
\item[$(4)$] $m\equiv3,7\pmod{12}$, $w\equiv1\pmod2$ and $t\equiv2\pmod4$;
\item[$(5)$] $m\equiv11\pmod{12}$, either $w\equiv3\pmod6$ and $t\equiv2\pmod4$, or $w\equiv1,5\pmod6$ and
$t\equiv10\pmod{12}$.
\end{enumerate}
\end{Lemma}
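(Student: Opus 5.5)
This lemma is a known existence theorem, so I would not build it from scratch. The plan is to split it into necessity and sufficiency, derive the necessary conditions directly from the difference description of a $3$-SCHGDD given above, and assemble sufficiency and the exceptional cases from \cite{fwc,fwwei,wfpw}.

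\emph{Necessity.} A $3$-SCHGDD of type $(m,w^t)$ is in particular a $3$-HGDD of type $(m,w^t)$, so Lemma \ref{hgdd} already forces $m,t\geq3$ and $(m-1)(t-1)w\equiv0\pmod2$. For the stronger divisibility condition I would use the count derived just before the lemma, with $r=0$ and $k=3$:
$$|{\cal B}^*|=\frac{m(m-1)w(t-1)}{6}.$$
Since this must be an integer, $(t-1)m(m-1)w\equiv0\pmod6$. The listed exceptions (1)--(5) are genuine non-existence results, which I would take from the cited papers. Their arguments are of the same parity type as Lemmas \ref{bd1} and \ref{bd:n=2}: one computes the sum of the mixed $(i,j)$-differences for $i>j$ in two ways modulo $2$ (or modulo $4$), obtaining a contradiction when $t\equiv2\pmod4$ and $w$ is odd. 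The sporadic cases $(5,1,4)$ and $(6,1,3)$ would be settled by exhaustive search, as recorded in the literature.

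\emph{Sufficiency.} I would organise the proof by $w$. For $w=1$, an SCHGDD of type $(m,1^t)$ is essentially a $1$-regular $2$-D $(m\times t,3,1)$-OOC, equivalently a $t$-cyclic $3$-GDD of type $t^m$. This case is handled by Lemma \ref{h-cyclic gdd} together with direct constructions. General $w$ would then follow recursively, by inflating via Construction \ref{regular OOC to HGDD} and Corollary \ref{h to mh}, with $w$-regular OOCs or cyclic GDDs serving as ingredients, and by filling in groups. The residual small cases would be supplied by direct difference constructions.

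The main obstacle is the boundary region where the necessary conditions only barely hold: $w$ odd, $t\equiv2\pmod4$, and $m\equiv3,7,11\pmod{12}$. There, one must show that the parity obstruction appears exactly when claimed, that is, that it is not also forced in neighbouring residue classes. This requires delicate direct constructions. Since all of it is established in \cite{fwc,fwwei,wfpw}, the proof in the paper amounts to citing those results.
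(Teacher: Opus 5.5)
Your bottom line, that the lemma is simply quoted from \cite{fwc,fwwei,wfpw}, is exactly what the paper does. Your derivation of the arithmetic necessary conditions is also sound: every base-block orbit is full because its three points lie in distinct groups, so $m(m-1)w(t-1)/6$ must be an integer. However, the sufficiency sketch rests on a false identification, so that step would fail if you tried to carry it out.

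A $3$-SCHGDD of type $(m,1^t)$ requires $\Delta_{ij}=\mathbb{Z}_t\setminus\{0\}$ for $i\neq j$ and $\Delta_{ii}=\emptyset$. The two objects you equate it with are different:
\begin{itemize}
\item A $1$-regular $2$-D $(m\times t,3,1)$-OOC additionally requires $\Delta_{ii}=\mathbb{Z}_t\setminus\{0\}$, and it has $m^2(t-1)/6$ codewords.
\item A $t$-cyclic $3$-GDD of type $t^m$ requires $\Delta_{ij}=\mathbb{Z}_t$ including $0$, and it has $m(m-1)t/6$ base blocks. Covering the difference $0$ means some blocks meet a hole twice.
\end{itemize}
A concrete case: $(m,w,t)=(4,1,3)$ satisfies the lemma's conditions, and $\{(0,0),(1,1),(2,2)\}$, $\{(0,0),(1,2),(3,1)\}$, $\{(0,0),(2,1),(3,2)\}$, $\{(1,0),(2,2),(3,1)\}$ is indeed a $3$-SCHGDD of type $(4,1^3)$. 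Yet $\Upsilon(4,0,3,1,3)=16/3$ is not an integer, and Lemma~\ref{h-cyclic gdd} rules out a $3$-cyclic $3$-GDD of type $3^4$. So Lemma~\ref{h-cyclic gdd} cannot supply the $w=1$ case.

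The recursion has a similar problem. Construction~\ref{regular OOC to HGDD} and Corollary~\ref{h to mh} give semi-cyclic designs only when $h=e=1$. Then the ingredient is a strictly cyclic $3$-GDD of type $w^t$, which needs $w(t-1)\equiv0\pmod 6$ and $t\geq4$. That route never reaches $t=3$, nor the cases where divisibility comes from $m(m-1)$.

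Your account of the exceptions is also inaccurate. For $wt$ even, the mod-$2$ count of mixed $(i,j)$-differences with $i>j$ works because each base block contributes $2(x_3-x_1)$. It shows that $\binom m2\cdot\frac{w^2t(t-1)}{2}$ must be even. This rules out precisely $w$ odd, $t\equiv2\pmod4$, $m\equiv3\pmod4$, which under the necessary conditions is exactly (4)$\cup$(5). So the ``boundary region'' you call the main obstacle contains no designs and needs no constructions; the constructive work lies outside it.

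The other exceptions need more than this mod-$2$ count:
\begin{itemize}
\item Exception (3) with $t\equiv0\pmod4$ is not caught mod $2$. For $m=3$ every base block is a transversal $\{(0,a),(1,b),(2,c)\}$, and $(b-a)+(c-b)=c-a$ forces $\sum_{x\in\mathbb{Z}_{wt}\setminus H}x=0$ in $\mathbb{Z}_{wt}$. This fails exactly when $w$ is odd and $t$ is even.
\item Exception (1) has $w$ even and $t=3$, where that global sum vanishes. The obstruction comes from sorting base blocks by the common residue mod $3$ of their differences. The blocks of one class then yield a complete mapping of $\mathbb{Z}_w$, which is impossible for $w$ even.
\end{itemize}
None of this affects the lemma itself, since you ultimately cite it, but the supporting sketch would not work as written.
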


The following construction is a special case of \cite[Construction 4.3]{wc}.

\begin{Construction}\label{inflate}
Suppose that there exist a $k$-SCHGDD of type $(m,w^{t})$ and a $k$-GDD of type $h^k$.
Then there exists a $wt$-cyclic $k$-HGDD of type $(m,(hw)^{t})$.
\end{Construction}

\begin{Lemma}\label{inflate schgdd}
Let $(m,h)\in\{(3,3),(4,4),(7,2)\}$. Then there exists an $n$-cyclic $3$-HGDD of type $(m,(2h)^{n/2})$ for any $n\equiv0\pmod4$ and $n\geq8$.
\end{Lemma}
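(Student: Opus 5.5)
The natural route is Construction \ref{inflate}. It takes a $3$-SCHGDD of type $(m,w^t)$ together with a $3$-GDD of type $h^3$ and returns a $wt$-cyclic $3$-HGDD of type $(m,(hw)^t)$. We want type $(m,(2h)^{n/2})$ with $n=wt$, so we set $w=2$ and $t=n/2$. Since $n\equiv0\pmod4$ and $n\geq8$, we get $t$ even and $t\geq4$. A $3$-GDD of type $h^3$ always exists: it is a Latin square of order $h$, i.e.\ a transversal design $TD(3,h)$. The whole task therefore reduces to showing that a $3$-SCHGDD of type $(m,2^{t})$ exists for each pair $(m,h)\in\{(3,3),(4,4),(7,2)\}$ and every even $t\geq4$. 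Once that is done, Construction \ref{inflate} gives a $2t$-cyclic, i.e.\ $n$-cyclic, $3$-HGDD of type $(m,(2h)^{n/2})$.

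For the existence I would check Lemma \ref{schgdd} with $w=2$. Its basic conditions are $m,t\geq3$, $(t-1)(m-1)\cdot2\equiv0\pmod2$, and $(t-1)m(m-1)\cdot2\equiv0\pmod6$. The second is automatic because $w=2$. The third needs $3\mid(t-1)m(m-1)$, and this holds for $m=3,4,7$ since $m(m-1)$ equals $6$, $12$ or $42$. Next I would rule out the exceptions. Exception (1) needs $t=3$, but $t\geq4$. Exception (2) needs $w=1$. Exceptions (3) and (4) need $w$ odd. Exception (5) needs $m\equiv11\pmod{12}$, which fails for $m=3,4,7$. So a $3$-SCHGDD of type $(m,2^t)$ exists for all $t\geq4$, even or not, and the even case is all we need.

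Why the specific $h$ paired with each $m$ matters is not clear from this argument, since any $h$ would work. I expect it is there to match the divisibility needed later, so that $m\cdot 2h$ rows meet the conditions in Section~\ref{g-regular [m:r]}. It plays no role in the proof. The one step to double-check is whether Construction \ref{inflate} keeps the cyclic structure with the right parameters, namely that inflating each point by $h$ multiplies only the hole-per-group size $w$ by $h$ and leaves $t$ and the cyclic order $wt=n$ unchanged. That is how the construction is stated, so I expect no real obstacle beyond this bookkeeping.
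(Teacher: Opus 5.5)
Your proof is correct and is the paper's argument: apply Construction \ref{inflate} to a $3$-SCHGDD of type $(m,2^{n/2})$ from Lemma \ref{schgdd} and a $3$-GDD of type $h^3$. Your check of the conditions and exceptions of Lemma \ref{schgdd} for $w=2$, $t=n/2\geq 4$ spells out what the paper leaves implicit. One side note: the specific pairs $(m,h)$ are used later in Lemma \ref{scihgdd1}, where these HGDDs feed Construction \ref{fill in cyclic hgdd} and contribute $mh$ rows, not in Section \ref{g-regular [m:r]} as you guessed.
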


\proof
Apply Construction \ref{inflate} with a $3$-SCHGDD of type $(m,2^{n/2})$ (from Lemma \ref{schgdd}) and a $3$-GDD of type $h^3$ (from Lemma \ref{h-cyclic gdd}).
\qed

\begin{Lemma}\label{inflate schgdd1}
There exists an $n$-cyclic $3$-HGDD of type $(3,(3\times 4)^{n/4})$ for any $n\equiv4\pmod8$ and $n\geq20$.
\end{Lemma}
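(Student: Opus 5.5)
The most natural route mirrors Lemma \ref{inflate schgdd}: apply Construction \ref{inflate} with $k=3$, using a $3$-SCHGDD of type $(3,w^{t})$ and a $3$-GDD of type $h^3$. With $w=4$, $t=n/4$ and $h=3$ this produces an $n$-cyclic $3$-HGDD of type $(3,(3\cdot 4)^{n/4})$, which is exactly the statement. The $3$-GDD of type $3^3$ exists; it is a transversal design of order $3$, i.e.\ a Latin square of order $3$, or one can take it from Lemma \ref{h-cyclic gdd} with $m=3$. So the whole burden falls on the existence of a $3$-SCHGDD of type $(3,4^{n/4})$.

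Next I check Lemma \ref{schgdd} for $m=3$, $w=4$ and $t=n/4$. Since $n\equiv4\pmod 8$ and $n\geq20$, $t$ is odd and $t\geq5$. The two divisibility conditions hold: $(t-1)\cdot2\cdot4$ is even, and $(t-1)\cdot3\cdot2\cdot4\equiv0\pmod 6$. For the exceptions, $(1)$ needs $t=3$, which is excluded because $n\geq20$. Exception $(3)$ needs $w$ odd, and $(2)$, $(4)$, $(5)$ do not apply to $m=3$ with $w=4$. So a $3$-SCHGDD of type $(3,4^{n/4})$ exists, and Construction \ref{inflate} finishes the proof. This also explains the hypothesis $n\geq 20$: it excludes $t=3$, which is the forbidden case $m=t=3$ with $w$ even.

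If one prefers to avoid $w=4$, an alternative is Corollary \ref{h to mh}. This needs a $4$-regular $2$-D $(3\times n,3,1)$-OOC, or a $4$-regular $(1\times n)$ code used with $e=3$. By Corollary \ref{3-sCGDD to ooc} the latter would require $n-4\equiv0\pmod 6$, which is too restrictive. The SCHGDD route is therefore the one I would commit to.

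The only real obstacle is making sure the parameters in Lemma \ref{schgdd} avoid all the listed exceptions. Once that is confirmed, the statement is a direct corollary of Construction \ref{inflate}.
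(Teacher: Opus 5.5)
Your proposal is correct and is the same argument as the paper's proof: apply Construction \ref{inflate} with a $3$-SCHGDD of type $(3,4^{n/4})$ from Lemma \ref{schgdd} and a $3$-GDD of type $3^3$ from Lemma \ref{h-cyclic gdd}. You also check explicitly that $t=n/4$ is odd with $t\geq 5$, so none of the exceptions in Lemma \ref{schgdd} applies, in particular not $m=t=3$ with $w$ even; the paper leaves this check implicit.
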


\proof
Apply Construction \ref{inflate} with a $3$-SCHGDD of type $(3,4^{n/4})$ (from Lemma \ref{schgdd}) and a $3$-GDD of type $3^3$ (from Lemma \ref{h-cyclic gdd}).
\qed

The following construction is a special case of \cite[Construction 2.15]{wfpw}.

\begin{Construction}\label{fill in cyclic hgdd}
Suppose that there exist a $wt$-cyclic $k$-HGDD of type $(m,(hw)^{t})$ and a $k$-SCIHGDD of type $(h+\delta,\delta,w^{t})$.
Then there exists a $k$-SCIHGDD of type $(mh+\delta,h+\delta,w^{t})$.
\end{Construction}

\begin{Lemma}\label{scihgdd1}
Let $m=6t+r$ where $r\in\{1,2,4,5\}$ and $t\in\{1,2\}$. Then there exists a $3$-SCIHGDD of type $(m-1,r-1,2^{n/2})$ for any $n\equiv0\pmod4$ and $n\geq8$.
\end{Lemma}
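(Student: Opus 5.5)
The plan is to obtain the required $3$-SCIHGDD of type $(m-1,r-1,2^{n/2})$ from Construction \ref{fill in cyclic hgdd}, taking $w=2$ and $t=n/2$. Write $m-1=6t'+(r-1)$ with $t'\in\{1,2\}$, where we rename the parameter $t$ of the statement as $t'$ to avoid a clash with the number of holes. We want an $n$-cyclic $3$-HGDD of type $(M,(2h)^{n/2})$ together with a $3$-SCIHGDD of type $(h+\delta,\delta,2^{n/2})$ such that
\[
Mh+\delta=m-1 \quad\text{and}\quad h+\delta=r-1 .
\]
These two equations force $(M-1)h=6t'$.

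The HGDD inputs available for $n\equiv0\pmod4$, $n\geq8$, come from Lemma \ref{inflate schgdd}. They are $(M,h)=(3,3)$, $(4,4)$ and $(7,2)$. Note that $M=3,h=3$ gives $(M-1)h=6$ ($t'=1$), $M=4,h=4$ gives $12$ ($t'=2$), and $M=7,h=2$ gives $12$ ($t'=2$). Lemma \ref{cyclic 3hgdd} also covers general $M$ with $h=3e$, but only for $n\equiv0\pmod8$. For $n\equiv4\pmod8$ we need $n\geq20$ to use Lemma \ref{inflate schgdd1} with $w=4$. So the natural route is to use Lemma \ref{inflate schgdd}, which covers all $n\equiv0\pmod4$, $n\geq8$.

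The case split is as follows.
\begin{itemize}
\item For $t'=1$, take $(M,h)=(3,3)$ and $\delta=r-4$.
\item For $t'=2$, take $(M,h)=(4,4)$ with $\delta=r-5$, or $(7,2)$ with $\delta=r-3$.
\end{itemize}
When $\delta$ comes out negative, which happens for small $r$, I will instead set things up with the small ingredient having $\delta\in\{0,1\}$ so that it is an SCHGDD. Alternatively, I will rechoose $h$ via Lemma \ref{cyclic 3hgdd}-type inputs. For $r\in\{1,2\}$ with $t'=1$ we get $m-1\in\{6,7\}$ and $r-1\in\{0,1\}$. The target is then just a $3$-SCHGDD of type $(6,2^{n/2})$ or $(7,2^{n/2})$, which I will take directly from Lemma \ref{schgdd}. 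The divisibility conditions $(t-1)(m-1)\cdot2\equiv0\pmod2$ and $(t-1)m(m-1)\cdot2\equiv0\pmod6$ hold since $m(m-1)\equiv0\pmod3$ for $m\in\{6,7\}$. None of the exceptions applies because $w=2$ is even and $m\neq3$. The same direct appeal handles $r\in\{1,2\}$ with $t'=2$, that is $m-1\in\{12,13\}$.

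The remaining cases are $r\in\{4,5\}$, where $r-1\in\{3,4\}$. For $t'=1$ we have $m-1\in\{9,10\}$. Here use $(M,h)=(3,3)$ with $\delta=0$ or $1$: the filler is a $3$-SCIHGDD of type $(3,0,2^{n/2})$ or $(4,1,2^{n/2})$, i.e.\ a $3$-SCHGDD of type $(3,2^{n/2})$ or $(4,2^{n/2})$. These exist by Lemma \ref{schgdd}, since $m=3$ with $w=2$ even avoids exception (3), and $m=t=3$ is impossible because $t=n/2\geq4$.

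For $t'=2$ we have $m-1\in\{15,16\}$. Use $(M,h)=(4,4)$ with $\delta=-1$? That fails, so instead take $(M,h)=(7,2)$ with $\delta=1$ or $2$. This needs a filler of type $(3,1,2^{n/2})$ when $r=4$, which is a $3$-SCHGDD of type $(3,2^{n/2})$ (fine). When $r=5$ it needs type $(4,2,2^{n/2})$, a genuinely incomplete design.

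That last filler, a $3$-SCIHGDD of type $(4,2,2^{n/2})$ for all $n\equiv0\pmod4$, is the main obstacle. To get around it, I will first try $(M,h)=(4,4)$ with $\delta=0$ when $m-1=16$. That choice needs a filler of type $(4,4,\cdot)$, which is vacuous but would not give the right hole size. I will also try $(3,3)$ with the HGDD of type $(5,(6)^{n/2})$ from Lemma \ref{cyclic 3hgdd} when $n\equiv0\pmod8$, which gives $5\cdot3+1=16$ with hole $4$ and filler $(4,1,2^{n/2})$. For $n\equiv4\pmod8$ the fallback is a direct difference construction of the $(4,2,2^{n/2})$ (or $(16,4,2^{n/2})$) design, via explicit base-block families parametrized by $n$, placed in an appendix.

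In every case, the final step is to check that the parameter conditions of Lemma \ref{schgdd} hold and that none of its exceptions is triggered.
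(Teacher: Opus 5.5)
Your treatment of $r\in\{1,2\}$ (a direct appeal to Lemma \ref{schgdd}) matches the paper's proof. So do $r\in\{4,5\}$ with $t=1$ (the HGDD with $(M,h)=(3,3)$ and an SCHGDD of type $(3,2^{n/2})$ or $(4,2^{n/2})$ as filler) and $r=4$, $t=2$ (the HGDD with $(M,h)=(7,2)$ and $\delta=1$). The gap is the case $r=5$, $t=2$, i.e.\ type $(16,4,2^{n/2})$, which you leave open for $n\equiv4\pmod 8$.

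The gap comes from a slip. With $(M,h)=(4,4)$ you get $\delta=r-5=0$, not $-1$. Moreover, in Construction \ref{fill in cyclic hgdd} the filler has type $(h+\delta,\delta,w^t)=(4,0,2^{n/2})$, not $(4,4,\cdot)$. That filler is just a $3$-SCHGDD of type $(4,2^{n/2})$, which exists by Lemma \ref{schgdd} since $n/2\geq4$. The output type is then $(Mh+\delta,h+\delta,2^{n/2})=(16,4,2^{n/2})$, exactly the required hole. The $n$-cyclic $3$-HGDD of type $(4,(2\cdot4)^{n/2})$ comes from Lemma \ref{inflate schgdd} for every $n\equiv0\pmod4$, $n\geq8$. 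So this choice settles the case uniformly, and it is what the paper does.

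The route you actually adopt cannot be rescued by a direct construction, because a $3$-SCIHGDD of type $(4,2,2^{n/2})$ does not exist. Take a point $x$ in one of the two groups outside $Y$.
\begin{itemize}
\item It must share a block with each of the $2(n-2)$ points of $Y$ outside its hole.
\item Each such block contains exactly one point of $Y$.
\item The third point of such a block must lie in the single other group outside $Y$, where only $n-2$ points are admissible for $x$.
\end{itemize}
Hence at most $n-2$ blocks through $x$ meet $Y$, but $2(n-2)$ are needed. In general, a $3$-IHGDD of type $(m,r,w^t)$ with $t\geq2$ requires $m\geq 2r+1$.

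Your alternative via Lemma \ref{cyclic 3hgdd}, using the HGDD of type $(5,6^{n/2})$ with a filler of type $(4,1,2^{n/2})$, is valid but covers only $n\equiv0\pmod 8$. The promised explicit base blocks for $n\equiv4\pmod8$ are never supplied.
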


\begin{proof}

For $r\in\{1,2\}$, a $3$-SCIHGDD of type $(m-1,r-1,2^{n/2})$ is just a $3$-SCHGDD of type $(m-1,2^{n/2})$ from Lemma \ref{schgdd}.

For $r\in\{4,5\}$ and $t=1$, apply Construction \ref{fill in cyclic hgdd} with an $n$-cyclic $3$-HGDD of type $(3,(3\times2)^{n/2})$ (from Lemma \ref{inflate schgdd}) and a $3$-SCIHGDD of type $(3+r-1-3,r-1-3,2^{n/2})$ (from Lemma \ref{schgdd})
to obtain a $3$-SCIHGDD of type $(m-1,r-1,2^{n/2})$.

For $r=4$ and $t=2$, apply Construction \ref{fill in cyclic hgdd} with an $n$-cyclic $3$-HGDD of type $(7,(2\times2)^{n/2})$ (from Lemma \ref{inflate schgdd}) and a $3$-SCIHGDD of type $(3,1,2^{n/2})$ (from Lemma \ref{schgdd})
to obtain a $3$-SCIHGDD of type $(m-1,r-1,2^{n/2})$.

For $r=5$ and $t=2$, apply Construction \ref{fill in cyclic hgdd} with an $n$-cyclic $3$-HGDD of type $(4,(4\times2)^{n/2})$ (from Lemma \ref{inflate schgdd}) and a $3$-SCIHGDD of type $(4,0,2^{n/2})$ (from Lemma \ref{schgdd})
to obtain a $3$-SCIHGDD of type $(m-1,r-1,2^{n/2})$.
\end{proof}

\begin{Lemma}\label{scihgdd2}
Let $m\equiv r\pmod{12}$ where $r\in\{1,2,4,5,7,8,10,11\}$ and $m\geq13$. Then there exists a $3$-SCIHGDD of type $(m-1,r-1,2^{n/2})$ for any $n\equiv0,2\pmod8$ and $n\geq8$.
\end{Lemma}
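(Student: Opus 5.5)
The plan is to mimic the proof of Lemma \ref{scihgdd1} and use the ``fill in'' Construction \ref{fill in cyclic hgdd}. The strategy: write $m-1=(m-r)+(r-1)$ and treat the $(m-r)/3$ blocks of size $3$ as inflated groups of an $n$-cyclic HGDD over a master design. Concretely, I would first build an $n$-cyclic $3$-HGDD of type $(u,(3\cdot 2)^{n/2})$ with $u=(m-r)/3$, or more generally of type $(u,(h\cdot2)^{n/2})$ with suitable $h$. Then I would fill each inflated group, together with $\delta$ extra rows, by a $3$-SCIHGDD of type $(h+\delta,\delta,2^{n/2})$. This yields a $3$-SCIHGDD of type $(uh+\delta,h+\delta,2^{n/2})$. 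Since $Y$ has size $h+\delta$ while we want a hole of size $r-1$, the filling step has to be iterated, or else combined with Lemma \ref{scihgdd1} applied to the last piece.

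\textbf{Step 1: the HGDD.} Take $h=3$ and use Lemma \ref{cyclic 3hgdd} with $e=1$ and $m$ replaced by $u=(m-r)/3\geq4$ (this is where $m\geq13$ enters). This gives an $n$-cyclic $3$-HGDD of type $(u,(3\times 2)^{n/2})$ for $n\equiv0,2\pmod 8$ and $n\geq8$. When $u$ is large one could instead take $e=2$ or $e=4$ (blocks of $6$ or $12$ rows). Since $m-r\equiv0\pmod{12}$, the choice $h=12$ with $u=(m-r)/12$ is available only when $u\geq3$, so it will not cover $m\in\{13,\dots,35\}$ by itself; hence I would commit to $h=3$, $u=(m-r)/3$.

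\textbf{Step 2: the filling ingredient.} Apply Construction \ref{fill in cyclic hgdd} with $h=3$ and $\delta=r-1$. This requires a $3$-SCIHGDD of type $(r+2,r-1,2^{n/2})$ for every $r\in\{1,2,4,5,7,8,10,11\}$, and it outputs a $3$-SCIHGDD of type $(m-1,r+2,2^{n/2})$. The hole then has size $r+2$ rather than $r-1$, which is wrong. So I would instead fill all but one inflated group with $\delta=0$, namely with SCHGDDs of type $(3,2^{n/2})$, and fill the remaining one with $\delta=r-1$. The clean way to do this is to regard the special group together with the $\delta$ extra rows as one larger group. Accordingly, use a master HGDD of type $(u-1)$ groups of $3$ rows plus one group of $3+\delta$ rows, built from Construction \ref{regular OOC to HGDD} or from Construction \ref{inflate} with a GDD of type $3^{u-1}(r+2)^1$ in place of $h^k$. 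The ingredient for the big group is then a $3$-SCIHGDD of type $(r+2,r-1,2^{n/2})$. For $r\in\{1,2\}$ this reduces to an SCHGDD of type $(r+2,2^{n/2})$, obtained from Lemma \ref{schgdd} with the exceptions checked. For $r\geq4$ I would take it from Lemma \ref{scihgdd1}: its types $(m'-1,r'-1,2^{n/2})$ with $m'=6t+r'$ include $(r+2,r-1)$ when $r'\equiv r+3$, $t=1$ fits, and otherwise I would iterate once more.

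\textbf{Main obstacle.} The hardest step is the non-uniform master design, a $3$-GDD of type $3^{u-1}(r+2)^1$ inflated cyclically over $\mathbb{Z}_n$ with $n\equiv2\pmod8$. Lemma \ref{inflate schgdd} needs $n\equiv0\pmod4$, so for $n\equiv2\pmod8$ I must rely on the frame-starter based Lemma \ref{cyclic 3hgdd}, and the only inflation tool available there is Construction \ref{regular OOC to HGDD}, which gives uniform groups. To get around this, I would use Construction \ref{fill in cyclic hgdd} repeatedly with uniform $h=3$ and handle the $\delta$ rows recursively, verifying that every SCHGDD needed avoids the exceptions in Lemma \ref{schgdd}. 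The cases $m\equiv 3,7\pmod{12}$ and $t\equiv2\pmod4$ are the ones to check carefully when $n\equiv4\pmod 8$, that is, when $n/2\equiv2\pmod 4$.
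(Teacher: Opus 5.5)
There is a genuine gap: the route you propose cannot be completed. The ingredient you need for the special group, a $3$-SCIHGDD of type $(r+2,r-1,2^{n/2})$, does not exist for any $r\geq 4$. Take a $3$-SCIHGDD of type $(v,w,2^{n/2})$ on $X$ with hole rows $Y$. No block contains two points of $Y$, and a block that meets $Y$ covers two pairs joining $Y$ to $X\setminus Y$ and one pair inside $X\setminus Y$. The $wn(v-w)(n-2)$ pairs joining $Y$ to $X\setminus Y$ therefore force $\frac12 wn(v-w)(n-2)$ pairs inside $X\setminus Y$ to be covered. Only $\binom{v-w}{2}n(n-2)$ such pairs exist, so $v\geq 2w+1$; for $v=r+2$, $w=r-1$ this means $r\leq 3$.

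In particular, Lemma~\ref{scihgdd1} does not supply this ingredient: its types $(6t+r'-1,r'-1)$ have $v-w=6t\geq 6$, never $3$. The non-uniform master HGDD with row groups $3^{u-1}(r+2)^1$ is also, as you note yourself, unavailable from the paper's tools. Your fallback of applying the construction repeatedly and handling the $\delta$ rows recursively is not actually specified. (The concern about $n\equiv4\pmod 8$ is moot, since the lemma only covers $n\equiv0,2\pmod 8$.)

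The missing idea is how Construction~\ref{fill in cyclic hgdd} forms its hole. The ingredient of type $(h+\delta,\delta,w^t)$ is placed on every inflated group but one, with its $\delta$ hole rows identified with the new rows. So the output hole is one \emph{entire} inflated group together with the $\delta$ new rows. You found the off-by-one correctly (hole $r+2$). The fix is simply to use one more group and $h$ fewer new rows: an $n$-cyclic HGDD of type $(\frac{m-r}{h}+1,(h\times 2)^{n/2})$ from Lemma~\ref{cyclic 3hgdd}, with $\delta=r-1-h$.

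By the bound above, the ingredient needs $\delta\leq h-1$. Hence $h=3$ reaches only $r\in\{4,5\}$, where $\delta\in\{0,1\}$ and the ingredient is an SCHGDD of type $(3+\delta,2^{n/2})$. Committing to $h=3$ was your other misstep. The remaining cases go as follows.
\begin{itemize}
\item For $r\in\{7,8\}$, take $h=6$ (Lemma~\ref{cyclic 3hgdd} with $e=2$, giving $\frac{m-r}{6}+1\geq 3$ groups since $m\geq 13$) and $\delta\in\{0,1\}$.
\item For $r\in\{10,11\}$, take $h=6$ and $\delta\in\{3,4\}$. The ingredient of type $(9,3,2^{n/2})$ or $(10,4,2^{n/2})$ comes from one earlier application of the same construction, to an HGDD of type $(3,(3\times 2)^{n/2})$ and an SCHGDD of type $(r-7,2^{n/2})$.
\item For $r\in\{1,2\}$ nothing is needed: the target is an SCHGDD of type $(m-1,2^{n/2})$, given directly by Lemma~\ref{schgdd}.
\end{itemize}
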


\begin{proof}

For $r\in\{1,2\}$, a $3$-SCIHGDD of type $(m-1,r-1,2^{n/2})$ is just a $3$-SCHGDD of type $(m-1,2^{n/2})$ from Lemma \ref{schgdd}.

For $r\in\{4,5\}$, apply Construction \ref{fill in cyclic hgdd} with an $n$-cyclic $3$-HGDD of type $(\frac{m-r}{3}+1,(3\times2)^{n/2})$ (from Lemma \ref{cyclic 3hgdd}) and a $3$-SCIHGDD of type $(3+(r-1-3),r-1-3,2^{n/2})$ (from Lemma \ref{schgdd})
to obtain a $3$-SCIHGDD of type $(m-1,r-1,2^{n/2})$.

For $r\in\{7,8\}$, apply Construction \ref{fill in cyclic hgdd} with an $n$-cyclic $3$-HGDD of type $(\frac{m-r}{6}+1,(6\times2)^{n/2})$ (from Lemma \ref{cyclic 3hgdd}) and a $3$-SCIHGDD of type $(6+(r-1-6),r-1-6,2^{n/2})$ (from Lemma \ref{schgdd})
to obtain a $3$-SCIHGDD of type $(m-1,r-1,2^{n/2})$.

For $r\in\{10,11\}$, first apply Construction \ref{fill in cyclic hgdd} with an $n$-cyclic $3$-HGDD of type $(3,(3\times2)^{n/2})$ (from Lemma \ref{cyclic 3hgdd}) and a $3$-SCIHGDD of type $(3+(r-1-9),r-1-9,2^{n/2})$ (from Lemma \ref{schgdd})
to obtain a $3$-SCIHGDD of type $(6+3+(r-1-9),3+(r-1-9),2^{n/2})$. Further apply Construction \ref{fill in cyclic hgdd} with an $n$-cyclic $3$-HGDD of type $(\frac{m-r}{6}+1,(6\times2)^{n/2})$ (from Lemma \ref{cyclic 3hgdd}) to obtain a $3$-SCIHGDD of type $(m-1,r-1,2^{n/2})$.
\end{proof}

\section{Constructions for $g$-regular $2$-D $([m:r]\times n,k,1)$-OOCs}\label{g-regular [m:r]}

\begin{Construction}\label{inflationg:g-regular}
Suppose that there exist
\begin{enumerate}
\item[$(1)$] a $k$-GDD of type $h^k$, and
\item[$(2)$] a $g$-regular $2$-D $([m:r]\times n,k,1)$-OOC with $\Upsilon(m,r,n,g,k)$ codewords.
\end{enumerate}
Then there exists a $g$-regular $2$-D $([mh:rh]\times n,k,1)$-OOC with $\Upsilon(mh,rh,n,g,k)$ codewords.
\end{Construction}

\begin{proof}
Let $\mathcal{C}$ be a $g$-regular $2$-D $([m:r]\times n,k,1)$-OOC with $\Upsilon(m,r,n,g,k)$ codewords defined on $I_{m}\times\mathbb{Z}_{n}$ with $I_{r}\times\mathbb{Z}_{n}$ as the hole and $I_m\times H$ as the forbidden set, where $H$ is the subgroup of $\mathbb{Z}_n$ of order $g$.

For each $B\in\mathcal{C}$, construct a $k$-GDD of type $h^k$ on $I_h\times B$ with the group set $\{I_h\times\{x\}:x\in B\}$. Denote its block set by $\mathcal{A}_B$. It is readily checked that $\bigcup_{B\in\mathcal{C}}\mathcal{A}_B$ forms a $g$-regular $2$-D $([mh:rh]\times n,k,1)$-OOC with $h^2\Upsilon(m,r,n,g,k)=\Upsilon(mh,rh,n,g,k)$ codewords, which is defined on $I_{h}\times I_{m}\times\mathbb{Z}_{n}$ with $I_{h}\times I_{r}\times\mathbb{Z}_{n}$ as the hole and $I_{h}\times I_m\times H$ as the forbidden set.
\end{proof}

\begin{Lemma}\label{m=24t+6,6regular}
Let $m$ be a positive interger. There exists a $g$-regular $2$-D $(m\times n,3,1)$-OOC with $\Upsilon(m,0,n,g,3)$ codewords if the
following conditions hold:
\begin{enumerate}
\item[$(1)$] $n-g\equiv 0\ ({\rm mod}\ 6)$ and $n\geq 4g;$
\item[$(2)$] $n/g\not\equiv 2,3\ ({\rm mod}\ 4)$ when $g\equiv 2\ ({\rm mod}\ 4).$
\end{enumerate}
\end{Lemma}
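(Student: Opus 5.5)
The plan is to obtain the code by inflating a one-row regular code, using Construction \ref{inflationg:g-regular} with $r=0$.

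\emph{Step 1: the base code.} Under conditions (1) and (2), Corollary \ref{3-sCGDD to ooc} gives a $g$-regular $2$-D $(1\times n,3,1)$-OOC with $\Upsilon(1,0,n,g,3)=(n-g)/6$ codewords. Equivalently, this is a strictly cyclic $3$-GDD of type $g^{n/g}$ from Lemma \ref{3-sCGDD}. This is a $g$-regular $2$-D $([1:0]\times n,3,1)$-OOC.

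\emph{Step 2: the inflating design.} We need a $3$-GDD of type $m^3$. One always exists for every positive integer $m$: it is the transversal design obtained from a Latin square of order $m$. In the cyclic form of Lemma \ref{h-cyclic gdd}, the case $m=3$ with $v=1$ needs the cyclic parameter odd. So we simply cite the ordinary Latin square construction, $\{(0,a),(1,b),(2,a+b)\}$ over $\mathbb{Z}_m$, which works for all $m\geq1$.

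\emph{Step 3: apply the construction.} Apply Construction \ref{inflationg:g-regular} with $h=m$, base parameters $m=1$, $r=0$, and $k=3$. This produces a $g$-regular $2$-D $([m:0]\times n,3,1)$-OOC with
\[
m^2\Upsilon(1,0,n,g,3)=\Upsilon(m,0,n,g,3)
\]
codewords, which is exactly the claimed code.

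\emph{Step 4: check that pure differences are handled.} The point needing care is the row index $i$, where pure differences must also be covered. In the inflation, each base codeword $B=\{(0,x_1),(0,x_2),(0,x_3)\}$ is replaced by GDD blocks on $I_m\times B$. Let $a,a'$ be rows in the inflated code. The pair of rows $(a,a')$ picks up difference $x_s-x_t$ exactly once for each ordered pair $s\neq t$. The reason is that in a $3$-GDD of type $m^3$ each pair $((a,x_s),(a',x_t))$ lies in exactly one block, and this holds whether or not $a=a'$. Hence $\Delta_{aa'}$ equals $\Delta_{00}$ of the base code, which is $\mathbb{Z}_n\setminus H$, for all $a,a'$ including $a=a'$. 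This verification, that the construction's proof handles $i=j$, is the only subtle point, and it is immediate from the GDD property.
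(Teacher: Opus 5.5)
Your proposal is correct and matches the paper's argument: take the one-row $g$-regular code from Corollary \ref{3-sCGDD to ooc} and inflate it by a $3$-GDD of type $m^3$ using Construction \ref{inflationg:g-regular} with $r=0$. The differences are only cosmetic. You build the $3$-GDD of type $m^3$ from the Latin square over $\mathbb{Z}_m$, whereas the paper cites Lemma \ref{h-cyclic gdd}, which covers this case when the cyclic parameter is taken to be $1$. Your Step 4 also writes out the verification, including pure differences, that the paper leaves as ``readily checked''.
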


\begin{proof}
By Corollary \ref{3-sCGDD to ooc}, there exists a $g$-regular $2$-D $(1\times n,3,1)$-OOC with $\Upsilon(1,0,n,g,3)$ codewords. Then apply Construction \ref{inflationg:g-regular} with $3$-GDD of type $m^3$ (from Lemma \ref{h-cyclic gdd}) to obtain a $g$-regular $2$-D $(m\times n,3,1)$-OOC with $\Upsilon(m,0,n,g,3)$ codewords.
\end{proof}

\begin{Lemma}\label{m=6t+3,regular}
There exists a $g$-regular $2$-D $(m\times n,3,1)$-OOC with $\Upsilon(m,0,n,g,3)$ codewords for
\begin{enumerate}
\item[$(1)$] $m\equiv3\pmod6$, $g=2$, $n\equiv0,2\pmod8$ and $n\geq8$;
\item[$(2)$] $m\equiv6\pmod{12}$, $g=2$, $n\equiv2\pmod4$ and $n\geq6$.
\end{enumerate}
\end{Lemma}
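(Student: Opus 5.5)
Both parts will come from Construction \ref{inflationg:g-regular}, applied with $r=0$ and $k=3$: inflate a small $g$-regular base code by a $3$-GDD of type $h^3$. Such a GDD exists for every positive integer $h$ (Lemma \ref{h-cyclic gdd} with $m=3$ and $n=1$, or simply a Latin square of order $h$). The construction then turns a $g$-regular $2$-D $(m_0\times n,3,1)$-OOC with $\Upsilon(m_0,0,n,g,3)$ codewords into a $g$-regular $2$-D $(m_0h\times n,3,1)$-OOC with $h^2\Upsilon(m_0,0,n,g,3)=\Upsilon(m_0h,0,n,g,3)$ codewords. So each part reduces to choosing a suitable base $m_0$ and inflation factor $h$.

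For part (1), let $m\equiv3\pmod 6$ and write $m=3h$ with $h$ odd. The base code is the $2$-regular $2$-D $(3\times n,3,1)$-OOC of Example \ref{m=3,regular}, which exists exactly when $n\equiv0,2\pmod8$ and $n\geq8$, matching the hypotheses. Inflating by a $3$-GDD of type $h^3$ gives the required $2$-regular $2$-D $(m\times n,3,1)$-OOC. The argument does not use that $h$ is odd, so it in fact covers every multiple of $3$.

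For part (2), let $m\equiv6\pmod{12}$ and write $m=6h$ with $h$ odd. The base code is the $2$-regular $2$-D $(6\times n,3,1)$-OOC of Example \ref{m=6,2-regular}, which exists for $n\equiv2\pmod4$ and $n\geq6$. Inflating by a $3$-GDD of type $h^3$ gives the $2$-regular $2$-D $(6h\times n,3,1)$-OOC. Again the parity of $h$ is not needed.

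The only step that needs real checking is that the inflated family is still $g$-regular. I would verify it directly on $I_h\times I_{m_0}\times\mathbb{Z}_n$, as follows.
\begin{itemize}
\item Take rows $(a,i)$ and $(b,j)$, and first suppose $i\neq j$. A difference $d\in\mathbb{Z}_n\setminus H$ occurs exactly once as an $(i,j)$-difference in the base code, say from the points $(i,x)$ and $(j,y)$ of a codeword $B$. The GDD built on $I_h\times B$ covers the pair $\{(a,i,x),(b,j,y)\}$ exactly once. So $d$ occurs exactly once as an $((a,i),(b,j))$-difference.
\item Differences in $H$ never occur, because the base code never produces them.
\item The same argument applies when $i=j$ and $x\neq y$. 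This includes $a=b$, so pure differences are also covered exactly once.
\item The points $(a,i,x)$ and $(b,i,x)$ with $a\neq b$ lie in a common group of the GDD, so they produce no difference; this is consistent with $0\in H$.
\item Distinct codewords $B$ cannot produce the same difference, since the base differences are distinct.
\end{itemize}
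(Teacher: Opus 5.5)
Your proof is correct and takes the same approach as the paper: both parts inflate the $2$-regular base codes of Examples~\ref{m=3,regular} and~\ref{m=6,2-regular} by a $3$-GDD of type $(m/3)^3$, respectively $(m/6)^3$, using Construction~\ref{inflationg:g-regular}. Your direct regularity check supplies the step the paper calls ``readily checked,'' and your remark that the parity of $h$ is never used is right; the paper states only the congruence classes it needs later, in Lemma~\ref{mn=6,12mod24}.
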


\begin{proof}
For Case $(1)$, by Example \ref{m=3,regular}, there exists a $g$-regular $2$-D $(3\times n,3,1)$-OOC with $\Upsilon(3,0,n,g,3)$ codewords. Then apply Construction \ref{inflationg:g-regular} with $3$-GDD of type $(m/3)^3$ (from Lemma \ref{h-cyclic gdd}) to obtain a $g$-regular $2$-D $(m\times n,3,1)$-OOC with $\Upsilon(m,0,n,g,3)$ codewords.

For Case $(2)$, by Example \ref{m=6,2-regular}, there exists a $g$-regular $2$-D $(6\times n,3,1)$-OOC with $\Upsilon(6,0,n,g,3)$ codewords. Then apply Construction \ref{inflationg:g-regular} with $3$-GDD of type $(m/6)^3$ (from Lemma \ref{h-cyclic gdd}) to obtain a $g$-regular $2$-D $(m\times n,3,1)$-OOC with $\Upsilon(m,0,n,g,3)$ codewords.
\end{proof}

\begin{Construction}\label{fill in ihgdd}
Suppose that there exist
\begin{enumerate}
\item[$(1)$] a $k$-SCIHGDD of type $(m-1,r-1,w^{t})$, and
\item[$(2)$] a $w$-regular $2$-D $([2:1]\times wt,k,1)$-OOC with $\Upsilon(2,1,wt,w,k)$ codewords.
\end{enumerate}
Then there exists a $w$-regular $2$-D $([m:r]\times wt,k,1)$-OOC with $\Upsilon(m,r,wt,w,k)=b_1+(m-r)\Upsilon(2,1,wt,w,k)$ codewords, where $b_1=(m(m-3)-r(r-3))w(t-1)/(k^2-k)$ is the number of base blocks of a $k$-SCIHGDD of type $(m-1,r-1,w^{t})$.
\end{Construction}

\begin{proof}
Let $H$ be the subgroup of order $w$ in $\mathbb{Z}_{wt}$ and $H_l=H+l$ be a coset of $H$ in $\mathbb{Z}_{wt}$.
Let $(X,Y,\mathcal{H},\mathcal{G},\mathcal{B})$ be the given
$k$-SCIHGDD of type $(m-1,r-1,w^{t})$, where $X=I_{m-1}\times\mathbb{Z}_{wt}$,
$Y=I_{r-1}\times \mathbb{Z}_{wt}$, $\mathcal{G}=\{\{x\}\times\mathbb{Z}_{wt}:x\in I_{m-1}\}$
and $\mathcal{H}=\{I_{m-1}\times H_l:0\leq l\leq t-1\}$. Let $\mathcal{A}_1$ be the set of base blocks of this design.

For each group $\{x\}\times\mathbb{Z}_{wt}$, $x\in I_{m-1}\setminus I_{r-1}$, construct a $w$-regular $2$-D $([2:1]\times wt,k,1)$-OOC with $\Upsilon(2,1,wt,w,k)$ codewords on $(\{x\}\bigcup\{\infty\})\times\mathbb{Z}_{wt}$ with $\{\infty\}\times\mathbb{Z}_{wt}$ as the hole and $(\{x\}\bigcup\{\infty\})\times H$ as the forbidden set. Denote by $\mathcal{C}_{x}$ the set of its codewords. Write $\mathcal{A}_2=\bigcup_{x\in I_{m-1}\setminus I_{r-1}}\mathcal{C}_x$.

Let $\mathcal{A}=\mathcal{A}_1\bigcup\mathcal{A}_2$. Then $\mathcal{A}$
forms the required $w$-regular $2$-D $([m:r]\times wt,k,1)$-OOC, which is defined on $(I_{m-1}\bigcup\{\infty\})\times\mathbb{Z}_{wt}$ with $(I_{r-1}\bigcup\{\infty\})\times\mathbb{Z}_{wt}$ as the hole and $(I_{m-1}\bigcup\{\infty\})\times H$ as the forbidden set.
\end{proof}

\begin{Remark}\label{2regular[4:1]}
The combination of Constructions \ref{filling3} and \ref{fill in ihgdd} will generate $2$-D $([m:r]\times n,3,1)$-OOCs
we need. For example, when $n\equiv0,2\pmod8$ and $n\geq8$, apply Construction \ref{fill in ihgdd} with a $3$-SCHGDD of type $(3,2^{n/2})$ and a
$2$-regular $2$-D $([2:1]\times n,3,1)$-OOC with $\Upsilon(2,1,n,2,3)$ codewords for Lemma \ref{2regular [2:1]} to obtain a $2$-regular $2$-D $([4:1]\times n,3,1)$-OOC with $\Upsilon(4,1,n,2,3)$ codewords. Further apply Construction \ref{filling3} with $2$-D $([4:1]\times 2,3,1)$-OOC with $\Theta(4,1,2,3)$ codewords $($which is just a $2$-cyclic $3$-GDD of type $2^4$ from Lemma \ref{h-cyclic gdd}$)$ to obtain a $2$-D $([4:1]\times n,3,1)$-OOC with $\Theta(4,1,n,3)$ codewords.
\end{Remark}

\begin{Lemma}\label{2regular[m:s]}
Let $m\equiv r\pmod{12}$ where $r\in\{1,2,4,5,7,8,10,11\}$ and $m\geq13$. Then there exists a $2$-regular $2$-D $([m:r]\times n,3,1)$-OOC with $\Upsilon(m,r,n,2,3)$ codewords for any $n\equiv0,2\pmod8$ and $n\geq8$.
\end{Lemma}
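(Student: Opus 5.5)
The plan is to apply Construction \ref{fill in ihgdd} directly, with $w=2$ and $t=n/2$. For its first ingredient we need a $3$-SCIHGDD of type $(m-1,r-1,2^{n/2})$. Lemma \ref{scihgdd2} supplies exactly this for every $m\equiv r\pmod{12}$ with $r\in\{1,2,4,5,7,8,10,11\}$, $m\geq13$, and every $n\equiv0,2\pmod8$ with $n\geq8$. Its hypotheses match those of the present statement verbatim, so this ingredient is available in all cases.

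For the second ingredient we need a $2$-regular $2$-D $([2:1]\times n,3,1)$-OOC with $\Upsilon(2,1,n,2,3)$ codewords. Example \ref{2regular [2:1]} provides this for $n\equiv0,2\pmod8$ and $n\geq8$, built from the frame starter of Lemma \ref{fs 2^n}. Construction \ref{fill in ihgdd} then gives a $2$-regular $2$-D $([m:r]\times n,3,1)$-OOC. It is defined on $(I_{m-1}\cup\{\infty\})\times\mathbb{Z}_n$, with hole $(I_{r-1}\cup\{\infty\})\times\mathbb{Z}_n$ of size $r$, and has $\Upsilon(m,r,n,2,3)$ codewords.

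The only remaining point is the bookkeeping on the codeword count. The SCIHGDD contributes
$$b_1=\frac{(m(m-3)-r(r-3))(n-2)}{6}$$
base blocks. Each of the $m-r$ copies of the $[2:1]$ code contributes $(n-2)/2$ codewords. The total is
$$\frac{(n-2)\bigl[m^2-3m-r^2+3r+3(m-r)\bigr]}{6}=\frac{(m^2-r^2)(n-2)}{6}=\Upsilon(m,r,n,2,3),$$
as required. This also confirms the difference-covering conditions:
\begin{itemize}
\item for pairs $\{i,j\}\subseteq I_{m-1}$ not inside $I_{r-1}$, the mixed differences come from the SCIHGDD;
\item for pairs $\{x,\infty\}$ and the pure differences on row $x$, with $x\notin I_{r-1}$, they come from $\mathcal{C}_x$;
\item in every case the differences covered are exactly $\mathbb{Z}_n\setminus H$ with $H=\{0,n/2\}$.
\end{itemize}

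No step here is a real obstacle, since all the difficulty was absorbed into Lemma \ref{scihgdd2}. The one place needing care is checking that the hole in Construction \ref{fill in ihgdd} really has size $r$ (it is $r-1$ rows plus $\infty$), so that the resulting code is a $[m:r]$ code rather than a $[m:r-1]$ one. For $r=1$ the SCIHGDD is an SCHGDD of type $(m-1,2^{n/2})$, whose hole is just $\{\infty\}$, and the same argument applies.
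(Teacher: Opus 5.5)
Your proposal is correct and is exactly the paper's proof: apply Construction~\ref{fill in ihgdd} with the $3$-SCIHGDD of type $(m-1,r-1,2^{n/2})$ from Lemma~\ref{scihgdd2} and the $2$-regular $2$-D $([2:1]\times n,3,1)$-OOC from Example~\ref{2regular [2:1]}. Your count $b_1+(m-r)(n-2)/2=(m^2-r^2)(n-2)/6$ correctly confirms the codeword total that the construction asserts.
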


\begin{proof}
By Lemma \ref{scihgdd2}, there exists a $3$-SCIHGDD of type $(m-1,r-1,2^{n/2})$. Apply Construction \ref{fill in ihgdd} with
$2$-regular $2$-D $([2:1]\times n,3,1)$-OOC with $\Upsilon(2,1,n,2,3)$ codewords (from Example \ref{2regular [2:1]}) to obtain a $2$-regular $2$-D $([m:r]\times n,3,1)$-OOC with $\Upsilon(m,r,n,2,3)$ codewords.
\end{proof}

\section{Constructions for $2$-D $([m:r]\times n,k,1)$-OOCs}

\subsection{Skolem-type sequences}

A \emph{$k$-extended Skolem sequence} of order $v$ is a partition of $[1,2v+1]\setminus\{k\}$
into a collection of ordered pairs $(a_i,b_i)$ such that $b_i-a_i=i$
for $1\leq i\leq v$. When $k=2v$ and $2v+1$, a $k$-extended Skolem sequence of order $v$ is often said to be a \emph{hooked Skolem sequence} and \emph{Skolem sequence}, respectively.

\begin{Lemma}\label{Skolem}\rm{\cite{Baker}}
A $k$-extended Skolem sequence of order $v$
exists if and only if $v\equiv0, 1\pmod{4}$ and $k$ is odd, or
$v\equiv 2, 3\pmod{4}$ and $k$ is even.
\end{Lemma}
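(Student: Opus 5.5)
The statement has two directions. For necessity we use a parity count, in the same spirit as the proof of Lemma \ref{bd1}. For sufficiency we give explicit pair families, organised by $v \bmod 4$ and by the position of the omitted element $k$.

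\textbf{Necessity.} Suppose a $k$-extended Skolem sequence $\{(a_i,b_i):1\leq i\leq v\}$ of order $v$ exists. Since the pairs partition $[1,2v+1]\setminus\{k\}$,
$$\sum_{i=1}^{v}(a_i+b_i)=\frac{(2v+1)(2v+2)}{2}-k=(v+1)(2v+1)-k .$$
On the other hand, $\sum_{i=1}^{v}(b_i-a_i)=\sum_{i=1}^v i=v(v+1)/2$. For each $i$, $a_i+b_i$ and $b_i-a_i$ have the same parity, so
$$(v+1)(2v+1)-k\equiv \frac{v(v+1)}{2}\pmod 2,$$
that is, $k\equiv v+1-v(v+1)/2\pmod 2$. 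Checking the four residues of $v$ modulo $4$ gives:
\begin{itemize}
\item $k$ odd when $v\equiv0,1\pmod4$;
\item $k$ even when $v\equiv2,3\pmod4$.
\end{itemize}
This is exactly the stated condition.

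\textbf{Sufficiency.} The endpoint cases come first. For $k=2v+1$ with $v\equiv0,1\pmod4$ we use the classical Skolem sequences. For $k=2v$ with $v\equiv 2,3\pmod 4$ we use hooked Skolem sequences, i.e.\ O'Keefe's construction. Both are given by standard explicit formulas:
\begin{itemize}
\item the long differences are realised by nested pairs $(j,\,c-j)$ around a centre;
\item the short differences are realised by a second nested block;
\item a few exceptional pairs are patched in by hand.
\end{itemize}

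For general admissible $k$, we write explicit pair families separately for each residue $v\bmod 4$ and for $k$ in roughly three ranges: $k$ small, $k$ near the middle $v+1$, and $k$ large. The natural template places the gap $k$ at the centre of a nested block. For instance, pairs $(j,\,2k-j)$ produce all even differences up to about $2k$ while leaving position $k$ free. The odd differences and the remaining even differences are then covered by a second family of nested pairs on the other side of $k$, adjusted by a constant number of exceptional pairs. The required parity of $k$ is exactly what makes the counts of odd and even differences in each nested block match up.

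We expect the main obstacle to be this sufficiency step for intermediate $k$. Each residue class of $v$ and each range of $k$ needs its own formula, and small orders ($v\leq 7$ or so) need to be listed directly. Here we would follow Baker's case split \cite{Baker} and verify, family by family, that the listed pairs cover $[1,2v+1]\setminus\{k\}$ and realise each difference $1,\dots,v$ exactly once. Since the paper uses this lemma only as a tool, citing \cite{Baker} for these explicit families is the intended justification, with the parity argument above accounting for the necessity direction.
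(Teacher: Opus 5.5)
Your proposal is correct, and for the hard direction it matches the paper's own justification. The paper gives no argument for this lemma and simply quotes it from \cite{Baker}, so deferring to Baker's explicit constructions for sufficiency is exactly what the paper does.

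What you add is a self-contained proof of necessity, and it is right. Since $a_i+b_i\equiv b_i-a_i \pmod 2$ for every pair, you get $(v+1)(2v+1)-k\equiv v(v+1)/2 \pmod 2$. This gives $k\equiv v+1-v(v+1)/2\pmod 2$, which is odd for $v\equiv 0,1\pmod 4$ and even for $v\equiv 2,3\pmod 4$.

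Your sufficiency paragraph should be read only as a plan, not a proof. Two remarks on it:
\begin{itemize}
\item The nested pairs $(j,2k-j)$, $1\leq j\leq k-1$, produce differences up to $2k-2$, which exceeds $v$ as soon as $k>v/2+1$. So the range of $j$ must be truncated; the template does not work as written.
\item You can halve the case analysis with the reflection $x\mapsto 2v+2-x$. It turns a $k$-extended Skolem sequence of order $v$ into a $(2v+2-k)$-extended one and preserves the parity of $k$, so one may assume $k\leq v+1$. The endpoint cases $k=1$ and $k=2$ then also come for free from the Skolem and hooked Skolem cases.
\end{itemize}
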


An \emph{$m$-near Skolem sequence} of order $v$ is a partition
of $[1,2v-2]$ into a collection of ordered pairs $(a_i,b_i)$ such that $\{b_i-a_i:1\leq i\leq v-1\}=[1,v]\setminus\{m\}$.

\begin{Lemma}\label{m near}\rm{\cite{s2}}
An $m$-near Skolem sequence of order $v$
exists if and only if $v\equiv0,1\pmod{4}$ and $m$ is odd, or
$v\equiv 2,3\pmod{4}$ and $m$ is even.
\end{Lemma}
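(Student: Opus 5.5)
The statement is a classical result of Shalaby \cite{s2}, and in the paper it is simply quoted. If I had to reprove it, I would treat necessity and sufficiency separately.

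\textbf{Necessity.} This follows from a parity count. The pairs $(a_i,b_i)$ partition $[1,2v-2]$, so
$$\sum_i (a_i+b_i)=\sum_{x=1}^{2v-2}x=(v-1)(2v-1).$$
On the other hand, $a_i+b_i=2a_i+(b_i-a_i)$. Hence
$$\sum_i(a_i+b_i)\equiv\sum_{d\in[1,v]\setminus\{m\}}d=\frac{v(v+1)}{2}-m \pmod 2.$$
This gives $v-1\equiv v(v+1)/2-m\pmod 2$. Checking the four residues of $v$ modulo $4$:
\begin{itemize}
\item if $v\equiv0,1\pmod 4$, the congruence forces $m$ to be odd;
\item if $v\equiv2,3\pmod 4$, it forces $m$ to be even.
\end{itemize}
This is exactly the stated condition.

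\textbf{Sufficiency.} Here I would give explicit constructions, split according to $v\bmod 4$ and the parity of $m$. Within each residue class I would split further according to whether $m$ is small (roughly $m\le v/2$) or large. In each case I would write down a family of pairs in the style of the classical Skolem/O'Keefe constructions:
\begin{itemize}
\item a "descending" block of pairs $(j,\,c-j)$ whose differences run through all long differences of one parity;
\item a second interleaved block whose differences run through the remaining differences of the other parity;
\item a short gap in one of these arithmetic runs that removes the difference $m$.
\end{itemize}
To leave no hole in $[1,2v-2]$, one or two boundary pairs would be adjusted near the position where $m$ is skipped. These boundary pairs depend on $m$ through linear expressions such as $(v+m)/2$ or $(v-m+1)/2$. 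The parity condition on $m$ makes these expressions integers in the correct residue class.

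\textbf{Fallback and small cases.} An alternative route is to start from a $k$-extended Skolem sequence of order $v-1$ from Lemma \ref{Skolem}. One would then try to exchange the pair with difference $m$ for a pair of difference $v$ while shifting the hook $k$ out of the range. I expect this to work only for special $m$, so I would keep it as a fallback for boundary values such as $m=v$ or $m=1$. Small orders, say $v\le 8$, would be handled by direct listing or by computer search.

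\textbf{Main obstacle.} The hardest step is making the explicit families cover every admissible $m$ uniformly. The near-Skolem condition removes one difference from anywhere in $[1,v]$, not just at the top as in the hooked case. So the patching pairs near the removed difference have to be designed so that they neither collide with the main arithmetic runs nor leave uncovered positions. This has to hold in all eight parity and position subcases, and I expect most of the verification effort to go there.
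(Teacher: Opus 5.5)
Your necessity argument is correct and complete. Writing $a_i+b_i=2a_i+(b_i-a_i)$ and summing over the $v-1$ pairs gives $v-1\equiv v(v+1)/2-m \pmod 2$, which is exactly the stated dichotomy.

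\textbf{The gap is sufficiency.} You describe the shape of the constructions you would use (descending runs, an interleaved run, boundary pairs patched near the skipped difference). But you never write down a single family of pairs, let alone check that it partitions $[1,2v-2]$ with difference set $[1,v]\setminus\{m\}$. Your last paragraph concedes that all the work lies there. That existence direction is the entire content of Shalaby's paper \cite{s2}. The present paper does not reprove it; it only quotes it. So your options are to cite \cite{s2} for sufficiency, as the paper does, or to actually produce the constructions.

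\textbf{A partial self-contained route.} The boundary cases need no exchange trick. A $v$-near Skolem sequence of order $v$ is just a Skolem sequence of order $v-1$. A $1$-near Skolem sequence of order $v$ is a Langford sequence of order $v-1$ and defect $2$. Both come from Lemmas \ref{Skolem} and \ref{Langford}. More generally, split on $m \bmod 4$:
\begin{itemize}
\item For $m\equiv1,2\pmod 4$, juxtapose a Skolem sequence of order $m-1$ on $[1,2m-2]$ with a Langford sequence of order $v-m$ and defect $m+1$, shifted onto $[2m-1,2v-2]$.
\item For $m\equiv0,3\pmod 4$, take a hooked Skolem sequence of order $m-1$ on $[1,2m-1]\setminus\{2m-2\}$. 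Add a reversed hooked Langford sequence (also in \cite{s}) of order $v-m$ and defect $m+1$ on $[2m-2,2v-2]\setminus\{2m-1\}$, so each hook fills the other.
\end{itemize}
The residue conditions of Lemmas \ref{Skolem} and \ref{Langford} (and Simpson's hooked analogue) combine to exactly the condition in the lemma. This settles every admissible pair with $v\ge 3m+1$.

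However, the Langford factor exists only when $v-m\ge 2m+1$. So no block decomposition of this kind reaches the range $(v-1)/3<m<v$. There the long differences must be interleaved with the short ones by explicit direct constructions. That is precisely the part your proposal leaves unspecified.
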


A \emph{Langford sequence} of order $v$ and defect $d$ is a partition of $[1,2v]$ into a collection of ordered pairs $(a_i,b_i)$ such that $\{b_i-a_i:1\leq i\leq v\}=[d,d+v-1]$.

\begin{Lemma}\label{Langford}\rm{\cite{s}}
A Langford sequence of order $v$ and defect $d$
exists if and only if $(1)$ $v\geq2d-1$, $(2)$ $v\equiv0, 1\pmod{4}$ and $d$ is odd, or
$v\equiv 0, 3\pmod{4}$ and $d$ is even.
\end{Lemma}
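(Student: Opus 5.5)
This is a cited result (Simpson's theorem on Langford sequences), so the paper itself just invokes the literature. A self-contained proof plan has two halves: necessity and sufficiency.

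\textbf{Necessity.} Use the standard sum and parity argument. The pairs partition $[1,2v]$, so
$$\sum_i (a_i+b_i)=v(2v+1).$$
On the other hand, since $b_i=a_i+(b_i-a_i)$, we have $\sum_i (a_i+b_i)=2\sum_i a_i+\sum_{j=d}^{d+v-1}j$. Hence
$$\sum_{j=d}^{d+v-1} j\equiv v(2v+1)\equiv v \pmod 2.$$
The left side equals $vd+v(v-1)/2$. Reducing modulo $2$ and checking $v$ modulo $4$ then yields exactly the stated congruence conditions on $v$ and the parity of $d$.

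For the inequality $v\geq 2d-1$, use a counting argument. The pair with difference $d+v-1$ has both endpoints far apart. More generally, a pair with difference $\delta$ has its smaller element at most $2v-\delta$. Summing these constraints over the $v$ smaller elements gives $\sum_i a_i\leq \sum_j(2v-j)$. Combining this with the identity $\sum_i a_i=\bigl(v(2v+1)-\sum_j j\bigr)/2$ gives a quadratic inequality in $d$ that reduces to $v\geq 2d-1$.

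\textbf{Sufficiency.} Give explicit constructions. Split into cases by $v \bmod 4$ and the parity of $d$. In each case, write the pairs as a few arithmetic progressions:
\begin{itemize}
\item pairs $(x,\,x+\delta)$ with $x$ increasing and $\delta$ decreasing by $2$, which fills a block of consecutive positions with all differences of one parity;
\item a second family covering the differences of the other parity;
\item a handful of exceptional pairs patching the middle.
\end{itemize}
The main step is to choose the ranges so that the union of positions is $[1,2v]$ and the differences cover $[d,d+v-1]$ exactly once. The boundary case $v=2d-1$ will be handled within the same templates, with small $v$ checked directly.

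\textbf{Main obstacle.} The hard part is designing these uniform templates across all residue classes, including the extreme case $v=2d-1$. Since this is Simpson's known theorem \cite{s}, in the paper I would simply cite it rather than reproduce the constructions.
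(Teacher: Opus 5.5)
The parity half of your necessity argument is correct. From $\sum_{j=d}^{d+v-1} j = vd+v(v-1)/2\equiv v \pmod 2$ you do get exactly $v\equiv 0,1\pmod 4$ for odd $d$ and $v\equiv 0,3\pmod 4$ for even $d$.

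\textbf{The gap is the derivation of $v\geq 2d-1$.} Write $S=\sum_{j=d}^{d+v-1} j$. Your constraint $a_i\leq 2v-(b_i-a_i)$ uses only that each larger element is at most $2v$. Summing it gives $\sum_i a_i\leq 2v^2-S$. Substituting your identity $\sum_i a_i=(v(2v+1)-S)/2$ then gives $S\leq 2v^2-v$, i.e.\ $d\leq (3v-1)/2$. This is linear in $d$ and much weaker than $d\leq (v+1)/2$. For example, $(v,d)=(4,3)$ satisfies the parity condition and your inequality ($S=18\leq 28$), yet no Langford sequence of order $4$ and defect $3$ exists. So the claim that the inequality ``reduces to $v\geq 2d-1$'' is false, and your necessity proof does not exclude such pairs.

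\textbf{The missing ingredient is distinctness.} The $a_i$ are $v$ distinct positive integers, so $\sum_i a_i\geq 1+2+\cdots+v=v(v+1)/2$. Equivalently, the $b_i$ are distinct, so $\sum_i b_i\leq (v+1)+\cdots+2v$. You need this \emph{lower} bound on $\sum_i a_i$, not an upper bound. Combined with your identity it gives $v(2v+1)-S\geq v(v+1)$, i.e.\ $S\leq v^2$, i.e.\ $vd+v(v-1)/2\leq v^2$, which is exactly $v\geq 2d-1$.

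On sufficiency, your template only describes the shape of Simpson's constructions; it is not a proof. That is acceptable here only because the paper gives no proof either and simply quotes the result from \cite{s}, which is what you also propose to do.
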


\begin{Lemma}\rm{\cite[Lemma 1.8]{zc}}\label{sequence}
If $(s,d)\equiv(2,0),(2,1),(1,0),(3,1)\pmod{(4,2)}$
and $s(s-2d+1)+2\geq0$, then $[d,d+3s]\setminus\{d+3s-1\}$ can be
partitioned into triples $\{a_i,b_i,c_i\}$, $1\leq i\leq s$ such
that $a_i+b_i=c_i$.
\end{Lemma}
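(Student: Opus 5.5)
The plan is to obtain the required triples from a \emph{hooked Langford sequence}, in the same way that Skolem and Langford sequences yield difference triples. A hooked Langford sequence of order $s$ and defect $d$ is a partition of $[1,2s+1]\setminus\{2s\}$ into ordered pairs $(a_i,b_i)$, $1\leq i\leq s$, such that $b_i-a_i=d+i-1$. I will rely on Simpson's existence theorem for such sequences. It is not among the earlier results of the paper; I recall its conditions as follows, and they should be checked against Simpson's paper. A hooked Langford sequence of order $s$ and defect $d$ exists if and only if $s(s-2d+1)+2\geq0$, and either $s\equiv1,2\pmod4$ with $d$ even, or $s\equiv2,3\pmod4$ with $d$ odd.

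First I check that these conditions are exactly the hypotheses of the lemma. The congruence classes $(s,d)\equiv(1,0),(2,0)\pmod{(4,2)}$ give $d$ even with $s\equiv1,2\pmod4$. The classes $(2,1),(3,1)$ give $d$ odd with $s\equiv2,3\pmod4$. The inequality $s(s-2d+1)+2\geq0$ is literally the same in both statements. So under the hypotheses of Lemma \ref{sequence} a hooked Langford sequence $\{(a_i,b_i):1\leq i\leq s\}$ of order $s$ and defect $d$ exists.

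Next I build the triples. For $1\leq i\leq s$ put
$$T_i=\{\,d+i-1,\ a_i+d+s-1,\ b_i+d+s-1\,\}.$$
The sum condition holds because $(d+i-1)+(a_i+d+s-1)=b_i+d+s-1$, using $b_i-a_i=d+i-1$. For the partition property, the first coordinates $d+i-1$ run over $[d,d+s-1]$. The pairs $\{a_i,b_i\}$ cover $[1,2s+1]\setminus\{2s\}$, so after shifting by $d+s-1$ the other two coordinates cover $[d+s,d+3s]\setminus\{d+3s-1\}$. The two ranges are disjoint and together give $[d,d+3s]\setminus\{d+3s-1\}$. Relabelling each $T_i$ as $\{a_i',b_i',c_i'\}$ with $a_i'+b_i'=c_i'$ completes the proof.

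The main obstacle is the existence of the hooked Langford sequence itself, and I would not reprove it but cite Simpson's theorem (equivalently \cite{zc}). As a sanity check I would verify the necessity of the congruence conditions by a parity count. The sum of $[1,2s+1]\setminus\{2s\}$ is $(s+1)(2s+1)-2s$. The sum $\sum_i(a_i+b_i)=\sum_i(2a_i+d+i-1)$ is congruent to $sd+s(s-1)/2 \pmod 2$. Equating the two parities reproduces the listed residues, which confirms that the cited theorem is being applied in the correct cases.
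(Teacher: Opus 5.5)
Your proof is correct. The hypotheses are exactly the existence conditions of Simpson's theorem on hooked Langford sequences of order $s$ and defect $d$, which appears in \cite{s}, the paper the authors already cite for Lemma~\ref{Langford}. The shifted triples $\{d+i-1,\ a_i+d+s-1,\ b_i+d+s-1\}$ then give the required partition. The paper supplies no argument of its own and simply quotes \cite{zc}; your derivation is the standard way this lemma is obtained.

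The one caveat is that $d\geq 1$ must be understood: for $d=0$ the triple containing $0$ would be $\{0,b,b\}$, and Simpson's theorem does not apply. This is harmless here, since the paper only uses the lemma with $d=r_m\in\{7,8\}$.
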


\subsection{Sporadic recursive constructions}

\begin{Lemma}\label{[3:0],n=0,6 mod8}
There exists an optimal $2$-D $([3:0]\times n, 3, 1)$-OOC with $\Theta(3,0,n,3)$ codewords for any $n\equiv0,6\pmod{8}$.
\end{Lemma}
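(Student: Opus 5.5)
The plan is a direct construction that fills every available difference exactly. For $n$ even we have $\Theta(3,0,n,3)=\lfloor (9n-6)/6\rfloor = 3n/2-1$. The $3n/2-1$ codewords contribute $6(3n/2-1)=9n-6$ differences, and this equals $6n$ mixed differences plus $3(n-2)$ pure differences. So the code must be \emph{perfect relative to the Remark in Section 2}:
\begin{itemize}
\item every mixed $(i,j)$-difference, $i\neq j\in I_3$, is covered exactly once;
\item in each row $i$, every pure difference except $0$ and $n/2$ is covered exactly once.
\end{itemize}
Optimality then follows from the bound $\Theta$ (equivalently Lemma \ref{bd}, since $J(3\times n,3,1)=3n/2-1$ here). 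We use the three types of codewords from Section 3.
\begin{itemize}
\item Type $3$ codewords $\{(0,0),(1,x),(2,y)\}$ cover the three mixed classes $x$, $y$, $y-x$ for the pairs $(0,1),(0,2),(1,2)$, together with their negatives.
\item Type $2$ codewords $\{(i,0),(i,c),(j,d)\}$ cover one pure class $\pm c$ in row $i$ and two mixed classes $d$, $d-c$ for the pair $\{i,j\}$.
\item Type $1$ codewords $\{(i,0),(i,a),(i,a+b)\}$ cover three pure classes in row $i$.
\end{itemize}

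\textbf{Step 1 (mixed part).} Take a family of $n-s$ Type $3$ codewords whose pairs $(x,y)$ have $x$ running over $\mathbb{Z}_n$ minus $s$ values, $y$ running over $\mathbb{Z}_n$ minus $s$ values, and $y-x$ distinct. This is a partial orthomorphism of $\mathbb{Z}_n$. Since $n$ is even no complete mapping exists, so $s\geq1$ is forced, but near-complete mappings exist. I would write them explicitly as piecewise-linear maps of the form $x\mapsto 2x+\epsilon$ on suitable intervals, splitting into cases $n\equiv0\pmod 8$ and $n\equiv6\pmod8$. The choice of $s$ and of the missing residues is kept flexible for Step 2. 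After this step each pair $\{i,j\}$ has exactly $s$ uncovered mixed classes.

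\textbf{Step 2 (patching).} The $3s$ leftover mixed classes are covered by Type $2$ codewords, two classes of a single pair per codeword. Hence $3s/2$ Type $2$ codewords are needed, with the missing values arranged in each pair to be of the form $d, d-c$. These codewords consume pure classes $c$. Let row $i$ lose $p_i$ pure classes this way, so that $\sum_i p_i = 3s/2$.
\begin{itemize}
\item The constraint to satisfy is $n/2-1-p_i\equiv 0\pmod 3$ for each $i$, so that the remaining pure classes $[1,n/2-1]$ minus the used $c$'s in each row can be split into Type $1$ triples.
\item This forces a case split on $n \pmod{24}$, i.e.\ on $n/2-1 \pmod 3$. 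I would choose $s\in\{2,4,6\}$ and $p_i$ all equal to $(n/2-1)\bmod 3$ (adding $3$ if needed).
\end{itemize}

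\textbf{Step 3 (pure part).} In each row, partition $[1,n/2-1]\setminus\{c\text{'s}\}$ into triples $\{a,b,c'\}$ with $a+b=c'$, or with $a+b+c'=n$ (wrap-around type). This uses Lemma \ref{sequence} for the bulk, supplemented by Lemma \ref{Skolem}, Lemma \ref{m near} or Lemma \ref{Langford} to absorb the few removed values $c$. The parameter conditions $(s,d)\pmod{(4,2)}$ of Lemma \ref{sequence} are what I expect to dictate the exact choice of the removed $c$'s in each residue class of $n$. A final count then gives $(n-s)+3s/2+\sum_i (n/2-1-p_i)/3 = 3n/2-1$.

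\textbf{Main obstacle.} I expect the hardest step to be the compatibility between Step 1 and Steps 2--3: the leftover mixed values per pair must come in pairs $d,d-c$ whose $c$'s are exactly values that the Skolem/Langford-type partitions of Step 3 can omit. I would first try to fix the $c$'s as small values (e.g.\ $c\in\{1,2\}$ or values near $n/4$) where Lemma \ref{sequence} leaves a gap naturally. I would then reverse-engineer the near-orthomorphism so that its missing differences are $\{d,d-c\}$. Small cases such as $n=6,8,14,16$ would be handled by explicit codeword lists.
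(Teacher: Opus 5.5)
Your proposal is a framework rather than a proof, because none of its three steps is carried out.
\begin{itemize}
\item No near-complete mapping of $\mathbb{Z}_n$ is exhibited.
\item The uncovered mixed classes $X,Y,Z$ for the pairs $(1,0),(2,0),(2,1)$ are never specified.
\item The sets $C_i\subseteq[1,n/2-1]$ of pure classes consumed by Type~$2$ codewords in row $i$ are never fixed.
\item No partition of $[1,n/2-1]\setminus C_i$ into difference triples is given.
\end{itemize}
The compatibility you list as the main obstacle is the whole content of the lemma.

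A clear symptom is that the hypothesis $n\equiv0,6\pmod 8$ does no work anywhere in your plan, although it must. For $m=3$, Lemma~\ref{bd1} shows that $3n/2-1$ codewords are impossible when $n\equiv2,4\pmod 8$. In your framework the hypothesis enters through a parity condition you have not identified. Write $h=n/2$.
\begin{itemize}
\item A Type~$1$ triple has even sum of representatives in $[1,h-1]$, so row $i$ requires $\sum C_i\equiv\binom{h}{2}\pmod 2$.
\item The two mixed differences of a Type~$2$ codeword sum to $c$ modulo $2$, so $\sum X+\sum Y+\sum Z\equiv\sum_i\sum C_i\pmod 2$.
\item A Type~$3$ codeword contributes $x+y+(y-x)\equiv 2y\pmod n$, which is even. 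Summing over Step~1 forces $\sum X+\sum Y+\sum Z\equiv 3\binom{n}{2}\equiv h\pmod 2$.
\end{itemize}
These conditions are compatible exactly when $\binom{h}{2}\equiv h\pmod 2$, i.e.\ $n\equiv0,6\pmod 8$. They also constrain the choice of the $C_i$. For instance, your suggested default $C_i=\{1,2\}$ is impossible when $n\equiv0\pmod{24}$, where $p_i=2$ and $\binom{h}{2}$ is even.

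The missing idea is that you do not need small $s$ at all. The paper's construction is the extreme case $s=n-2$, $p_i=n/2-1$ of your own scheme. It has no Type~$1$ codewords and only two Type~$3$ codewords, so neither a near-orthomorphism nor a Heffter-type partition is required.

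Take a $k$-extended Skolem sequence $\{(a_i,b_i)\}$ of order $(n-2)/2$. Use the $3(n-2)/2$ codewords $\{(l,0),(l,i),(l+1,b_i)\}$, $l\in\mathbb{Z}_3$, $1\leq i\leq (n-2)/2$, with $k=2$ for $l=0,1$ and $k=n-4$ for $l=2$.
\begin{itemize}
\item Row $l$ receives each pure class $\pm1,\dots,\pm(n/2-1)$ exactly once.
\item The $(l+1,l)$-differences $b_i$ and $b_i-i=a_i$ sweep $[1,n-1]\setminus\{k\}$.
\item The codewords $\{(0,0),(1,0),(2,0)\}$ and $\{(0,0),(1,2),(2,4)\}$ supply the remaining mixed differences $0$ and $k$.
\end{itemize}
The hypothesis $n\equiv0,6\pmod 8$ is exactly the condition $(n-2)/2\equiv2,3\pmod 4$ under which an extended Skolem sequence with even $k$ exists (Lemma~\ref{Skolem}).
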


\proof Let $S_t=\{(a_{ti},b_{ti}):1\leq i\leq (n-2)/2\}$ where $1\leq t\leq 2$, be a $k_t$-extended Skolem sequence of order $(n-2)/2$, where $k_1=2$ and $k_2=n-4$ (from Lemma \ref{Skolem}). Let $\mathcal{B}$ be the following $3(n-2)/2$ base blocks.
\begin{center}
\begin{tabular}{lll}
$\{(0,0),(0,i),(1,a_{1i}+i)\}$,
&$\{(1,0),(1,i),(2,a_{1i}+i)\}$,
&$\{(2,0),(2,i),(0,a_{2i}+i)\}$,
\end{tabular}
\end{center}
where $1\leq i\leq (n-2)/2$. It is readily checked that $\mathcal{B}$ together with $\{(0,0),(1,0),(2,0)\}$ and $\{(0,0),(1,2),(2,4)\}$ form a $2$-D $([3:0]\times n, 3, 1)$-OOC with $\Theta(3,0,n,3)$ codewords.
\qed

\begin{Lemma}\label{[5:2]}
There exists a $2$-D $([5:2]\times n,3,1)$-OOC with $\Theta(5,2,n,3)$ codewords for any $n\equiv0,6\pmod8$.
\end{Lemma}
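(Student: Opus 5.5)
The plan is a direct construction on $(I_3\cup\{\infty_1,\infty_2\})\times\mathbb{Z}_n$, with hole $\{\infty_1,\infty_2\}\times\mathbb{Z}_n$, modelled on the proof of Lemma \ref{[3:0],n=0,6 mod8}.

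\textbf{Counting.} Since $n$ is even,
$$\Theta(5,2,n,3)=\left\lfloor\frac{21n-6}{6}\right\rfloor=\frac{7n}{2}-1 .$$
This value is attained exactly, so the code must be \emph{tight}. Every admissible difference must be used: all pure differences in rows $0,1,2$ except $0$ and $n/2$, and every mixed difference for the $9$ unordered row-pairs not contained in the hole. We therefore must design the blocks so that nothing is wasted.

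\textbf{Step 1 (pure differences).} For each $i\in I_3$ we attach row $i$ to one infinite row, say $\infty_{1}$ for $i=0,1$ and $\infty_2$ for $i=2$ (the exact assignment to be tuned). Let $S=\{(a_l,b_l):1\le l\le (n-2)/2\}$ be a $k$-extended Skolem sequence of order $(n-2)/2$ from Lemma \ref{Skolem}. For $n\equiv0,6\pmod 8$ the order is $\equiv3,2\pmod4$, so $k$ must be even, exactly as in Lemma \ref{[3:0],n=0,6 mod8}. We take the $3(n-2)/2$ blocks
$$\{(i,0),(i,l),(\infty_{j(i)},a_l+l)\},\qquad 1\le l\le (n-2)/2 .$$
These use every pure difference of row $i$ except $0,n/2$. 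They also use all but two mixed $(i,\infty_{j(i)})$-differences, and the two missing values are determined by $k$ and $2v+1$. Different rows may use different $k$, say $k=2$ or $k=n-4$, so that the missing values can be steered.

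\textbf{Step 2 (remaining mixed differences).} Still to be covered:
\begin{itemize}
\item all $3n$ mixed differences among rows $0,1,2$;
\item the full $n$ differences for each of the three pairs $(i,\infty_{j'})$ with $j'\neq j(i)$;
\item the $2$ leftover differences for each pair $(i,\infty_{j(i)})$.
\end{itemize}
This must be done with $2n+2$ Type-3 blocks. Each such block is either a transversal $\{(0,x),(1,y),(2,z)\}$ or of the form $\{(i,x),(i',y),(\infty_j,z)\}$. The natural family is $\{(i,0),(i',t),(\infty_j,\sigma(t))\}$, which covers all $(i,i')$-differences if $t\mapsto\sigma(t)$ and $t\mapsto\sigma(t)-t$ are both bijections. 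This is an orthomorphism of $\mathbb{Z}_n$, which does not exist for even $n$.

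\textbf{Main obstacle.} I would use near-orthomorphisms, built from Lemma \ref{m near}, Lemma \ref{Langford} or Lemma \ref{sequence}, that miss one or two values. One such family is used for each of the three pairs $\{0,1\},\{1,2\},\{0,2\}$, each paired with an infinite row. The defects of these near-orthomorphisms, together with the leftovers from Step 1, are then absorbed by a small constant number of sporadic blocks. Candidates for these are the transversals $\{(0,0),(1,0),(2,0)\}$ and $\{(0,0),(1,2),(2,4)\}$ used in Lemma \ref{[3:0],n=0,6 mod8}, plus one or two blocks meeting an infinite row.

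Matching the missing values exactly, so that every difference is used once, is where I expect the real difficulty. I would choose the Skolem parameters $k$ in Step 1 precisely so that the two leftover $(i,\infty_{j(i)})$-differences coincide with the defects of the near-orthomorphism on the corresponding pair. I would first check the small cases $n=6,8,14,16$ by hand or computer to fix the pattern, then write the general base blocks and verify Definition \ref{def-diff} difference class by difference class.
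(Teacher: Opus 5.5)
Your count $\Theta(5,2,n,3)=7n/2-1$ is right, and so is your tightness observation. Step~1 matches the paper's Skolem blocks. The gap is Step~2: it is not a construction, and the concrete shape you propose for it cannot work.

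\textbf{Why Step~2 fails by counting.} After Step~1 no pure difference is available, and no block may contain two hole points. So each of the remaining $2n+2$ blocks is either a transversal of rows $0,1,2$ or meets exactly one hole row. There are $3n$ differences among rows $0,1,2$ and $3(n+2)$ hole-row differences left. Writing $T$ for the number of transversals and $M$ for the number of blocks meeting a hole row, you get $3T+M=3n$ and $2M=3n+6$. Hence $T=(n-2)/2$ and $M=(3n+6)/2$.
\begin{itemize}
\item Three near-orthomorphism families of size about $n$, one per pair $\{i,i'\}$, would need about $3n$ blocks.
\item They would also need about $2n$ free hole-row differences at each row $i\in I_3$, where only $n+2$ remain.
\end{itemize}
No constant number of sporadic blocks repairs this. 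Roughly half of each $\{i,i'\}$-class must go into transversals, and you give no mechanism that produces such a split.

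\textbf{Why the row assignment fails.} Your tentative choice $j(0)=j(1)=1$, $j(2)=2$ cannot be tuned. Take a block covering one of the $n$ untouched $\{2,\infty_1\}$-differences. Its third point must lie in row $0$ or $1$, since row $2$ has no pure differences left and the third point cannot be in a hole row. So the block consumes one of only four surviving $\{0,\infty_1\}$- or $\{1,\infty_1\}$-differences. Hence all three rows must be attached to the \emph{same} hole row.

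\textbf{What is missing.} Call the hole rows $\infty_1$ (the one used in Step~1) and $\infty_0$, as the paper does. What remains is to cover all but three of the $6n$ mixed differences among rows $0,1,2,\infty_0$ by $2n-1$ blocks, plus three blocks through $\infty_1$. This is the ingredient your proposal lacks, and the paper supplies it as follows.
\begin{itemize}
\item It takes a $3$-SCHGDD of type $(4,2^{n/2})$ on $(I_3\cup\{\infty_0\})\times\mathbb{Z}_n$ from Lemma~\ref{schgdd}. Its $2n-4$ base blocks cover every mixed difference among these four rows except $0$ and $n/2$.
\item It attaches rows $0,1,2$ to $\infty_1$ with $k$-extended Skolem sequences for $k=2,2,4$, shifting row $1$ by $2$. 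This leaves the $(\infty_1,i)$-differences $\{0,2\}$, $\{2,4\}$, $\{0,4\}$ for $i=0,1,2$.
\item Six explicit codewords then cover the twelve differences $0,n/2$ among rows $0,1,2,\infty_0$, together with these six leftovers.
\end{itemize}
Without the SCHGDD, or an equivalent object, your argument does not yet construct the code.
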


\begin{proof}
By Lemma \ref{schgdd}, there exists a $3$-SCHGDD of type $(4,2^{n/2})$. Without loss of generality, assume that this design is constructed on $(I_3\bigcup\{\infty_0\})\times\mathbb{Z}_{n}$ with $\mathcal{G}=\{\{i\}\times {\mathbb Z}_{n}:i\in I_3\bigcup\{\infty_0\}\}$ and $\mathcal{H}=\{(I_3\bigcup\{\infty_0\})\times\{j,n/2+j\}:0\leq j\leq(n-2)/2\}$. Let ${\cal A}$ be a set of base blocks of this design.

Let $S_t=\{(a_{ti},b_{ti}):1\leq i\leq (n-2)/2\}$ where $0\leq t\leq 2$, be a $k_t$-extended Skolem sequence of order $(n-2)/2$, where $k_0=k_1=2$ and $k_2=4$ (from Lemma \ref{Skolem}). Let $\mathcal{B}$ be the following $3(n-2)/2$ codewords:
\begin{center}
\begin{tabular}{lll}
$\{(0,0),(0,i),(\infty_1,a_{0i}+i)\}$, &$\{(1,0),(1,i),(\infty_1,a_{1i}+i+2)\}$, &$\{(2,0),(2,i),(\infty_1,a_{2i}+i)\}$,
\end{tabular}
\end{center}
where $1\leq i\leq (n-2)/2$. Let $\mathcal{C}$ be the following $6$ codewords:
\begin{center}
\begin{tabular}{lll}
$\{(0,0),(\infty_0,0),(1,n/2)\}$,
&$\{(0,0),(2,n/2),(\infty_0,n/2)\}$,
&$\{(0,0),(1,0),(\infty_1,2)\}$,\\
$\{(1,0),(\infty_0,0),(2,n/2)\}$,
&$\{(0,0),(2,0),(\infty_1,0)\}$,
&$\{(1,0),(2,0),(\infty_1,4)\}$.
\end{tabular}
\end{center}
It is readily checked that $\mathcal{A}\bigcup\mathcal{B}\bigcup\mathcal{C}$ forms the desired code defined on $(I_3\bigcup\{\infty_0,\infty_1\})\times\mathbb{Z}_{n}$ with $\{\infty_0,\infty_1\}\times\mathbb{Z}_{n}$ as the hole.
\end{proof}

\begin{Lemma}\label{[9:3]}
There exists a $2$-D $([9:3]\times n,3,1)$-OOC with $\Theta(9,3,n,3)$ codewords for any $n\equiv4\pmod8$ and $n\geq12$.
\end{Lemma}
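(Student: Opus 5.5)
We model the construction on Lemma \ref{[5:2]}: an SCHGDD core, Skolem-type sequences to use up the pure differences, and a few hand-picked codewords for the leftover differences near $0$ and $n/2$.

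First, the target count. With $n\equiv4\pmod 8$,
$$\Theta(9,3,n,3)=\left\lfloor\frac{72n-12}{6}\right\rfloor=12n-2.$$
So every pure difference except $\{0,n/2\}$ in each of the $6$ non-hole rows must be used, and every mixed difference between two non-hole rows, or between a non-hole row and a hole row, must be used.

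We take the point set $(I_6\cup\{\infty_0,\infty_1,\infty_2\})\times\mathbb{Z}_n$, with $\{\infty_0,\infty_1,\infty_2\}\times\mathbb{Z}_n$ as the hole. The building blocks are as follows.

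\begin{enumerate}
\item[$(1)$] By Lemma \ref{schgdd}, a $3$-SCHGDD of type $(7,2^{n/2})$ exists, since $(t-1)\cdot 6\cdot 2$ is even and $6\cdot 7\cdot 6\cdot 2\equiv0\pmod 6$; the exceptions do not apply because $m=7$, $w=2$ is even. Build it on $(I_6\cup\{\infty_0\})\times\mathbb{Z}_n$, with holes the cosets of $\{0,n/2\}$. Its $7\cdot6\cdot(n-2)/6=7(n-2)$ base blocks cover all mixed differences in $\mathbb{Z}_n\setminus\{0,n/2\}$ among these $7$ rows.
\item[$(2)$] The pure differences of rows $0,\dots,5$ are covered with Skolem-type sequences. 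For each $i\in I_6$, pair row $i$ with one hole row $\infty_1$ or $\infty_2$, three rows to each. Take codewords $\{(i,0),(i,j),(\infty_s,a_{j}+j)\}$ for $1\le j\le (n-2)/2$, where $\{(a_j,b_j)\}$ is a $k$-extended Skolem sequence of order $(n-2)/2$. This order is $\equiv1\pmod4$, so $k$ must be odd by Lemma \ref{Skolem}. Each such family uses the pure differences $\pm1,\dots,\pm(n-2)/2$ of row $i$. It also uses the mixed $(\infty_s,i)$ differences $\{a_j+j,\,b_j\}=[1,n-1]\setminus\{k\}$, and hence all the mixed differences of the pair $(i,\infty_s)$ except $0$ and $k$. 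Where shifts are needed, we add a shift as in Lemma \ref{[5:2]} ($a_{1i}+i+2$) so the missing values are distinct and controlled.
\item[$(3)$] After $(1)$ and $(2)$, the remaining differences are of three kinds. There are the mixed differences $0$ and $n/2$ among rows $I_6\cup\{\infty_0\}$. For each $i$, there are all $n$ mixed differences between $i$ and the one $\infty_s$ with $s\in\{1,2\}$ it was not paired with. Finally there are the two leftover values between $i$ and its paired $\infty_s$. Counting, $(1)$ and $(2)$ give $7(n-2)+3(n-2)=10n-20$ codewords, so $2n+18$ more are needed. These must be built from the remaining differences: $6n$ mixed differences to the unpaired hole row, $12$ leftovers to the paired row, and $84$ values $0,n/2$ among the $7$ rows, for a total of $6n+96=6(n+16)$ differences. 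This gives only $n+16$ codewords, not $2n+18$.
\end{enumerate}

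Step $(3)$ therefore shows the split as first planned is wrong. Each non-hole row $i$ needs all $3n$ of its mixed differences to the three hole rows used, and the Skolem codewords of step $(2)$ absorb only about $n$ of them. So the SCHGDD should include two hole rows rather than one. We will use a $3$-SCHGDD of type $(8,2^{n/2})$ on $I_6\cup\{\infty_0,\infty_1\}$ instead, but that puts unwanted difference pairs $\infty_0\infty_1$ inside the hole. Note that we cannot get out of this by invoking $n\equiv4\pmod8$: Lemma \ref{schgdd} allows $(8,2^{n/2})$ for every $t$, and $7\cdot$ even is fine, so the obstacle is the hole, not existence.

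The fix is a $3$-SCIHGDD of type $(8,2,2^{n/2})$, i.e.\ with $\infty_0,\infty_1$ forming the incomplete part. We obtain it via Construction \ref{fill in cyclic hgdd}, using an $n$-cyclic $3$-HGDD of type $(3,(2\cdot2)^{n/2})$ from Lemma \ref{inflate schgdd} with $h=2$. This needs $n\equiv0\pmod4$, which holds, although that lemma lists only $h\in\{3,4,2\}$ via $(m,h)$ pairs; if it is unavailable we inflate a SCHGDD of type $(3,2^{n/2})$ by a $3$-GDD of type $2^3$. We fill this with a SCIHGDD of type $(2+2,2,2^{n/2})$ if it exists.

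The final ingredients are then these. The SCIHGDD handles all nonzero, non-$n/2$ mixed differences except those within the hole. The Skolem families into $\infty_2$ handle the pure differences and $(i,\infty_2)$. A handful of hand-made codewords, like the set $\mathcal{C}$ in Lemma \ref{[5:2]}, covers the differences $0$ and $n/2$ and the Skolem leftovers.

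The main obstacle is this last patch: choosing the extended Skolem parameters $k_t$ (odd), plus shifts, so that the leftover $(i,\infty_2)$ differences and the $0,n/2$ differences can be exactly covered by explicit triples. We expect to try $k_t\in\{1,3,n-3\}$ with shifts by $2$, and to check a small case such as $n=12$ directly by computer first.
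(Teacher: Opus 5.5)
Your revised count is consistent. A $3$-SCIHGDD of type $(8,2,2^{n/2})$ on $(I_6\cup\{\infty_0,\infty_1\})\times\mathbb{Z}_n$ has $9(n-2)$ base blocks, the six Skolem families into $\infty_2$ add $3(n-2)$, and exactly $22$ codewords would remain. The gap is that this SCIHGDD is never obtained.

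Construction \ref{fill in cyclic hgdd} puts one whole group of the HGDD into the incomplete part. So an HGDD of type $(3,(2\cdot 2)^{n/2})$ with fillers of type $(4,2,2^{n/2})$ gives type $(8,4,2^{n/2})$, not $(8,2,2^{n/2})$. Filling all three groups does not help, because a $3$-SCIHGDD of type $(4,2,2^{n/2})$ cannot exist. Every base block must consist of the two complete rows and one incomplete row, so the $n-2$ differences between the complete rows allow only $n-2$ base blocks. But the $4(n-2)$ differences from complete to incomplete rows need $2(n-2)$; this is the usual condition $m\geq 2r+1$ failing. Any other solution of $mh+\delta=8$, $h+\delta=2$ needs a filler on only two rows, which is equally impossible. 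Lemmas \ref{scihgdd1} and \ref{scihgdd2} never produce type $(8,2,\cdot)$, so your main ingredient is unsupported.

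A smaller error: your count $6n+96$ in step $(3)$ mixes one- and two-directional counts. The true total is $12n+108=6(2n+18)$. The first plan really fails because the $6n$ differences from rows of $I_6$ to their unpaired hole rows need at least $3n$ codewords. Each such codeword also uses one of only $30$ values $0,n/2$ on pairs inside $I_6$.

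Even granting the SCIHGDD, the decisive step is left open: the $22$ patch codewords, the choice of $k_i$ and shifts $\beta_i$, and the case $n=12$. The parameters you plan to try also cannot work. Write the leftover $(\infty_2,i)$-differences as $\beta_i$ and $\beta_i+k_i$, where $\beta_i$ is the shift.

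After the SCIHGDD, each of the $27$ row pairs of $I_6\cup\{\infty_0,\infty_1\}$ other than $\{\infty_0,\infty_1\}$ still needs the difference $n/2$.
\begin{itemize}
\item A patch codeword avoiding $\infty_2$ has all pairwise differences in $\{0,n/2\}$, so it uses $n/2$ on $0$ or $2$ of its pairs.
\item A codeword $\{(\infty_2,z),(i,x),(j,y)\}$ uses $n/2$ on $\{i,j\}$ exactly when its two leftover values $z-x$ and $z-y$ differ by $n/2$.
\end{itemize}
Since $27$ is odd, an odd number of the six matched leftover pairs must differ by $n/2$. With $k_i\in\{1,3,n-3\}$ and $\beta_i\in\{0,2\}$, all leftover values lie in $\{0,1,2,3,5,n-3,n-1\}$. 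No two of these differ by $n/2$ once $n\geq 20$.

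The idea you are missing is to make the OOC hole a group of a cyclic HGDD, so no incomplete design is needed. The paper takes the $n$-cyclic $3$-HGDD of type $(3,(3\times 4)^{n/4})$ from Lemma \ref{inflate schgdd1}. Its holes are cosets of the subgroup of order $4$, and it exists for $n\geq 20$. It sits on $I_9\times\mathbb{Z}_n$ with groups $\{0,1,2\},\{3,4,5\},\{6,7,8\}$, and $\{6,7,8\}\times\mathbb{Z}_n$ is declared the hole.

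The pure differences of rows $0,\dots,5$ and the mixed differences inside $\{0,1,2\}$ and inside $\{3,4,5\}$ are covered by the triples $\{(l,0),(l,t),(l',a_t+t)\}$. Here $l\mapsto l'$ is a $3$-cycle on each group, and the $(a_t,b_t)$ come from a single $(n/4)$-extended Skolem sequence. Everything left lies in $\{0,n/4,n/2,3n/4\}$ and is covered by $40$ explicit codewords. This gives $9(n-4)+3(n-2)+40=12n-2$ codewords. The case $n=12$, where that HGDD is unavailable, is settled by a direct list.
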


\begin{proof}
For $n=12$, the $2$-D $([9:3]\times12,3,1)$-OOC is defined on $(\mathbb{Z}_{6}\cup\{\infty_0,\infty_1,\infty_{2}\})\times \mathbb{Z}_{12}$ with the hole $\{\infty_0,\infty_1,\infty_{2}\}\times \mathbb{Z}_{12}$. All the $142$ codewords can be obtained by developing the following $48$ initial codewords by $(+2\pmod 6,-)$, where $\infty_i+2=\infty_i$ for $i\in I_3$. The initial codeword marked with a star only generates one codeword.
\begin{center}
\begin{longtable}{lll}
$\{(0,0),(1,0),(3,0)\}$,
&$\{(0,0),(5,0),(0,5)\}$,
&$\{(0,0),(\infty_0,0),(4,4)\}$,\\
$\{(0,0),(\infty_1,0),(1,6)\}$,
&$\{(0,0),(\infty_2,0),(1,7)\}$,
&$\{(0,0),(0,1),(3,3)\}$,\\
$\{(0,0),(1,1),(0,10)\}$,
&$\{(0,0),(2,1),(\infty_1,7)\}$,
&$\{(0,0),(3,1),(3,6)\}$,\\
$\{(0,0),(4,1),(\infty_2,9)\}$,
&$\{(0,0),(5,1),(3,7)\}$,
&$\{(0,0),(\infty_0,1),(2,7)\}$,\\
$\{(0,0),(\infty_1,1),(1,10)\}$,
&$\{(0,0),(\infty_2,1),(0,3)\}$,
&$\{(0,0),(1,2),(5,6)\}$,\\
$\{(0,0),(2,2),(\infty_0,9)\}$,
&$\{(0,0),(4,2),(\infty_2,6)\}$,
&$\{(0,0),(5,2),(1,9)\}$,\\
$\{(0,0),(\infty_0,2),(1,8)\}$,
&$\{(0,0),(\infty_1,2),(2,3)\}$,
&$\{(0,0),(\infty_2,2),(1,5)\}$,\\
$\{(0,0),(4,3),(\infty_1,8)\}$,
&$\{(0,0),(5,3),(\infty_2,7)\}$,
&$\{(0,0),(\infty_0,3),(1,11)\}$,\\
$\{(0,0),(\infty_1,3),(3,10)\}$,
&$\{(0,0),(\infty_2,3),(3,4)\}$,
&$\{(0,0),(0,4),(5,8)\}$,\\
$\{(0,0),(1,4),(5,11)\}$,
&$\{(0,0),(2,4),(5,9)\}$,
&$\{(0,0),(\infty_0,4),(3,8)\}$,\\
$\{(0,0),(\infty_1,4),(4,7)\}$,
&$\{(0,0),(5,5),(\infty_0,5)\}$,
&$\{(0,0),(\infty_2,5),(4,6)\}$,\\
$\{(0,0),(3,9),(\infty_0,11)\}$,
&$\{(0,0),(5,10),(\infty_1,10)\}$,
&$\{(0,0),(\infty_0,10),(3,11)\}$,\\
$\{(1,0),(\infty_2,0),(5,10)\}$,
&$\{(1,0),(1,1),(\infty_1,9)\}$,
&$\{(1,0),(3,1),(\infty_0,10)\}$,\\
$\{(1,0),(5,1),(\infty_2,7)\}$,
&$\{(1,0),(\infty_0,1),(1,8)\}$,
&$\{(1,0),(\infty_1,1),(1,9)\}$,\\
$\{(1,0),(\infty_2,1),(1,10)\}$,
&$\{(1,0),(5,2),(\infty_2,10)\}$,
&$\{(1,0),(\infty_1,2),(3,3)\}$,\\
$\{(1,0),(5,3),(\infty_1,10)\}$,
&$\{(1,0),(\infty_0,3),(5,8)\}$,
&$\{(0,0),(2,0),(4,0)\}^*$.
\end{longtable}
\end{center}

For $n\geq20$, by Lemma \ref{inflate schgdd1}, there exists an $n$-cyclic $3$-HGDD of type $(3,(3\times4)^{n/4})$. Without loss of generality, assume that this design is constructed on $I_9\times\mathbb{Z}_{n}$ with $\mathcal{G}=\{\{3i,3i+1,3i+2\}\times {\mathbb Z}_{n}:i\in I_3\}$ and $\mathcal{H}=\{I_9\times\{j,n/4+j,n/2+j,3n/4+j\}:0\leq j\leq(n-4)/4\}$. Let ${\cal A}$ be a set of base blocks of this design.
Let $S=\{(a_{t},b_{t}):1\leq t\leq (n-2)/2\}$ be a $(n/4)$-extended Skolem sequence of order $(n-2)/2$ (from Lemma \ref{Skolem}). Let $\mathcal{B}$ be the following $3(n-2)$ base blocks.
\begin{center}
\begin{tabular}{lll}
$\{(0,0),(0,t),(1,a_{t}+t)\}$, &$\{(1,0),(1,t),(2,a_{t}+t)\}$, &$\{(2,0),(2,t),(0,a_{t}+t)\}$,\\
$\{(3,0),(3,t),(4,a_{t}+t)\}$, &$\{(4,0),(4,t),(5,a_{t}+t)\}$, &$\{(5,0),(5,t),(3,a_{t}+t)\}$,\\
\end{tabular}
\end{center}
where $1\leq t\leq (n-2)/2$. Let $\mathcal{C}$ be the following $40$ codewords:
\begin{center}
\begin{tabular}{lll}

$\{(0,0),(1,0),(2,0)\}$,
&$\{(0,0),(3,0),(4,0)\}$,
&$\{(0,0),(5,0),(6,0)\}$,\\
$\{(0,0),(7,0),(1,n/4)\}$,
&$\{(0,0),(8,0),(3,n/4)\}$,
&$\{(0,0),(4,n/4),(6,n/4)\}$,\\
$\{(0,0),(5,n/4),(7,n/4)\}$,
&$\{(0,0),(8,n/4),(4,n/2)\}$,
&$\{(0,0),(3,n/2),(6,n/2)\}$,\\
$\{(0,0),(5,n/2),(8,n/2)\}$,
&$\{(0,0),(7,n/2),(5,3n/4)\}$,
&$\{(0,0),(2,3n/4),(6,3n/4)\}$,\\
$\{(0,0),(3,3n/4),(7,3n/4)\}$,
&$\{(0,0),(4,3n/4),(8,3n/4)\}$,
&$\{(1,0),(3,0),(5,0)\}$,\\
$\{(1,0),(4,0),(7,0)\}$,
&$\{(1,0),(6,0),(2,n/4)\}$,
&$\{(1,0),(8,0),(5,n/4)\}$,\\
$\{(1,0),(3,n/4),(8,n/4)\}$,
&$\{(1,0),(4,n/4),(8,n/2)\}$,
&$\{(1,0),(6,n/4),(3,n/2)\}$,\\
$\{(1,0),(7,n/4),(3,3n/4)\}$,
&$\{(1,0),(4,n/2),(6,3n/4)\}$,
&$\{(1,0),(5,n/2),(8,3n/4)\}$,\\
$\{(1,0),(6,n/2),(5,3n/4)\}$,
&$\{(1,0),(7,n/2),(4,3n/4)\}$,
&$\{(2,0),(3,0),(7,n/4)\}$,\\
$\{(2,0),(4,0),(5,0)\}$,
&$\{(2,0),(7,0),(5,n/2)\}$,
&$\{(2,0),(8,0),(3,n/2)\}$,\\
$\{(2,0),(3,n/4),(8,n/2)\}$,
&$\{(2,0),(4,n/4),(8,3n/4)\}$,
&$\{(2,0),(5,n/4),(7,n/2)\}$,\\
$\{(2,0),(6,n/4),(3,3n/4)\}$,
&$\{(2,0),(8,n/4),(5,3n/4)\}$,
&$\{(2,0),(4,n/2),(7,3n/4)\}$,\\
$\{(2,0),(6,n/2),(4,3n/4)\}$,
&$\{(3,0),(4,n/4),(7,3n/4)\}$,
&$\{(3,0),(6,n/4),(5,3n/4)\}$,\\
$\{(4,0),(5,n/4),(6,n/2)\}$.
\end{tabular}
\end{center}
It is readily checked that $\mathcal{A}\bigcup\mathcal{B}\bigcup\mathcal{C}$ forms the desired code defined on $I_9\times\mathbb{Z}_{n}$ with $\{6,7,8\}\times\mathbb{Z}_{n}$ as the hole.
\end{proof}

\begin{Lemma}\label{[10:4]}
Let $r\in\{1,2,4,5\}$ and $t\in\{1,2\}$. There exists a $2$-D $([6t+r:r]\times n,3,1)$-OOC with $\Theta(6t+r,r,n,3)$ codewords for any $n\equiv0\pmod4$ and $n\geq8$.
\end{Lemma}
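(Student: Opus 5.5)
The plan follows the pattern of Remark \ref{2regular[4:1]}. First build a $2$-regular $2$-D $([6t+r:r]\times n,3,1)$-OOC with $\Upsilon(6t+r,r,n,2,3)$ codewords. Then fill its forbidden set $I_{6t+r}\times\{0,n/2\}$ using Construction \ref{filling3} with a $2$-D $([6t+r:r]\times 2,3,1)$-OOC having the maximum number $\Theta(6t+r,r,2,3)$ of codewords. The final step is a count showing that $\Upsilon(6t+r,r,n,2,3)+\Theta(6t+r,r,2,3)=\Theta(6t+r,r,n,3)$.

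\textbf{Step 1 (the $2$-regular holey code).} Write $m=6t+r$. By Lemma \ref{scihgdd1} there is a $3$-SCIHGDD of type $(m-1,r-1,2^{n/2})$ for every $n\equiv0\pmod4$, $n\ge8$. For the companion ingredient, $n\equiv0\pmod8$ is covered by Example \ref{2regular [2:1]}. The case $n\equiv4\pmod8$ is the gap: a frame starter of type $2^{n/2}$ in $\mathbb{Z}_n$ is needed there too, and Lemma \ref{fs 2^n} only covers $n\equiv0,2\pmod 8$. I would try first to supply such starters directly for $n\equiv4\pmod8$, for instance by pairing $x$ with $-x$ up to a shift and using a Skolem-type sequence from Lemma \ref{Skolem} to cover the differences $\pm1,\dots,\pm(n/2-1)$. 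If no such starter exists (a parity obstruction is plausible), I would instead absorb the leftover row into the hole directly: on each group $\{x\}\times\mathbb{Z}_n$ with $x\notin I_{r-1}$, add codewords $\{(x,0),(x,i),(\infty,a_i+i)\}$ built from an extended Skolem sequence of order $(n-2)/2$, as in Lemmas \ref{[3:0],n=0,6 mod8} and \ref{[5:2]}. These use the pure $(x,x)$-differences $\ne n/2$ and the mixed $(x,\infty)$-differences outside a few values, which are then handled in Step 2. With the ingredients in hand, Construction \ref{fill in ihgdd} gives the $2$-regular $2$-D $([m:r]\times n,3,1)$-OOC.

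\textbf{Step 2 (filling the forbidden set).} We need a $2$-D $([m:r]\times2,3,1)$-OOC with $\Theta(m,r,2,3)$ codewords, that is, an optimal $2$-cyclic $3$-GDP of type $2^{m-r}(2r)^1$ on the at most ten values of $(m,r)$ with $m=6t+r$, $r\in\{1,2,4,5\}$, $t\in\{1,2\}$. When $m(m-1)-r(r-1)\equiv0\pmod 3$ this is a $2$-cyclic $3$-GDD, and Lemma \ref{h-cyclic gdd} together with Construction \ref{GDD-filling} should supply it. Otherwise one needs a near-optimal packing, which I expect to give by explicit base blocks. Applying Construction \ref{filling3} then gives the final code.

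\textbf{Step 3 (count).} Check arithmetically that $\frac{(m^2-r^2)(n-2)}{6}+\left\lfloor\frac{2(m^2-r^2)-2(m-r)}{6}\right\rfloor=\left\lfloor\frac{(m^2-r^2)n-2(m-r)}{6}\right\rfloor$. This holds because $(m^2-r^2)(n-2)/6$ is an integer; with $m\equiv r\pmod 6$ we get $m^2-r^2\equiv0\pmod{12}$. The main obstacle is Step 1 for $n\equiv4\pmod8$, together with producing the small explicit $n=2$ packings whenever the GDD route fails.
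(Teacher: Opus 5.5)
There is a genuine gap, and it sits in Step 2, not only in Step 1. Since $m-r=6t$, we have $\Theta(6t+r,r,n,3)=t[(m+r)n-2]$, which is exactly the number of usable differences divided by $6$. So the code you want is tight: every mixed difference between non-hole rows is used. In your decomposition the $2$-regular part uses no difference in $\{0,n/2\}$, so the scaled $n=2$ code would have to be a $2$-cyclic $3$-GDD of type $2^{6t}(2r)^1$.

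For $t=1$ this object does not exist for any $r$. In a $2$-D $(m\times2,3,1)$-OOC every codeword has its three points in distinct rows. Among its three row pairs, the number whose second coordinates differ is $0$ or $2$. A GDD would need difference $1$ on each of the $\binom{6+r}{2}-\binom{r}{2}=15+6r$ non-hole row pairs, which is an odd number; this is the argument of Lemma \ref{bd:n=2}. Hence for $t=1$, any code obtained by Construction \ref{filling3} from a $2$-regular code has at most $\Theta(6+r,r,n,3)-1$ codewords, whatever you do in Step 1. Note also that Lemma \ref{scigdd} only covers $m\ge13$. For $t=2$ and $n\equiv0\pmod 8$ your route does go through.

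Step 1 is also unresolved for $n\equiv4\pmod8$. A frame starter of type $2^{n/2}$ in $\mathbb{Z}_n$ forces $n(n-2)/2\equiv\binom{n/2}{2}\pmod 2$, i.e. $n\equiv0,2\pmod8$, so Example \ref{2regular [2:1]} cannot be extended. Your Skolem fallback leaves mixed differences $\{0,k\}$, where $k$ is the extension point. Taking $k=n/2$ is impossible, because order $(n-2)/2\equiv1\pmod4$ requires $k$ odd. Moreover, those codewords use the mixed difference $n/2$, so the result is not $2$-regular and your Step 2 no longer applies.

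The missing idea is to not split off the $\{0,n/2\}$ layer at all. The paper takes the $3$-SCIHGDD of type $(6t+r-1,r-1,2^{n/2})$ from Lemma \ref{scihgdd1}. It then attaches the last hole row $\infty_{r-1}$ in two halves:
\begin{itemize}
\item rows $0,\dots,3t-1$ via an $(n/4)$-extended Skolem sequence;
\item rows $3t,\dots,6t-1$ via a $(3n/4)$-extended Skolem sequence shifted by $n/2$.
\end{itemize}
Both sequences exist for every $n\equiv0\pmod4$. These codewords themselves consume the mixed differences $n/2$ (respectively $0$) to $\infty_{r-1}$. The common leftover $n/4$ is absorbed by $\{(2i,0),(2i+1,0),(\infty_{r-1},n/4)\}$. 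The rest of the $\{0,n/2\}$ layer, together with the differences involving $\infty_0,\dots,\infty_{r-2}$, is covered by the explicit $t(12t+4r-5)$ codewords. Because codewords straddle the $\{0,n/2\}$ layer, the parity obstruction disappears, and $n\equiv0$ and $n\equiv4\pmod8$ are handled uniformly.
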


\begin{proof}

By Lemma \ref{scihgdd1}, there exists a $3$-SCIHGDD of type $(6t+r-1,r-1,2^{n/2})$.
Let $Y=\{\infty_0,\infty_1,\ldots,\infty_{r-2}\}$ and $Y$ is an empty set if $r=1$. Assume that this design is constructed on $(I_{6t}\bigcup Y)\times\mathbb{Z}_{n}$ with $\mathcal{G}=\{\{i\}\times\mathbb{Z}_{n}:i\in I_{6t}\bigcup Y\}$ and $\mathcal{H}=\{(I_{6t}\bigcup Y)\times\{j,n/2+j\}:0\leq j\leq(n-2)/2\}$. Let $\mathcal{A}$ be a set of base blocks of this design.

Let $S=\{(a_{i},b_{i}):1\leq i\leq (n-2)/2\}$ be an $(n/4)$-extended Skolem sequence of order $(n-2)/2$ and
$T=\{(c_{i},d_{i}):1\leq i\leq (n-2)/2\}$ be a $(3n/4)$-extended Skolem sequence of order $(n-2)/2$ (from Lemma \ref{Skolem}). Let $\mathcal{B}$ be the following $3t(n-2)$ codewords:
\begin{center}
\begin{tabular}{lll}
$\{(l,0),(l,i),(\infty_{r-1},a_{i}+i)\}$, &$\{(e,0),(e,i),(\infty_{r-1},c_{i}+i+n/2)\}$, &$1\leq i\leq (n-2)/2$,
\end{tabular}
\end{center}
where $0\leq l\leq3t-1$ and $3t\leq e\leq6t-1$. Let $\mathcal{C}$ consist of $\{\{(2i,0),(2i+1,0),(\infty_{r-1},n/4)\}:0\leq i\leq3t-1\}$ and the $t(12t+4r-5)$ codewords in Appendix \ref{initial codewords of [10:4]}. It is readily checked that $\mathcal{A}\bigcup\mathcal{B}\bigcup\mathcal{C}$ forms the desired code on $(I_{6t}\bigcup\{\infty_0,\infty_1,\ldots,\infty_{r-1}\})\times\mathbb{Z}_{n}$ with $\{\infty_0,\infty_1,\ldots,\infty_{r-1}\}\times\mathbb{Z}_{n}$ as the hole.
\end{proof}

\subsection{Constructions from $n$-cyclic GDDs}

\begin{Construction}\rm{\cite[Construction 9]{fww}}\label{n-cyclic GDD}
Suppose that there exist
\begin{enumerate}
\item[$(1)$] an $n$-cyclic $k$-GDP of type $(v_{1}n)^{m_1} (v_{2}n)^{m_2}\cdots (v_{r}n)^{m_r}$ with $b$ base blocks;
\item[$(2)$] a $2$-D $([v_i+r:r]\times n,k,1)$-OOC with $f_i$ codewords for each $1\leq i\leq r$.
\end{enumerate}
Then there exists a $2$-D $([(\sum_{i=1}^r v_i m_i)+r:r]\times n,k,1)$-OOC with $(\sum_{i=1}^r m_if_i)+b$ codewords.
\end{Construction}

\begin{Lemma}\label{[10:1],n=8t+6}
There exists a $2$-D $([10:1]\times n,3,1)$-OOC with $\Theta(10,1,n,3)$ codewords for any $n\equiv6\pmod{8}$.
\end{Lemma}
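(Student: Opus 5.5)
Since Construction \ref{n-cyclic GDD} has just been stated and the lemma follows it directly, I would obtain the $([10:1]\times n)$ code by applying that construction, with one infinite row as the hole.

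\textbf{Step 1: the GDD.} Write $10=3\cdot3+1$. Lemma \ref{3n^3n^1} supplies an $n$-cyclic $3$-GDD of type $(3n)^3n^1$ whenever $n\equiv2\pmod4$, and every $n\equiv6\pmod8$ qualifies. I would use this design with all four groups as input, $v$-values $3,3,3,1$, and no separate infinite point. Construction \ref{n-cyclic GDD} in its stated form ($v_im_i$ summing to $m$, with $r$ added points) is awkward here, so I would apply it in the degenerate form: take the GDD on $10$ rows and fill each group $G$ of size $3n$ with a $2$-D $(3\times n,3,1)$-OOC, leaving the size-$n$ group as the hole of size $1$. This filling is legitimate because a GDD produces only mixed differences between different groups, while the fill-ins produce differences within a group.

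\textbf{Step 2: fill the groups.} For each of the three groups of size $3n$ I need an optimal $2$-D $([3:0]\times n,3,1)$-OOC with $\Theta(3,0,n,3)$ codewords. Lemma \ref{[3:0],n=0,6 mod8} provides exactly this for $n\equiv6\pmod8$. The group of size $n$ (one row) stays as the hole and receives nothing.

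\textbf{Step 3: count.} The GDD has $\binom{4}{2}$-type mixed differences: the pairs of distinct groups contribute $3\cdot 9n+3\cdot 3n=36n$ ordered-row-pair classes, each covering $\mathbb{Z}_n$ fully in both directions. Its number of base blocks is therefore $2\cdot36n/6=12n$. Each fill-in contributes $\Theta(3,0,n,3)=\lfloor 3(3n-2)/6\rfloor=(9n-6)/6$ rounded down. Since $n$ is even, $3(3n-2)/6=(3n-2)/2$ exactly, so each fill-in gives $(3n-2)/2$ codewords. The total is $12n+3(3n-2)/2=(33n-6)/2$.

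This must be compared with $\Theta(10,1,n,3)=\lfloor(99n-18)/6\rfloor=\lfloor(33n-6)/2\rfloor=(33n-6)/2$, and the two agree. The main obstacle is only the careful verification that the degenerate use of Construction \ref{n-cyclic GDD} respects the hole: the size-$n$ group must receive neither pure differences nor mixed differences with itself. This holds because no block of the GDD meets one group twice, and nothing is filled into that group. If an even-$n$ subtlety about the difference $n/2$ arose in the fill-ins, I would instead fall back on Lemma \ref{3-GDP}-style packings, but the counts above show no slack is needed.
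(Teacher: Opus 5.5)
Your proposal is correct and is the paper's proof: take the $n$-cyclic $3$-GDD of type $(3n)^3n^1$ from Lemma~\ref{3n^3n^1}, fill each of the three $3n$-groups with the $2$-D $(3\times n,3,1)$-OOC of Lemma~\ref{[3:0],n=0,6 mod8}, and leave the size-$n$ group as the one-row hole. Your count $12n+3(3n-2)/2=(33n-6)/2=\Theta(10,1,n,3)$ and your check that the hole row receives no differences are both right. The fallback you mention for the difference $n/2$ is not needed.
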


\begin{proof}
There exists an $n$-cyclic $3$-GDD of type $(3n)^{3}n^1$ by Lemma \ref{3n^3n^1}.
Then apply Construction \ref{n-cyclic GDD} with $2$-D $(3\times n,3,1)$-OOC with $\Theta(3,0,n,3)$ codewords (from Lemma \ref{[3:0],n=0,6 mod8}) to obtain a $2$-D $([10:1]\times n,3,1)$-OOC with $\Theta(10,1,n,3)$ codewords.
\end{proof}

\begin{Lemma}\label{[11:2],n=8t+2}
There exists a $2$-D $([11:2]\times n,3,1)$-OOC with $\Theta(11,2,n,3)$ codewords for any $n\equiv2\pmod{8}$.
\end{Lemma}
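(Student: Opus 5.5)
The plan is to reduce the statement to Construction \ref{filling3}: build a $2$-regular $2$-D $([11:2]\times n,3,1)$-OOC, then fill its forbidden set $I_{11}\times\{0,n/2\}$ with a suitable $2$-D $([11:2]\times 2,3,1)$-OOC. The route used for Lemma \ref{[10:1],n=8t+6} does not work here. There, one would use Construction \ref{n-cyclic GDD} with an $n$-cyclic $3$-GDD of type $(3n)^3$ and a $([5:2]\times n)$-code. But Lemma \ref{h-cyclic gdd} forbids type $(3n)^3$ for $n$ even. Moreover, Lemma \ref{[5:2]} only covers $n\equiv 0,6\pmod 8$.

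\textbf{Case $n\ge 10$: the regular part.} By Lemma \ref{schgdd}, a $3$-SCHGDD of type $(10,2^{n/2})$ exists, and this is the same as a $3$-SCIHGDD of type $(10,1,2^{n/2})$. The conditions check as follows. Here $t=n/2\equiv1\pmod 4$, so $t\ge 5$ and $t-1$ is even; hence both congruence conditions hold, and $m=10$ is not among the exceptions. Example \ref{2regular [2:1]} gives a $2$-regular $2$-D $([2:1]\times n,3,1)$-OOC, since $n\equiv 2\pmod 8$ and $n\ge 10$. Construction \ref{fill in ihgdd} with $m=11$, $r=2$ then yields a $2$-regular $2$-D $([11:2]\times n,3,1)$-OOC with
\[
\Upsilon(11,2,n,2,3)=\frac{117(n-2)}{6}=\frac{39(n-2)}{2}
\]
codewords, which is an integer.

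\textbf{Counting what the filler must supply.} We have
\[
\Theta(11,2,n,3)=\left\lfloor\frac{117n-18}{6}\right\rfloor=\Upsilon(11,2,n,2,3)+\left\lfloor\frac{216}{6}\right\rfloor=\Upsilon(11,2,n,2,3)+36,
\]
and $\Theta(11,2,2,3)=\lfloor (234-18)/6\rfloor=36$. So it suffices to exhibit a $2$-D $([11:2]\times 2,3,1)$-OOC with $36$ codewords. Such a code uses every mixed difference outside the hole exactly once, so it is precisely a $2$-cyclic $3$-GDD of type $2^9 4^1$, i.e.\ a $3$-SCIGDD of type $2^{(11,2)}$. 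Applying Construction \ref{filling3} with this design then finishes every $n\ge 10$. The same object is exactly the required code in the remaining case $n=2$.

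\textbf{Main obstacle: the $2$-cyclic $3$-GDD of type $2^9 4^1$.} Lemma \ref{scigdd} only supplies such designs for $m\ge 13$, so this small design has to be built directly. I would work on $(\mathbb{Z}_9\cup\{\infty_0,\infty_1\})\times\mathbb{Z}_2$ and impose the automorphism $(+3\pmod 9,-)$, fixing each $\infty_i$. This reduces the $36$ blocks to about $12$ orbits, and I would search for base blocks covering each required mixed difference exactly once, for instance by computer, listing the result in an appendix. If that symmetry fails, I would fall back on $(+1\pmod 9,-)$ with $4$ orbit representatives, or on an unrestricted search. Verifying the difference conditions is then routine.
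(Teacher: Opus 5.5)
Your regular part is correct. For $n\ge 10$ you have $t=n/2\equiv 1\pmod 4$, so the $3$-SCHGDD of type $(10,2^{n/2})$ exists by Lemma \ref{schgdd}. It serves as the $3$-SCIHGDD of type $(10,1,2^{n/2})$ in Construction \ref{fill in ihgdd}, and with Example \ref{2regular [2:1]} it gives a $2$-regular $([11:2]\times n,3,1)$-OOC with $39(n-2)/2$ codewords. Your count $\Theta(11,2,n,3)=39(n-2)/2+36$ is also right.

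The gap is the ingredient everything hinges on: the $2$-cyclic $3$-GDD of type $2^94^1$, which for $n=2$ is the whole lemma. You only announce a search for it under the automorphism $(+3\pmod 9,-)$, with no guarantee that the search succeeds, so as written this step is unproved.

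The missing observation is that the paper's tools already supply this design. Take the $2$-cyclic $3$-GDD of type $6^32^1$ from Example \ref{6^32^1}; its group of size $2$ is a single row $\infty$. Adjoin a new row $\infty'$. On each of the three groups of size $6$ (three rows), together with $\{\infty'\}\times\mathbb{Z}_2$, place a $2$-cyclic $3$-GDD of type $2^4$ (Lemma \ref{h-cyclic gdd}). This gives $24+3\cdot 4=36$ base blocks. They cover every mixed difference between two rows exactly once, except those between $\infty$ and $\infty'$, and they produce no pure difference. Hence they form a $2$-D $([11:2]\times 2,3,1)$-OOC, i.e.\ the required design of type $2^94^1$.

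This fix is exactly the case $n=2$ of the paper's own proof. It also shows that you misread the proof of Lemma \ref{[10:1],n=8t+6}. That lemma uses the $n$-cyclic $3$-GDD of type $(3n)^3n^1$ from Lemma \ref{3n^3n^1}, valid for every $n\equiv 2\pmod 4$, not type $(3n)^3$, and it involves no $([5:2]\times n)$-code.

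The paper proves the present lemma by the same device with one extra row $\infty$. Fill each group of size $3n$, together with $\{\infty\}\times\mathbb{Z}_n$, by a $([4:1]\times n,3,1)$-OOC with $\Theta(4,1,n,3)=5n/2-1$ codewords. These come from Remark \ref{2regular[4:1]}, or from the $2$-cyclic $3$-GDD of type $2^4$ when $n=2$. Keep the group of size $n$ together with $\infty$ as the hole. This gives $12n+3(5n/2-1)=\Theta(11,2,n,3)$ codewords, uniformly for all $n\equiv 2\pmod 8$.

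Once the $2^94^1$ filler is in hand, your route (the $m=11$ analogue of Lemma \ref{[m:s]}) is valid. Both routes ultimately rest on Example \ref{6^32^1}. The paper's version is a one-line application of lemmas already proved. Yours needs the SCHGDD of type $(10,2^{n/2})$ plus a separately assembled $n=2$ filler.
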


\begin{proof}
There exists an $n$-cyclic $3$-GDD of type $(3n)^{3}n^1$ by Lemma \ref{3n^3n^1}.
Then apply Construction \ref{n-cyclic GDD} with $2$-D $([4:1]\times n,3,1)$-OOC with $\Theta(4,1,n,3)$ codewords (from Remark \ref{2regular[4:1]}) to obtain a $2$-D $([11:2]\times n,3,1)$-OOC with $\Theta(11,2,n,3)$ codewords.
\end{proof}

\begin{Lemma}\label{[m:r],n=4}
Let $m\equiv r\pmod{6}$ where $r\in\{1,2,3,4,5\}$ and $m\geq7$. Then there exists a $2$-D $([m:r]\times 4,3,1)$-OOC with $\Theta(m,r,4,3)$ codewords.
\end{Lemma}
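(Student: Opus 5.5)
Write $m=6t+r$ with $t\geq1$ and $r\in\{1,\dots,5\}$. The plan is to apply Construction \ref{n-cyclic GDD} with an $n$-cyclic GDP whose groups have size $6\cdot 4$ (i.e. $v=6$, $n=4$), or size $3\cdot4$, and then fill each group together with the $r$ hole rows by small ingredient codes. In Construction \ref{n-cyclic GDD}, take a $4$-cyclic $3$-GDP of type $(6\cdot4)^t$ with $b$ base blocks. Then, for each group, take a $2$-D $([6+r:r]\times4,3,1)$-OOC with $f$ codewords. This yields a $2$-D $([m:r]\times 4,3,1)$-OOC with $tf+b$ codewords.

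The count must reach $\Theta(m,r,4,3)=\lfloor ((m^2-r^2)4-2(m-r))/6\rfloor$, which equals $(2(m^2-r^2)-(m-r))/3$ exactly since $m\equiv r\pmod 6$. For $t\geq3$ (and even $t=2$, where $m\geq4$ groups fail but $t\ge 3$ is needed for a GDD), I would use Lemma \ref{h-cyclic gdd} to get a $4$-cyclic $3$-GDD of type $(24)^t$. Its conditions are checked as follows:
\begin{itemize}
\item $(t-1)\cdot24$ is even;
\item $t(t-1)\cdot24\equiv0\pmod3$;
\item $n=4\not\equiv2\pmod4$;
\item for $t=3$ we have $n$ even and $v=6$ even.
\end{itemize}
A GDD uses every mixed difference between groups, so $b=36t(t-1)\cdot4/6=24t(t-1)$. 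With $f=\Theta(6+r,r,4,3)=(2(6+r)^2-2r^2-6)/3=(72+24r-6)/3=22+8r$, the total is $24t(t-1)+t(22+8r)$. This matches $\Theta(6t+r,r,4,3)=(2(36t^2+12tr)-6t)/3=24t^2+8tr-2t$. So the whole problem reduces to the base cases $t\in\{1,2\}$, i.e. $m\in\{7,\dots,11\}\cup\{13,\dots,17\}$.

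For $t=1$ the statement is itself the ingredient, and these five codes ($m=6+r$, $r=1,\dots,5$) I would construct directly on $(\mathbb{Z}_6\cup\{\infty_0,\dots,\infty_{r-1}\})\times\mathbb{Z}_4$. I would list initial codewords developed by $(+2\bmod 6,-)$ or $(+3\bmod6,-)$, as in Lemma \ref{[9:3]}, relegating the lists to an appendix. Their correctness is checked by the difference condition of Definition \ref{def-diff}: no pure differences $0,2$ are used, each pure difference $\pm1$ on a non-hole row is used once, and the number of unused mixed differences is exactly what the floor in $\Theta$ permits. Alternatively, for $r\in\{1,2,4,5\}$, Lemma \ref{[10:4]} already covers $n=4$? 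No, since it requires $n\geq8$, so direct constructions are needed.

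For $t=2$ ($m=12+r$) a GDD of type $24^2$ does not exist. Here I would instead use a $4$-cyclic $3$-GDD of type $12^4$, i.e. $v=3$, $m=4$, which exists by Lemma \ref{h-cyclic gdd}. It would be filled with $[3+r:r]\times4$ codes, but that is only available if $[3+r:r]$ codes hit $\Theta$. A cleaner route is a $4$-cyclic $3$-GDD of type $(12)^3(4r')^1$, built similarly to Lemma \ref{3n^4t9n^1}, or simply constructing these five codes directly as well. I would first try type $12^4$ with direct $[3+r:r]\times 4$ codes, checking the count: with $b=6\cdot144\cdot4/6\cdot$ adjusted, everything again uses all mixed differences, so it suffices that each filler attains its $\Theta$.

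The main obstacle is producing the small direct constructions for $t=1$ (and possibly $t=2$) by computer search. The divisibility count shows optimality needs every difference used except one per non-hole row. The parity arguments of Lemma \ref{bd:n=4} suggest some residues might be delicate, so I would search with the hole placed to break those parity obstructions.
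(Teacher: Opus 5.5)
Your recursion for $t\ge3$ is exactly the paper's. You take a $4$-cyclic $3$-GDD of type $24^t$ with $24t(t-1)$ base blocks (Lemma~\ref{h-cyclic gdd}) and fill it via Construction~\ref{n-cyclic GDD} with $[6+r:r]\times4$ codes of size $22+8r$; your count $24t^2+8tr-2t=\Theta(6t+r,r,4,3)$ is correct. The gap is in the base cases $t\in\{1,2\}$. You do not produce these codes, and the route you intend to try first for $t=2$ (type $12^4$ filled with $[3+r:r]\times4$ codes) is impossible.

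Here is why. The value $\Theta(3+r,r,4,3)=5+4r$ can only be reached if two kinds of difference are each used exactly once: every mixed difference between two rows not both in the hole, and each pure difference $\pm1$ in each of the three non-hole rows.
\begin{itemize}
\item Two points of a codeword in the same row at distance $2$ would repeat the pure difference $2$, and every $3$-subset of $\mathbb{Z}_4$ contains such a pair. So each non-hole row gets its pure differences from exactly one codeword $\{(i,a),(i,a+1),(j,x)\}$.
\item That codeword covers exactly one odd mixed difference. Here each unordered pair of rows is counted once, and $d$ and $-d$ have the same parity modulo $4$.
\item A codeword meeting three distinct rows covers $0$ or $2$ odd mixed differences, since $z-x=(y-x)+(z-y)$.
\item There are $2\bigl(\binom{3+r}{2}-\binom{r}{2}\bigr)=6r+6$ odd mixed differences to cover. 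This leaves the odd number $6r+3$ for the three-row codewords, a contradiction.
\end{itemize}
This is the argument of Lemma~\ref{bd:n=4}. Its case $r=0$ says $\Phi(3\times4,3,1)=4<5=\Theta(3,0,4,3)$. That also kills your type $12^3(4r')^1$ variant, because every $3$-row group would need a filler attaining $\Theta$ exactly.

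More generally, suppose $\Theta$ forces every difference to be used (e.g.\ when $3\mid v$, with $v$ non-hole rows). Then the same count leaves $v(v+2r-2)$ odd differences for the three-row codewords. So the obstruction depends only on the parity of $v$, not on where you place the hole. For $t=1,2$ one has $v=6,12$, so these cases are unobstructed, but no recursive shortcut is available and they must be built directly.

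That is exactly what the paper does. Appendix~\ref{initial codewords of [m:r]} lists initial codewords on $(\mathbb{Z}_{6t}\cup\{\infty_0,\dots,\infty_{r-1}\})\times\mathbb{Z}_4$, developed by $(+2t\pmod{6t},-)$ with each $\infty_i$ fixed. It adds short-orbit codewords such as $\{(0,0),(2t,0),(4t,0)\}$, each contributing a single codeword. The $t=1$ codes are also the fillers for $t\ge3$. So until those ten codes are exhibited and checked against Definition~\ref{def-diff}, your proposal does not prove the lemma for any $m$.
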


\begin{proof}
Let $m=6t+r$. It follows from Appendix \ref{initial codewords of [m:r]} that the conclusion holds for $t\in\{1,2\}$ and $r\in\{1,2,3,4,5\}$. For $t\geq3$, by Lemma \ref{h-cyclic gdd}, there exists a $4$-cyclic $3$-GDD of type $24^{t}$ with $24t(t-1)$ base blocks. Then apply Construction \ref{n-cyclic GDD} with $2$-D $([6+r:r]\times4,3,1)$-OOC with $22+8r$ codewords to obtain a $2$-D $([6t+r:r]\times4,3,1)$-OOC with $\Theta(6t+r,r,4,3)=24t^2+8tr-2t$ codewords.
\end{proof}

\begin{Lemma}\label{[m:r],r=3}
Let $m\equiv 3\pmod{6}$ where $m\geq9$ and $m\neq15$. Then there exists a $2$-D $([m:3]\times n,3,1)$-OOC with $\Theta(m,3,n,3)$ codewords for any $n\equiv4\pmod{8}$ and $n\geq12$.
\end{Lemma}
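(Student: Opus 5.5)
Write $m=6t+3$ with $t\geq1$, $t\neq2$. The plan is to use Construction \ref{n-cyclic GDD} with an $n$-cyclic $3$-GDD whose groups all have size $6n$, together with a hole of size $3$. Concretely, take an $n$-cyclic $3$-GDD of type $(6n)^t$ and, as the ingredient, the $2$-D $([9:3]\times n,3,1)$-OOC of Lemma \ref{[9:3]} with $\Theta(9,3,n,3)$ codewords.

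\textbf{Case $t=1$.} This is exactly Lemma \ref{[9:3]}, which covers $n\equiv4\pmod8$, $n\geq12$.

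\textbf{Case $t\geq3$.} By Lemma \ref{h-cyclic gdd} with $v=6$, an $n$-cyclic $3$-GDD of type $(6n)^t$ exists. For $t=3$ this needs $v$ even, which holds. For $t\geq4$ the conditions $(t-1)6n\equiv0\pmod2$ and $t(t-1)6n\equiv0\pmod3$ are automatic, and the extra condition $v\equiv0\pmod2$ holds anyway. Its number of base blocks is
$$b=\frac{\binom{t}{2}(6)^2 n\cdot 2}{6}=6t(t-1)n.$$
Filling each of the $t$ groups, together with three infinite rows, with a $([9:3]\times n)$-OOC yields a $2$-D $([6t+3:3]\times n,3,1)$-OOC with $t\,\Theta(9,3,n,3)+6t(t-1)n$ codewords.

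\textbf{Count check.} Since $n\equiv4\pmod8$ and $m^2-9$ is divisible by $6$ for $m\equiv3\pmod6$, we have
$$\Theta(m,3,n,3)=\left\lfloor\frac{(m^2-9)n-2(m-3)}{6}\right\rfloor=\frac{(m^2-9)n}{6}-2t+\left\lfloor\frac{-\,0}{6}\right\rfloor\!,$$
more carefully $2(m-3)/6=2t$, so $\Theta(m,3,n,3)=(m^2-9)n/6-2t$, an exact integer. With $m^2-9=36t^2+36t$ this gives $\Theta(m,3,n,3)=6t(t+1)n-2t$. In particular $\Theta(9,3,n,3)=12n-2$. The construction then produces
$$t(12n-2)+6t(t-1)n=6t(t+1)n-2t,$$
which matches $\Theta(m,3,n,3)$. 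So the count is right, and the only real issue is the existence of the GDD.

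\textbf{Main obstacle: the excluded value $t=2$.} Here Lemma \ref{h-cyclic gdd} requires $m\geq3$ groups, so type $(6n)^2$ is unavailable and $m=15$ is excluded, consistent with the statement. I would therefore double-check only the $t=3$ case of Lemma \ref{h-cyclic gdd}, which demands $n$ odd or $v$ even; with $v=6$ this is satisfied. The ingredient's hole must be the common set of $r=3$ extra rows, which is exactly the format of Construction \ref{n-cyclic GDD}.
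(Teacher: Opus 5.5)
Your proposal is correct and is essentially the paper's proof. The case $m=9$ is Lemma \ref{[9:3]}, and for $m\geq 21$ you apply Construction \ref{n-cyclic GDD} to an $n$-cyclic $3$-GDD of type $(6n)^{(m-3)/6}$ from Lemma \ref{h-cyclic gdd}, which has $6t(t-1)n=(m-3)(m-9)n/6$ base blocks, filled with the $([9:3]\times n)$-OOC having $12n-2$ codewords; your count $t(12n-2)+6t(t-1)n=\Theta(m,3,n,3)$ is right, and the garbled intermediate floor expression is harmless since your next line, $2(m-3)/6=2t$, gives the correct value.
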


\begin{proof}
It follows from Lemma \ref{[9:3]} that the conclusion holds for $m=9$. For $m\geq21$, by Lemma \ref{h-cyclic gdd}, there exists an $n$-cyclic $3$-GDD of type $(6n)^{(m-3)/6}$ with $(m-3)(m-9)n/6$ base blocks. Then apply Construction \ref{n-cyclic GDD} with $2$-D $([9:3]\times n,3,1)$-OOC with $12n-2$ codewords to obtain a $2$-D $([m:3]\times n,3,1)$-OOC with $\Theta(m,3,n,3)=[(m^2-9)n-2(m-3)]/6$ codewords.
\end{proof}

\begin{Lemma}\label{[m:r],n=4t}
Let $m\equiv r\pmod{6}$ where $r\in\{1,2,4,5\}$ and $m\geq7$. Then there exists a $2$-D $([m:r]\times n,3,1)$-OOC with $\Theta(m,r,n,3)$ codewords for any $n\equiv0\pmod{4}$ and $n\geq8$.
\end{Lemma}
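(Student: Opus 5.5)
We write $m=6t+r$ with $t\geq1$ and split according to $t$ and to $n\pmod 8$.

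\textbf{Small $t$.} For $t\in\{1,2\}$ the statement is exactly Lemma \ref{[10:4]}. It provides a $2$-D $([6t+r:r]\times n,3,1)$-OOC with $\Theta(6t+r,r,n,3)$ codewords for every $n\equiv0\pmod4$ with $n\geq8$.

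\textbf{Recursion for $t\geq3$.} We use Construction \ref{n-cyclic GDD} with a single group size. By Lemma \ref{h-cyclic gdd}, with group size $v=6$ and $t$ groups, there is an $n$-cyclic $3$-GDD of type $(6n)^t$ for every $t\geq3$ and every even $n$:
\begin{itemize}
\item for $t=3$, $v$ is even;
\item for $t\geq4$, $(t-1)6n$ is even, $t(t-1)6n\equiv0\pmod 3$, and $v$ is even, so the restriction for $t\equiv2,3\pmod4$ and $n\equiv2\pmod4$ is met.
\end{itemize}
Counting mixed differences, it has $b=t(t-1)36n/6=6t(t-1)n$ base blocks. We fill each of the $t$ groups, together with the common hole of $r$ rows, by a $2$-D $([6+r:r]\times n,3,1)$-OOC with $\Theta(6+r,r,n,3)$ codewords from the case $t=1$. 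Construction \ref{n-cyclic GDD} then gives a $2$-D $([6t+r:r]\times n,3,1)$-OOC with $t\,\Theta(6+r,r,n,3)+6t(t-1)n$ codewords.

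\textbf{Count.} It remains to check that this equals $\Theta(6t+r,r,n,3)$. Since $n\equiv0\pmod4$ and $r\not\equiv0\pmod3$, the quantity
\[
\bigl((6+r)^2-r^2\bigr)n-12=12(3+r)n-12
\]
is divisible by $6$, so $\Theta(6+r,r,n,3)=2(3+r)n-2$ with no floor. Likewise
\[
\bigl((6t+r)^2-r^2\bigr)n-12t=12t(3t+r)n-12t,
\]
so $\Theta(6t+r,r,n,3)=2t(3t+r)n-2t$. Then
\[
t\bigl(2(3+r)n-2\bigr)+6t(t-1)n=2t(3t+r)n-2t,
\]
which is the required number. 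Only the mixed differences between distinct groups and the pure differences inside each group's rows need to be counted, and these are accounted for separately by the GDD and by the fillings.

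\textbf{Main obstacle.} The delicate part is the base cases $t\in\{1,2\}$, which are already covered by Lemma \ref{[10:4]}. In the recursion, the only things to confirm are that the hypotheses of Lemma \ref{h-cyclic gdd} hold for $v=6$ whenever $n$ is even, and that the exact-count arithmetic above involves no rounding loss.
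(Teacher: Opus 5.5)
Your proof is correct and is essentially the paper's argument: Lemma \ref{[10:4]} covers $t\in\{1,2\}$, and for $t\geq3$ you apply Construction \ref{n-cyclic GDD} to an $n$-cyclic $3$-GDD of type $(6n)^t$ with $6t(t-1)n$ base blocks, filled with the $([6+r:r]\times n,3,1)$-OOC having $2(3+r)n-2$ codewords. Two minor points: the split of $n$ modulo $8$ that you announce at the start is never used, and the divisibility by $6$ in your count holds for every $n$ and $r$, so the hypotheses you invoke there are unnecessary.
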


\begin{proof}
Let $m=6t+r$. It follows from Lemma \ref{[10:4]} that the conclusion holds for $t\in\{1,2\}$. For $t\geq3$, by Lemma \ref{h-cyclic gdd}, there exists an $n$-cyclic $3$-GDD of type $(6n)^{t}$ with $6nt(t-1)$ base blocks. Then apply Construction \ref{n-cyclic GDD} with $2$-D $([6+r:r]\times n,3,1)$-OOC with $6n+2nr-2$ codewords to obtain a $2$-D $([6t+r:r]\times n,3,1)$-OOC with $\Theta(6t+r,r,n,3)=2t(3nt+nr-1)$ codewords.
\end{proof}

\begin{Lemma}\label{[m:r],n=8t+6}
Let $m\equiv r\pmod{12}$ where $r\in\{1,2,4,5,7,8,10,11\}$ and $m\geq13$. Then there exists a $2$-D $([m:r]\times n,3,1)$-OOC with $\Theta(m,r,n,3)$ codewords for any $n\equiv6\pmod{8}$.
\end{Lemma}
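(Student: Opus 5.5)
The approach is to inflate an $n$-cyclic $3$-GDD whose groups all have size $3n$ except for one group of size $rn$, which becomes the hole. Since $n\equiv6\pmod 8$, each $3n$-group can be filled with the optimal $2$-D $([3:0]\times n,3,1)$-OOC of Lemma \ref{[3:0],n=0,6 mod8}. Concretely, let $m=12t+r$ with $t\geq1$. I would construct an $n$-cyclic $3$-GDD of type $(3n)^{(m-r)/3}(rn)^1$ on $I_m\times\mathbb{Z}_n$, with the $rn$-group $R\times\mathbb{Z}_n$ playing the role of the hole. For every $3n$-group $G_j\times\mathbb{Z}_n$, I place a copy of the $([3:0]\times n)$-OOC on $G_j\times\mathbb{Z}_n$, and I leave the $rn$-group empty.

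By Remark \ref{n-cyclic gdd and ooc}, the base blocks of the GDD only cover mixed differences between distinct groups, each at most once. The small OOCs only use pure differences and mixed differences inside a single $3$-group. Hence the union is a $2$-D $([m:r]\times n,3,1)$-OOC with hole $R\times\mathbb{Z}_n$. For the count, the GDD has
\[\frac{\bigl[m(m-1)-r(r-1)-2(m-r)\bigr]n}{6}\]
base blocks, and the fillings add $\frac{m-r}{3}\cdot\frac{9n-6}{6}$ codewords. A routine computation shows that the total is $\bigl[(m^2-r^2)n-2(m-r)\bigr]/6=\Theta(m,r,n,3)$.

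It remains to produce the GDD for each $r$.
\begin{itemize}
\item For $r\in\{1,2,5,8\}$ the GDD is supplied directly by Lemma \ref{3n^4t9n^1}, since $n\equiv2\pmod 4$, so these cases are immediate.
\item For $r\in\{4,7,10,11\}$ I would mimic the proof of Lemma \ref{3n^4t9n^1}. First build a $2$-cyclic $3$-GDD of type $6^{4t}(2r)^1$ by applying Construction \ref{GDD-filling} to a $2$-cyclic $3$-GDD of type $24^t$ from Lemma \ref{h-cyclic gdd}, together with a single small ingredient of type $6^4(2r)^1$. Then apply Construction \ref{SCIGDD-weighting} with an $(n/2)$-cyclic $3$-GDD of type $(n/2)^3$, which exists by Lemma \ref{h-cyclic gdd} because $n/2$ is odd. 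This gives type $(3n)^{4t}(rn)^1$.
\item If a direct $6^4(2r)^1$ ingredient is hard to find for some $r$, I would instead split $r$ into $r-9\in\{1,2\}$ plus a $9$-block. Alternatively I would use the type $(3n)^{4t}(9n)^1$ of Lemma \ref{3n^4t9n^1} with Construction \ref{n-cyclic GDD}, adding $r-9$ extra hole rows. That route would need filler codes such as $([10:1]\times n)$ from Lemma \ref{[10:1],n=8t+6}, and for $r=11$ a $([11:2]\times n)$-OOC for $n\equiv 6\pmod 8$ built analogously from Lemma \ref{3n^3n^1}.
\end{itemize}

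The main obstacle is the base ingredients for $r\in\{4,7,10,11\}$. I need $2$-cyclic $3$-GDDs of type $6^4(2r)^1$, and possibly a few small exceptional cases with $t=1,2$. These have to be found by direct computer search and listed in an appendix, as was done for Lemma \ref{3n^4t9n^1}. Their necessary conditions should hold, since the difference counts match $\Theta$, but their existence is not guaranteed by any cited lemma. Everything else is the formal combination of Constructions \ref{GDD-filling}, \ref{SCIGDD-weighting} and the filling argument above.
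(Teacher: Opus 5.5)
Your case $r\in\{1,2,5,8\}$ matches the paper, and your count there is correct. The case $r\in\{4,7,10,11\}$, however, is not proved.

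\textbf{The main route.} It needs $2$-cyclic $3$-GDDs of type $6^4(2r)^1$, which you do not supply, and nothing cited gives them. For $r\in\{10,11\}$ they cannot exist at all. Take a point $x$ in a $6$-group. It lies in $2r$ distinct blocks meeting the long group, since no block holds two long-group points. Each of these blocks contributes a different partner of $x$ from the other three $6$-groups, so $2r\le 18$. This is not a difference-count condition, so your remark that ``the counts match $\Theta$'' guarantees nothing. It is also why Lemma~\ref{3n^4t9n^1} stops at $r=9$. For $m\in\{22,23\}$ the same argument ($rn\le 9n$) rules out even the target GDD of type $(3n)^4(rn)^1$ itself.

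\textbf{The fallback.} It is inconsistent. If the $9n$-group together with $r-9$ new rows is to form the hole, then the $3n$-groups must be filled by $([4:1]\times n)$- and $([5:2]\times n)$-OOCs, not by $[10:1]$ and $[11:2]$ codes. Filling the $9n$-group with a $[10:1]$ code leaves a hole of size $1$, not $10$. Moreover, the paper provides no $[4:1]$ code with $\Theta(4,1,n,3)$ codewords for $n\equiv 6\pmod 8$.

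\textbf{The missing idea.} Put only $r-2$ rows of the hole into the GDD, and make the other two hole rows new points.
\begin{enumerate}
\item Since $r-2\in\{2,5,8,9\}$ and $m-2\equiv r-2\pmod{12}$ with $m-2\ge 13$, Lemma~\ref{3n^4t9n^1} gives an $n$-cyclic $3$-GDD of type $(3n)^{(m-r)/3}((r-2)n)^1$.
\item Add rows $\infty_0,\infty_1$.
\item On each $3n$-group together with $\{\infty_0,\infty_1\}\times\mathbb{Z}_n$, place the $([5:2]\times n,3,1)$-OOC of Lemma~\ref{[5:2]}, which exists for $n\equiv 6\pmod 8$. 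This is Construction~\ref{n-cyclic GDD}.
\end{enumerate}
The hole is then the $(r-2)$-group plus the two new rows, which has size $r$. The count is $\frac{m-r}{3}\cdot\frac{7n-2}{2}+\frac{n}{6}[(m-2)(m-3)-2(m-r)-(r-2)(r-3)]$, and this simplifies to $\frac{(m^2-r^2)n-2(m-r)}{6}=\Theta(m,r,n,3)$. This handles all four residues uniformly and needs no new direct constructions; it is exactly the paper's argument.
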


\begin{proof}
For $r\in\{1,2,5,8\}$, there exists an $n$-cyclic $3$-GDD of type $(3n)^{(m-r)/3}(rn)^1$ by Lemma \ref{3n^4t9n^1}.
Then apply Construction \ref{n-cyclic GDD} with $2$-D $(3\times n,3,1)$-OOC with $\Theta(3,0,n,3)$ codewords (from Lemma \ref{[3:0],n=0,6 mod8}) to obtain a $2$-D $([m:r]\times n,3,1)$-OOC with $\Theta(m,r,n,3)$ codewords.

For $r\in\{4,7,10,11\}$, there exists an $n$-cyclic $3$-GDD of type $(3n)^{(m-r)/3}((r-2)n)^1$ by Lemma \ref{3n^4t9n^1}. Then apply Construction \ref{n-cyclic GDD} with $2$-D $([5:2]\times n,3,1)$-OOC with $\Theta(5,2,n,3)$ codewords (from Lemma \ref{[5:2]}) to obtain a $2$-D $([m:r]\times n,3,1)$-OOC with $\Theta(m,r,n,3)$ codewords.
\end{proof}

\begin{Lemma}\label{[m:s]}
Let $m\equiv r\pmod{12}$ where $r\in\{1,2,4,5,7,8,10,11\}$ and $m\geq13$. Then there exists a $2$-D $([m:r]\times n,3,1)$-OOC with $\Theta(m,r,n,3)$ codewords for any $n\equiv2\pmod{8}$.
\end{Lemma}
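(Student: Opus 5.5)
The plan is to use the ``regular code plus fill-in'' route set up in Section 4, since Lemma \ref{2regular[m:s]} covers exactly these parameters. For $m\equiv r\pmod{12}$ with $r\in\{1,2,4,5,7,8,10,11\}$, $m\geq13$ and $n\equiv2\pmod8$, $n\geq10$, Lemma \ref{2regular[m:s]} gives a $2$-regular $2$-D $([m:r]\times n,3,1)$-OOC with $\Upsilon(m,r,n,2,3)=(m^2-r^2)(n-2)/6$ codewords. That lemma already absorbs the SCIHGDD of Lemma \ref{scihgdd2} and the frame-starter codes of Example \ref{2regular [2:1]}, so those need not be used again.

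\textbf{Step 1: a base code for $n=2$.} We need a $2$-D $([m:r]\times2,3,1)$-OOC with $b=\Theta(m,r,2,3)=\lfloor (m^2-r^2)/3\rfloor$ codewords. Since $m\equiv r\pmod{12}$, we have $m^2-r^2\equiv0\pmod{24}$, so $b=(m^2-r^2)/3$ exactly. A $2$-cyclic $3$-GDD of type $2^{m-r}(2r)^1$ has $[(m(m-1)-r(r-1))\cdot 2]/6=(m^2-m-r^2+r)/3$ base blocks. This is short of $b$ by $(m-r)/3$, because the GDD uses no pure differences. We therefore take the code from the $2$-cyclic GDD of Lemma \ref{scigdd} only when this is all that is required; otherwise we need a code that also exploits the mixed $0$-differences within groups of size $2$. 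More cleanly, we regard the $n=2$ object as an $n$-cyclic GDD itself: a $2$-cyclic $3$-GDD of type $2^{m-r}(2r)^1$ on $I_m\times\mathbb{Z}_2$, where groups of type $2$ are single rows $\{i\}\times\mathbb{Z}_2$. Its base blocks give codewords covering every mixed difference between distinct rows not both in the hole. Since $n=2$ has no pure differences other than $0$ and $1=n/2$, which are excluded, the count is $(m(m-1)-r(r-1))\cdot2/6=\Theta(m,r,2,3)$ once we recompute $\Theta(m,r,2,3)=\lfloor ((m^2-r^2)\cdot2-2(m-r))/6\rfloor$. These agree, so Lemma \ref{scigdd} directly supplies the required $[m:r]\times2$ code, as in Remark \ref{2regular[4:1]}.

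\textbf{Step 2: combine.} Apply Construction \ref{filling3} with $g=2$ to the $2$-regular code and this $2$-D $([m:r]\times2,3,1)$-OOC. This yields a $2$-D $([m:r]\times n,3,1)$-OOC with $\Upsilon(m,r,n,2,3)+\Theta(m,r,2,3)$ codewords. Finally, verify the arithmetic
\[
\frac{(m^2-r^2)(n-2)}{6}+\frac{(m^2-r^2)\cdot2-2(m-r)}{6}=\frac{(m^2-r^2)n-2(m-r)}{6}=\Theta(m,r,n,3),
\]
where integrality holds because $m\equiv r\pmod{12}$.

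The remaining case $n=2$ is Step 1 itself. The main obstacle I expect is confirming that Lemma \ref{scigdd} holds for every listed $r$ and $m\geq13$, including $r=8$, which is treated separately there. One must also check that the identification of a $2$-cyclic GDD with a holey code on $I_m\times\mathbb{Z}_2$ correctly uses $\mathbb{Z}_2$ as the row component. If $n=2$ turned out to need extra care for some $r$, I would fall back on the appendix-style direct constructions.
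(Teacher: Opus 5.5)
Your proposal is correct and is essentially the paper's proof. For $n=2$, the $2$-cyclic $3$-GDD of type $2^{m-r}(2r)^1$ from Lemma~\ref{scigdd} is itself a $2$-D $([m:r]\times 2,3,1)$-OOC with $\Theta(m,r,2,3)=(m-r)(m+r-1)/3$ codewords, and for $n\geq 10$ you apply Construction~\ref{filling3} to it and the $2$-regular code of Lemma~\ref{2regular[m:s]}, exactly as the paper does. You should delete the false start in Step~1, where $\Theta(m,r,2,3)$ is first miscomputed as $\lfloor (m^2-r^2)/3\rfloor$ (dropping the $-2(m-r)$ term); that error creates the spurious shortfall of $(m-r)/3$, which your recomputation then correctly removes.
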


\begin{proof}
For $n=2$, there exists a $2$-cyclic $3$-GDD of type $2^{m-r}(2r)^1$ (from Lemma \ref{scigdd}), which is also a $2$-D $([m:r]\times 2,3,1)$-OOC with $\Theta(m,r,n,3)=(m-r)(m+r-1)/3$ codewords.

For $n\geq10$, by Lemma \ref{2regular[m:s]}, there exists a $2$-regular $2$-D $([m:r]\times n,3,1)$-OOC with $\Upsilon(m,r,n,2,3)$ codewords. Then apply Construction \ref{filling3} with $2$-D $([m:r]\times 2,3,1)$-OOC with $\Theta(m,r,2,3)=(m-r)(m+r-1)/3$ codewords to obtain a $2$-D $([m:r]\times n,3,1)$-OOC with $\Theta(m,r,n,3)=[(m^2-r^2)n-2(m-r)]/6$ codewords.
\end{proof}

\section{Main results}

\subsection{The cases of $n=2,4,6,10$}

\begin{Lemma}\label{n=2}
There exists an optimal $2$-D $(m\times 2, 3, 1)$-OOC with $J^*(m\times 2,3,1)$ codewords for any positive integer $m$.
\end{Lemma}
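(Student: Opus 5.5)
The plan is to identify a $2$-D $(m\times 2,3,1)$-OOC with a $2$-cyclic $3$-GDP of type $2^m$, and then read the size off Lemma \ref{3-GDP}.

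\textbf{Step 1 (the equivalence).} When $n=2$, the only nonzero element of $\mathbb{Z}_2$ is $1=n/2$. A pure difference $1$ in some $\Delta_{ii}$ always arises twice, once from each ordering of the two points of row $i$. So no codeword meets a row in two points, and every codeword is of Type $3$ (this is already noted in the proof of Lemma \ref{bd:n=2}). Take $\mathcal{G}=\{\{i\}\times\mathbb{Z}_2:i\in I_m\}$. Then a family of such codewords satisfies Definition \ref{def-diff} exactly when it is a set of base blocks of a $2$-cyclic $3$-GDP of type $2^m$, i.e.\ $(vn)^m$ with $v=1$ and $n=2$. This is the content of Remark \ref{n-cyclic gdd and ooc}, and the converse is immediate because no pure differences can occur. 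Hence $\Phi(m\times2,3,1)$ equals the maximum number of base blocks of such a GDP.

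\textbf{Step 2 (trivial cases).} For $m\in\{1,2\}$ we have $J(m\times2,3,1)=\lfloor m(m-1)/3\rfloor=0=J^*$, so the empty code works. For $m\geq3$, Lemma \ref{3-GDP} with $v=1$ and $n=2$ gives a $2$-cyclic $3$-GDP with
\[
\lfloor m(m-1)/3\rfloor-\gamma=J(m\times2,3,1)-\gamma
\]
base blocks. By Step 1 this is a $2$-D $(m\times 2,3,1)$-OOC of that size.

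\textbf{Step 3 (matching $\gamma$ with $J^*-J$).} This is the real checking step.
\begin{itemize}
\item The condition ``$v$ odd, $n\equiv2\pmod 4$, $m\equiv3,6,7,10\pmod{12}$'' gives $\gamma=1$. Here $m\equiv3,6\pmod{12}$ means $m\equiv0\pmod3$ with $2m\equiv6,12\pmod{24}$. Also $m\equiv7,10\pmod{12}$ means $2m\equiv14,20\pmod{24}$. Both are cases where Lemma \ref{bd} gives $J^*=J-1$.
\item The condition $vn\equiv6\pmod{12}$ fails, since $vn=2$.
\item The condition $(m-1)v^2n\equiv4\pmod 6$ with $m\equiv2\pmod 3$ would need $2(m-1)\equiv4\pmod 6$, i.e.\ $m\equiv0\pmod 3$. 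This contradicts $m\equiv2\pmod3$, so it never applies.
\item The condition ``$n=2$, $m(m-1)\equiv8\pmod{12}$'' holds exactly for $m\equiv5,8\pmod{12}$. This is the case of Lemma \ref{bd:n=2}.
\end{itemize}
Conversely, I will check that every case in which $J^*=J-1$ with $n=2$ is among the ones just listed. The case $m\equiv4\pmod 6$, $n=4$ is irrelevant here. Hence $\gamma=J(m\times2,3,1)-J^*(m\times2,3,1)$ for every $m\geq3$, and the constructed code has exactly $J^*$ codewords.

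\textbf{Step 4 (optimality).} Lemma \ref{bd} gives the matching upper bound $\Phi\leq J^*$, so the code is optimal. The main obstacle is Step 3: the residue bookkeeping must be exhaustive in both directions. Without it, some congruence class could have $\gamma=1$ while $J^*=J$, and that class would need a separate construction.
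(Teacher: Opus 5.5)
Your proposal is correct and follows the paper's proof: the paper likewise takes the $2$-cyclic $3$-GDP of type $2^m$ from Lemma~\ref{3-GDP} (with $v=1$, $n=2$), reads it as an OOC via Remark~\ref{n-cyclic gdd and ooc}, and gets optimality from Lemma~\ref{bd}. Your residue check that $\gamma=J(m\times 2,3,1)-J^*(m\times 2,3,1)$ and your separate handling of $m\in\{1,2\}$ (where Lemma~\ref{3-GDP} does not apply) make explicit what the paper leaves implicit; for completeness, also list the first $\gamma$-case ($n\equiv0\pmod4$, $m=3$) as vacuous when $n=2$.
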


\proof By Lemma \ref{3-GDP}, there exists a $2$-cyclic $3$-GDP of type $2^m$ with $J^*(m\times 2,3,1)$ base blocks, which is also a
$2$-D $(m\times 2, 3, 1)$-OOC with $J^*(m\times 2,3,1)$ codewords by Remark \ref{n-cyclic gdd and ooc}.
\qed

\begin{Lemma}\label{n=4,small m}
There exists an optimal $2$-D $(m\times 4, 3, 1)$-OOC with $J^*(m\times 4,3,1)$ codewords for $m\in\{2,3,4,5\}$.
\end{Lemma}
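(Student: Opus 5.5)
The plan is to compute the target values explicitly and then exhibit, for each $m\in\{2,3,4,5\}$, a concrete set of codewords on $I_m\times\mathbb{Z}_4$ meeting that value. Optimality then follows at once from Lemma \ref{bd}, so the real content is a direct construction, checked with Definition \ref{def-diff}.

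First I compute the targets. Here $J(m\times 4,3,1)=\lfloor \frac m3\lfloor\frac{4m-1}{2}\rfloor\rfloor=\lfloor m(2m-1)/3\rfloor$. This gives $J=2,5,9,15$ for $m=2,3,4,5$. Applying the exceptions in Lemma \ref{bd}:
\begin{enumerate}
\item[] $m=3$ has $m\equiv0\pmod 3$ and $mn=12\equiv 12\pmod{24}$, so $J^*=4$;
\item[] $m=4$ has $m\equiv4\pmod 6$ and $n=4$, so $J^*=8$;
\item[] $m=5$ has $mn=20\equiv20\pmod{24}$, so $J^*=14$;
\item[] $m=2$ falls under no exception, so $J^*=2$.
\end{enumerate}
So it suffices to find $2$-D $(m\times4,3,1)$-OOCs with $2,4,8,14$ codewords respectively.

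For $m=2$ the code $\{(0,0),(0,1),(1,0)\}$, $\{(0,0),(1,1),(1,2)\}$ works. Its differences are $\Delta_{00}=\{1,3\}$ and $\Delta_{11}=\{1,3\}$. The mixed differences are $\Delta_{01}=\{0,1\}\cup\{3,2\}$ and $\Delta_{10}=\{0,3\}\cup\{1,2\}$, each covering every element at most once. For $m=3,4,5$ I would search for codewords by computer (or by hand with a cyclic symmetry), and list them in the proof or an appendix. To reduce the search I would impose an automorphism $(+1\pmod m,-)$ on the row index for $m=3$ and $m=5$. For $m=5$ this means $14$ is not a multiple of $5$, so I would look for $3$ orbits of length $5$ minus one codeword, or drop the symmetry and use a short orbit. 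For $m=4$ I would use $(+2\pmod 4,-)$, giving $4$ initial codewords each developed twice.

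In every case I would verify that each $\Delta_{ij}$ covers every element of $\mathbb{Z}_4$ at most once. The structure from Section 3 guides the search: pure differences can only be $\pm1$ (the $n/2=2$ difference is unusable), and no codeword is of Type 1. In the extremal cases only a few differences go unused: the count $m(4m-2)/6$ minus $J^*$ leaves slack of $3$ differences for $m=3$, $5$ for $m=5$, and the $1+3$ slack already identified in Lemma \ref{bd:n=4} for $m=4$.

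The main obstacle is $m=5$, where $14$ codewords must use all but three of the available differences while respecting the parity obstruction of Lemma \ref{bd1}. I would first try a computer backtracking search starting from the codewords containing pure differences (Type 2), since their placement is the most constrained, and then complete with Type 3 codewords.
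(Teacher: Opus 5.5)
Your target values $J^*(m\times 4,3,1)=2,4,8,14$ are correct, and your $m=2$ code is exactly the paper's. The paper's proof works the same way yours is meant to: it simply lists explicit codewords on $I_m\times\mathbb{Z}_4$. The gap is that for $m=3,4,5$ you never exhibit a code; you only describe a search you would run. For a finite existence statement like this, the exhibited codes \emph{are} the proof, so as written nothing is established for $m=3,4,5$.

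For $m=3$ and $m=4$ your symmetric ansatz does succeed, and you should just write the codes down. For $m=3$, take $\{(0,0),(1,0),(2,0)\}$ together with $\{(l,0),(l,1),(l+1,2)\}$, $l\in\mathbb{Z}_3$. For $m=4$, develop $\{(0,0),(0,1),(1,0)\}$, $\{(1,0),(1,1),(2,0)\}$, $\{(0,2),(1,0),(2,1)\}$ and $\{(0,0),(1,1),(3,2)\}$ by $(+2\pmod 4,-)$. This yields $8$ codewords with $\Delta_{01}=\Delta_{12}=\Delta_{23}=\Delta_{03}=\mathbb{Z}_4$, $\Delta_{02}=\Delta_{13}=\{1,3\}$, and pure differences $\{1,3\}$ exactly once in each row.

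For $m=5$, however, the symmetric search you propose is provably empty. Under $\sigma=(+1\pmod 5,-)$ no codeword lies in a short orbit. Indeed, if $\sigma B=B+\tau$, then $B=\sigma^5B=B+5\tau=B+\tau$, so $\tau=0$ by the auto-correlation property. But $\sigma B=B$ is impossible for a $3$-set, since every point orbit of $\sigma$ has length $5$. So every orbit has length $5$, and ``drop the symmetry and use a short orbit'' is not available.

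Moreover, suppose $U$ is a union of three full orbits and $U\setminus\{C\}$ is a valid code. Then $U$ is valid too. A repeated element of some $\Delta_{ij}(U)$ must come from $C$ together with either $C$ itself or one other codeword $D\in U$. Since $\Delta_{i+k,j+k}(\sigma^kX)=\Delta_{ij}(X)$ (row indices mod $5$), choosing $k\neq0$ with $\sigma^kD\neq C$ moves the repetition into $U\setminus\{C\}$, a contradiction. That would give a $15$-codeword code, contradicting Lemma \ref{bd1} since $mn=20$.

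Hence ``three orbits of length $5$ minus one codeword'' cannot exist. You must abandon the $\mathbb{Z}_5$ symmetry and actually find and list $14$ codewords, as the paper does. One minor slip: the slack for $m=5$ is $3$, not $5$, as you yourself say later.
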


\begin{proof}
We construct the codes on $I_m\times \mathbb{Z}_4$ for $m\in\{2,3,4,5\}$. All the $J^*(m\times 4,3,1)$ codewords are listed below:
\begin{center}
\begin{longtable}{llll}
$m=2:$ &$\{(0,0),(1,0),(0,1)\}$,
&$\{(0,0),(1,1),(1,2)\}$.\\

$m=3:$ &$\{(0,0),(1,0),(2,0)\}$,
&$\{(0,0),(0,1),(1,2)\}$,
&$\{(0,0),(2,1),(1,3)\}$,\\
&$\{(0,0),(2,2),(2,3)\}$.\\

$m=4:$ & $\{(0,0),(1,0),(2,0)\}$,
&$\{(0,0),(3,0),(0,1)\}$,
&$\{(0,0),(1,1),(3,1)\}$,\\
&$\{(0,0),(2,1),(1,2)\}$,
&$\{(0,0),(2,2),(2,3)\}$,
&$\{(0,0),(3,2),(1,3)\}$,\\
&$\{(1,0),(1,1),(2,2)\}$,
&$\{(1,0),(3,1),(3,2)\}$.\\

$m=5:$ & $\{(0,0),(1,0),(2,0)\}$,
&$\{(0,0),(3,0),(4,0)\}$,
&$\{(0,0),(0,1),(1,2)\}$,\\
&$\{(0,0),(2,1),(3,1)\}$,
&$\{(0,0),(4,1),(2,2)\}$,
&$\{(0,0),(3,2),(1,3)\}$,\\
&$\{(0,0),(4,2),(3,3)\}$,
&$\{(0,0),(2,3),(4,3)\}$,
&$\{(1,0),(3,0),(2,1)\}$,\\
&$\{(1,0),(4,0),(1,1)\}$,
&$\{(1,0),(3,1),(3,2)\}$,
&$\{(1,0),(4,1),(4,2)\}$,\\
&$\{(1,0),(2,2),(2,3)\}$,
&$\{(2,0),(3,1),(4,2)\}$.
\end{longtable}
\end{center}
Note that $J^*(2\times 4,3,1)=\lfloor m(2m-1)/3\rfloor$, and $J^*(m\times 4,3,1)=\lfloor m(2m-1)/3\rfloor-1$ for $m\in\{3,4,5\}$.
\end{proof}

\begin{Lemma}\label{n=4}
There exists an optimal $2$-D $(m\times 4, 3, 1)$-OOC with $J^*(m\times 4,3,1)$ codewords for any positive integer $m\equiv1,2,3,4,5\pmod6$.
\end{Lemma}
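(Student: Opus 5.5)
Write $m=6t+r$ with $r\in\{1,2,3,4,5\}$. For $t=0$, i.e.\ $m\leq 5$, the statement is covered by Lemma~\ref{n=4,small m} together with the trivial case $m=1$, where $J^*(1\times4,3,1)=0$. So assume $m\geq7$. The plan is to use Construction~\ref{filling2} (filling in the hole). We take a $2$-D $([m:r]\times 4,3,1)$-OOC with $\Theta(m,r,4,3)$ codewords from Lemma~\ref{[m:r],n=4} and fill its hole $R\times\mathbb{Z}_4$, $|R|=r$, with an optimal $2$-D $(r\times4,3,1)$-OOC having $J^*(r\times4,3,1)$ codewords, again from Lemma~\ref{n=4,small m} (or the empty code when $r=1$). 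The result is a $2$-D $(m\times4,3,1)$-OOC with $\Theta(m,r,4,3)+J^*(r\times 4,3,1)$ codewords. By Lemma~\ref{bd} it is enough to check that this number equals $J^*(m\times4,3,1)$.

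The remaining work is arithmetic. First, $\Theta(m,r,4,3)=\lfloor(4(m^2-r^2)-2(m-r))/6\rfloor=(m-r)(2m+2r-1)/3$. This is an integer because $m\equiv r\pmod 6$, and it agrees with the count $24t^2+8tr-2t$ in the proof of Lemma~\ref{[m:r],n=4}. Next, $J(m\times4,3,1)=\lfloor\frac m3\lfloor\frac{4m-1}{2}\rfloor\rfloor=\lfloor m(2m-1)/3\rfloor$. Hence
\[
J(m\times4,3,1)-\Theta(m,r,4,3)=\Big\lfloor \tfrac{m(2m-1)}{3}\Big\rfloor-\tfrac{(m-r)(2m+2r-1)}{3}.
\]
Since $m(2m-1)-(m-r)(2m+2r-1)=r(2r-1)$ and $(m-r)(2m+2r-1)/3$ is an integer, the difference equals $\lfloor r(2r-1)/3\rfloor=J(r\times4,3,1)$.

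It remains to compare the deficiencies $\mu$. Taking $n=4$ in Lemma~\ref{bd}: $mn=4m\equiv14,20\pmod{24}$ never holds. The condition $m\equiv0\pmod 3$ with $4m\equiv6,12\pmod{24}$ reduces to $m\equiv3\pmod6$, and the remaining condition for $n=4$ is $m\equiv4\pmod6$. So $J^*(m\times4,3,1)=J-1$ exactly when $m\equiv3,4\pmod6$, which depends only on $r$. For $r\in\{3,4\}$ the filling code has $J(r\times4,3,1)-1$ codewords, and for $r\in\{1,2,5\}$ it has $J(r\times4,3,1)$ codewords. In both cases the total is $J^*(m\times4,3,1)$.

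One step needs care. Lemma~\ref{n=4,small m} states that $J^*(5\times4,3,1)=\lfloor 5\cdot9/3\rfloor-1=14$, whereas $m=5\equiv5\pmod 6$ does not fall in a deficient class of Lemma~\ref{bd}. I would recompute $J(5\times4,3,1)=\lfloor\frac53\lfloor\frac{19}{2}\rfloor\rfloor=\lfloor 45/3\rfloor=15$. I would then check whether the $m=5$ listing really has 14 codewords and whether that value is optimal. If $J^*(5\times 4)$ is genuinely 15, the $r=5$ filling would have to use a 15-codeword code.

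This reconciliation of the small-case values with the mod-$6$ classes of Lemma~\ref{bd} is the main obstacle; the rest is routine. If a discrepancy remains for $r=5$, the fallback is to change the hole. One option is hole size $r'=5$ with the $[m:5]$ ingredient from Lemma~\ref{[m:r],n=4} together with a directly built optimal $5\times4$ code. Alternatively, treat $m\equiv5\pmod6$ through a hole of size $11$, using the $t\in\{1,2\}$ appendix ingredients.
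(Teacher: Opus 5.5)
Your construction is the paper's: fill the hole of the $([6t+r:r]\times 4,3,1)$-OOC from Lemma~\ref{[m:r],n=4} with the optimal $r\times 4$ code from Lemma~\ref{n=4,small m}. Your identity $J(m\times4,3,1)-\Theta(m,r,4,3)=\lfloor r(2r-1)/3\rfloor=J(r\times4,3,1)$ is also correct. The gap is in how you determine the deficient residue classes. You say $4m\equiv 14,20\pmod{24}$ never holds, but $4m\equiv 20\pmod{24}$ holds exactly when $m\equiv 5\pmod 6$. So Lemma~\ref{bd1} (the $mn\equiv14,20\pmod{24}$ clause) gives $J^*(m\times4,3,1)=J(m\times4,3,1)-1$ for $m\equiv 3,4,5\pmod 6$, not only for $m\equiv 3,4$. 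Concretely, for $m=5$ you have $J(5\times4,3,1)=15=m(mn-2)/6$. The parity count in the proof of Lemma~\ref{bd1} then gives $A=60$ and $B=5$, which is a contradiction. Hence $\Phi(5\times 4,3,1)\leq 14$, and the value $J^*(5\times4,3,1)=14$ in Lemma~\ref{n=4,small m} is correct and optimal.

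Because of this slip, your write-up leaves $m\equiv5\pmod6$ unresolved and treats it as the main obstacle. Your fallbacks are unnecessary, and they could not work anyway. A $15$-codeword $5\times4$ code does not exist by the bound above. A hole of size $11$ would need an $11\times4$ code with $J(11\times 4,3,1)=77$ codewords, which is again impossible because $44\equiv 20\pmod{24}$. With the residue classes corrected, the deficiency of $J^*(m\times4,3,1)$ depends only on $r=m \bmod 6$ and equals the deficiency of $J^*(r\times4,3,1)$ for every $r\in\{1,\dots,5\}$. Your identity then gives $\Theta(m,r,4,3)+J^*(r\times4,3,1)=J^*(m\times4,3,1)$ in all cases, including $r=5$ with the listed $14$-codeword code. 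With that fix your proof is complete and coincides with the paper's.
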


\proof
Let $m=6t+r$ where $t\geq0$ and $r\in\{1,2,3,4,5\}$. For $(t,r)=(0,1)$, note that $J^*(1\times 4,3,1)$=0. For $t=0$ and $r\in\{2,3,4,5\}$,
by Lemma \ref{n=4,small m}, there exists an optimal $2$-D $(r\times4, 3, 1)$-OOC with $J^*(r\times 4,3,1)$ codewords. For $t\geq1$ and $r\in\{1,2,3,4,5\}$, by Lemma \ref{[m:r],n=4}, there exists a $2$-D $([6t+r:r]\times4,3,1)$-OOC with $24t^2+8tr-2t$ codewords.
Then apply Construction \ref{filling2} with the aforementioned optimal $2$-D $(r\times4, 3, 1)$-OOC with $J^*(r\times 4,3,1)$ codewords to obtain an optimal $2$-D $(m\times 4, 3, 1)$-OOC with $J^*(m\times 4,3,1)$ codewords.
\qed

\begin{Lemma}\label{n=6}
There exists an optimal $2$-D $(m\times 6, 3, 1)$-OOC with $J^*(m\times 6,3,1)$ codewords for $m\in\{7,8,10,11\}$.
\end{Lemma}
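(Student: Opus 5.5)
The plan is to handle the four values of $m$ separately, using the ``filling in hole'' Construction~\ref{filling2} wherever a suitable holey code is available and a direct construction otherwise. First we record the targets. For $n=6$ we have $\lfloor(6m-1)/2\rfloor=3m-1$, so $J(m\times 6,3,1)=\lfloor m(3m-1)/3\rfloor$. Next we check whether Lemma~\ref{bd} lowers this value. For $m=7,8,10,11$ we get $mn=42,48,60,66$, which are $\equiv 18,0,12,18\pmod{24}$. The only residue that could trigger a reduction is $12$ (at $m=10$), but $3\nmid 10$. Hence $J^*(m\times 6,3,1)=J(m\times 6,3,1)=46,61,96,117$ for $m=7,8,10,11$ respectively, and it suffices to construct codes of exactly these sizes.

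\textbf{The case $m=10$.} This follows immediately from Lemma~\ref{[10:1],n=8t+6} with $n=6$, which gives a $2$-D $([10:1]\times 6,3,1)$-OOC with $\Theta(10,1,6,3)=(99\cdot 6-18)/6=96$ codewords. The hole is a single row, and $\Phi(1\times 6,3,1)=0$, so nothing needs to be filled in and we obtain $96=J^*(10\times 6,3,1)$ codewords.

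\textbf{The cases $m=8$ and $m=11$.} Here we fill a hole of size $2$ with the optimal $2$-D $(2\times 6,3,1)$-OOC of Example~\ref{(2,6)}, which has $3$ codewords.
\begin{itemize}
\item For $m=11$, Lemma~\ref{[11:2],n=8t+2} does not apply, since $6\not\equiv 2\pmod 8$. Instead we would build a $2$-D $([11:2]\times 6,3,1)$-OOC with $\Theta(11,2,6,3)=114$ codewords.
\item For $m=8$ we need a $2$-D $([8:2]\times 6,3,1)$-OOC with $\Theta(8,2,6,3)=58$ codewords.
\end{itemize}
In both cases, adding the $3$ codewords from Example~\ref{(2,6)} via Construction~\ref{filling2} gives $117$ and $61$ codewords, as required.

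For $[11:2]$, the first attempt is Construction~\ref{n-cyclic GDD} with a $6$-cyclic $3$-GDD of type $18^3$. Lemma~\ref{h-cyclic gdd} requires $v$ even when $m=3$ and $n$ is even; here $v=3$ is odd, so this GDD does not exist. The fallback is the type $(3n)^3n^1$ GDD of Lemma~\ref{3n^3n^1} combined with a $[4:1]$ ingredient, as in Lemma~\ref{[11:2],n=8t+2}. But the $[4:1]$ ingredient came from Remark~\ref{2regular[4:1]}, which needs $n\equiv 0,2\pmod 8$ and so is unavailable for $n=6$. One would therefore construct $([4:1]\times 6)$ directly, with $\Theta(4,1,6,3)=(15\cdot 6-6)/6=14$ codewords.

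\textbf{The case $m=7$.} We need a $2$-D $([7:1]\times 6,3,1)$-OOC with $\Theta(7,1,6,3)=46$ codewords, or directly a $7\times 6$ code with $46$ codewords. No available recursion gives this, so it will be found by computer.

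\textbf{Direct constructions and the main obstacle.} The main obstacle is producing the small holey codes ($[7:1]$, $[8:2]$, and $[4:1]$ or $[11:2]$, all at $n=6$). The plan is to find them by a computer search with an assumed automorphism, to cut down the search. For example, for $[8:2]$ we would take the rows to be $\mathbb{Z}_6\cup\{\infty_0,\infty_1\}$ and develop initial codewords under $(+2\pmod 6,-)$, allowing a few short orbits, as in Lemma~\ref{[9:3]}. For each case we then list the initial codewords and verify the pure and mixed difference conditions of Definition~\ref{def-diff}, checking that no mixed difference occurs between hole rows. Optimality then follows in every case from the upper bound in Lemma~\ref{bd}.
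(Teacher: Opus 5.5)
Your treatment of $m=10$ is correct and differs from the paper, which simply lists all codewords for each of the four values of $m$. Lemma~\ref{[10:1],n=8t+6} does apply at $n=6$: Lemma~\ref{3n^3n^1} only needs $n\equiv 2\pmod 4$, and the Skolem construction of Lemma~\ref{[3:0],n=0,6 mod8} works for $n=6$. The one-row hole needs no filling, so the $96$ codewords come for free.

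The plan for $m=8$ and $m=11$, however, has a fatal gap: the holey ingredients you intend to find by computer do not exist. Consider a $2$-D $([8:2]\times 6,3,1)$-OOC with $58$ codewords, a $2$-D $([11:2]\times 6,3,1)$-OOC with $114$, or a $2$-D $([4:1]\times 6,3,1)$-OOC with $14$. The same applies to the $([7:1]\times 6)$ option with $46$ that you list for $m=7$. Each of these attains $\Theta$ with no rounding, so it must use every admissible difference exactly once. That means all six mixed differences for every pair of rows not both in the hole, and exactly the pure differences $\pm1,\pm2$ in every non-hole row.

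Now apply the parity argument from the proof of Lemma~\ref{bd1}, which holds for any code with $n$ even. Let $A$ be the sum of the mixed $(i,j)$-differences with $i>j$, and $B$ the sum of the pure differences below $n/2$. Both are congruent modulo $2$ to $\sum c_t$ over the Type~2 codewords. With $n=6$, each fully used pair of rows contributes $0+1+\cdots+5=15$ to $A$, and each non-hole row contributes $1+2=3$ to $B$. So a perfect $([m:r]\times 6)$ code forces $\binom{m}{2}-\binom{r}{2}\equiv m-r\pmod 2$. This fails in every case you need: $27$ versus $6$ for $[8:2]$, $54$ versus $9$ for $[11:2]$, $6$ versus $3$ for $[4:1]$, and $21$ versus $6$ for $[7:1]$. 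It holds for $[10:1]$ ($45$ versus $9$), consistent with your $m=10$ argument. The same obstruction is why the paper never uses $[4:1]$ or $[11:2]$ ingredients with $n\equiv 6\pmod 8$.

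So the searches you describe would come back empty. The filling-in-hole reduction with a $2$-row hole, or through $[4:1]$, is unavailable at $n=6$ for $m=8,11$. What makes the paper's direct construction possible is the slack in the complete codes:
\begin{itemize}
\item on $I_8\times\mathbb{Z}_6$, $61$ codewords use only $366$ of the $368$ admissible differences;
\item on $I_{11}\times\mathbb{Z}_6$, $117$ codewords use $702$ of $704$;
\item on $I_7\times\mathbb{Z}_6$, $46$ codewords use $276$ of $280$.
\end{itemize}
These unused differences absorb the parity defect, for instance a missing pure pair $\pm 2$.

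You therefore have two options. The first is to search for the full $7\times 6$, $8\times 6$ and $11\times 6$ codes directly, as the paper does by listing them. The second is to choose a hole whose ingredient is not forced to be perfect. Examples are $[8:3]$ with $\Theta(8,3,6,3)=53$ or $[11:3]$ with $\Theta(11,3,6,3)=109$, each filled with the $8$-codeword $3\times 6$ code of Lemma~\ref{[3:0],n=0,6 mod8}. In that case the existence of those holey codes still has to be established. Either way, the lemma is proved only once explicit codewords are exhibited.
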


\proof
We construct the codes on $I_m\times \mathbb{Z}_6$. All the $J^*(m\times 6,3,1)$ codewords are listed in Appendix \ref{Appendix:n=6}.
\qed

\begin{Lemma}\label{n=10}
There exists an optimal $2$-D $(m\times10, 3, 1)$-OOC with $J^*(m\times10,3,1)$ codewords for $m\in\{7,10\}$.
\end{Lemma}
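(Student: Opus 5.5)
The plan is to exhibit the codes directly on $I_m\times\mathbb{Z}_{10}$, as in Lemmas \ref{n=4,small m} and \ref{n=6}, after noting that the recursive machinery of the earlier sections fails exactly for these two values. First compute the targets. For $m=7$ we have $mn=70\equiv22\pmod{24}$. For $m=10$ we have $mn=100\equiv4\pmod{24}$. No exceptional case of Lemma \ref{bd} applies, so $J^*(m\times10,3,1)=J(m\times10,3,1)$. This equals $\lfloor 7\cdot34/3\rfloor=79$ for $m=7$ and $\lfloor10\cdot49/3\rfloor=163$ for $m=10$. In both cases $J=\lfloor m(mn-2)/6\rfloor$ with remainder $2$ (for $m=7$) and $2$ (for $m=10$), i.e. exactly $2$ of the available differences (mixed, or pure other than $0,5$) remain uncovered.

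Next, check why the recursive route breaks; this indicates that a direct construction is needed. Take $m=10$. A $3$-SCHGDD of type $(9,2^5)$ exists by Lemma \ref{schgdd}. Combined via Construction \ref{fill in ihgdd} with Example \ref{2regular [2:1]}, it gives a $2$-regular $2$-D $([10:1]\times10,3,1)$-OOC with $\Upsilon(10,1,10,2,3)=132$ codewords. Completing this through Construction \ref{filling3} would need a $2$-D $([10:1]\times2,3,1)$-OOC with $30$ codewords, i.e. a $2$-cyclic $3$-GDD of type $2^{10}$. That design is ruled out by Lemma \ref{h-cyclic gdd}, since $m\equiv2\pmod4$ and $n\equiv2\pmod4$ with $v$ odd. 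Similarly, for $m=7$ the analogous completion would need a $2$-cyclic $3$-GDD of type $2^6\,12^1$. That design cannot exist because its long group is too large. This is exactly why $m\in\{7,10\}$ are singled out.

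So I would construct the codes by computer search and list them in an appendix. To reduce the search, I would impose an automorphism of the row index set. For $m=10$, I would index rows by $\mathbb{Z}_{10}$ and develop base codewords by $(+2\pmod{10},-)$, allowing a few short orbits if needed. Since $163\equiv3\pmod5$, I would take $32$ full orbits of length $5$ plus three fixed codewords, or else use a row group of order $3$ fixing one row. For $m=7$, I would index rows by $\mathbb{Z}_7$ and develop by $(+1\pmod7,-)$. Since $79=7\cdot11+2$, I would use $11$ full orbits plus two extra codewords placed by hand. Alternatively, I would take $\mathbb{Z}_3\times\mathbb{Z}_2\cup\{\infty\}$ with a shorter development. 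The search itself will be a standard hill-climbing/backtracking search over base blocks. Its constraint is the difference condition of Definition \ref{def-diff}: each $\Delta_{ij}$ covers every element of $\mathbb{Z}_{10}$ at most once, and $5$ never occurs as a pure difference.

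The main obstacle is finding base blocks compatible with the chosen automorphism that leave only the permitted two uncovered differences. The tight count means there is almost no slack. If the symmetric search fails, I would drop the symmetry partially, fixing a large cyclic part and completing the remaining few codewords by exhaustive search. Verification is then routine, via the pure and mixed difference lists. Optimality follows from Lemma \ref{bd}, because the number of codewords equals $J^*(m\times10,3,1)$.
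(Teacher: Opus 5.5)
Your bookkeeping is correct: $J^*(7\times10,3,1)=79$ and $J^*(10\times10,3,1)=163$, with exactly two spare differences in each case. Your $m=10$ diagnosis is also right: there is no $2$-cyclic $3$-GDD of type $2^{10}$, so the $2$-regular route gives only $132+29+1=162$. But the lemma is a bare existence statement, and the paper's proof \emph{is} the explicit data. All $79$ and $163$ codewords on $I_m\times\mathbb{Z}_{10}$ are listed in the appendix, with no symmetry imposed. Your proposal ends with a plan to search, so the one thing that needs proving, namely that such codes exist, is never established.

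Moreover, the symmetric templates you name cannot work.
\begin{itemize}
\item For $m=10$, under $(+2\pmod{10},-)$ every row orbit of $\mathbb{Z}_{10}$ has size $5$. So no codeword is fixed, every codeword orbit has length exactly $5$, and ``three fixed codewords'' do not exist.
\item Adding three undeveloped codewords to $32$ orbits instead also fails. The $20$ uncovered differences form an invariant, negation-closed union of two orbits and their negatives. Type $1$ and Type $2$ codewords would need three distinct such orbit pairs. Type $3$ codewords are forced to have rows and columns in arithmetic progression, and a position count then shows that three of them collide.
\item For $m=7$ the obstruction is immediate. After $11$ orbits under $(+1\pmod 7,-)$, the $14$ uncovered differences are a single orbit $O$ together with $-O$. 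If $O$ is pure, each row retains only one pair $\pm d$, too few for a Type $1$ codeword, while Types $2$ and $3$ need mixed differences. If $O$ is mixed with row step $s$, Types $1$ and $2$ need pure differences, and a Type $3$ codeword would need three row steps from $\{\pm s\}$ summing to $0$ modulo $7$, which is impossible.
\end{itemize}
So only your fallbacks remain viable: an order-$3$ row group with a fixed row, or no symmetry as in the paper. Either way, the codes still have to be produced and checked.

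Separately, your $m=7$ motivation is off. A $2$-cyclic $3$-GDD of type $2^6\,12^1$ lives on $12$ rows and is irrelevant here. The real shortfall is that $J^*(7\times2,3,1)=13$ (since $14\equiv14\pmod{24}$), so the $2$-regular $([7:1]\times10)$ route gives only $64+13+1=78<79$.
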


\proof
We construct the codes on $I_m\times \mathbb{Z}_{10}$. All the $J^*(m\times10,3,1)$ codewords are listed in Appendix \ref{Appendix:n=10}.
\qed

\subsection{The cases of $m\equiv1,2\pmod{3}$ and $m\leq11$}

\begin{Lemma}\label{Lem:2to2}\rm{\cite[Theorem 5]{am}}
Suppose that there exists a $2$-D $(m\times n, k, \lambda)$-OOC with $b$ codewords.
Then for any integer factorization $n=n_1n_2$, there exists a $2$-D $(mn_1\times n_2, k, \lambda)$-OOC
with $n_1b$ codewords.
\end{Lemma}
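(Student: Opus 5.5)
The plan is to unfold each codeword along the column index. Write $n=n_1n_2$ and identify $\mathbb{Z}_n$ with pairs via the map $x\mapsto (x \bmod n_1, \lfloor\cdot\rfloor)$. More precisely, for $x\in\mathbb{Z}_n$ write $x=q n_1+s$ with $0\le s<n_1$ and $0\le q<n_2$.

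Given a codeword $B\subseteq I_m\times\mathbb{Z}_n$ of the $2$-D $(m\times n,k,\lambda)$-OOC $\mathcal{C}$, I would produce $n_1$ codewords on $I_{mn_1}\times\mathbb{Z}_{n_2}$. For each shift $j\in I_{n_1}$, take $B+j$ (a cyclic shift in $\mathbb{Z}_n$). Then map each point $(i,x)$ of $B+j$ with $x=qn_1+s$ to $(i n_1+s,\, q)$. This is the standard "fold $n$ columns into $n_1$ row-blocks" map $\phi:I_m\times\mathbb{Z}_n\to I_{mn_1}\times\mathbb{Z}_{n_2}$, which is a bijection. The new code is $\mathcal{C}'=\{\phi(B+j):B\in\mathcal{C},\,j\in I_{n_1}\}$, and each member clearly has weight $k$.

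The key observation is how $\phi$ interacts with shifts. Shifting $\phi(A)$ by $\tau'$ in $\mathbb{Z}_{n_2}$ corresponds exactly to $\phi(A+n_1\tau')$, since adding $n_1\tau'$ to $x=qn_1+s$ keeps $s$ and adds $\tau'$ to $q$ modulo $n_2$. Thus $\phi(A)\cap(\phi(A')+\tau')=\phi\bigl(A\cap(A'+n_1\tau')\bigr)$, and these have the same size because $\phi$ is a bijection.

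With this, cross- and auto-correlation of $\mathcal{C}'$ reduce to correlations in $\mathcal{C}$:
\[|\phi(B+j)\cap(\phi(B'+j')+\tau')|=|B\cap(B'+j'-j+n_1\tau')|.\]
If $B\neq B'$, the right-hand side is at most $\lambda$ by $(2')$. If $B=B'$, the shift $j'-j+n_1\tau'$ is $\not\equiv0\pmod n$ unless $j=j'$ and $\tau'\equiv0\pmod{n_2}$, because $0\le |j-j'|<n_1$. So the right-hand side is at most $\lambda$ by $(1')$. This covers both distinct codewords coming from the same $B$ and the auto-correlation of a single codeword with $\tau'\not\equiv0$.

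It remains to check that the $n_1 b$ codewords are distinct. If $\phi(B+j)=\phi(B'+j')$, then $B=B'+(j'-j)$. If $B\neq B'$, this gives intersection $k>\lambda$ (assuming $k>\lambda$, the meaningful case), contradicting $(2')$. If $B=B'$ and $j\ne j'$, it contradicts $(1')$. The only real care is this bookkeeping of shifts; there is no hard obstacle.
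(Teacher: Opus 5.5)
Your proof is correct and complete. The coset-folding bijection $\phi$, with $\phi(A)+\tau'=\phi(A+n_1\tau')$, reduces every correlation of the new code to a correlation $|B\cap(B'+j'-j+n_1\tau')|$ in $\mathcal{C}$. Your observation that $j'-j+n_1\tau'\equiv 0\pmod n$ forces $j=j'$ and $\tau'\equiv 0\pmod{n_2}$ is exactly the bookkeeping needed.

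The paper gives no proof of this lemma; it is quoted from \cite[Theorem 5]{am}, so there is nothing internal to compare against.

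One remark: the hypothesis $k>\lambda$ that you invoke for distinctness is genuinely necessary, not cosmetic. For $k=\lambda=1$, all $mn$ singletons form a code with $b=mn$. But $I_{mn_1}\times\mathbb{Z}_{n_2}$ has only $mn$ singletons, fewer than $n_1b$ whenever $n_1>1$.
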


\begin{Lemma}\label{m=2}
There exists an optimal $2$-D $(2\times n, 3, 1)$-OOC with $J^*(2\times n,3,1)$ codewords for any $n\equiv0\pmod{2}$ and $n\geq4$.
\end{Lemma}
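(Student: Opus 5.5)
Our plan is to split according to $n \bmod 12$. First we evaluate the target. For $m=2$ we have $J(2\times n,3,1)=\lfloor \frac{2}{3}\lfloor\frac{2n-1}{2}\rfloor\rfloor=\lfloor 2(n-1)/3\rfloor$. Going through the cases of Lemma \ref{bd}:
\begin{itemize}
\item the conditions on $n=2$, on $n=4$ with $m\equiv 4\pmod 6$, and on $m\equiv 0\pmod 3$ do not apply when $m=2$;
\item the condition $2n\equiv 14,20\pmod{24}$ means $n\equiv 7,10\pmod{12}$, and since $n$ is even only $n\equiv 10\pmod{12}$ survives.
\end{itemize}
Hence $J^*(2\times n,3,1)=\lfloor 2(n-1)/3\rfloor-1$ if $n\equiv 10\pmod{12}$, and $\lfloor 2(n-1)/3\rfloor$ otherwise. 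By Lemma \ref{bd} it then suffices to construct a code of this size.

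\emph{Easy residues.} Apply Lemma \ref{Lem:2to2} with $m=1$ to an optimal $1$-D $(2n,3,1)$-OOC from Lemma \ref{1D}, using the factorization $2n=2\cdot n$. This yields a $2$-D $(2\times n,3,1)$-OOC with $2\Phi(1\times 2n,3,1)=2(\lfloor (n-1)/3\rfloor-\delta)$ codewords. Here $\delta=1$ exactly when $n\equiv 7,10\pmod{12}$. For $n\equiv 2,4,8\pmod{12}$ we have $n\equiv 1,2\pmod 3$ and $\delta=0$, so $2\lfloor (n-1)/3\rfloor=\lfloor 2(n-1)/3\rfloor=J^*$, and these residues are done. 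For $n\equiv 0,6\pmod{12}$ this route falls one short, and for $n\equiv 10\pmod{12}$ it also falls one short (it gives $2(n-1)/3-2$). So these three classes need a direct construction.

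\emph{Direct construction for $n\equiv 0,6,10\pmod{12}$.} We work on $I_2\times\mathbb{Z}_n$ and mimic Lemma \ref{[3:0],n=0,6 mod8}. The available resources are:
\begin{itemize}
\item the $n$ mixed $(1,0)$-differences (a mixed $(0,1)$-difference is just the negative of one of these);
\item in each row, the $(n-2)/2$ pure differences in $[1,(n-2)/2]$, with $n/2$ necessarily left unused.
\end{itemize}
The available codeword types draw on these as follows:
\begin{itemize}
\item A Type 2 codeword $\{(0,0),(0,i),(1,x)\}$ uses pure difference $i$ in row $0$ and the mixed differences $x$ and $x-i$.
\item A codeword $\{(1,0),(1,j),(0,y)\}$ similarly uses pure $j$ in row $1$ and the mixed $(1,0)$-differences $-y$ and $j-y$.
\item A Type 3 codeword is impossible since $m=2$, so the rest are Type 1 codewords $\{(i,0),(i,a),(i,a+b)\}$, each consuming a triple $a+b=c$ of pure differences in one row.
\end{itemize}
Counting against $J^*\approx 2(n-1)/3$, we aim for roughly $n/2$ Type 2 codewords, which together use essentially all $n$ mixed differences and about $n/2$ pure differences split between the two rows. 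The remaining about $n/2-2$ pure differences, split between the rows, are to be partitioned into triples $\{a_i,b_i,a_i+b_i\}$ giving the Type 1 codewords.

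\emph{Realising the partition.} Concretely, in each row we choose a short initial interval of pure differences, say $[1,s_\ell]$ for row $\ell$, to be used by Type 2 codewords. We pair these with mixed differences via an extended Skolem or Langford sequence (Lemmas \ref{Skolem}, \ref{m near}, \ref{Langford}), so that the pairs $\{x,x-i\}$ from both rows tile $\mathbb{Z}_n$ up to the allowed leftover. The upper interval $[s_\ell+1,(n-2)/2]$, possibly with one value removed, is partitioned into difference triples using Lemma \ref{sequence}, with $d=s_\ell+1$ and the congruence conditions $(s,d)\bmod(4,2)$ chosen to match $n\bmod 12$. The exceptional missing value $d+3s-1$ in Lemma \ref{sequence} is exactly the kind of slack that accounts for the $-1$ when $n\equiv 10\pmod{12}$.

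The main obstacle is making the two halves compatible. The mixed differences consumed by the Type 2 codewords of both rows must be pairwise distinct and must cover $\mathbb{Z}_n$ apart from at most one value. Simultaneously, the leftover pure intervals must satisfy the hypotheses of Lemma \ref{sequence} for each residue class $0,6,10\pmod{12}$. I would first fix $s_0,s_1$ by the counting, then choose the Skolem-type sequence by parity, and fall back to explicit small cases (e.g.\ $n=6,10,12$) by direct listing where the inequality $s(s-2d+1)+2\ge 0$ of Lemma \ref{sequence} fails.
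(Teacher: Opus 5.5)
Your computation of $J^*(2\times n,3,1)$ is correct, and so is your treatment of $n\equiv 2,4,8\pmod{12}$ via Lemmas \ref{1D} and \ref{Lem:2to2}; both match the paper. The gap is in the remaining classes $n\equiv 0,6,10\pmod{12}$, which carry the whole content of the lemma. You give no construction for them, and the scheme you sketch cannot work. In that scheme, each row uses an initial segment $[1,s_\ell]$ for Type 2 codewords, and the upper segment $[s_\ell+1,(n-2)/2]$ is cut into triples $a+b=c$.

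\textbf{Why the scheme fails.} Let $t_1,t_2$ count the Type 1 and Type 2 codewords. Let $w_m$ count unused mixed $(1,0)$-differences, and $w_p$ unused pure differences below $n/2$. Then $3(t_1+t_2)=2n-2-w_m-w_p$, with $w_m$ even.
\begin{itemize}
\item For $n\equiv 0\pmod 6$, reaching $J^*=2n/3-1$ forces $w_m=0$ and $w_p=1$. So $t_2=n/2$, and two applications of Lemma \ref{sequence} (each discarding a value) are already one waste too many.
\item Splitting $[s+1,(n-2)/2]$, minus at most one value, into sum-triples requires the smallest two thirds of the set not to outweigh the largest third. This forces $s\le (n+2)/14$. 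That contradicts $s_0+s_1=n/2$ unless one row carries no Type 1 codewords at all.
\item Even that fallback fails, e.g.\ at $n=24$. The twelve pairs $\{x,x-i\}$ tiling $\mathbb{Z}_{24}$ need an even number of odd $i$, which excludes $(s_0,s_1)=(11,1)$. The choice $(10,2)$ wastes $11$ in row $0$ and leaves $[3,11]$ to be split completely, which is impossible since its sum is odd.
\end{itemize}

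\textbf{The missing idea.} Make the split between rows extreme, and let one sequence handle all mixed differences at once. The paper's construction has three parts.
\begin{itemize}
\item Take a $k$-extended Skolem sequence $\{(a_i,b_i)\}$ of order $(n-2)/2$. Put all of row $0$ into the codewords $\{(0,0),(0,i),(1,a_i+i)\}$. Their mixed $(1,0)$-differences are exactly the $a_i,b_i$, i.e.\ $[1,n-1]\setminus\{k\}$. So there is no compatibility problem, and only $0$ and $k$ are left.
\item Row $1$ receives the Type 1 codewords $\{(1,0),(1,i),(1,c_i+i+\lfloor n/6\rfloor)\}$ for $1\le i\le\lceil n/6\rceil-1$, built from a $d$-extended Skolem sequence $\{(c_i,d_i)\}$. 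The triples $\{i,\,c_i+\lfloor n/6\rfloor,\,d_i+\lfloor n/6\rfloor\}$ exhaust $[1,(n-2)/2]$ except one value when $n\equiv 10\pmod{12}$, which already gives $J^*$.
\item When $n\equiv 0\pmod 6$, the triples miss $n/6$ and $d+n/6$. Choosing $k=d+n/6$ lets the single extra codeword $\{(0,0),(1,0),(1,k)\}$ absorb the leftover mixed differences $0,k$ together with the pure difference $k$ in row $1$. The case $n=6$ is Example \ref{(2,6)}.
\end{itemize}
The lone row-$1$ Type 2 difference $k$ is not an initial segment. Its position is tied to the hook of the first sequence and the hole of the second, which is exactly what your interval framework cannot produce.
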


\proof For $n\equiv2\pmod6$ or $n\equiv4\pmod{12}$, the conclusion holds by Lemmas \ref{1D} and \ref{Lem:2to2}.
For $n=6$, the conclusion holds by Example \ref{(2,6)}. For the remaining cases, we construct as follows.

Let $S=\{(a_{i},b_{i}):1\leq i\leq (n-2)/2\}$ be a $k$-extended Skolem sequence of order $(n-2)/2$ (from Lemma \ref{Skolem}) and $$\mathcal{A}=\{\{(0,0),(0,i),(1,a_{i}+i)\}: 1\leq i\leq (n-2)/2\}.$$
Let $T=\{(c_{i},d_{i}):1\leq i\leq\lceil n/6\rceil-1\}$ be a $d$-extended Skolem sequence of order $\lceil n/6\rceil-1$ (from Lemma \ref{Skolem}) and
$$\mathcal{B}=\{\{(1,0),(1,i),(1,c_{i}+i+\lfloor n/6\rfloor)\}: 1\leq i\leq\lceil n/6\rceil-1\}.$$
For $n\equiv10\pmod{12}$, $\mathcal{A}\bigcup\mathcal{B}$ is a $2$-D $(2\times n, 3, 1)$-OOC with $J^*(2\times n,3,1)$ codewords. For $n\equiv0\pmod{6}$ and $n\geq12$, take $k=d+n/6$ and $\mathcal{C}=\{\{(0,0),(1,0),(1,k)\}\}$. Then $\mathcal{A}\bigcup\mathcal{B}\bigcup\mathcal{C}$ is a $2$-D $(2\times n, 3, 1)$-OOC with $J^*(2\times n,3,1)$ codewords.
\qed

\begin{Lemma}\label{m=4}
There exists an optimal $2$-D $(4\times n, 3, 1)$-OOC with $J^*(4\times n,3,1)$ codewords for any $n\equiv4,6\pmod{8}$.
\end{Lemma}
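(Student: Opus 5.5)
The plan is to reduce to a ``filling in hole'' argument via Construction \ref{filling2}, after first computing the target value. With $m=4$ the Remark in Section \ref{sec:2} gives $J(4\times n,3,1)=\lfloor (8n-4)/3\rfloor$. For $n\equiv4,6\pmod 8$ we have $4n\equiv 0,8,16\pmod{24}$, and $m=4\not\equiv0\pmod 3$. Hence by Lemma \ref{bd} the only deficient case is $n=4$, where $J^*=J-1$. The case $n=4$ is already settled by Lemma \ref{n=4,small m}, so from now on we take $n\geq 6$ and aim for exactly $\lfloor(8n-4)/3\rfloor$ codewords.

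\emph{Case $n\equiv 4\pmod 8$, $n\geq12$.} Remark \ref{2regular[4:1]} supplies a $2$-D $([4:1]\times n,3,1)$-OOC with $\Theta(4,1,n,3)=5n/2-1$ codewords. It is stated there for $n\equiv0,2\pmod 8$, but Lemma \ref{[m:r],n=4t} with $m=4$ is not available, so I would re-derive it. I would use Construction \ref{fill in ihgdd} with a $3$-SCHGDD of type $(3,2^{n/2})$ and a $2$-regular $([2:1]\times n)$-OOC. If the frame starter needed for the latter is unavailable when $n\equiv4\pmod8$, I would instead write the $[4:1]$ code directly, using extended Skolem sequences in the style of Lemma \ref{[5:2]}.

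Filling the hole with an optimal $1$-D $(n,3,1)$-OOC from Lemma \ref{1D}, which has $\lfloor (n-2)/6\rfloor$ codewords when $n\not\equiv 20\pmod{24}$, gives $5n/2-1+\lfloor(n-2)/6\rfloor=\lfloor(8n-4)/3\rfloor$ codewords, as required. When $n\equiv 20\pmod{24}$ the $1$-D code loses one codeword, so I will instead use a hole of size $3$, as in the next case.

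\emph{Case $n\equiv6\pmod 8$, and the exceptional $n\equiv20\pmod{24}$.} A hole of size $1$ fails for $n\equiv14\pmod{24}$; for example, $n=14$ gives only $34+1=35<36$. A hole of size $2$ with Lemma \ref{m=2} also falls one short. I therefore use a hole of size $3$. We have $\Theta(4,3,n,3)=\lfloor(7n-2)/6\rfloor$ and $J(3\times n,3,1)=(3n-2)/2$, and a direct check gives $\lfloor(7n-2)/6\rfloor+(3n-2)/2=\lfloor(8n-4)/3\rfloor$. For the hole, the optimal $3\times n$ code is Lemma \ref{[3:0],n=0,6 mod8} when $n\equiv6\pmod8$. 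For $n\equiv20\pmod{24}$ I need a separate optimal $(3\times n)$ code, which I would build by the same Skolem recipe as in Lemma \ref{[3:0],n=0,6 mod8}, with extended Skolem sequences of order $(n-2)/2$ and a suitable defect.

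The step I expect to be the main obstacle is the direct construction of a $2$-D $([4:3]\times n,3,1)$-OOC with $\lfloor(7n-2)/6\rfloor$ codewords. Take rows $0,1,2$ as the hole and row $3$ as the new row. Only pure $(3,3)$-differences and mixed differences between row $3$ and rows $0,1,2$ are available, about $7n-2$ differences in total, so nearly all of them must be used. I would first try three families $\{(3,0),(3,i),(j,a^{(j)}_i+i)\}$, one for each $j\in\{0,1,2\}$, built from three extended Skolem sequences. These would exhaust the pure differences of row $3$ three times over, which is too many. So instead I would take codewords $\{(3,0),(j,x),(j',y)\}$ with $j\ne j'$ among $0,1,2$ (not allowed, since that is a difference inside the hole). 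The correct shape is codewords $\{(j,0),(3,x),(3,y)\}$, using frame-starter-type pairs to cover the mixed differences, together with about $(n-2)/6$ codewords $\{(3,0),(3,a),(3,a+b)\}$ built from Lemma \ref{sequence} to cover the pure differences. The leftover differences in $\{0,n/2\}$ would be handled by a few hand-chosen codewords, as in the set $\mathcal{C}$ of Lemma \ref{[5:2]}.
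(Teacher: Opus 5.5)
Your computation of $J^*(4\times n,3,1)$ and the case $n=4$ are fine. Beyond $n=4$, however, the hole-filling plan cannot reach $\lfloor(8n-4)/3\rfloor$ for any $n\equiv4,6\pmod8$, because each ingredient you need is ruled out by an upper bound.

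\emph{Hole of size $1$.} A $2$-D $([4:1]\times n,3,1)$-OOC with $\Theta(4,1,n,3)=5n/2-1$ codewords would have to use each of the $15n-6$ admissible differences exactly once. The parity argument from the proof of Lemma~\ref{bd1} then applies. The mixed $(i,j)$-differences with $i>j$ sum to $6\cdot n(n-1)/2=3n(n-1)$, which is even. The pure differences in $(0,n/2)$ sum to $3\cdot n(n-2)/8$, which is odd whenever $n\equiv4,6\pmod8$. This contradicts $A\equiv B\pmod2$. So neither your fallback Skolem-type construction nor Construction~\ref{fill in ihgdd} can produce this code. (The frame starter of type $2^{n/2}$ also fails to exist for $n\equiv4\pmod8$, by a similar parity count.) This is exactly why Remark~\ref{2regular[4:1]} is restricted to $n\equiv0,2\pmod8$.

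\emph{Hole of size $3$.} Here the obstruction is structural. Since $\Delta_{ij}=\emptyset$ for all $i,j\in\{0,1,2\}$, no codeword can contain two points of the hole. So every codeword has at least two points in row $3$ and uses at least two of the $n-2$ admissible pure $(3,3)$-differences. Hence a $([4:3]\times n,3,1)$-OOC has at most $(n-2)/2$ codewords, far below $\lfloor(7n-2)/6\rfloor$. Your ``correct shape'' $\{(j,0),(3,x),(3,y)\}$ is just a translate of the family you had rejected. Covering the $6n$ mixed differences with it would need $3n$ pure differences in row $3$, but only $n-2$ exist.

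\emph{The case $n\equiv20\pmod{24}$.} Independently of the above, $3n\equiv12\pmod{24}$ here, so Lemma~\ref{bd1} gives $\Phi(3\times n,3,1)\le J(3\times n,3,1)-1$. The hole filler with $(3n-2)/2$ codewords that your count requires does not exist either.

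\emph{What is missing.} The missing idea is to avoid holes altogether.
\begin{itemize}
\item For $n\equiv14,20\pmod{24}$, the paper uses an optimal $1$-D $(4n,3,1)$-OOC. Since $4n\equiv8\pmod{24}$, Lemma~\ref{1D} has no defect here, and Lemma~\ref{Lem:2to2} with $n_1=4$ converts it.
\item For $n\equiv6,12\pmod{24}$, it takes the optimal $2$-D $(2\times 2n,3,1)$-OOC of Lemma~\ref{m=2} and applies Lemma~\ref{Lem:2to2} with $n_1=2$.
\item The classes $n\equiv4\pmod{24}$ with $n\ge28$, and $n\equiv22\pmod{24}$, are built directly on $(I_3\cup\{\infty\})\times\mathbb{Z}_n$ with \emph{no} hole:
\begin{itemize}
\item a $3$-SCHGDD of type $(3,4^{n/4})$, respectively $(3,2^{n/2})$, covers the mixed differences among rows $0,1,2$;
\item codewords $\{(i,0),(i,t),(\infty,a_{it}+t+\beta_i)\}$ built from extended Skolem sequences pair the pure differences of row $i$ with the mixed differences between row $i$ and $\infty$;
\item a few explicit codewords absorb the leftover differences;
\item near-Skolem or Langford triples cover the pure differences of row $\infty$.
\end{itemize}
\end{itemize}
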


\begin{proof}
For $n\equiv14,20\pmod{24}$, the conclusion holds by Lemmas \ref{1D} and \ref{Lem:2to2}. For $n\equiv6,12\pmod{24}$, the conclusion holds by Lemmas \ref{m=2} and \ref{Lem:2to2}.

For $n=4$, the following is an optimal $2$-D $(4\times 4, 3, 1)$-OOC with $J^*(4\times4,3,1)=8$ codewords.
\begin{center}
\begin{tabular}{llll}
$\{(0,0),(1,0),(2,0)\}$,
&$\{(0,0),(3,0),(0,1)\}$,
&$\{(0,0),(1,1),(3,1)\}$,
&$\{(0,0),(2,1),(1,2)\}$,\\
$\{(0,0),(2,2),(2,3)\}$,
&$\{(0,0),(3,2),(1,3)\}$,
&$\{(1,0),(1,1),(2,2)\}$,
&$\{(1,0),(3,1),(3,2)\}$.
\end{tabular}
\end{center}
For $n\equiv4\pmod{24}$ and $n\geq28$, we construct the codes on $(I_3\cup\{\infty\})\times\mathbb{Z}_n$. All the codewords are divided into four parts.

The first part consists of $n-4$ codewords which are the base blocks of a $3$-SCHGDD of type $(3,4^{n/4})$ on $I_3\times\mathbb{Z}_n$ with the group set $\{\{i\}\times\mathbb{Z}_n:0\leq i\leq2\}$ and the hole set $\{I_3\times\{j,j+n/4,j+n/2,j+3n/4\}:0\leq j\leq n/4-1\}$ (from Lemma \ref{schgdd}).
The second part consists of the following $3(n-2)/2-1$ codewords:
\begin{center}
\begin{tabular}{llll}
$\{(i,0),(i,t),(\infty,a_{it}+t+\beta_i)\}$, &$1\leq t\leq (n-2)/2$, &$i=0,1,2$, &$(i,t)\neq(2,(n-4)/2)$,
\end{tabular}
\end{center}
where $\{(a_{it},b_{it}):1\leq t\leq (n-2)/2\}$ is a $k_i$-extended Skolem sequence of order $(n-2)/2$ with $(k_0,\beta_0)=(n/2-1,1)$,
$(k_1,\beta_1)=(n/2-3,1)$ and $(k_2,\beta_2)=(n-1,0)$ (from Lemma \ref{Skolem}). Note that, by the construction of Skolem sequence of order $(n-2)/2$ in \cite[Theorem 53.10]{s3}, we can take $(a_{it},b_{it})=(n/2,n-2)$ when $(i,t)=(2,(n-4)/2)$.
The third part consists of $\{(2,0),(\infty,0),(\infty,n-1)\}$ and the following $6$ codewords:
\begin{center}
\begin{tabular}{llll}
$\{(0,0),(2,0),(\infty,n/2)\}$,
&$\{(0,0),(1,n/2),(2,n/4)\}$,
&$\{(0,0),(1,3n/4),(2,3n/4)\}$,\\
$\{(0,0),(1,0),(\infty,1)\}$,
&$\{(0,0),(1,n/4),(2,n/2)\}$,
&$\{(2,0),(1,n/2),(\infty,n-2)\}$.
\end{tabular}
\end{center}
The fourth part consists of the following $s:=(n-4)/6$ codewords:
\begin{center}
\begin{tabular}{llll}
$\{(\infty,0),(\infty,c_t+s+1),(\infty,d_{t}+s+1)\}$, &$1\leq t\leq s$,
\end{tabular}
\end{center}
where $\{(c_t,d_t): 1\leq t\leq s\}$ is a $1$-near Skolem sequence of order $s+1$ (from Lemma \ref{m near}).

For $n\equiv22\pmod{24}$, we also construct the codes on $(I_3\cup\{\infty\})\times\mathbb{Z}_n$. All the codewords are divided into four parts.

The first part consists of $n-2$ codewords which are the base blocks of a $3$-SCHGDD of type $(3,2^{n/2})$ on $I_3\times\mathbb{Z}_n$ with the group set $\{\{i\}\times\mathbb{Z}_n:0\leq i\leq2\}$ and the hole set $\{I_3\times\{j,j+n/2\}:0\leq j\leq n/2-1\}$ (from Lemma \ref{schgdd}).
The second part consists of the following $3(n-2)/2-1$ codewords:
\begin{center}
\begin{tabular}{llll}
$\{(i,0),(i,t),(\infty,a_{it}+t+\beta_i)\}$, &$1\leq t\leq (n-2)/2$, &$0\leq i\leq2$, &$(i,t)\neq(0,(n-2)/2)$,
\end{tabular}
\end{center}
where $\{(a_{it},b_{it}):1\leq t\leq (n-2)/2\}$ is a $k_i$-extended Skolem sequence of order $(n-2)/2$ with $(k_0,\beta_0)=(n-2,0)$,
$(k_1,\beta_1)=(n/2-1,0)$ and $(k_2,\beta_2)=(n/2-1,n/2)$ (from Lemma \ref{Skolem}). Note that, by the construction of hooked Skolem sequence of order $(n-2)/2$ in \cite[Theorem 53.10]{s3}, we can take $(a_{it},b_{it})=(n/2,n-1)$ when $(i,t)=(0,(n-2)/2)$.
The third part consists of the following $5$ codewords:
\begin{center}
\begin{tabular}{llll}
$\{(0,0),(1,n/2),(\infty,n/2)\}$,
&$\{(1,0),(2,n/2),(\infty,(n-2)/2)\}$,
&$\{(0,0),(2,n/2),(\infty,0)\}$,\\
$\{(0,0),(1,0),(2,0)\}$,
&$\{(0,0),(\infty,n-2),(\infty,n-1)\}$.
\end{tabular}
\end{center}
The fourth part consists of the following $s:=(n-4)/6$ codewords:
\begin{center}
\begin{tabular}{llll}
$\{(\infty,0),(\infty,c_t+s+1),(\infty,d_{t}+s+1)\}$, &$1\leq t\leq s$,
\end{tabular}
\end{center}
where $\{(c_t,d_t): 1\leq t\leq s\}$ is a Langford sequence of order $s$ and defect $2$ (from Lemma \ref{Langford}).
\end{proof}

\begin{Lemma}\label{m<12,n=0mod8}
There exists an optimal $2$-D $(m\times n, 3, 1)$-OOC with $J^*(m\times n,3,1)$ codewords for $m\in\{4,5\}$ and $n\equiv0\pmod{8}$.
\end{Lemma}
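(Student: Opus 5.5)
The plan is to use the filling-in-hole Construction \ref{filling2} in both cases. For each $m\in\{4,5\}$ we take an incomplete code $2$-D $([m:r]\times n,3,1)$-OOC that was already built for $n\equiv0\pmod8$, and fill its hole with an optimal code on the $r$ hole rows. First we note that for $m\in\{4,5\}$ and $n\equiv0\pmod8$ none of the exceptional cases of Lemma \ref{bd} applies. Indeed, $mn \bmod 24\in\{0,8,16\}$, $m\not\equiv0\pmod3$, and $n\neq2,4$. Hence $J^*(m\times n,3,1)=J(m\times n,3,1)$, and it suffices to reach the Johnson bound exactly.

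\emph{Case $m=4$.} Remark \ref{2regular[4:1]} applies to $n\equiv0\pmod8$, $n\geq8$. It takes the SCHGDD of type $(3,2^{n/2})$ together with Example \ref{2regular [2:1]} in Construction \ref{fill in ihgdd}, and then applies Construction \ref{filling3}. This gives a $2$-D $([4:1]\times n,3,1)$-OOC with
\[
\Theta(4,1,n,3)=\lfloor (15n-6)/6\rfloor=5n/2-1
\]
codewords. The hole is a single row. We fill it with an optimal $1$-D $(n,3,1)$-OOC from Lemma \ref{1D}; since $n\equiv0\pmod8$ we have $\delta=0$, so it has $J(1\times n,3,1)=\lfloor (n-2)/6\rfloor$ codewords. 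Then we check $5n/2-1+\lfloor (n-2)/6\rfloor=\lfloor (8n-4)/3\rfloor=J(4\times n,3,1)$ by splitting $n \bmod 24\in\{0,8,16\}$. The three cases give $64s-2$, $64s+20$ and $64s+41$ on both sides.

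\emph{Case $m=5$.} Lemma \ref{[5:2]} covers $n\equiv0\pmod8$. It gives a $2$-D $([5:2]\times n,3,1)$-OOC with $\Theta(5,2,n,3)=7n/2-1$ codewords. We fill the two-row hole with an optimal $2$-D $(2\times n,3,1)$-OOC from Lemma \ref{m=2} (valid since $n\geq8$). Here $J^*(2\times n,3,1)=J(2\times n,3,1)=\lfloor 2(n-1)/3\rfloor$, because $2n\not\equiv14,20\pmod{24}$. Then we verify
\[
7n/2-1+\lfloor 2(n-1)/3\rfloor=\lfloor 5(5n-2)/6\rfloor=J(5\times n,3,1)
\]
by the same residue split mod $24$. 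Both sides equal $100s-2$, $100s+31$ and $100s+65$ respectively.

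The only real work is the arithmetic identity between $\Theta$ plus the hole code size and the Johnson bound. That identity is the step to check carefully, since floors could in principle lose one codeword. It holds in all residues, so no direct or sporadic constructions should be needed.
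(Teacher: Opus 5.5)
Your proposal is correct and is the paper's proof. You fill the hole of the $([4:1]\times n,3,1)$-OOC from Remark~\ref{2regular[4:1]} with an optimal $1$-D code from Lemma~\ref{1D}, and the hole of the $([5:2]\times n,3,1)$-OOC from Lemma~\ref{[5:2]} with the $2\times n$ code from Lemma~\ref{m=2}, both via Construction~\ref{filling2}. Your residue-by-residue check modulo $24$ that $\Theta(m,r,n,3)+J^*(r\times n,3,1)=J(m\times n,3,1)$ is a step the paper leaves implicit, and your values are all correct.
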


\begin{proof}
For $m\in\{4,5\}$, let $m=3+r$, where $r\in\{1,2\}$. By Remark \ref{2regular[4:1]} and Lemma \ref{[5:2]}, there exists a $2$-D $([m:r]\times n,3,1)$-OOC with $\Theta(m,r,n,3)$ codewords. Apply Construction \ref{filling2} with a $2$-D $(r\times n,3,1)$-OOC with $J^*(r\times n,3,1)$ codewords (from Lemmas \ref{1D}, \ref{m=2}) to obtain a $2$-D $(m\times n, 3, 1)$-OOC with $J^*(m\times n,3,1)$ codewords.
\end{proof}

\begin{Lemma}\label{m<12,n=2mod8}
There exists an optimal $2$-D $(m\times n, 3, 1)$-OOC with $J^*(m\times n,3,1)$ codewords for $(1)$ $m=5$ and $n\equiv4\pmod{8}$; $(2)$ $m\in\{4,5,7,8,10,11\}$, $n\equiv2\pmod{8}$.
\end{Lemma}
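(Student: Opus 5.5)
The plan is to use the ``filling in hole'' Construction \ref{filling2} throughout. In each case we write $m=m'+r$ with a small $r$, take a $2$-D $([m:r]\times n,3,1)$-OOC with $\Theta(m,r,n,3)$ codewords, and fill the hole with an optimal $2$-D $(r\times n,3,1)$-OOC. The count then has to be checked: $\Theta(m,r,n,3)+J^*(r\times n,3,1)=J^*(m\times n,3,1)$. Since $\Theta(m,r,n,3)=\lfloor((m^2-r^2)n-2(m-r))/6\rfloor$ and $J(m\times n,3,1)=\lfloor m(mn-2)/6\rfloor$ for even $n$, this reduces in each case to a congruence computation modulo $6$ together with the defect $\mu$.

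For (1), $m=5$ and $n\equiv4\pmod 8$. We write $5=3+2$. We would like a $([5:2]\times n)$ code, but Lemma \ref{[5:2]} only covers $n\equiv0,6\pmod 8$. So for $n\equiv4\pmod8$ with $n\geq 12$ we imitate its proof. We use a $3$-SCHGDD of type $(4,4^{n/4})$, or alternatively of type $(4,2^{n/2})$, which exists by Lemma \ref{schgdd}. We add Skolem-type codewords $\{(i,0),(i,t),(\infty_1,a_{it}+t+\beta_i)\}$ for $i\in I_3$, with extended Skolem sequences of order $(n-2)/2$ from Lemma \ref{Skolem}; here $(n-2)/2\equiv1\pmod4$, so the hooks $k_i$ must be odd. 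Finally we add a handful of sporadic codewords covering the mixed differences $0$, $n/4$, $n/2$ and $3n/4$ between $\{0,1,2,\infty_0\}$ and $\infty_1$. We then fill with the optimal $(2\times n)$ code of Lemma \ref{m=2}. The case $n=4$ is already in Lemma \ref{n=4,small m}, and $n\equiv 20\pmod{24}$ (where $5n\equiv 4$) needs a check of $\mu$. An alternative for $n\equiv 4\pmod 8$ is Lemma \ref{[m:r],n=4t}, but that requires $m\geq7$, so it is of no help here.

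For (2), $n\equiv2\pmod8$. For $m\in\{4,7,10\}$ we use $r=1$. Remark \ref{2regular[4:1]} gives $[4:1]$. For $[7:1]$ and $[10:1]$ we apply Construction \ref{fill in ihgdd} with a $3$-SCHGDD of type $(6,2^{n/2})$, respectively type $(9,2^{n/2})$ (Lemma \ref{schgdd}; we must check exceptions (4) and (5) do not apply), together with Example \ref{2regular [2:1]}. This produces $2$-regular $[m:1]$ codes, which we complete via Construction \ref{filling3} using $2$-cyclic $3$-GDDs of type $2^{m-1}2^1$, i.e. $3$-GDPs from Lemma \ref{3-GDP} or the $n=2$ case of Lemma \ref{n=2} in holey form. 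We then fill with the $1$-D code of Lemma \ref{1D}. For $m\in\{5,8,11\}$ we use $r=2$. Lemma \ref{[11:2],n=8t+2} handles $m=11$. For $5$ and $8$ we use SCIHGDDs of type $(4,1,2^{n/2})$ and $(7,1,2^{n/2})$ (i.e. SCHGDDs) in Construction \ref{fill in ihgdd}, then Construction \ref{filling3} with a $([m:2]\times2)$ code from a $2$-cyclic GDD of type $2^{m-2}4^1$, then fill with Lemma \ref{m=2}. Small $n=2$ is Lemma \ref{n=2}, and $n=10$ for $m\in\{7,10\}$ is Lemma \ref{n=10}, which signals that those cases need $n\geq 18$ from the recursion.

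The main obstacle is that the $n=2$ ingredients (GDDs of type $2^{m-r}(2r)^1$) may fail for small $m$, and that the hole-filling counts may fall short by one exactly when $\mu=1$ for $m$ but not for $r$. For these I would first try direct cyclic constructions by computer, with codewords listed in an appendix, as the paper does for $n=6,10$.
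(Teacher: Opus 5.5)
Your hole-filling plan breaks down for $m\in\{5,7,8,10\}$, and not only at small parameters. The holey codes you ask for do not exist for any $n$ in the relevant classes, so neither imitating Lemma \ref{[5:2]} nor a computer search can supply them.

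The congruence $A\equiv B\pmod 2$ from the proof of Lemma \ref{bd1} is checked codeword by codeword, so it holds for holey codes too. Here $A$ is the sum of all mixed $(i,j)$-differences with $i>j$, and $B$ is the sum of all pure differences in $(0,n/2)$. For $(m,r)\in\{(5,2),(8,2),(7,1),(10,1)\}$ we have $3\mid m-r$, so $6\mid (m-r)[(m+r)n-2]$ for even $n$. Hence a $2$-D $([m:r]\times n,3,1)$-OOC with $\Theta(m,r,n,3)$ codewords uses every available difference exactly once: all of $\mathbb{Z}_n$ for each pair of rows not both in the hole, and $\mathbb{Z}_n\setminus\{0,n/2\}$ on each non-hole row. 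This forces $A=\bigl(\binom{m}{2}-\binom{r}{2}\bigr)\frac{n(n-1)}{2}$ and $B=(m-r)\frac{n(n-2)}{8}$.
\begin{itemize}
\item For $n\equiv 2\pmod 8$ (including $n=2$), $B$ is even but $A$ is odd, since $\binom{m}{2}-\binom{r}{2}\in\{9,27,21,45\}$ is odd.
\item For $n\equiv 4\pmod 8$ and $(m,r)=(5,2)$, $A$ is even but $B$ is odd.
\end{itemize}
This is consistent with Lemma \ref{[5:2]} being stated only for $n\equiv 0,6\pmod 8$. It also rules out the $n=2$ ingredients you name, namely $2$-cyclic $3$-GDDs of types $2^34^1$, $2^64^1$, $2^7$ and $2^{10}$.

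Dropping to a holey code with one codeword fewer (e.g.\ via GDPs) yields at most $J(m\times n,3,1)-1$ after filling. That is optimal only when $\mu=1$: $n\equiv 2\pmod{24}$ for $m\in\{7,10\}$, and $n\equiv 4\pmod{24}$ for $m=5$. (The defect class is $n\equiv 4$, not $n\equiv 20$, where $5n\equiv 4$ and $\mu=0$.) So your argument is complete only for $m=4$ (Remark \ref{2regular[4:1]} with Lemma \ref{1D}) and $m=11$ (Lemma \ref{[11:2],n=8t+2} with Lemma \ref{m=2}). There it coincides with the paper.

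The missing idea is that for $m\in\{5,7,8,10\}$ the code must not split into a holey code plus a filling. The paper handles these $m$ as follows.
\begin{itemize}
\item In the classes with $\mu=1$ it transports the defect from a small code via Construction \ref{filling3}. For $m=5$, $n\equiv 4\pmod{24}$ it uses the $4$-regular $(5\times n)$ code of Lemma \ref{m=24t+6,6regular} with the $(5\times 4)$ code of Lemma \ref{n=4}. For $m\in\{7,10\}$, $n\equiv 2\pmod{24}$ it uses a $2$-regular $(m\times n)$ code with the $(m\times 2)$ code of Lemma \ref{n=2}.
\item For $m\in\{5,8\}$, $n\equiv 10\pmod{24}$ it uses Lemmas \ref{1D} and \ref{Lem:2to2}.
\item It lists $(m,n)=(5,12)$ and the cases $n=10$ directly.
\item For all remaining classes it builds the code directly on $(I_{m-1}\cup\{\infty\})\times\mathbb{Z}_n$. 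The pieces are a $3$-SCHGDD of type $(m-1,2^{n/2})$; Skolem codewords $\{(i,0),(i,t),(\infty,a_{it}+t)\}$ with odd hooks $k_i$; sporadic codewords including $\{(i,0),(\infty,0),(\infty,k_i)\}$; and pure $\infty$-codewords from Lemmas \ref{Langford} and \ref{sequence}.
\end{itemize}
The codewords $\{(i,0),(\infty,0),(\infty,k_i)\}$ spend pure differences of the $\infty$ row inside codewords that also carry mixed differences. So the ``hole'' and its ``filling'' are interleaved, and no subfamily forms a holey code of size $\Theta$. The parity count then only has to balance for the whole code, whose unused differences give the needed slack.
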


\begin{proof}
For $n\in\{2,4\}$, Lemmas \ref{n=2} and \ref{n=4} provide the required codes. For $(m,n)=(5,12)$, the $J^*(5\times12,3,1)=48$ codewords are listed below.
\begin{center}
\begin{tabular}{llll}
$\{(0,0),(1,0),(2,0)\}$,
&$\{(0,0),(3,0),(1,8)\}$,
&$\{(0,0),(4,0),(2,9)\}$,
&$\{(0,0),(0,1),(4,6)\}$,\\
$\{(0,0),(1,1),(0,8)\}$,
&$\{(0,0),(2,1),(1,6)\}$,
&$\{(0,0),(3,1),(4,1)\}$,
&$\{(0,0),(0,2),(1,4)\}$,\\
$\{(0,0),(2,2),(3,3)\}$,
&$\{(0,0),(3,2),(2,4)\}$,
&$\{(0,0),(4,2),(1,9)\}$,
&$\{(0,0),(0,3),(2,6)\}$,\\
$\{(0,0),(1,3),(4,3)\}$,
&$\{(0,0),(3,4),(0,5)\}$,
&$\{(0,0),(4,4),(2,5)\}$,
&$\{(0,0),(3,5),(3,6)\}$,\\
$\{(0,0),(1,7),(3,7)\}$,
&$\{(0,0),(2,7),(1,11)\}$,
&$\{(0,0),(4,7),(3,8)\}$,
&$\{(0,0),(2,8),(4,8)\}$,\\
$\{(0,0),(3,9),(1,10)\}$,
&$\{(0,0),(4,9),(4,10)\}$,
&$\{(0,0),(2,10),(2,11)\}$,
&$\{(0,0),(3,10),(4,11)\}$,\\
$\{(1,0),(1,1),(2,10)\}$,
&$\{(1,0),(2,1),(2,4)\}$,
&$\{(1,0),(3,1),(1,4)\}$,
&$\{(1,0),(4,1),(2,11)\}$,\\
$\{(1,0),(1,2),(3,7)\}$,
&$\{(1,0),(2,2),(4,7)\}$,
&$\{(1,0),(3,2),(4,9)\}$,
&$\{(1,0),(4,2),(1,3)\}$,\\
$\{(1,0),(2,3),(3,3)\}$,
&$\{(1,0),(4,3),(1,5)\}$,
&$\{(1,0),(4,4),(4,6)\}$,
&$\{(1,0),(2,5),(3,8)\}$,\\
$\{(1,0),(2,6),(3,10)\}$,
&$\{(1,0),(3,6),(4,8)\}$,
&$\{(2,0),(4,1),(4,8)\}$,
&$\{(2,0),(2,2),(3,8)\}$,\\
$\{(2,0),(3,2),(4,6)\}$,
&$\{(2,0),(2,4),(3,9)\}$,
&$\{(2,0),(4,4),(2,7)\}$,
&$\{(2,0),(3,7),(4,10)\}$,\\
$\{(2,0),(4,7),(3,11)\}$,
&$\{(3,0),(3,2),(3,5)\}$,
&$\{(3,0),(3,4),(4,9)\}$,
&$\{(3,0),(4,6),(4,10)\}$.
\end{tabular}
\end{center}

For $m=4$, by Remark \ref{2regular[4:1]}, there exists a $2$-D $([4:1]\times n,3,1)$-OOC with $\Theta(4,1,n,3)$ codewords.
Apply Construction \ref{filling2} with a $1$-D $(n,3,1)$-OOC with $J^*(1\times n,3,1)$ codewords (from Lemma \ref{1D}) to obtain a $2$-D $(4\times n, 3, 1)$-OOC with $J^*(4\times n,3,1)$ codewords.

For $m=11$, there exists a $2$-D $([11:2]\times n,3,1)$-OOC with $\Theta(11,2,n,3)$ codewords (from Lemma \ref{[11:2],n=8t+2}).
Apply Construction \ref{filling2} with a $2$-D $(2\times n,3,1)$-OOC with $J^*(2\times n,3,1)$ codewords (from Lemma \ref{m=2}) to obtain a $2$-D $(11\times n, 3, 1)$-OOC with $J^*(11\times n,3,1)$ codewords.

For $m\in\{5,8\}$ and $n\equiv10\pmod{24}$, the conclusion holds by Lemmas \ref{1D} and \ref{Lem:2to2}. For $m\in\{7,10\}$ and $n=10$, Lemma \ref{n=10} provides the required codes.

For $m=5$ and $n\equiv4\pmod{24}$, apply Construction \ref{filling3} with a $2$-D $(5\times 4,3,1)$-OOC with $J^*(5\times 4,3,1)$ codewords (from Lemma \ref{n=4}) and a $4$-regular $2$-D $(5\times n,3,1)$-OOC with $\Upsilon(5,0,n,4,3)$ codewords (from Lemma \ref{m=24t+6,6regular}).

For $m\in\{7,10\}$ and $n\equiv2\pmod{24}$, apply Construction \ref{filling3} with a $2$-D $(m\times 2,3,1)$-OOC with $J^*(m\times 2,3,1)$ codewords (from Lemma \ref{n=2}) and a $2$-regular $2$-D $(m\times n,3,1)$-OOC with $\Upsilon(m,0,n,2,3)$ codewords (from Lemma \ref{m=24t+6,6regular}).

Now we deal with the remaining four cases: $(1)$ $m=5$, $n\equiv12,20\pmod{24}$ and $n\geq20$; $(2)$ $m\in\{5,8\}$, $n\equiv2\pmod{24}$ and $n\geq26$; $(3)$ $m\in\{7,10\}$, $n\equiv10\pmod{24}$ and $n\geq34$; $(4)$ $m\in\{5,7,8,10\}$ and $n\equiv18\pmod{24}$. We construct the codes on $(I_{m-1}\cup\{\infty\})\times\mathbb{Z}_n$. All the codewords are divided into four parts. Let $e_m=3\lfloor(m-3)/3\rfloor$ and $r_m=m\pmod3+6$.

The first part consists of $(m-1)(m-2)(n-2)/6$ codewords, which are the base blocks of a $3$-SCHGDD of type $(m-1,2^{n/2})$ on $I_{m-1}\times\mathbb{Z}_n$ with the group set $\{\{i\}\times\mathbb{Z}_n:i\in I_{m-1}\}$ and the hole set $\{I_{m-1}\times\{j,j+n/2\}:0\leq j\leq n/2-1\}$ (from Lemma \ref{schgdd}).

The second part consists of the following $(m-1)(n-2)/2$ codewords:
\begin{center}
\begin{tabular}{llll}
$\{(i,0),(i,t),(\infty,a_{it}+t)\}$, &$1\leq t\leq (n-2)/2$, &$0\leq i\leq m-2$,\\
\end{tabular}
\end{center}
where $\{(a_{it},b_{it}):1\leq t\leq (n-2)/2\}$ is a $k_i$-extended Skolem sequence of order $(n-2)/2$ with $k_i=n/2$ for $0\leq i\leq e_m-1$,
and $k_i=2(i-e_m)+1$ for $e_m\leq i\leq m-2$ (from Lemma \ref{Skolem}).

The third part consists of $\{(i,0),(\infty,0),(\infty,2(i-e_m)+1)\}$ for $e_m\leq i\leq m-2$ and the following codewords:
\begin{center}
\begin{longtable}{llll}
$m=5$:\\
$\{(0,0),(1,0),(2,0)\}$,
&$\{(0,0),(3,0),(1,n/2)\}$,
&$\{(0,0),(2,n/2),(3,n/2)\}$,\\
$\{(1,0),(3,0),(2,n/2)\}$.\\
$m=7$:\\
$\{(0,0),(1,0),(3,0)\}$,
&$\{(0,0),(2,0),(4,0)\}$,
&$\{(0,0),(5,0),(3,n/2)\}$,\\
$\{(0,0),(\infty,0),(1,n/2)\}$,
&$\{(0,0),(2,n/2),(\infty,n/2)\}$,
&$\{(0,0),(4,n/2),(5,n/2)\}$,\\
$\{(1,0),(2,0),(5,0)\}$,
&$\{(1,0),(4,0),(5,n/2)\}$,
&$\{(1,0),(\infty,0),(2,n/2)\}$,\\
$\{(1,0),(3,n/2),(4,n/2)\}$,
&$\{(2,0),(3,0),(4,n/2)\}$,
&$\{(2,0),(3,n/2),(5,n/2)\}$.\\
$m=8$:\\
$\{(0,0),(1,0),(2,0)\}$,
&$\{(0,0),(3,0),(4,0)\}$,
&$\{(0,0),(5,0),(6,0)\}$,\\
$\{(0,0),(\infty,0),(1,n/2)\}$,
&$\{(0,0),(2,n/2),(\infty,n/2)\}$,
&$\{(0,0),(3,n/2),(5,n/2)\}$,\\
$\{(0,0),(4,n/2),(6,n/2)\}$,
&$\{(1,0),(3,0),(6,0)\}$,
&$\{(1,0),(4,0),(3,n/2)\}$,\\
$\{(1,0),(5,0),(6,n/2)\}$,
&$\{(1,0),(\infty,0),(2,n/2)\}$,
&$\{(1,0),(4,n/2),(5,n/2)\}$,\\
$\{(2,0),(3,0),(5,n/2)\}$,
&$\{(2,0),(4,0),(6,n/2)\}$,
&$\{(2,0),(5,0),(4,n/2)\}$,\\
$\{(2,0),(6,0),(3,n/2)\}$.\\
$m=10$:\\
$\{(0,0),(1,0),(2,0)\}$,
&$\{(0,0),(3,0),(4,0)\}$,
&$\{(0,0),(5,0),(6,0)\}$,\\
$\{(0,0),(7,0),(8,0)\}$,
&$\{(0,0),(\infty,0),(1,n/2)\}$,
&$\{(0,0),(2,n/2),(3,n/2)\}$,\\
$\{(0,0),(4,n/2),(7,n/2)\}$,
&$\{(0,0),(5,n/2),(\infty,n/2)\}$,
&$\{(0,0),(6,n/2),(8,n/2)\}$,\\
$\{(1,0),(3,0),(5,0)\}$,
&$\{(1,0),(4,0),(6,0)\}$,
&$\{(1,0),(7,0),(2,n/2)\}$,\\
$\{(1,0),(8,0),(3,n/2)\}$,
&$\{(1,0),(\infty,0),(4,n/2)\}$,
&$\{(1,0),(5,n/2),(8,n/2)\}$,\\
$\{(1,0),(6,n/2),(7,n/2)\}$,
&$\{(2,0),(4,0),(8,0)\}$,
&$\{(2,0),(5,0),(6,n/2)\}$,\\
$\{(2,0),(6,0),(4,n/2)\}$,
&$\{(2,0),(7,0),(8,n/2)\}$,
&$\{(2,0),(\infty,0),(5,n/2)\}$,\\
$\{(2,0),(3,n/2),(\infty,n/2)\}$,
&$\{(3,0),(6,0),(7,n/2)\}$,
&$\{(3,0),(7,0),(5,n/2)\}$,\\
$\{(3,0),(8,0),(6,n/2)\}$,
&$\{(3,0),(4,n/2),(\infty,n/2)\}$,
&$\{(4,0),(5,0),(8,n/2)\}$,\\
$\{(4,0),(5,n/2),(7,n/2)\}$.
\end{longtable}
\end{center}

The fourth part consists of $\{(\infty,0),(\infty,2),(\infty,6)\}$ and
the following $s:=\lfloor(n-16)/6\rfloor$ codewords:
$$\left\{
\begin{array}{lll}
\{(\infty,0),(\infty,a_i),(\infty,b_i)\}, & \ \ {\rm if}\ n\equiv2,10\pmod{24},\\
\{(\infty,0),(\infty,c_i+r_m+s-1),(\infty,d_i+r_m+s-1)\}, & \ \ {\rm if}\ n\equiv12,18,20\pmod{24},\\
\end{array}
\right.
$$
where $\{\{a_i,b_i-a_i,b_i\}: 1\leq i\leq s\}$ is a partition of $[r_m,r_m+3s]\setminus\{r_m+3s-1\}$ and
$\{(c_i,d_i): 1\leq i\leq s\}$ is a Langford sequence of order $s$ and defect $r_m$ (from Lemmas \ref{Langford} and \ref{sequence}).
Note that Lemmas \ref{Langford} and \ref{sequence} work only when $s\geq2r_m-1$, the codewords of the fourth part for $s\leq2r_m-2$ are listed in Appendix \ref{fourthpart m=5}.
\end{proof}

\begin{Lemma}\label{m<12,n=6mod8}
There exists an optimal $2$-D $(m\times n, 3, 1)$-OOC with $J^*(m\times n,3,1)$ codewords for $m\in\{5,7,8,10,11\}$, $n\equiv6\pmod{8}$.
\end{Lemma}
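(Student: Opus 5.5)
We follow the pattern of Lemmas \ref{m<12,n=0mod8} and \ref{m<12,n=2mod8}. Write $n=8s+6$, so $n\equiv 6,14,22\pmod{24}$. In each case we either transfer a known code via Lemma \ref{Lem:2to2}, or fill a holey code (Construction \ref{filling2}), or give a direct construction on $(I_{m-1}\cup\{\infty\})\times\mathbb{Z}_n$.

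\textbf{Step 1: cases handled by earlier results.} For $n=6$ and $m\in\{7,8,10,11\}$, Lemma \ref{n=6} gives the codes. For $m=5$, $n=6$, Lemma \ref{[5:2]} gives a $2$-D $([5:2]\times 6,3,1)$-OOC with $\Theta(5,2,6,3)$ codewords; we fill its hole with the optimal $2$-D $(2\times 6,3,1)$-OOC of Example \ref{(2,6)}. The same filling works for $m=5$ and every $n\equiv 6\pmod 8$, using Lemma \ref{m=2} for the $2\times n$ code.

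For $m\in\{10,11\}$ the natural tools are the $n$-cyclic $3$-GDD of type $(3n)^3n^1$ (Lemma \ref{3n^3n^1}) and Construction \ref{n-cyclic GDD}:
\begin{itemize}
\item For $m=10$, Lemma \ref{[10:1],n=8t+6} gives the $([10:1]\times n)$-OOC. We fill its hole with the optimal $1$-D OOC of Lemma \ref{1D}.
\item For $m=11$, we apply Construction \ref{n-cyclic GDD} with the same GDD and $([5:2]\times n)$-OOCs (Lemma \ref{[5:2]}) to get a $([11:2]\times n)$-OOC. We fill its hole with the $(2\times n)$-code of Lemma \ref{m=2}.
\end{itemize}
For $m\in\{7,8\}$ we write $m=6+r$ with $r\in\{1,2\}$. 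We then try a $([m:r]\times n)$-OOC built from a $3$-SCHGDD of type $(6,2^{n/2})$ (Lemma \ref{schgdd}) plus Skolem-type codewords through the extra row $\infty$, as in the proof of Lemma \ref{[5:2]}. Alternatively, when $n\equiv 14\pmod{24}$, Lemmas \ref{1D} and \ref{Lem:2to2} give $m=7$ codes from a $1$-D OOC of length $7n$; we use this whenever the counts match.

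\textbf{Step 2: count check.} In each case we verify that $\Theta(m,r,n,3)+J^*(r\times n,3,1)=J^*(m\times n,3,1)$. This is needed especially when $mn\equiv 14,20\pmod{24}$ or $m\equiv0\pmod 3$ with $mn\equiv6,12\pmod{24}$, since then $J^*$ drops by one and the deficiency must come from the filled-in small code. If the deficiency does not match, we fall back to a direct construction in the style of the second half of Lemma \ref{m<12,n=2mod8}, with four parts:
\begin{itemize}
\item the base blocks of a $3$-SCHGDD of type $(m-1,2^{n/2})$;
\item codewords $\{(i,0),(i,t),(\infty,a_{it}+t)\}$ from $k_i$-extended Skolem sequences of order $(n-2)/2$, whose existence for the parity of $(n-2)/2$ comes from Lemma \ref{Skolem};
\item a small list of codewords using the differences $0$ and $n/2$ between rows;
\item pure $(\infty,\infty)$ triples from a Langford or near-Skolem sequence (Lemmas \ref{Langford}, \ref{m near}), with small $n$ settled by computer in an appendix.
\end{itemize}

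\textbf{Main obstacle.} The hard step is $m\in\{7,8\}$ and small $n$ (say $n=14,22$), where neither a holey code nor a suitable Langford sequence is available. There the ``third part'' leftover codewords and the choice of the $k_i$ must be made compatible by hand. We would first try the same $e_m,r_m$ bookkeeping as in Lemma \ref{m<12,n=2mod8}, and settle the remaining small orders by direct computer search.
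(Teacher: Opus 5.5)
Your handling of $m=5$ (Lemma \ref{[5:2]} filled with Lemma \ref{m=2}), of $m=10$ (Lemma \ref{[10:1],n=8t+6} filled with Lemma \ref{1D}), of $n=6$, and of $m=7$ with $n\equiv14\pmod{24}$ matches the paper. The gap is at $m\in\{7,8,11\}$. The holey codes you plan to fill there are $([7:1]\times n)$-, $([8:2]\times n)$- and $([11:2]\times n)$-OOCs with $\Theta(m,r,n,3)$ codewords, and none of them exists for any $n\equiv6\pmod 8$.

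Here is why. In these cases $(m^2-r^2)n-2(m-r)\equiv0\pmod 6$. So such a code would use exactly once every mixed difference between two rows not both in the hole, and every pure difference other than $0,n/2$ in each non-hole row. The congruence $A\equiv B\pmod 2$ from the proof of Lemma \ref{bd1} holds for any $2$-D $(m\times n,3,1)$-OOC with $n$ even. Here it reads
\[
\Big(\tbinom m2-\tbinom r2\Big)\frac{n(n-1)}{2}\equiv (m-r)\frac{n(n-2)}{8}\pmod 2 .
\]
For $n\equiv6\pmod8$ both fractions are odd, so you need $(m-r)(m+r-3)\equiv0\pmod4$. This holds for $(m,r)=(5,2)$ and $(10,1)$, the only holey codes the paper fills in this lemma. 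It fails for $(7,1),(8,2),(11,2)$, where $(m-r)(m+r-3)=30,42,90$.

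Independently, your recipe for $[11:2]$ does not produce $11$ rows. The GDD of type $(3n)^3n^1$ already occupies $10$ rows, so $([5:2]\times n)$-ingredients with their two extra hole rows give $12$ rows. The GDD of type $(3n)^3$ you would need instead does not exist for even $n$ (Lemma \ref{h-cyclic gdd}).

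That leaves the unspecified four-part fallback, which is a plan rather than a proof. It also aims at the wrong difficulty: the uncovered cases are entire residue classes, not small $n$. The paper closes them as follows.
\begin{itemize}
\item For $n\equiv6\pmod{24}$ and $m\in\{7,8,11\}$, it fills the $n=6$ codes of Lemma \ref{n=6} into $6$-regular $(m\times n)$-OOCs from Lemma \ref{m=24t+6,6regular}, via Construction \ref{filling3}. You used Lemma \ref{n=6} only for $n=6$ itself.
\item For $m\in\{8,11\}$ with $n\equiv22\pmod{24}$, it applies Lemma \ref{Lem:2to2} to $1$-D OOCs of length $mn$.
\item For $m=8$ with $n\equiv14\pmod{24}$, it applies Lemma \ref{Lem:2to2} to the $(4\times 2n)$-codes of Lemma \ref{m=4}.
\item For $m=7$ with $n\equiv22\pmod{24}$, it gives an explicit construction that does not pass through a holey code: $30$ codewords plus $(7n-28)/6$ codewords developed modulo $7$ in the row coordinate.
\item For $m=11$ with $n\equiv14\pmod{24}$, it gives a five-part construction on $(I_{10}\cup\{\infty\})\times\mathbb{Z}_n$ in which the $\infty$ row carries pure differences rather than being a hole. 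It is assembled from a $3$-GDD of type $3^3$ inflated by $n$-cyclic HGDDs, $n$-cyclic GDDs of type $n^4$, Skolem codewords through $\infty$, pure $\infty$-triples, and $32$ ad hoc codewords.
\end{itemize}
Your proposal supplies none of these.
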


\begin{proof}
For $m=5$, there exists a $2$-D $([5:2]\times n,3,1)$-OOC with $\Theta(5,2,n,3)$ codewords by Lemma \ref{[5:2]}.
Apply Construction \ref{filling2} with a $2$-D $(2\times n,3,1)$-OOC with $J^*(2\times n,3,1)$ codewords (from Lemma \ref{m=2}) to obtain a $2$-D $(5\times n, 3, 1)$-OOC with $J^*(5\times n,3,1)$ codewords.

For $m=10$, there exists a $2$-D $([10:1]\times n,3,1)$-OOC with $\Theta(10,1,n,3)$ codewords by Lemma \ref{[10:1],n=8t+6}.
Apply Construction \ref{filling2} with a $2$-D $(1\times n,3,1)$-OOC with $J^*(1\times n,3,1)$ codewords (from Lemma \ref{1D}) to obtain a $2$-D $(10\times n, 3, 1)$-OOC with $J^*(10\times n,3,1)$ codewords.

For $m\in\{7,8,11\}$ and $n\equiv6\pmod{24}$, apply Construction \ref{filling3} with a $2$-D $(m\times 6,3,1)$-OOC with $J^*(m\times 6,3,1)$ codewords (from Lemma \ref{n=6}) and a $6$-regular $2$-D $(m\times n,3,1)$-OOC with $\Upsilon(m,0,n,6,3)$ codewords (from Lemma \ref{m=24t+6,6regular}).

For either $m\in\{8,11\}$ and $n\equiv22\pmod{24}$, or $m=7$ and $n\equiv14\pmod{24}$, the conclusion holds by Lemmas \ref{1D} and \ref{Lem:2to2}.
For $m=8$ and $n\equiv14\pmod{24}$, the conclusion holds by Lemmas \ref{m=4} and \ref{Lem:2to2}.

For $m=7$ and $n\equiv22\pmod{24}$, all the $J^*(7\times n,3,1)=(49n-16)/6$ codewords are divided into two parts. The first part consists of $30$ codewords on $I_7\times\Z_n$:
\begin{center}
\begin{tabular}{lllll}
$\{(0,0),(1,0),(2,0)\}$,
&$\{(0,0),(5,0),(6,0)\}$,\\
$\{(0,0),(1,1),(3,1)\}$,
&$\{(0,0),(2,1),(3,2)\}$,\\
$\{(0,0),(0,n/2-1),(4,n-1)\}$,
&$\{(0,0),(1,n/2-1),(2,n/2)\}$,\\
$\{(0,0),(2,n/2-1),(3,n/2-1)\}$,
&$\{(0,0),(1,n/2),(6,n/2)\}$,\\
$\{(0,0),(3,n/2),(5,n/2+1)\}$,
&$\{(0,0),(5,n/2),(5,n-1)\}$,\\
$\{(0,0),(4,n/2+1),(6,n/2+1)\}$,
&$\{(0,0),(4,n-2),(6,n-1)\}$,\\
$\{(1,0),(3,1),(4,1)\}$,
&$\{(1,0),(4,2),(5,n/2+1)\}$,\\
$\{(1,0),(1,n/2-1),(6,n-1)\}$,
&$\{(1,0),(2,n/2-1),(5,n-2)\}$,\\
$\{(1,0),(3,n/2-1),(6,n/2+1)\}$,
&$\{(1,0),(4,n/2-1),(5,n-1)\}$,\\
$\{(1,0),(2,n/2),(4,n/2)\}$,
&$\{(1,0),(3,n/2),(5,n/2)\}$,\\
$\{(2,0),(4,1),(4,n/2)\}$,
&$\{(2,0),(5,1),(2,n/2+1)\}$,\\
$\{(2,0),(5,2),(6,n/2+1)\}$,
&$\{(2,0),(3,n/2-1),(6,n/2)\}$,\\
$\{(2,0),(4,n/2-1),(6,n-2)\}$,
&$\{(2,0),(3,n/2),(6,n-1)\}$,\\
$\{(3,0),(4,1),(3,n/2+1)\}$,
&$\{(3,0),(4,n/2-1),(5,n/2)\}$,\\
$\{(3,0),(5,n/2-1),(6,n/2)\}$,
&$\{(4,0),(5,0),(6,n/2)\}$.\\
\end{tabular}
\end{center}
The second part consist of $(49n-196)/6$ codewords, which are obtained by developing the following $(7n-28)/6$ codewords by $(+1\mod{7},-)$. When $n=22$, the $21$ codewords are:

\begin{center}
\begin{tabular}{lllll}
$\{(0,0),(3,0),(0,1)\}$,
&$\{(0,0),(5,1),(4,2)\}$,
&$\{(0,0),(0,2),(1,4)\}$,\\
$\{(0,0),(2,2),(0,4)\}$,
&$\{(0,0),(6,2),(0,5)\}$,
&$\{(0,0),(0,3),(2,6)\}$,\\
$\{(0,0),(3,3),(0,6)\}$,
&$\{(0,0),(5,3),(4,6)\}$,
&$\{(0,0),(2,4),(0,8)\}$,\\
$\{(0,0),(3,4),(0,9)\}$,
&$\{(0,0),(4,4),(5,10)\}$,
&$\{(0,0),(6,4),(3,12)\}$,\\
$\{(0,0),(1,5),(6,10)\}$,
&$\{(0,0),(2,5),(1,13)\}$,
&$\{(0,0),(3,5),(4,13)\}$,\\
$\{(0,0),(6,5),(2,13)\}$,
&$\{(0,0),(3,6),(3,13)\}$,
&$\{(0,0),(5,6),(6,13)\}$,\\
$\{(0,0),(6,6),(5,13)\}$,
&$\{(0,0),(2,7),(5,14)\}$,
&$\{(0,0),(4,7),(2,14)\}$.
\end{tabular}
\end{center}

When $n\equiv 22\pmod{24}$ and $n\geq 46$, the $(7n-28)/6$ codewords are divided into two pieces:

\noindent $\bullet$ The fist piece consists of $n-3$ codewords.
\begin{center}
\begin{tabular}{lllll}
$\{(0,0),(1,2+i),(3,4+2i)\}$, & $i\in[0,n/2-4]\setminus\{(n-10)/4\}$;\\
$\{(0,0),(1,n/2+2+i),(3,5+2i)\}$, & $i\in[0,n/2-5]\setminus\{(n-10)/4\}$,
\end{tabular}

\begin{tabular}{lllll}
$\{(0,0),(0,(n-6)/4),(1,n-2)\}$,
&$\{(0,0),(0,(n+6)/4),(1,n/2+1)\}$,\\
$\{(0,0),(0,4),(3,3)\}$,
&$\{(0,0),(1,n-1),(3,n-2)\}$,\\
$\{(0,0),(2,(n-2)/4),(4,0)\}$,
&$\{(0,0),(2,n/2+1),(4,3)\}$.\\
\end{tabular}
\end{center}
\noindent $\bullet$ The second piece consists of $(n-10)/6$ codewords.

If $n=46$, then take
\begin{center}
\begin{tabular}{lllll}
$\{(0,0),(0,1),(0,20)\}$,
&$\{(0,0),(0,2),(0,18)\}$,
&$\{(0,0),(0,3),(0,15)\}$,\\
$\{(0,0),(0,5),(0,11)\}$,
&$\{(0,0),(0,7),(0,21)\}$,
&$\{(0,0),(0,8),(0,17)\}$.\\
\end{tabular}
\end{center}

If $n\equiv 22\pmod{24}$ and $n\geq70$, then take

\begin{center}
\begin{tabular}{lllll}
$\{(0,0),(0,5+2i),(0,(5n+22)/12+i)\}$, & $i\in[0,(n-46)/12]\setminus\{(n-70)/24\}$;\\
$\{(0,0),(0,6+2i),(0,(n+14)/4+i)\}$, & $i\in[0,(n-46)/12]\setminus\{(n-46)/24\}$,
\end{tabular}
\begin{tabular}{lllll}
$\{(0,0),(0,1),(0,(5n-14)/12)\}$,
&$\{(0,0),(0,2),(0,(n+10)/4)\}$,\\
$\{(0,0),(0,3),(0,(n+26)/12)\}$,
&$\{(0,0),(0,(n+2)/6),(0,(3n-2)/8)\}$,\\
$\{(0,0),(0,(n-4)/6),(0,(7n+2)/12)\}$,
&$\{(0,0),(0,(n-2)/4),(0,(13n+26)/24)\}$.\\
\end{tabular}
\end{center}

For $m=11$ and $n\equiv14\pmod{24}$, we construct the code on $(I_{10}\cup\{\infty\})\times\mathbb{Z}_n$. All the codewords are divided into five parts.

Let $\mathcal{B}$ be the block set of a $3$-GDD of type $3^3$ on $I_9$ with group set $\{G_i:0\leq i\leq 2\}$ where $G_0=\{0,5,8\}$, $G_1=\{1,3,7\}$ and $G_2=\{2,4,6\}$ (from Lemma \ref{h-cyclic gdd}). For each $B\in\mathcal{B}$, construct an $n$-cyclic $3$-HGDD of type $(3,2^{n/2})$ on $B\times\mathbb{Z}_n$ with group set $\{\{x\}\times\mathbb{Z}_n:x\in B\}$ and hole set $\{B\times\{j,n/2+j\}:0\leq j\leq n/2-1\}$ (from Lemma \ref{schgdd}). Let $\mathcal{A}_B$ be the base block set of this design. Then $\mathcal{A}_1=\bigcup_{B\in\mathcal{B}}\mathcal{A}_B$ forms the first part of codewords of the required code.

For $0\leq i\leq 2$, let $\mathcal{C}_i$ be the base block set of an $n$-cyclic $3$-GDD of type $n^4$ on $(G_i\bigcup\{9\})\times\mathbb{Z}_n$ with group set $\{\{x\}\times\mathbb{Z}_n:x\in G_i\bigcup\{9\}\}$ (from Lemma \ref{h-cyclic gdd}). Note that, by the construction of \cite[Section 3]{gjl}, we can assume that $T_{0}=\{(0,0),(5,0),(8,0)\}\in\mathcal{C}_0$, $T_{1}=\{(1,0),(3,0),(7,0)\}\in\mathcal{C}_1$, $T_{2}=\{(2,0),(4,0),(6,0)\}$ and $T_{3}=\{(2,0),(4,2),(6,4)\}\in\mathcal{C}_2$. Then $\mathcal{A}_2=(\bigcup_{0\leq i\leq 2}\mathcal{C}_i)\setminus\{T_{0},T_{1},T_{2},T_{3}\}$ forms the second part of codewords of the required code.

For $0\leq i\leq 9$, let $\{(a_{it},b_{it}):1\leq t\leq (n-2)/2\}$ be a $k_i$-extended Skolem sequence of order $(n-2)/2$
with $k_0=k_5=k_7=4$, $k_1=k_2=k_3=k_6=k_8=2$, $k_4=n/2-3$ and $k_9=n/2-1$ (from Lemma \ref{Skolem}). Let $\mathcal{D}_4$ denote the following
$(n-4)/2$ codewords:
\begin{center}
\begin{tabular}{llll}
$\{(4,0),(4,t),(\infty,a_{4t}+t)\}$, &$1\leq t\leq (n-2)/2$, &$t\neq(n-4)/2$.
\end{tabular}
\end{center}
Assume that $a_{4t}+t=u$ with $t=(n-4)/2$. Let $\mathcal{D}_2$ denote the following
$(n-4)/2$ codewords:
\begin{center}
\begin{tabular}{llll}
$\{(2,0),(2,t),(\infty,a_{2t}+t+u+2)\}$, &$1\leq t\leq (n-2)/2$, &$t\neq4$.
\end{tabular}
\end{center}
Assume that $a_{2t}+t+u+2=v$ with $t=4$. Let $\mathcal{D}_6$ denote the following
$(n-4)/2$ codewords:
\begin{center}
\begin{tabular}{llll}
$\{(6,0),(6,t),(\infty,a_{6t}+t+v+2)\}$, &$1\leq t\leq (n-2)/2$, &$t\neq2$.
\end{tabular}
\end{center}
Assume that $a_{6t}+t+v+2=w$ with $t=2$. For $0\leq i\leq 9$ and $i\notin\{2,4,6\}$, let $\mathcal{D}_i$ denote the following
$(n-2)/2$ codewords:
\begin{center}
\begin{tabular}{llll}
$\{(i,0),(i,t),(\infty,a_{it}+t+\beta_i)\}$, &$1\leq t\leq (n-2)/2$,
\end{tabular}
\end{center}
where $\beta_0=v-4$, $\beta_1=v$, $\beta_3=w$, $\beta_5=v-n/2$, $\beta_7=w-2$, $\beta_8=u+2$ and $\beta_9=0$.
Then $\mathcal{A}_3=\bigcup_{0\leq i\leq 9}\mathcal{D}_i$ forms the third part of codewords of the required code.
The fourth part $\mathcal{A}_4$ consists of the following $(n-8)/6$ codewords:
\begin{center}
\begin{tabular}{llll}
$\{(\infty,0),(\infty,t),(\infty,c_{t}+t+(n-8)/6)\}$, &$1\leq t\leq (n-8)/6$,
\end{tabular}
\end{center}
where $\{(c_{t},d_{t}):1\leq t\leq(n-8)/6\}$ is a $(n-5)/3$-extended Skolem sequence of order $(n-8)/6$ (from Lemma \ref{Skolem}).
The fifth part $\mathcal{A}_5$ consists of the following $32$ codewords:
\begin{center}
\begin{longtable}{llll}
$\{(0,0),(6,n/2),(8,0)\}$,
&$\{(0,0),(5,0),(7,0)\}$,
&$\{(3,0),(7,0),(\infty,w+2)\}$,\\

$\{(0,0),(3,0),(4,n/2)\}$,
&$\{(1,0),(7,0),(8,0)\}$,
&$\{(0,0),(2,0),(\infty,v-4)\}$,\\

$\{(1,0),(4,0),(5,0)\}$,
&$\{(0,0),(1,0),(\infty,v)\}$,
&$\{(5,n/2),(6,0),(\infty,v+4)\}$,\\

$\{(4,0),(6,0),(6,2)\}$,
&$\{(2,0),(5,n/2),(\infty,v)\}$,
&$\{(1,0),(6,0),(\infty,v+2)\}$,\\

$\{(1,0),(2,n/2),(3,0)\}$,
&$\{(3,0),(6,0),(\infty,w)\}$,
&$\{(6,0),(7,0),(\infty,w-2)\}$,\\

$\{(3,0),(5,0),(8,0)\}$,
&$\{(2,0),(4,2),(\infty,u+2)\}$,
&$\{(4,0),(\infty,0),(\infty,n/2-3)\}$,\\

$\{(2,0),(4,0),(7,0)\}$,
&$\{(4,n/2),(8,0),(\infty,u+2)\}$,
&$\{(9,0),(\infty,0),(\infty,n/2-1)\}$,\\

$\{(2,0),(2,4),(6,4)\}$,
&$\{(2,0),(8,0),(\infty,u+4)\}$,
&$\{(0,n/2),(1,0),(2,0)\}$,\\

$\{(3,0),(4,0),(5,n/2)\}$,
&$\{(6,0),(7,n/2),(8,0)\}$,
&$\{(0,0),(4,0),(7,n/2)\}$,\\

$\{(1,n/2),(5,0),(6,0)\}$,
&$\{(2,0),(3,0),(8,n/2)\}$,
&$\{(0,0),(3,n/2),(6,0)\}$,\\

$\{(1,n/2),(4,0),(8,0)\}$,
&$\{(2,0),(5,0),(7,n/2)\}$.
\end{longtable}
\end{center}
\end{proof}

\subsection{The case of $m\equiv0\pmod{3}$}

\begin{Lemma}\label{m=3}
There exists an optimal $2$-D $(3\times n, 3, 1)$-OOC with $J^*(3\times n,3,1)$ codewords for any $n\equiv0\pmod{2}$.
\end{Lemma}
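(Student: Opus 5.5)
The plan is to first pin down the target value. For $m=3$ and $n$ even we have $J(3\times n,3,1)=\lfloor (3n-1)/2\rfloor=(3n-2)/2$. Since $3n\equiv 14,20\pmod{24}$ is impossible, the only reductions in Lemma \ref{bd} come from $3n\equiv 6,12\pmod{24}$, i.e.\ $n\equiv 2,4\pmod 8$ (the cases $n=2$ and $n=4$ are covered by this same condition). So
$$J^*(3\times n,3,1)=\frac{3n-2}{2}\ \text{if } n\equiv 0,6\pmod 8,\qquad J^*(3\times n,3,1)=\frac{3n-4}{2}\ \text{if } n\equiv 2,4\pmod 8.$$
I would split into these residue classes.

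\emph{Case $n\equiv 0,6\pmod 8$.} Lemma \ref{[3:0],n=0,6 mod8} already gives a $2$-D $([3:0]\times n,3,1)$-OOC, i.e.\ a $2$-D $(3\times n,3,1)$-OOC, with $\Theta(3,0,n,3)=\lfloor (9n-6)/6\rfloor=(3n-2)/2$ codewords. This settles the case.

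\emph{Case $n\equiv 2\pmod 8$.} For $n=2$, Lemma \ref{n=2} applies. For $n\geq 10$, Example \ref{m=3,regular} gives a $2$-regular $2$-D $(3\times n,3,1)$-OOC with $\Upsilon(3,0,n,2,3)=3(n-2)/2$ codewords. I would fill it via Construction \ref{filling3} (with $r=0$), using the optimal $(3\times 2,3,1)$-OOC with $J^*(3\times 2,3,1)=1$ codeword, e.g.\ $\{(0,0),(1,0),(2,0)\}$. The total is $3(n-2)/2+1=(3n-4)/2$, as required.

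\emph{Case $n\equiv 4\pmod 8$.}
\begin{enumerate}
\item For $n=4$, Lemma \ref{n=4,small m} applies.
\item For $n\equiv 4\pmod{24}$ with $n\geq 28$, Lemma \ref{m=24t+6,6regular} with $g=4$ (the hypotheses $n-4\equiv 0\pmod 6$ and $n\geq 16$ hold) gives a $4$-regular $(3\times n)$ code with $3(n-4)/2$ codewords. Construction \ref{filling3}, applied with the optimal $(3\times 4)$ code with $4$ codewords, then gives $(3n-4)/2$.
\item The remaining classes $n\equiv 12,20\pmod{24}$ are the main obstacle, since no regular code with a small forbidden subgroup is available from earlier results. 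Here I would imitate the direct proof of Lemma \ref{[3:0],n=0,6 mod8}. Take $k_t$-extended Skolem sequences $\{(a_{ti},b_{ti})\}$ of order $(n-2)/2$; this order is $\equiv 1\pmod 4$, so by Lemma \ref{Skolem} the indices $k_t$ must be odd. Use the codewords $\{(j,0),(j,i),(j+1,a_{ti}+i)\}$ for $j\in\mathbb{Z}_3$. These exhaust all pure differences except $0$ and $n/2$, and cover each mixed $(j,j+1)$-difference except the single value determined by the hook $k_t$.
\item It then remains to cover the leftover mixed differences by a bounded number of extra codewords, such as $\{(0,0),(1,0),(2,0)\}$ and $\{(0,0),(1,x),(2,y)\}$. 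The counting forces exactly one fewer codeword than in Lemma \ref{[3:0],n=0,6 mod8}, and the choice of the odd hooks $k_t$, possibly with shifts $\beta_t$ added to the third coordinate as in Lemma \ref{m=4}, has to be tuned so that the leftovers form such a pattern.
\item If the hooks cannot be aligned, I would fall back on small cases by computer (e.g.\ $n=12,20$). For larger $n$ I would then use Construction \ref{filling3} with a $12$-regular code from Lemma \ref{m=24t+6,6regular} when $n\equiv 12\pmod{24}$ and $n\geq 48$, filled with an optimal $(3\times 12)$ code with $16$ codewords. For $n\equiv 20\pmod{24}$ I would use a direct Skolem-type construction or a $20$-regular filling.
\end{enumerate}
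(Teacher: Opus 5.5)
Your cases $n\equiv 0,6\pmod 8$, $n\equiv 2\pmod 8$, $n=4$ and $n\equiv 4\pmod{24}$ are correct, and the counts check out. These are: the $2$-regular code of Example \ref{m=3,regular} filled with $\{(0,0),(1,0),(2,0)\}$, and the $4$-regular code of Lemma \ref{m=24t+6,6regular} filled with the $3\times 4$ code.

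The gap is in the classes $n\equiv 12,20\pmod{24}$. You write down the right codewords, but then claim the leftover hook differences must be absorbed by extra codewords, with the hooks and shifts tuned to fit. Your fallback does not close these classes either:
\begin{itemize}
\item $n=12$ and $n=20$ are left to a computer search you have not carried out;
\item the $12$-regular filling needs $n\geq 48$, so $n=36$ is missed;
\item a $20$-regular filling needs $n\geq 80$, so $n=44$ and $n=68$ are missed;
\item the $3\times 12$ and $3\times 20$ fillers are themselves instances of the lemma being proved.
\end{itemize}
As written, those residue classes are not established.

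The missing observation is that nothing needs tuning. Definition \ref{def-diff} only requires each difference to be covered at most once, and the target is $(3n-4)/2=3(n-2)/2+1$.
\begin{itemize}
\item Take one $k$-extended Skolem sequence of order $(n-2)/2$, with any admissible odd $k$, and use it for all three rows.
\item The codewords $\{(l,0),(l,i),(l+1,a_i+i)\}$, $l\in\mathbb{Z}_3$, cover the pure differences $\mathbb{Z}_n\setminus\{0,n/2\}$ in each row exactly once.
\item They cover the mixed $(l+1,l)$-differences $[1,n-1]\setminus\{k\}$; each unordered row pair arises from exactly one value of $l$.
\item Adding $\{(0,0),(1,0),(2,0)\}$ covers the mixed difference $0$.
\item The six differences $\pm k$, one pair per row pair, are simply left unused.
\end{itemize}
Counting confirms this: there are $3(n-2)+6n=9n-6$ admissible differences, and $(3n-4)/2$ codewords use $9n-12$ of them, so exactly six must be left over anyway.

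Since $(n-2)/2\equiv 0,1\pmod 4$ for every $n\equiv 2,4\pmod 8$, an odd hook always exists by Lemma \ref{Skolem}. So this single construction handles all such $n\geq 4$, and it is exactly the paper's proof. Your regular-code fillings for $n\equiv 2\pmod 8$ and $n\equiv 4\pmod{24}$ are valid but unnecessary.
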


\proof For $n\equiv0,6\pmod{8}$, Lemma \ref{[3:0],n=0,6 mod8} provides the required codes. We deal with the cases of $n\equiv2,4\pmod{8}$.

For $n=2$, $\{(0,0),(1,0),(2,0)\}$ is a $2$-D $(3\times n, 3, 1)$-OOC with $J^*(3\times n,3,1)$ codewords.
For $n\geq4$, let $S=\{(a_{i},b_{i}):1\leq i\leq (n-2)/2\}$ be a $k$-extended Skolem sequence of order $(n-2)/2$ (from Lemma \ref{Skolem}). Let $\mathcal{B}$ be the following $3(n-2)/2$ base blocks.
\begin{center}
\begin{tabular}{lll}
$\{(l,0),(l,i),(l+1,a_{i}+i)\}: l\in\mathbb{Z}_3,1\leq i\leq (n-2)/2$.
\end{tabular}
\end{center}
Then $\mathcal{B}$ together with $\{(0,0),(1,0),(2,0)\}$ form a $2$-D $(3\times n, 3, 1)$-OOC with $J^*(3\times n,3,1)$ codewords.
\qed

\begin{Lemma}\label{mn=0,18mod24}
There exists an optimal $2$-D $(m\times n,3,1)$-OOC with $J^*(m\times n,3,1)$ codewords for any $m\equiv0\pmod{3}$, $n\equiv0\pmod{2}$ and $mn\equiv0,18\pmod{24}$.
\end{Lemma}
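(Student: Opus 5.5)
The plan is to reduce everything to the \emph{full} Johnson value. For $m=3u$ and $n$ even, $m(mn-2)/6=u(3un-2)/2$ is an integer, and since $mn\equiv0,18\pmod{24}$ none of the exceptional cases of Lemma~\ref{bd} applies. So $J^*(m\times n,3,1)=J(m\times n,3,1)=u(3un-2)/2$, and it suffices to build a code of exactly this size. The congruence $mn\equiv 0,18\pmod{24}$ is equivalent to $un\equiv0,6\pmod 8$. I would split according to $n\pmod 8$; the case $u=1$ is Lemma~\ref{m=3}.

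\emph{Case $n\equiv0,6\pmod 8$.} Here Lemma~\ref{[3:0],n=0,6 mod8} gives a $2$-D $(3\times n,3,1)$-OOC with $\Theta(3,0,n,3)=(3n-2)/2$ codewords. Take an $n$-cyclic $3$-GDD of type $(3n)^u$ from Lemma~\ref{h-cyclic gdd}; it has $3u(u-1)n/2$ base blocks. Fill each group using Construction~\ref{n-cyclic GDD} with $r=0$. The count is
\[
3u(u-1)n/2+u(3n-2)/2=u(3un-2)/2,
\]
as required. I still need to check the existence conditions of Lemma~\ref{h-cyclic gdd} with $v=3$.
\begin{itemize}
\item The parity and divisibility conditions hold automatically.
\item The exception $u\equiv2,3\pmod4$ with $n\equiv2\pmod4$ would force $n\equiv6\pmod8$. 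But then $un\equiv0,6\pmod8$ gives $u\equiv0,1\pmod4$, so the exception does not occur.
\item The genuinely bad values are $u=2$, where there are too few groups, and $u=3$, where Lemma~\ref{h-cyclic gdd} requires $n$ odd or $v$ even.
\end{itemize}

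\emph{Case $n\equiv4\pmod8$.} Now $u$ must be even, say $u=2t$. I would use Lemma~\ref{Lem:2to2} with $n_1=2$ applied to a $(3t\times 2n)$ code. Since $2n\equiv0\pmod8$ and $3t\cdot 2n\equiv0\pmod{24}$, that code is already covered by the previous case (or by Lemma~\ref{m=3} when $t=1$). If it has $b=3t(6tn-2)/6$ codewords, then $2b=t(6tn-2)=m(mn-2)/6$, which is exactly the target. The same halving trick also handles $n\equiv0\pmod 8$ with $u$ even, including $u=2$, and it handles $u\equiv0\pmod4$ when $n\equiv2\pmod 8$.

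\emph{Case $n\equiv2\pmod8$.} Here $u\equiv0,3\pmod4$, and $u\equiv0\pmod 4$ reduces as above. For $u$ odd we have $m\equiv3\pmod6$, and for $n\ge10$ Lemma~\ref{m=6t+3,regular}(1) gives a $2$-regular $2$-D $(m\times n,3,1)$-OOC with $\Upsilon(m,0,n,2,3)=m^2(n-2)/6$ codewords. Construction~\ref{filling3} then needs an $(m\times2)$ code with $m(m-1)/3$ codewords. Lemma~\ref{n=2} supplies $J^*(m\times2,3,1)\ge m(m-1)/3$ codewords, and discarding the extras (or using a $2$-cyclic GDD of type $2^m$) gives the total $m^2(n-2)/6+m(m-1)/3=m(mn-2)/6$. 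The value $n=2$ itself is Lemma~\ref{n=2}. This same $2$-regular route also covers $n\equiv0\pmod 8$ with $u=3$.

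The main obstacle is the small sporadic parameters not reached by these recursions. The chief one is $u=3$ (that is, $m=9$) with $n\equiv6\pmod8$ and $9n\equiv0,18\pmod{24}$; here neither the GDD of type $(3n)^3$ nor the $2$-regular codes are available. For these I would first try Lemma~\ref{Lem:2to2} from a smaller already-settled case. Failing that, I would give a direct construction: a $3$-SCHGDD of type $(3,2^{n/2})$ inflated as in Lemma~\ref{inflate schgdd}, plus Skolem-sequence blocks as in Lemma~\ref{[3:0],n=0,6 mod8}, with the few leftover codewords listed explicitly.
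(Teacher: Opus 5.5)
Your argument is correct, but it takes a much longer route than the paper. The paper's proof is a single application of Lemma~\ref{Lem:2to2} with $n_1=u$ instead of $n_1=2$.

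By Lemma~\ref{m=3} there is a $2$-D $(3\times un,3,1)$-OOC with $J^*(3\times un,3,1)$ codewords. Since $3\cdot un=mn\equiv0,18\pmod{24}$, no exception applies, so this equals $J(3\times un,3,1)=(3un-2)/2$. Factoring the $un$ columns as $u\cdot n$ then gives a $(3u\times n)$ code with $u(3un-2)/2=J(m\times n,3,1)$ codewords.

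Your halving step is exactly the special case $n_1=2$ of this. Taking $n_1=u$ at once removes the need for the case split by $n \bmod 8$, the cyclic GDDs of type $(3n)^u$, the $2$-regular codes, and Construction~\ref{filling3}. Your route does go through. The recursion on $u$ always ends in one of four situations:
\begin{itemize}
\item Lemma~\ref{m=3} when $u=1$;
\item the $2$-regular filling when $u=3$ and $n\equiv0,2\pmod8$;
\item the $2$-regular filling, or Lemma~\ref{n=2} when $n=2$, for odd $u$ with $n\equiv2\pmod 8$;
\item the GDD filling when $u\ge4$ and $n\equiv0,6\pmod8$.
\end{itemize}
It buys nothing extra for this lemma, though.

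Finally, the ``main obstacle'' in your last paragraph is vacuous. For $n\equiv6\pmod8$ one has $9n\equiv6\pmod{24}$, so $m=9$ with $n\equiv6\pmod8$ lies outside this statement; it is Case $(1)$ of Lemma~\ref{mn=6,12mod24}. Your own earlier observation, that $n\equiv6\pmod8$ forces $u\equiv0,1\pmod4$, already rules it out. No sporadic construction is needed, and you should drop that paragraph rather than leave an unfinished fallback in the proof.
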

\begin{proof}
Combining the results of Lemmas \ref{Lem:2to2} and \ref{m=3}.
\end{proof}

\begin{Lemma}\label{mn=6,12mod24}
There exists an optimal $2$-D $(m\times n,3,1)$-OOC with $J^*(m\times n,3,1)$ codewords for any $m\equiv0\pmod{3}$, $n\equiv0\pmod{2}$ and $mn\equiv6,12\pmod{24}$.
\end{Lemma}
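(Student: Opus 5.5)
The plan is to reduce everything to filling constructions whose only loss is a single codeword, inherited from an already-known optimal code with deficit $1$. Write $m=3u$. Since $n$ is even, the condition $mn\equiv6,12\pmod{24}$ is equivalent to $un\equiv2,4\pmod 8$. This splits into three cases:
\begin{enumerate}
\item[(a)] $u\equiv1\pmod4$ and $n\equiv2\pmod8$, i.e.\ $m\equiv3\pmod{12}$;
\item[(b)] $u\equiv2\pmod4$ and $n\equiv2\pmod4$, i.e.\ $m\equiv6\pmod{12}$;
\item[(c)] $u$ odd and $n\equiv4\pmod8$.
\end{enumerate}
In every case $J(m\times n,3,1)=m(mn-2)/6$ is an integer, and the target is $J-1$. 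The obvious shortcut via Lemma \ref{Lem:2to2} from the $3\times un$ code of Lemma \ref{m=3} loses $u$ codewords, not $1$, so it only handles $m=3$. Lemma \ref{Lem:2to2} from an $(m/2)\times 2n$ code loses $2$, so it is also useless here.

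For cases (a) and (b) I would use Construction \ref{filling3} with $g=2$ and hole size $r=0$. The $2$-regular $2$-D $(m\times n,3,1)$-OOC comes from Lemma \ref{m=6t+3,regular}: part (1) for $m\equiv3\pmod6$, $n\equiv2\pmod8$, $n\ge10$, and part (2) for $m\equiv6\pmod{12}$, $n\equiv2\pmod4$, $n\ge6$. The filling ingredient is the optimal $2$-D $(m\times2,3,1)$-OOC of Lemma \ref{n=2}. Here $2m\equiv6,12\pmod{24}$, so Lemma \ref{bd} gives $J^*(m\times2,3,1)=m(m-1)/3-1$. A direct count then gives
\[
\Upsilon(m,0,n,2,3)+J^*(m\times 2,3,1)=\frac{m^2(n-2)}{6}+\frac{m(m-1)}{3}-1=\frac{m(mn-2)}{6}-1,
\]
which is optimal by Lemma \ref{bd}. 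The case $n=2$ is Lemma \ref{n=2} itself.

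For case (c) I would use Construction \ref{filling2}: fill a $2$-D $([m:3]\times n,3,1)$-OOC with an optimal $2$-D $(3\times n,3,1)$-OOC from Lemma \ref{m=3}. The latter has $J^*(3\times n,3,1)=(3n-4)/2$ codewords, since $3n\equiv12\pmod{24}$. The $[m:3]$ code comes from Lemma \ref{[m:r],r=3}, which applies for $m\ge9$, $m\ne15$, $n\ge12$. One checks that $(m^2-9)n-2(m-3)=9(u^2-1)n-6(u-1)$ is divisible by $6$, because $n$ is even. Hence $\Theta(m,3,n,3)$ is exact, and
\[
\Theta(m,3,n,3)+\frac{3n-4}{2}=\frac{m(mn-2)}{6}-1 .
\]
The remaining small cases are handled by earlier lemmas: $m=3$ by Lemma \ref{m=3}, and $n=4$ with $m\equiv3\pmod6$ by Lemma \ref{n=4}.

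The main obstacle is $m=15$ with $n\equiv4\pmod8$, which Lemma \ref{[m:r],r=3} excludes. For $n\equiv4\pmod{24}$ I would try Construction \ref{filling3} with $g=4$: Lemma \ref{m=24t+6,6regular} gives a $4$-regular $2$-D $(15\times n,3,1)$-OOC, which I would fill with the optimal $2$-D $(15\times4,3,1)$-OOC of Lemma \ref{n=4}; the count again gives $J-1$. For $n\equiv12,20\pmod{24}$ I would first try to build a $2$-D $([15:3]\times n,3,1)$-OOC with $\Theta$ codewords. One route is an $n$-cyclic $3$-GDD of type $(6n)^2(3n)^1$-like structure fed into Construction \ref{n-cyclic GDD}. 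The other is an ad hoc combination of a $3$-SCHGDD of type $(12,4^{n/4})$ from Lemma \ref{schgdd} with Skolem-type sequences, patterned on the proof of Lemma \ref{[9:3]}. Failing both, I would give direct constructions for $n=12,20$ and a recursive family for larger $n$, with the sporadic codewords placed in an appendix.
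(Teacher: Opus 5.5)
Your proposal has a genuine gap: the case split omits one family, and that family cannot be handled by either of your constructions.

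\textbf{The missing case.} For $u$ odd and $n\equiv2\pmod4$, the condition $un\equiv2,4\pmod8$ becomes $u\cdot\frac n2\equiv1\pmod4$. This allows $u\equiv1\pmod4$, $n\equiv2\pmod8$ (your case (a)), but also $u\equiv3\pmod4$, $n\equiv6\pmod8$, i.e.\ $m\equiv9\pmod{12}$, $n\equiv6\pmod8$. For example $m=9$, $n=6$ gives $mn=54\equiv6\pmod{24}$.

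Your methods cannot absorb this family. Here $2m\equiv18$ and $3n\equiv18\pmod{24}$, so $J^*(m\times2,3,1)=J(m\times2,3,1)$ and $J^*(3\times n,3,1)=J(3\times n,3,1)$. Filling either a $2$-regular $2$-D $(m\times n,3,1)$-OOC or a $2$-D $([m:3]\times n,3,1)$-OOC with $\Theta(m,3,n,3)$ codewords by an optimal small code would therefore give $J(m\times n,3,1)$ codewords. That contradicts Lemma \ref{bd}, so neither ingredient exists. The forced deficit of one codeword must sit in the master structure itself.

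The paper handles this with Lemma \ref{3-GDP}. For $v=3$, $n\equiv2\pmod4$ and $m/3\equiv3\pmod4$ one has $\gamma=1$. This gives an $n$-cyclic $3$-GDP of type $(3n)^{m/3}$ with $m(m-3)n/6-1$ base blocks. Construction \ref{n-cyclic GDD} then fills it with $m/3$ copies of the $2$-D $([3:0]\times n,3,1)$-OOC of Lemma \ref{[3:0],n=0,6 mod8}, each with $(3n-2)/2$ codewords. The total is exactly $m(mn-2)/6-1$.

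\textbf{The unfinished subcase.} For $m=15$ and $n\equiv12,20\pmod{24}$ you only list strategies you would try, so this subcase is left open. It closes with the lemma you set aside. You checked Lemma \ref{Lem:2to2} only for the factorizations through $3\times un$ and $(m/2)\times 2n$; through $5\times 3n$ it loses just one codeword.

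Since $3n\equiv4\pmod8$, Lemma \ref{m<12,n=2mod8}(1) gives an optimal $2$-D $(5\times3n,3,1)$-OOC with $J(5\times3n,3,1)=(75n-12)/6$ codewords. There is no deficit here because $5$ is not divisible by $3$. Lemma \ref{Lem:2to2} with $n_1=3$ then yields a $2$-D $(15\times n,3,1)$-OOC with $(75n-12)/2=J(15\times n,3,1)-1$ codewords, for every $n\equiv4\pmod8$ with $n\ge12$.

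This is the paper's argument. It also makes your (correct) $g=4$ filling for $n\equiv4\pmod{24}$ unnecessary. Your cases (a), (b), and (c) with $m\ne15$ agree with the paper.
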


\begin{proof}
Clearly, $m\equiv0\pmod{3}$ and $mn\equiv6,12\pmod{24}$ can be divided into four cases:
\begin{enumerate}
\item[$(1)$] $m\equiv9\pmod{12}$, $n\equiv6\pmod8$; $(2)$ $m\equiv3\pmod{12}$, $n\equiv2\pmod8$;
\item[$(3)$] $m\equiv6\pmod{12}$, $n\equiv2\pmod4$; $(4)$ $m\equiv3\pmod6$, $n\equiv4\pmod8$.
\end{enumerate}

For Case $(1)$, by Lemma \ref{3-GDP}, there exists an $n$-cyclic $3$-GDP of type $(3n)^{m/3}$ with
$m(m-3)n/6-1$ base blocks. Apply Construction \ref{n-cyclic GDD} with a $2$-D $([3:0]\times n, 3, 1)$-OOC with $J^*(3\times n,3,1)$ codewords
(from Lemma \ref{[3:0],n=0,6 mod8}) to obtain a $2$-D $(m\times n,3,1)$-OOC with $J^*(m\times n,3,1)$ codewords.

For Cases $(2)$ and $(3)$, the conclusion holds for $n=2$ by Lemma \ref{n=2}. When $n>2$, by Lemma \ref{m=6t+3,regular}, there exists a $2$-regular $2$-D $(m\times n,3,1)$-OOC with $\Upsilon(m,0,n,2,3)=m^2(n-2)/6$ codewords. Then apply Construction \ref{filling3} with a $2$-D $(m\times 2, 3, 1)$-OOC with $J^*(m\times 2,3,1)$ codewords to obtain a $2$-D $(m\times n,3,1)$-OOC with $J^*(m\times n,3,1)$ codewords.

For Case $(4)$, the conclusion holds for $n=4$ by Lemma \ref{n=4}. For $n\geq12$ and $m=15$,
combining the results of Lemmas \ref{Lem:2to2} and \ref{m<12,n=2mod8}. For $n\geq12$, $m\equiv3\pmod6$, $m\geq9$ and $m\neq15$,
apply Construction \ref{filling2} with a $2$-D $([m:3]\times n,3,1)$-OOC with $\Theta(m,3,n,3)$ codewords (from Lemma \ref{[m:r],r=3}) and a $2$-D $(3\times n,3,1)$-OOC with $J^*(3\times n,3,1)$ codewords (from Lemma \ref{m=3}).
\end{proof}

\noindent
\textbf{Proof of Theorem~\ref{main}} We divide the proof into the following four cases:
\begin{enumerate}
\item[$(1)$] $m=1$ or $n\equiv1\pmod2$;~~~~~~~~~~~~~~~$(2)$ $m\equiv0\pmod{3}$, $n\equiv0\pmod2$;
\item[$(3)$] $m\equiv1,2\pmod3$, $n\equiv0\pmod4$;~~~$(4)$ $m\equiv1,2\pmod3$, $n\equiv2\pmod4$.
\end{enumerate}

For Cases $(1)$ and $(2)$, the conclusion holds by Lemmas \ref{1D}, \ref{2D:m odd}, \ref{mn=0,18mod24} and \ref{mn=6,12mod24}.

For Case $(3)$, when $n=4$, the conclusion holds by Lemma \ref{n=4}.
When $n\geq8$ and $m\in\{1,2,4,5\}$, the conclusion holds by Lemmas \ref{1D}, \ref{m=2}, \ref{m=4}, \ref{m<12,n=0mod8} and \ref{m<12,n=2mod8}. When $n\geq8$ and $m\geq7$, let $m\equiv r\pmod{6}$ where $r\in\{1,2,4,5\}$. Then apply Construction \ref{filling2} with a $2$-D $([m:r]\times n,3,1)$-OOC with $\Theta(m,r,n,3)$ codewords (from Lemma \ref{[m:r],n=4t}) and a $2$-D $(r\times n,3,1)$-OOC with $J^*(r\times n,3,1)$ codewords.

For Case $(4)$, when $m\in\{1,2,4,5,7,8,10,11\}$, the conclusion holds by Lemmas \ref{1D}, \ref{m=2}, \ref{m=4}, \ref{m<12,n=2mod8} and \ref{m<12,n=6mod8}. When $m\geq13$, let $m\equiv r\pmod{12}$ where $r\in\{1,2,4,5,7,8,10,11\}$. Then apply Construction \ref{filling2} with a $2$-D $([m:r]\times n,3,1)$-OOC with $\Theta(m,r,n,3)$ codewords (from Lemmas \ref{[m:r],n=8t+6} and \ref{[m:s]}) and a $2$-D $(r\times n,3,1)$-OOC with $J^*(r\times n,3,1)$ codewords. \qed



\appendix

\section{Appendix: Base blocks in the proof of Lemma \ref{3n^4t9n^1}}\label{6^4t12^1:t=1,2}

For $(t,r)\in\{(1,1),(1,2),(1,5),(2,1),(2,2),(2,5)\}$, take $X=(I_{12t}\bigcup\{\infty_0,\infty_1,\ldots,\infty_{r-1}\})\times\mathbb{Z}_2$ with the group set $\mathcal{G}=\{\{i,i+4t,i+8t\}\times\mathbb{Z}_2:0\leq i\leq 4t-1\}\bigcup\{\{\infty_0,\infty_1,\ldots,\infty_{r-1}\}\times\mathbb{Z}_2\}$. All the $4t(12t+2r-3)$ base blocks are listed below.

\begin{center}
\begin{longtable}{lllll}
$(t,r)=(1,1)$:
&$\{(0,0),(1,0),(2,0)\}$,
&$\{(0,0),(3,0),(2,1)\}$,
&$\{(0,0),(5,0),(7,0)\}$,\\
&$\{(0,0),(6,0),(3,1)\}$,
&$\{(0,0),(9,0),(10,0)\}$,
&$\{(0,0),(11,0),(\infty_0,0)\}$,\\
&$\{(0,0),(1,1),(7,1)\}$,
&$\{(0,0),(5,1),(6,1)\}$,
&$\{(0,0),(9,1),(11,1)\}$,\\
&$\{(0,0),(10,1),(\infty_0,1)\}$,
&$\{(1,0),(3,0),(\infty_0,1)\}$,
&$\{(1,0),(4,0),(11,0)\}$,\\
&$\{(1,0),(6,0),(4,1)\}$,
&$\{(1,0),(8,0),(\infty_0,0)\}$,
&$\{(1,0),(10,0),(3,1)\}$,\\
&$\{(1,0),(2,1),(7,1)\}$,
&$\{(1,0),(6,1),(11,1)\}$,
&$\{(1,0),(8,1),(10,1)\}$,\\
&$\{(2,0),(3,0),(8,0)\}$,
&$\{(2,0),(4,0),(5,1)\}$,
&$\{(2,0),(5,0),(11,0)\}$,\\
&$\{(2,0),(9,0),(8,1)\}$,
&$\{(2,0),(\infty_0,0),(11,1)\}$,
&$\{(2,0),(4,1),(\infty_0,1)\}$,\\
&$\{(2,0),(7,1),(9,1)\}$,
&$\{(3,0),(4,0),(5,0)\}$,
&$\{(3,0),(6,0),(8,1)\}$,\\
&$\{(3,0),(9,0),(\infty_0,0)\}$,
&$\{(3,0),(10,0),(5,1)\}$,
&$\{(3,0),(4,1),(9,1)\}$,\\
&$\{(4,0),(6,0),(9,1)\}$,
&$\{(4,0),(7,0),(\infty_0,1)\}$,
&$\{(4,0),(10,0),(7,1)\}$,\\
&$\{(4,0),(10,1),(11,1)\}$,
&$\{(5,0),(8,0),(11,1)\}$,
&$\{(5,0),(10,0),(\infty_0,1)\}$,\\
&$\{(5,0),(\infty_0,0),(6,1)\}$,
&$\{(5,0),(7,1),(8,1)\}$,
&$\{(6,0),(7,0),(\infty_0,0)\}$,\\
&$\{(6,0),(8,0),(7,1)\}$,
&$\{(6,0),(9,0),(11,1)\}$,
&$\{(7,0),(10,0),(9,1)\}$,\\
&$\{(8,0),(9,0),(\infty_0,1)\}$,
&$\{(8,0),(11,0),(10,1)\}$.\\

$(t,r)=(1,2):$
&$\{(0,0),(1,0),(2,0)\}$,
&$\{(0,0),(3,0),(2,1)\}$,
&$\{(0,0),(5,0),(6,1)\}$,\\
&$\{(0,0),(6,0),(3,1)\}$,
&$\{(0,0),(7,0),(\infty_0,0)\}$,
&$\{(0,0),(9,0),(7,1)\}$,\\
&$\{(0,0),(10,0),(\infty_1,0)\}$,
&$\{(0,0),(11,0),(10,1)\}$,
&$\{(0,0),(1,1),(11,1)\}$,\\
&$\{(0,0),(5,1),(\infty_1,1)\}$,
&$\{(0,0),(9,1),(\infty_0,1)\}$,
&$\{(1,0),(3,0),(8,0)\}$,\\
&$\{(1,0),(4,0),(\infty_0,1)\}$,
&$\{(1,0),(6,0),(\infty_0,0)\}$,
&$\{(1,0),(7,0),(6,1)\}$,\\
&$\{(1,0),(10,0),(4,1)\}$,
&$\{(1,0),(\infty_1,0),(8,1)\}$,
&$\{(1,0),(2,1),(11,1)\}$,\\
&$\{(1,0),(3,1),(\infty_1,1)\}$,
&$\{(1,0),(7,1),(10,1)\}$,
&$\{(2,0),(3,0),(\infty_1,1)\}$,\\
&$\{(2,0),(4,0),(7,1)\}$,
&$\{(2,0),(5,0),(4,1)\}$,
&$\{(2,0),(7,0),(9,0)\}$,\\
&$\{(2,0),(8,0),(\infty_1,0)\}$,
&$\{(2,0),(\infty_0,0),(9,1)\}$,
&$\{(2,0),(5,1),(11,1)\}$,\\
&$\{(2,0),(8,1),(\infty_0,1)\}$,
&$\{(3,0),(4,0),(9,1)\}$,
&$\{(3,0),(5,0),(8,1)\}$,\\
&$\{(3,0),(6,0),(9,0)\}$,
&$\{(3,0),(10,0),(5,1)\}$,
&$\{(3,0),(\infty_0,0),(10,1)\}$,\\
&$\{(3,0),(4,1),(\infty_0,1)\}$,
&$\{(4,0),(5,0),(10,0)\}$,
&$\{(4,0),(6,0),(11,1)\}$,\\
&$\{(4,0),(7,0),(\infty_1,1)\}$,
&$\{(4,0),(9,0),(6,1)\}$,
&$\{(4,0),(11,0),(\infty_1,0)\}$,\\
&$\{(5,0),(6,0),(\infty_1,1)\}$,
&$\{(5,0),(7,0),(\infty_0,1)\}$,
&$\{(5,0),(8,0),(7,1)\}$,\\
&$\{(5,0),(\infty_0,0),(11,1)\}$,
&$\{(6,0),(7,0),(\infty_1,0)\}$,
&$\{(6,0),(8,0),(\infty_0,1)\}$,\\
&$\{(6,0),(11,0),(8,1)\}$,
&$\{(7,0),(8,0),(10,1)\}$,
&$\{(8,0),(9,0),(10,0)\}$,\\
&$\{(8,0),(11,0),(9,1)\}$,
&$\{(9,0),(11,0),(\infty_1,1)\}$,
&$\{(9,0),(\infty_1,0),(10,1)\}$,\\
&$\{(10,0),(11,0),(\infty_0,0)\}$.\\

$(t,r)=(1,5):$
&$\{(0,0),(1,0),(2,0)\}$,
&$\{(0,0),(3,0),(9,0)\}$,
&$\{(0,0),(5,0),(6,0)\}$,\\
&$\{(0,0),(7,0),(10,0)\}$,
&$\{(0,0),(11,0),(\infty_2,0)\}$,
&$\{(0,0),(\infty_0,0),(10,1)\}$,\\
&$\{(0,0),(\infty_1,0),(1,1)\}$,
&$\{(0,0),(\infty_3,0),(3,1)\}$,
&$\{(0,0),(\infty_4,0),(2,1)\}$,\\
&$\{(0,0),(5,1),(\infty_0,1)\}$,
&$\{(0,0),(6,1),(\infty_3,1)\}$,
&$\{(0,0),(7,1),(\infty_1,1)\}$,\\
&$\{(0,0),(9,1),(\infty_2,1)\}$,
&$\{(0,0),(11,1),(\infty_4,1)\}$,
&$\{(1,0),(3,0),(\infty_0,1)\}$,\\
&$\{(1,0),(4,0),(6,0)\}$,
&$\{(1,0),(7,0),(\infty_2,1)\}$,
&$\{(1,0),(8,0),(10,0)\}$,\\
&$\{(1,0),(11,0),(\infty_3,1)\}$,
&$\{(1,0),(\infty_0,0),(4,1)\}$,
&$\{(1,0),(\infty_1,0),(2,1)\}$,\\
&$\{(1,0),(\infty_2,0),(3,1)\}$,
&$\{(1,0),(\infty_3,0),(6,1)\}$,
&$\{(1,0),(\infty_4,0),(7,1)\}$,\\
&$\{(1,0),(8,1),(11,1)\}$,
&$\{(1,0),(10,1),(\infty_4,1)\}$,
&$\{(2,0),(3,0),(4,0)\}$,\\
&$\{(2,0),(5,0),(\infty_2,1)\}$,
&$\{(2,0),(7,0),(5,1)\}$,
&$\{(2,0),(8,0),(\infty_0,1)\}$,\\
&$\{(2,0),(9,0),(11,1)\}$,
&$\{(2,0),(11,0),(\infty_0,0)\}$,
&$\{(2,0),(\infty_1,0),(3,1)\}$,\\
&$\{(2,0),(\infty_2,0),(4,1)\}$,
&$\{(2,0),(\infty_3,0),(7,1)\}$,
&$\{(2,0),(\infty_4,0),(8,1)\}$,\\
&$\{(2,0),(9,1),(\infty_3,1)\}$,
&$\{(3,0),(5,0),(8,0)\}$,
&$\{(3,0),(6,0),(\infty_0,0)\}$,\\
&$\{(3,0),(10,0),(\infty_4,1)\}$,
&$\{(3,0),(\infty_1,0),(4,1)\}$,
&$\{(3,0),(\infty_2,0),(8,1)\}$,\\
&$\{(3,0),(\infty_3,0),(5,1)\}$,
&$\{(3,0),(\infty_4,0),(6,1)\}$,
&$\{(3,0),(9,1),(10,1)\}$,\\
&$\{(4,0),(5,0),(7,0)\}$,
&$\{(4,0),(9,0),(\infty_4,0)\}$,
&$\{(4,0),(10,0),(\infty_0,0)\}$,\\
&$\{(4,0),(11,0),(\infty_1,0)\}$,
&$\{(4,0),(\infty_2,0),(10,1)\}$,
&$\{(4,0),(\infty_3,0),(9,1)\}$,\\
&$\{(4,0),(5,1),(\infty_3,1)\}$,
&$\{(4,0),(6,1),(11,1)\}$,
&$\{(4,0),(7,1),(\infty_4,1)\}$,\\
&$\{(5,0),(10,0),(\infty_2,0)\}$,
&$\{(5,0),(11,0),(\infty_0,1)\}$,
&$\{(5,0),(\infty_1,0),(10,1)\}$,\\
&$\{(5,0),(\infty_4,0),(11,1)\}$,
&$\{(5,0),(6,1),(\infty_1,1)\}$,
&$\{(5,0),(8,1),(\infty_4,1)\}$,\\
&$\{(6,0),(7,0),(\infty_2,0)\}$,
&$\{(6,0),(8,0),(11,1)\}$,
&$\{(6,0),(9,0),(\infty_1,1)\}$,\\
&$\{(6,0),(\infty_4,0),(9,1)\}$,
&$\{(6,0),(7,1),(\infty_0,1)\}$,
&$\{(6,0),(8,1),(\infty_2,1)\}$,\\
&$\{(7,0),(8,0),(\infty_1,1)\}$,
&$\{(7,0),(9,0),(10,1)\}$,
&$\{(7,0),(\infty_3,0),(8,1)\}$,\\
&$\{(7,0),(9,1),(\infty_0,1)\}$,
&$\{(8,0),(9,0),(\infty_1,0)\}$,
&$\{(8,0),(\infty_0,0),(9,1)\}$,\\
&$\{(8,0),(\infty_3,0),(10,1)\}$,
&$\{(9,0),(11,0),(\infty_2,1)\}$,
&$\{(10,0),(11,0),(\infty_3,0)\}$,\\
&$\{(10,0),(\infty_1,0),(11,1)\}$.\\

$(t,r)=(2,1):$
&$\{(0,0),(1,0),(2,0)\}$,
&$\{(0,0),(3,0),(17,1)\}$,
&$\{(0,0),(4,0),(17,0)\}$,\\
&$\{(0,0),(5,0),(9,1)\}$,
&$\{(0,0),(6,0),(7,0)\}$,
&$\{(0,0),(9,0),(19,0)\}$,\\
&$\{(0,0),(10,0),(\infty_0,0)\}$,
&$\{(0,0),(11,0),(4,1)\}$,
&$\{(0,0),(12,0),(13,0)\}$,\\
&$\{(0,0),(14,0),(15,0)\}$,
&$\{(0,0),(18,0),(20,0)\}$,
&$\{(0,0),(21,0),(22,0)\}$,\\
&$\{(0,0),(23,0),(20,1)\}$,
&$\{(0,0),(1,1),(6,1)\}$,
&$\{(0,0),(2,1),(3,1)\}$,\\
&$\{(0,0),(5,1),(7,1)\}$,
&$\{(0,0),(10,1),(19,1)\}$,
&$\{(0,0),(11,1),(21,1)\}$,\\
&$\{(0,0),(12,1),(18,1)\}$,
&$\{(0,0),(13,1),(14,1)\}$,
&$\{(0,0),(15,1),(22,1)\}$,\\
&$\{(0,0),(23,1),(\infty_0,1)\}$,
&$\{(1,0),(3,0),(4,0)\}$,
&$\{(1,0),(5,0),(8,0)\}$,\\
&$\{(1,0),(7,0),(10,0)\}$,
&$\{(1,0),(11,0),(18,1)\}$,
&$\{(1,0),(12,0),(7,1)\}$,\\
&$\{(1,0),(13,0),(22,0)\}$,
&$\{(1,0),(14,0),(16,0)\}$,
&$\{(1,0),(15,0),(18,0)\}$,\\
&$\{(1,0),(19,0),(20,1)\}$,
&$\{(1,0),(20,0),(22,1)\}$,
&$\{(1,0),(21,0),(2,1)\}$,\\
&$\{(1,0),(23,0),(3,1)\}$,
&$\{(1,0),(\infty_0,0),(13,1)\}$,
&$\{(1,0),(4,1),(5,1)\}$,\\
&$\{(1,0),(6,1),(8,1)\}$,
&$\{(1,0),(10,1),(11,1)\}$,
&$\{(1,0),(12,1),(\infty_0,1)\}$,\\
&$\{(1,0),(14,1),(19,1)\}$,
&$\{(1,0),(15,1),(21,1)\}$,
&$\{(1,0),(16,1),(23,1)\}$,\\
&$\{(2,0),(4,0),(11,0)\}$,
&$\{(2,0),(5,0),(22,0)\}$,
&$\{(2,0),(6,0),(8,1)\}$,\\
&$\{(2,0),(7,0),(\infty_0,0)\}$,
&$\{(2,0),(8,0),(14,0)\}$,
&$\{(2,0),(9,0),(16,1)\}$,\\
&$\{(2,0),(12,0),(15,0)\}$,
&$\{(2,0),(13,0),(16,0)\}$,
&$\{(2,0),(17,0),(19,0)\}$,\\
&$\{(2,0),(20,0),(\infty_0,1)\}$,
&$\{(2,0),(21,0),(19,1)\}$,
&$\{(2,0),(23,0),(4,1)\}$,\\
&$\{(2,0),(3,1),(5,1)\}$,
&$\{(2,0),(6,1),(12,1)\}$,
&$\{(2,0),(7,1),(9,1)\}$,\\
&$\{(2,0),(11,1),(13,1)\}$,
&$\{(2,0),(14,1),(17,1)\}$,
&$\{(2,0),(15,1),(20,1)\}$,\\
&$\{(2,0),(22,1),(23,1)\}$,
&$\{(3,0),(6,0),(16,0)\}$,
&$\{(3,0),(7,0),(10,1)\}$,\\
&$\{(3,0),(8,0),(15,1)\}$,
&$\{(3,0),(9,0),(18,1)\}$,
&$\{(3,0),(10,0),(13,1)\}$,\\
&$\{(3,0),(12,0),(14,0)\}$,
&$\{(3,0),(13,0),(16,1)\}$,
&$\{(3,0),(15,0),(20,1)\}$,\\
&$\{(3,0),(17,0),(\infty_0,0)\}$,
&$\{(3,0),(18,0),(21,0)\}$,
&$\{(3,0),(20,0),(21,1)\}$,\\
&$\{(3,0),(22,0),(4,1)\}$,
&$\{(3,0),(23,0),(5,1)\}$,
&$\{(3,0),(6,1),(9,1)\}$,\\
&$\{(3,0),(7,1),(8,1)\}$,
&$\{(3,0),(12,1),(22,1)\}$,
&$\{(3,0),(14,1),(\infty_0,1)\}$,\\
&$\{(4,0),(6,0),(\infty_0,1)\}$,
&$\{(4,0),(7,0),(5,1)\}$,
&$\{(4,0),(8,0),(23,0)\}$,\\
&$\{(4,0),(9,0),(10,1)\}$,
&$\{(4,0),(10,0),(13,0)\}$,
&$\{(4,0),(14,0),(18,0)\}$,\\
&$\{(4,0),(15,0),(16,1)\}$,
&$\{(4,0),(16,0),(19,0)\}$,
&$\{(4,0),(21,0),(6,1)\}$,\\
&$\{(4,0),(22,0),(7,1)\}$,
&$\{(4,0),(\infty_0,0),(14,1)\}$,
&$\{(4,0),(8,1),(13,1)\}$,\\
&$\{(4,0),(9,1),(15,1)\}$,
&$\{(4,0),(17,1),(18,1)\}$,
&$\{(4,0),(19,1),(21,1)\}$,\\
&$\{(5,0),(6,0),(23,0)\}$,
&$\{(5,0),(9,0),(11,1)\}$,
&$\{(5,0),(10,0),(6,1)\}$,\\
&$\{(5,0),(11,0),(20,1)\}$,
&$\{(5,0),(12,0),(8,1)\}$,
&$\{(5,0),(14,0),(10,1)\}$,\\
&$\{(5,0),(15,0),(\infty_0,0)\}$,
&$\{(5,0),(16,0),(12,1)\}$,
&$\{(5,0),(17,0),(14,1)\}$,\\
&$\{(5,0),(18,0),(22,1)\}$,
&$\{(5,0),(19,0),(20,0)\}$,
&$\{(5,0),(15,1),(19,1)\}$,\\
&$\{(5,0),(16,1),(17,1)\}$,
&$\{(5,0),(18,1),(\infty_0,1)\}$,
&$\{(6,0),(10,0),(16,1)\}$,\\
&$\{(6,0),(11,0),(\infty_0,0)\}$,
&$\{(6,0),(13,0),(17,0)\}$,
&$\{(6,0),(15,0),(9,1)\}$,\\
&$\{(6,0),(18,0),(19,0)\}$,
&$\{(6,0),(20,0),(17,1)\}$,
&$\{(6,0),(21,0),(7,1)\}$,\\
&$\{(6,0),(11,1),(20,1)\}$,
&$\{(6,0),(12,1),(19,1)\}$,
&$\{(6,0),(13,1),(15,1)\}$,\\
&$\{(6,0),(18,1),(23,1)\}$,
&$\{(7,0),(11,0),(\infty_0,1)\}$,
&$\{(7,0),(12,0),(16,0)\}$,\\
&$\{(7,0),(13,0),(8,1)\}$,
&$\{(7,0),(14,0),(20,0)\}$,
&$\{(7,0),(17,0),(21,0)\}$,\\
&$\{(7,0),(18,0),(13,1)\}$,
&$\{(7,0),(19,0),(18,1)\}$,
&$\{(7,0),(22,0),(19,1)\}$,\\
&$\{(7,0),(9,1),(14,1)\}$,
&$\{(7,0),(11,1),(16,1)\}$,
&$\{(7,0),(17,1),(20,1)\}$,\\
&$\{(8,0),(9,0),(19,1)\}$,
&$\{(8,0),(10,0),(12,0)\}$,
&$\{(8,0),(11,0),(14,1)\}$,\\
&$\{(8,0),(15,0),(17,0)\}$,
&$\{(8,0),(18,0),(20,1)\}$,
&$\{(8,0),(19,0),(17,1)\}$,\\
&$\{(8,0),(20,0),(\infty_0,0)\}$,
&$\{(8,0),(21,0),(22,1)\}$,
&$\{(8,0),(22,0),(23,1)\}$,\\
&$\{(8,0),(9,1),(10,1)\}$,
&$\{(8,0),(11,1),(18,1)\}$,
&$\{(8,0),(21,1),(\infty_0,1)\}$,\\
&$\{(9,0),(11,0),(22,0)\}$,
&$\{(9,0),(12,0),(13,1)\}$,
&$\{(9,0),(13,0),(20,1)\}$,\\
&$\{(9,0),(16,0),(21,1)\}$,
&$\{(9,0),(18,0),(23,1)\}$,
&$\{(9,0),(20,0),(14,1)\}$,\\
&$\{(9,0),(21,0),(\infty_0,1)\}$,
&$\{(9,0),(23,0),(12,1)\}$,
&$\{(9,0),(\infty_0,0),(22,1)\}$,\\
&$\{(10,0),(14,0),(15,1)\}$,
&$\{(10,0),(15,0),(\infty_0,1)\}$,
&$\{(10,0),(16,0),(20,0)\}$,\\
&$\{(10,0),(17,0),(11,1)\}$,
&$\{(10,0),(21,0),(23,1)\}$,
&$\{(10,0),(22,0),(12,1)\}$,\\
&$\{(10,0),(23,0),(17,1)\}$,
&$\{(10,0),(19,1),(22,1)\}$,
&$\{(10,0),(20,1),(21,1)\}$,\\
&$\{(11,0),(12,0),(15,1)\}$,
&$\{(11,0),(14,0),(16,1)\}$,
&$\{(11,0),(15,0),(21,1)\}$,\\
&$\{(11,0),(17,0),(22,1)\}$,
&$\{(11,0),(23,0),(13,1)\}$,
&$\{(11,0),(12,1),(23,1)\}$,\\
&$\{(12,0),(17,0),(18,1)\}$,
&$\{(12,0),(21,0),(17,1)\}$,
&$\{(12,0),(14,1),(21,1)\}$,\\
&$\{(12,0),(19,1),(\infty_0,1)\}$,
&$\{(13,0),(18,0),(14,1)\}$,
&$\{(13,0),(19,0),(15,1)\}$,\\
&$\{(13,0),(20,0),(23,0)\}$,
&$\{(13,0),(\infty_0,0),(19,1)\}$,
&$\{(13,0),(17,1),(22,1)\}$,\\
&$\{(14,0),(23,0),(19,1)\}$,
&$\{(14,0),(21,1),(23,1)\}$,
&$\{(15,0),(16,0),(17,1)\}$,\\
&$\{(15,0),(18,1),(22,1)\}$,
&$\{(16,0),(18,0),(\infty_0,1)\}$,
&$\{(16,0),(21,0),(18,1)\}$,\\
&$\{(16,0),(22,0),(\infty_0,0)\}$,
&$\{(16,0),(19,1),(23,1)\}$,
&$\{(16,0),(20,1),(22,1)\}$,\\
&$\{(17,0),(23,0),(\infty_0,1)\}$.\\

$(t,r)=(2,2):$
&$\{(0,0),(1,0),(2,0)\}$,
&$\{(0,0),(3,0),(15,1)\}$,
&$\{(0,0),(4,0),(7,0)\}$,\\
&$\{(0,0),(5,0),(17,1)\}$,
&$\{(0,0),(6,0),(7,1)\}$,
&$\{(0,0),(9,0),(10,1)\}$,\\
&$\{(0,0),(10,0),(20,0)\}$,
&$\{(0,0),(11,0),(12,1)\}$,
&$\{(0,0),(12,0),(5,1)\}$,\\
&$\{(0,0),(13,0),(1,1)\}$,
&$\{(0,0),(14,0),(\infty_0,0)\}$,
&$\{(0,0),(15,0),(11,1)\}$,\\
&$\{(0,0),(17,0),(18,0)\}$,
&$\{(0,0),(19,0),(13,1)\}$,
&$\{(0,0),(21,0),(6,1)\}$,\\
&$\{(0,0),(22,0),(3,1)\}$,
&$\{(0,0),(23,0),(\infty_1,0)\}$,
&$\{(0,0),(2,1),(9,1)\}$,\\
&$\{(0,0),(4,1),(14,1)\}$,
&$\{(0,0),(18,1),(19,1)\}$,
&$\{(0,0),(20,1),(21,1)\}$,\\
&$\{(0,0),(22,1),(\infty_1,1)\}$,
&$\{(0,0),(23,1),(\infty_0,1)\}$,
&$\{(1,0),(3,0),(6,1)\}$,\\
&$\{(1,0),(4,0),(6,0)\}$,
&$\{(1,0),(5,0),(7,0)\}$,
&$\{(1,0),(8,0),(22,1)\}$,\\
&$\{(1,0),(10,0),(11,0)\}$,
&$\{(1,0),(12,0),(7,1)\}$,
&$\{(1,0),(13,0),(14,1)\}$,\\
&$\{(1,0),(14,0),(16,1)\}$,
&$\{(1,0),(15,0),(16,0)\}$,
&$\{(1,0),(18,0),(20,1)\}$,\\
&$\{(1,0),(19,0),(23,1)\}$,
&$\{(1,0),(20,0),(\infty_0,1)\}$,
&$\{(1,0),(21,0),(8,1)\}$,\\
&$\{(1,0),(22,0),(15,1)\}$,
&$\{(1,0),(23,0),(5,1)\}$,
&$\{(1,0),(\infty_0,0),(3,1)\}$,\\
&$\{(1,0),(\infty_1,0),(2,1)\}$,
&$\{(1,0),(4,1),(10,1)\}$,
&$\{(1,0),(11,1),(21,1)\}$,\\
&$\{(1,0),(12,1),(18,1)\}$,
&$\{(1,0),(19,1),(\infty_1,1)\}$,
&$\{(2,0),(3,0),(15,0)\}$,\\
&$\{(2,0),(4,0),(17,0)\}$,
&$\{(2,0),(5,0),(6,0)\}$,
&$\{(2,0),(7,0),(16,0)\}$,\\
&$\{(2,0),(8,0),(6,1)\}$,
&$\{(2,0),(11,0),(12,0)\}$,
&$\{(2,0),(13,0),(23,1)\}$,\\
&$\{(2,0),(14,0),(19,0)\}$,
&$\{(2,0),(20,0),(22,0)\}$,
&$\{(2,0),(21,0),(14,1)\}$,\\
&$\{(2,0),(23,0),(3,1)\}$,
&$\{(2,0),(\infty_0,0),(11,1)\}$,
&$\{(2,0),(\infty_1,0),(4,1)\}$,\\
&$\{(2,0),(5,1),(12,1)\}$,
&$\{(2,0),(7,1),(8,1)\}$,
&$\{(2,0),(9,1),(19,1)\}$,\\
&$\{(2,0),(13,1),(15,1)\}$,
&$\{(2,0),(16,1),(17,1)\}$,
&$\{(2,0),(20,1),(\infty_0,1)\}$,\\
&$\{(2,0),(21,1),(22,1)\}$,
&$\{(3,0),(4,0),(13,0)\}$,
&$\{(3,0),(5,0),(4,1)\}$,\\
&$\{(3,0),(6,0),(7,0)\}$,
&$\{(3,0),(8,0),(9,0)\}$,
&$\{(3,0),(10,0),(12,0)\}$,\\
&$\{(3,0),(14,0),(\infty_1,0)\}$,
&$\{(3,0),(16,0),(13,1)\}$,
&$\{(3,0),(17,0),(20,0)\}$,\\
&$\{(3,0),(18,0),(12,1)\}$,
&$\{(3,0),(21,0),(10,1)\}$,
&$\{(3,0),(22,0),(16,1)\}$,\\
&$\{(3,0),(23,0),(14,1)\}$,
&$\{(3,0),(\infty_0,0),(17,1)\}$,
&$\{(3,0),(5,1),(18,1)\}$,\\
&$\{(3,0),(7,1),(\infty_1,1)\}$,
&$\{(3,0),(8,1),(20,1)\}$,
&$\{(3,0),(9,1),(21,1)\}$,\\
&$\{(4,0),(5,0),(8,0)\}$,
&$\{(4,0),(9,0),(6,1)\}$,
&$\{(4,0),(11,0),(7,1)\}$,\\
&$\{(4,0),(15,0),(16,1)\}$,
&$\{(4,0),(16,0),(17,1)\}$,
&$\{(4,0),(18,0),(14,1)\}$,\\
&$\{(4,0),(19,0),(8,1)\}$,
&$\{(4,0),(21,0),(23,0)\}$,
&$\{(4,0),(22,0),(23,1)\}$,\\
&$\{(4,0),(\infty_0,0),(9,1)\}$,
&$\{(4,0),(\infty_1,0),(10,1)\}$,
&$\{(4,0),(11,1),(13,1)\}$,\\
&$\{(4,0),(15,1),(18,1)\}$,
&$\{(4,0),(19,1),(21,1)\}$,
&$\{(4,0),(22,1),(\infty_0,1)\}$,\\
&$\{(5,0),(9,0),(20,0)\}$,
&$\{(5,0),(10,0),(22,1)\}$,
&$\{(5,0),(11,0),(15,0)\}$,\\
&$\{(5,0),(14,0),(\infty_0,1)\}$,
&$\{(5,0),(16,0),(6,1)\}$,
&$\{(5,0),(17,0),(7,1)\}$,\\
&$\{(5,0),(19,0),(14,1)\}$,
&$\{(5,0),(22,0),(\infty_1,1)\}$,
&$\{(5,0),(23,0),(8,1)\}$,\\
&$\{(5,0),(\infty_0,0),(15,1)\}$,
&$\{(5,0),(\infty_1,0),(16,1)\}$,
&$\{(5,0),(9,1),(18,1)\}$,\\
&$\{(5,0),(10,1),(19,1)\}$,
&$\{(5,0),(11,1),(20,1)\}$,
&$\{(6,0),(8,0),(10,0)\}$,\\
&$\{(6,0),(9,0),(13,0)\}$,
&$\{(6,0),(11,0),(16,0)\}$,
&$\{(6,0),(12,0),(13,1)\}$,\\
&$\{(6,0),(15,0),(10,1)\}$,
&$\{(6,0),(17,0),(21,0)\}$,
&$\{(6,0),(18,0),(20,0)\}$,\\
&$\{(6,0),(19,0),(23,0)\}$,
&$\{(6,0),(\infty_0,0),(12,1)\}$,
&$\{(6,0),(\infty_1,0),(15,1)\}$,\\
&$\{(6,0),(11,1),(23,1)\}$,
&$\{(6,0),(17,1),(19,1)\}$,
&$\{(6,0),(18,1),(\infty_0,1)\}$,\\
&$\{(6,0),(20,1),(\infty_1,1)\}$,
&$\{(7,0),(9,0),(14,0)\}$,
&$\{(7,0),(10,0),(\infty_0,1)\}$,\\
&$\{(7,0),(11,0),(17,0)\}$,
&$\{(7,0),(12,0),(19,0)\}$,
&$\{(7,0),(13,0),(22,1)\}$,\\
&$\{(7,0),(18,0),(8,1)\}$,
&$\{(7,0),(20,0),(19,1)\}$,
&$\{(7,0),(21,0),(\infty_0,0)\}$,\\
&$\{(7,0),(22,0),(9,1)\}$,
&$\{(7,0),(10,1),(21,1)\}$,
&$\{(7,0),(13,1),(20,1)\}$,\\
&$\{(7,0),(14,1),(16,1)\}$,
&$\{(7,0),(18,1),(\infty_1,1)\}$,
&$\{(8,0),(11,0),(9,1)\}$,\\
&$\{(8,0),(12,0),(\infty_1,1)\}$,
&$\{(8,0),(13,0),(22,0)\}$,
&$\{(8,0),(14,0),(15,1)\}$,\\
&$\{(8,0),(15,0),(12,1)\}$,
&$\{(8,0),(17,0),(10,1)\}$,
&$\{(8,0),(18,0),(21,0)\}$,\\
&$\{(8,0),(19,0),(\infty_0,0)\}$,
&$\{(8,0),(23,0),(11,1)\}$,
&$\{(8,0),(\infty_1,0),(20,1)\}$,\\
&$\{(8,0),(13,1),(\infty_0,1)\}$,
&$\{(8,0),(14,1),(17,1)\}$,
&$\{(9,0),(10,0),(12,1)\}$,\\
&$\{(9,0),(11,0),(\infty_1,0)\}$,
&$\{(9,0),(12,0),(23,0)\}$,
&$\{(9,0),(15,0),(22,0)\}$,\\
&$\{(9,0),(16,0),(18,1)\}$,
&$\{(9,0),(\infty_0,0),(19,1)\}$,
&$\{(9,0),(13,1),(23,1)\}$,\\
&$\{(9,0),(14,1),(20,1)\}$,
&$\{(9,0),(15,1),(21,1)\}$,
&$\{(9,0),(16,1),(\infty_1,1)\}$,\\
&$\{(10,0),(13,0),(\infty_1,0)\}$,
&$\{(10,0),(14,0),(20,1)\}$,
&$\{(10,0),(15,0),(17,0)\}$,\\
&$\{(10,0),(16,0),(19,1)\}$,
&$\{(10,0),(22,0),(23,0)\}$,
&$\{(10,0),(\infty_0,0),(13,1)\}$,\\
&$\{(10,0),(11,1),(14,1)\}$,
&$\{(10,0),(16,1),(23,1)\}$,
&$\{(11,0),(18,0),(13,1)\}$,\\
&$\{(11,0),(22,0),(17,1)\}$,
&$\{(11,0),(\infty_0,0),(22,1)\}$,
&$\{(11,0),(14,1),(18,1)\}$,\\
&$\{(11,0),(16,1),(20,1)\}$,
&$\{(11,0),(21,1),(\infty_1,1)\}$,
&$\{(12,0),(13,0),(17,0)\}$,\\
&$\{(12,0),(14,0),(15,0)\}$,
&$\{(12,0),(16,0),(21,0)\}$,
&$\{(12,0),(22,0),(19,1)\}$,\\
&$\{(12,0),(\infty_0,0),(16,1)\}$,
&$\{(12,0),(\infty_1,0),(21,1)\}$,
&$\{(12,0),(14,1),(23,1)\}$,\\
&$\{(12,0),(17,1),(22,1)\}$,
&$\{(13,0),(14,0),(\infty_1,1)\}$,
&$\{(13,0),(16,0),(19,0)\}$,\\
&$\{(13,0),(18,0),(17,1)\}$,
&$\{(13,0),(15,1),(20,1)\}$,
&$\{(14,0),(21,0),(17,1)\}$,\\
&$\{(15,0),(19,0),(21,1)\}$,
&$\{(15,0),(\infty_0,0),(18,1)\}$,
&$\{(15,0),(\infty_1,0),(17,1)\}$,\\
&$\{(15,0),(19,1),(20,1)\}$,
&$\{(16,0),(18,0),(23,1)\}$,
&$\{(16,0),(22,0),(20,1)\}$,\\
&$\{(16,0),(\infty_0,0),(21,1)\}$,
&$\{(17,0),(23,0),(20,1)\}$,
&$\{(17,0),(\infty_0,0),(23,1)\}$,\\
&$\{(17,0),(\infty_1,0),(19,1)\}$,
&$\{(18,0),(22,0),(21,1)\}$,
&$\{(18,0),(23,0),(\infty_1,1)\}$,\\
&$\{(18,0),(19,1),(22,1)\}$,
&$\{(20,0),(23,0),(21,1)\}$.\\

$(t,r)=(2,5):$
&$\{(0,0),(1,0),(2,0)\}$,
&$\{(0,0),(3,0),(12,1)\}$,
&$\{(0,0),(4,0),(23,0)\}$,\\
&$\{(0,0),(5,0),(12,0)\}$,
&$\{(0,0),(6,0),(23,1)\}$,
&$\{(0,0),(7,0),(1,1)\}$,\\
&$\{(0,0),(9,0),(10,0)\}$,
&$\{(0,0),(11,0),(\infty_0,1)\}$,
&$\{(0,0),(13,0),(14,0)\}$,\\
&$\{(0,0),(15,0),(21,1)\}$,
&$\{(0,0),(17,0),(18,1)\}$,
&$\{(0,0),(18,0),(21,0)\}$,\\
&$\{(0,0),(19,0),(20,0)\}$,
&$\{(0,0),(22,0),(\infty_0,0)\}$,
&$\{(0,0),(\infty_1,0),(2,1)\}$,\\
&$\{(0,0),(\infty_2,0),(14,1)\}$,
&$\{(0,0),(\infty_3,0),(3,1)\}$,
&$\{(0,0),(\infty_4,0),(9,1)\}$,\\
&$\{(0,0),(4,1),(\infty_1,1)\}$,
&$\{(0,0),(5,1),(6,1)\}$,
&$\{(0,0),(7,1),(13,1)\}$,\\
&$\{(0,0),(10,1),(11,1)\}$,
&$\{(0,0),(15,1),(22,1)\}$,
&$\{(0,0),(17,1),(\infty_2,1)\}$,\\
&$\{(0,0),(19,1),(\infty_3,1)\}$,
&$\{(0,0),(20,1),(\infty_4,1)\}$,
&$\{(1,0),(3,0),(\infty_1,1)\}$,\\
&$\{(1,0),(4,0),(13,0)\}$,
&$\{(1,0),(5,0),(18,0)\}$,
&$\{(1,0),(6,0),(2,1)\}$,\\
&$\{(1,0),(7,0),(\infty_1,0)\}$,
&$\{(1,0),(8,0),(12,1)\}$,
&$\{(1,0),(10,0),(5,1)\}$,\\
&$\{(1,0),(11,0),(22,1)\}$,
&$\{(1,0),(12,0),(6,1)\}$,
&$\{(1,0),(14,0),(15,0)\}$,\\
&$\{(1,0),(16,0),(\infty_4,1)\}$,
&$\{(1,0),(19,0),(13,1)\}$,
&$\{(1,0),(20,0),(\infty_0,0)\}$,\\
&$\{(1,0),(21,0),(22,0)\}$,
&$\{(1,0),(23,0),(\infty_2,1)\}$,
&$\{(1,0),(\infty_2,0),(3,1)\}$,\\
&$\{(1,0),(\infty_3,0),(4,1)\}$,
&$\{(1,0),(\infty_4,0),(8,1)\}$,
&$\{(1,0),(10,1),(14,1)\}$,\\
&$\{(1,0),(11,1),(15,1)\}$,
&$\{(1,0),(16,1),(18,1)\}$,
&$\{(1,0),(19,1),(21,1)\}$,\\
&$\{(1,0),(20,1),(\infty_3,1)\}$,
&$\{(1,0),(23,1),(\infty_0,1)\}$,
&$\{(2,0),(3,0),(\infty_0,0)\}$,\\
&$\{(2,0),(4,0),(21,0)\}$,
&$\{(2,0),(5,0),(19,0)\}$,
&$\{(2,0),(6,0),(5,1)\}$,\\
&$\{(2,0),(7,0),(16,1)\}$,
&$\{(2,0),(8,0),(\infty_3,1)\}$,
&$\{(2,0),(9,0),(11,0)\}$,\\
&$\{(2,0),(12,0),(21,1)\}$,
&$\{(2,0),(13,0),(15,0)\}$,
&$\{(2,0),(14,0),(16,0)\}$,\\
&$\{(2,0),(17,0),(\infty_4,0)\}$,
&$\{(2,0),(20,0),(22,0)\}$,
&$\{(2,0),(23,0),(8,1)\}$,\\
&$\{(2,0),(\infty_1,0),(4,1)\}$,
&$\{(2,0),(\infty_2,0),(9,1)\}$,
&$\{(2,0),(\infty_3,0),(13,1)\}$,\\
&$\{(2,0),(3,1),(7,1)\}$,
&$\{(2,0),(11,1),(12,1)\}$,
&$\{(2,0),(14,1),(17,1)\}$,\\
&$\{(2,0),(15,1),(\infty_2,1)\}$,
&$\{(2,0),(19,1),(\infty_0,1)\}$,
&$\{(2,0),(20,1),(23,1)\}$,\\
&$\{(2,0),(22,1),(\infty_4,1)\}$,
&$\{(3,0),(4,0),(22,0)\}$,
&$\{(3,0),(5,0),(\infty_3,0)\}$,\\
&$\{(3,0),(6,0),(\infty_4,1)\}$,
&$\{(3,0),(8,0),(20,0)\}$,
&$\{(3,0),(9,0),(\infty_2,0)\}$,\\
&$\{(3,0),(10,0),(21,0)\}$,
&$\{(3,0),(12,0),(14,0)\}$,
&$\{(3,0),(13,0),(6,1)\}$,\\
&$\{(3,0),(15,0),(16,0)\}$,
&$\{(3,0),(17,0),(20,1)\}$,
&$\{(3,0),(18,0),(23,0)\}$,\\
&$\{(3,0),(\infty_1,0),(5,1)\}$,
&$\{(3,0),(\infty_4,0),(10,1)\}$,
&$\{(3,0),(4,1),(7,1)\}$,\\
&$\{(3,0),(8,1),(22,1)\}$,
&$\{(3,0),(9,1),(13,1)\}$,
&$\{(3,0),(14,1),(18,1)\}$,\\
&$\{(3,0),(15,1),(\infty_0,1)\}$,
&$\{(3,0),(16,1),(17,1)\}$,
&$\{(3,0),(21,1),(23,1)\}$,\\
&$\{(4,0),(5,0),(22,1)\}$,
&$\{(4,0),(6,0),(7,1)\}$,
&$\{(4,0),(8,0),(5,1)\}$,\\
&$\{(4,0),(9,0),(18,1)\}$,
&$\{(4,0),(10,0),(6,1)\}$,
&$\{(4,0),(11,0),(21,1)\}$,\\
&$\{(4,0),(14,0),(19,0)\}$,
&$\{(4,0),(15,0),(8,1)\}$,
&$\{(4,0),(16,0),(17,1)\}$,\\
&$\{(4,0),(17,0),(18,0)\}$,
&$\{(4,0),(\infty_0,0),(9,1)\}$,
&$\{(4,0),(\infty_2,0),(10,1)\}$,\\
&$\{(4,0),(\infty_3,0),(11,1)\}$,
&$\{(4,0),(\infty_4,0),(13,1)\}$,
&$\{(4,0),(14,1),(23,1)\}$,\\
&$\{(4,0),(15,1),(\infty_4,1)\}$,
&$\{(4,0),(16,1),(\infty_0,1)\}$,
&$\{(4,0),(19,1),(\infty_2,1)\}$,\\
&$\{(5,0),(7,0),(20,0)\}$,
&$\{(5,0),(8,0),(14,0)\}$,
&$\{(5,0),(9,0),(7,1)\}$,\\
&$\{(5,0),(10,0),(\infty_4,0)\}$,
&$\{(5,0),(11,0),(14,1)\}$,
&$\{(5,0),(15,0),(17,0)\}$,\\
&$\{(5,0),(16,0),(22,0)\}$,
&$\{(5,0),(23,0),(17,1)\}$,
&$\{(5,0),(\infty_0,0),(15,1)\}$,\\
&$\{(5,0),(\infty_1,0),(9,1)\}$,
&$\{(5,0),(\infty_2,0),(16,1)\}$,
&$\{(5,0),(11,1),(20,1)\}$,\\
&$\{(5,0),(12,1),(\infty_2,1)\}$,
&$\{(5,0),(18,1),(\infty_0,1)\}$,
&$\{(5,0),(19,1),(\infty_4,1)\}$,\\
&$\{(5,0),(23,1),(\infty_3,1)\}$,
&$\{(6,0),(7,0),(\infty_4,0)\}$,
&$\{(6,0),(8,0),(\infty_2,0)\}$,\\
&$\{(6,0),(9,0),(8,1)\}$,
&$\{(6,0),(10,0),(15,0)\}$,
&$\{(6,0),(11,0),(\infty_1,0)\}$,\\
&$\{(6,0),(12,0),(16,0)\}$,
&$\{(6,0),(13,0),(17,0)\}$,
&$\{(6,0),(18,0),(\infty_3,0)\}$,\\
&$\{(6,0),(19,0),(23,0)\}$,
&$\{(6,0),(20,0),(11,1)\}$,
&$\{(6,0),(21,0),(\infty_3,1)\}$,\\
&$\{(6,0),(\infty_0,0),(16,1)\}$,
&$\{(6,0),(9,1),(18,1)\}$,
&$\{(6,0),(15,1),(19,1)\}$,\\
&$\{(6,0),(17,1),(\infty_0,1)\}$,
&$\{(6,0),(20,1),(\infty_1,1)\}$,
&$\{(6,0),(21,1),(\infty_2,1)\}$,\\
&$\{(7,0),(8,0),(17,0)\}$,
&$\{(7,0),(9,0),(12,0)\}$,
&$\{(7,0),(10,0),(16,0)\}$,\\
&$\{(7,0),(11,0),(\infty_2,1)\}$,
&$\{(7,0),(14,0),(21,0)\}$,
&$\{(7,0),(18,0),(19,0)\}$,\\
&$\{(7,0),(22,0),(17,1)\}$,
&$\{(7,0),(\infty_0,0),(8,1)\}$,
&$\{(7,0),(\infty_2,0),(12,1)\}$,\\
&$\{(7,0),(\infty_3,0),(10,1)\}$,
&$\{(7,0),(11,1),(21,1)\}$,
&$\{(7,0),(13,1),(\infty_0,1)\}$,\\
&$\{(7,0),(14,1),(20,1)\}$,
&$\{(7,0),(18,1),(\infty_4,1)\}$,
&$\{(7,0),(19,1),(\infty_1,1)\}$,\\
&$\{(7,0),(22,1),(\infty_3,1)\}$,
&$\{(8,0),(9,0),(11,1)\}$,
&$\{(8,0),(10,0),(\infty_1,1)\}$,\\
&$\{(8,0),(11,0),(17,1)\}$,
&$\{(8,0),(12,0),(10,1)\}$,
&$\{(8,0),(13,0),(\infty_1,0)\}$,\\
&$\{(8,0),(15,0),(\infty_2,1)\}$,
&$\{(8,0),(18,0),(13,1)\}$,
&$\{(8,0),(19,0),(14,1)\}$,\\
&$\{(8,0),(21,0),(\infty_0,0)\}$,
&$\{(8,0),(23,0),(18,1)\}$,
&$\{(8,0),(\infty_3,0),(19,1)\}$,\\
&$\{(8,0),(\infty_4,0),(22,1)\}$,
&$\{(8,0),(20,1),(21,1)\}$,
&$\{(9,0),(14,0),(\infty_0,0)\}$,\\
&$\{(9,0),(15,0),(\infty_3,1)\}$,
&$\{(9,0),(16,0),(19,0)\}$,
&$\{(9,0),(20,0),(22,1)\}$,\\
&$\{(9,0),(21,0),(14,1)\}$,
&$\{(9,0),(22,0),(23,0)\}$,
&$\{(9,0),(\infty_1,0),(12,1)\}$,\\
&$\{(9,0),(\infty_3,0),(23,1)\}$,
&$\{(9,0),(\infty_4,0),(15,1)\}$,
&$\{(9,0),(10,1),(20,1)\}$,\\
&$\{(9,0),(13,1),(19,1)\}$,
&$\{(9,0),(16,1),(21,1)\}$,
&$\{(10,0),(12,0),(13,1)\}$,\\
&$\{(10,0),(13,0),(23,1)\}$,
&$\{(10,0),(17,0),(19,1)\}$,
&$\{(10,0),(19,0),(21,1)\}$,\\
&$\{(10,0),(22,0),(15,1)\}$,
&$\{(10,0),(23,0),(20,1)\}$,
&$\{(10,0),(\infty_0,0),(14,1)\}$,\\
&$\{(10,0),(\infty_1,0),(22,1)\}$,
&$\{(10,0),(\infty_2,0),(17,1)\}$,
&$\{(10,0),(\infty_3,0),(16,1)\}$,\\
&$\{(10,0),(11,1),(\infty_0,1)\}$,
&$\{(11,0),(13,0),(15,1)\}$,
&$\{(11,0),(14,0),(12,1)\}$,\\
&$\{(11,0),(16,0),(\infty_2,0)\}$,
&$\{(11,0),(17,0),(\infty_3,0)\}$,
&$\{(11,0),(18,0),(16,1)\}$,\\
&$\{(11,0),(22,0),(23,1)\}$,
&$\{(11,0),(23,0),(\infty_1,1)\}$,
&$\{(11,0),(\infty_4,0),(18,1)\}$,\\
&$\{(11,0),(13,1),(\infty_4,1)\}$,
&$\{(12,0),(13,0),(18,0)\}$,
&$\{(12,0),(15,0),(\infty_1,0)\}$,\\
&$\{(12,0),(17,0),(23,0)\}$,
&$\{(12,0),(19,0),(\infty_0,1)\}$,
&$\{(12,0),(21,0),(\infty_4,1)\}$,\\
&$\{(12,0),(22,0),(\infty_3,1)\}$,
&$\{(12,0),(\infty_0,0),(22,1)\}$,
&$\{(12,0),(\infty_3,0),(17,1)\}$,\\
&$\{(12,0),(\infty_4,0),(19,1)\}$,
&$\{(12,0),(15,1),(18,1)\}$,
&$\{(12,0),(16,1),(23,1)\}$,\\
&$\{(13,0),(16,0),(\infty_3,0)\}$,
&$\{(13,0),(20,0),(\infty_2,0)\}$,
&$\{(13,0),(22,0),(\infty_2,1)\}$,\\
&$\{(13,0),(23,0),(\infty_0,1)\}$,
&$\{(13,0),(14,1),(\infty_1,1)\}$,
&$\{(13,0),(16,1),(20,1)\}$,\\
&$\{(13,0),(17,1),(22,1)\}$,
&$\{(14,0),(\infty_2,0),(18,1)\}$,
&$\{(14,0),(\infty_3,0),(20,1)\}$,\\
&$\{(14,0),(\infty_4,0),(17,1)\}$,
&$\{(14,0),(15,1),(\infty_3,1)\}$,
&$\{(14,0),(16,1),(\infty_1,1)\}$,\\
&$\{(14,0),(23,1),(\infty_4,1)\}$,
&$\{(15,0),(20,0),(19,1)\}$,
&$\{(15,0),(21,0),(16,1)\}$,\\
&$\{(15,0),(17,1),(20,1)\}$,
&$\{(15,0),(18,1),(\infty_1,1)\}$,
&$\{(16,0),(\infty_4,0),(20,1)\}$,\\
&$\{(16,0),(19,1),(22,1)\}$,
&$\{(16,0),(23,1),(\infty_1,1)\}$,
&$\{(17,0),(19,0),(\infty_1,1)\}$,\\
&$\{(17,0),(21,0),(\infty_0,1)\}$,
&$\{(17,0),(\infty_1,0),(21,1)\}$,
&$\{(18,0),(20,0),(\infty_0,1)\}$,\\
&$\{(18,0),(22,0),(19,1)\}$,
&$\{(18,0),(\infty_2,0),(20,1)\}$,
&$\{(18,0),(21,1),(\infty_3,1)\}$,\\
&$\{(18,0),(22,1),(\infty_1,1)\}$,
&$\{(19,0),(23,1),(\infty_2,1)\}$,
&$\{(20,0),(21,1),(\infty_1,1)\}$,\\
&$\{(21,0),(\infty_4,0),(23,1)\}$,
&$\{(21,0),(22,1),(\infty_2,1)\}$.
\end{longtable}
\end{center}
For $(t,r)=(1,8)$, take $X=(I_{3}\bigcup\{\infty_0,\infty_1\})\times\mathbb{Z}_8$ with the group set $\mathcal{G}=\{I_{3}\times\{i,i+4\}:0\leq i\leq 3\}\bigcup\{\{\infty_0,\infty_1\}\times\mathbb{Z}_8\}$. Only $25$ base blocks are listed below.

\begin{center}
\begin{longtable}{lllll}
$(t,r)=(1,8)$:
&$\{(0,0),(\infty_0,0),(0,1)\}$,
&$\{(0,0),(\infty_1,0),(1,1)\}$,
&$\{(0,0),(2,1),(\infty_1,6)\}$,\\
&$\{(0,0),(\infty_0,1),(0,5)\}$,
&$\{(0,0),(\infty_1,1),(0,6)\}$,
&$\{(0,0),(1,2),(2,5)\}$,\\
&$\{(0,0),(2,2),(\infty_0,5)\}$,
&$\{(0,0),(\infty_0,2),(1,6)\}$,
&$\{(0,0),(\infty_1,2),(1,7)\}$,\\
&$\{(0,0),(1,3),(\infty_0,6)\}$,
&$\{(0,0),(2,3),(\infty_1,5)\}$,
&$\{(0,0),(\infty_0,3),(2,7)\}$,\\
&$\{(0,0),(\infty_1,4),(2,6)\}$,
&$\{(0,0),(1,5),(\infty_1,7)\}$,
&$\{(1,0),(\infty_0,0),(2,7)\}$,\\
&$\{(1,0),(\infty_1,0),(1,3)\}$,
&$\{(1,0),(1,1),(\infty_0,6)\}$,
&$\{(1,0),(2,1),(\infty_0,7)\}$,\\
&$\{(1,0),(\infty_0,1),(2,2)\}$,
&$\{(1,0),(\infty_1,1),(2,6)\}$,
&$\{(1,0),(1,2),(\infty_1,6)\}$,\\
&$\{(1,0),(\infty_0,2),(2,5)\}$,
&$\{(2,0),(\infty_0,0),(2,6)\}$,
&$\{(2,0),(\infty_1,0),(2,7)\}$,\\
&$\{(2,0),(2,3),(\infty_1,7)\}$.
\end{longtable}
\end{center}
For $(t,r)=(2,8)$, take $X=(I_{3}\bigcup\{\infty\})\times\mathbb{Z}_{16}$ with $\mathcal{G}=\{I_{3}\times\{i,i+8\}:0\leq i\leq 7\}\bigcup\{\{\infty\}\times\mathbb{Z}_{16}\}$. Only $37$ base blocks are listed below.

\begin{center}
\begin{longtable}{lllll}
$(t,r)=(2,8)$:
&$\{(0,0),(\infty,0),(0,1)\}$,
&$\{(0,0),(1,1),(2,11)\}$,
&$\{(0,0),(2,1),(2,10)\}$,\\
&$\{(0,0),(\infty,1),(0,10)\}$,
&$\{(0,0),(0,2),(0,13)\}$,
&$\{(0,0),(1,2),(\infty,8)\}$,\\
&$\{(0,0),(2,2),(1,14)\}$,
&$\{(0,0),(\infty,2),(1,7)\}$,
&$\{(0,0),(1,3),(0,9)\}$,\\
&$\{(0,0),(2,3),(2,6)\}$,
&$\{(0,0),(\infty,3),(1,15)\}$,
&$\{(0,0),(0,4),(\infty,10)\}$,\\
&$\{(0,0),(1,4),(2,7)\}$,
&$\{(0,0),(2,4),(\infty,13)\}$,
&$\{(0,0),(\infty,4),(1,12)\}$,\\
&$\{(0,0),(1,5),(1,9)\}$,
&$\{(0,0),(2,5),(\infty,12)\}$,
&$\{(0,0),(\infty,5),(1,11)\}$,\\
&$\{(0,0),(1,6),(2,15)\}$,
&$\{(0,0),(2,9),(\infty,11)\}$,
&$\{(0,0),(\infty,9),(2,14)\}$,\\
&$\{(0,0),(2,12),(2,13)\}$,
&$\{(0,0),(1,13),(\infty,14)\}$,
&$\{(1,0),(\infty,0),(1,11)\}$,\\
&$\{(1,0),(1,1),(2,6)\}$,
&$\{(1,0),(2,1),(\infty,14)\}$,
&$\{(1,0),(1,2),(2,13)\}$,\\
&$\{(1,0),(2,2),(\infty,3)\}$,
&$\{(1,0),(\infty,2),(2,12)\}$,
&$\{(1,0),(1,3),(\infty,15)\}$,\\
&$\{(1,0),(1,6),(\infty,13)\}$,
&$\{(1,0),(1,7),(2,14)\}$,
&$\{(1,0),(\infty,9),(2,15)\}$,\\
&$\{(2,0),(\infty,0),(2,12)\}$,
&$\{(2,0),(2,2),(\infty,14)\}$,
&$\{(2,0),(\infty,3),(2,11)\}$,\\
&$\{(2,0),(\infty,5),(2,6)\}$.
\end{longtable}
\end{center}
For $(t,r)\in\{(1,9),(2,9)\}$, take $X=(I_{4t}\bigcup\{\infty_0,\infty_1,\infty_2\})\times\mathbb{Z}_{6}$ with
$\mathcal{G}=\{\{i\}\times\mathbb{Z}_{6}:i\in(I_{4t}\bigcup\{\infty_0,\infty_1,\infty_2\})\}$. Only $4t(4t+5)$ base blocks are listed below.

\begin{center}
\begin{longtable}{lllll}
$(t,r)=(1,9)$:
&$\{(0,0),(1,0),(\infty_0,1)\}$,
&$\{(0,0),(2,0),(\infty_0,3)\}$,
&$\{(0,0),(3,0),(\infty_2,2)\}$,\\
&$\{(0,0),(\infty_0,0),(2,4)\}$,
&$\{(0,0),(\infty_1,0),(3,5)\}$,
&$\{(0,0),(\infty_2,0),(1,4)\}$,\\
&$\{(0,0),(1,1),(\infty_0,5)\}$,
&$\{(0,0),(2,1),(\infty_2,3)\}$,
&$\{(0,0),(3,1),(\infty_1,4)\}$,\\
&$\{(0,0),(\infty_1,1),(1,5)\}$,
&$\{(0,0),(\infty_2,1),(3,4)\}$,
&$\{(0,0),(1,2),(\infty_1,5)\}$,\\
&$\{(0,0),(2,2),(\infty_2,5)\}$,
&$\{(0,0),(3,2),(\infty_0,2)\}$,
&$\{(0,0),(\infty_1,2),(3,3)\}$,\\
&$\{(0,0),(1,3),(\infty_2,4)\}$,
&$\{(0,0),(2,3),(\infty_0,4)\}$,
&$\{(0,0),(\infty_1,3),(2,5)\}$,\\
&$\{(1,0),(2,0),(\infty_2,0)\}$,
&$\{(1,0),(3,0),(\infty_0,5)\}$,
&$\{(1,0),(\infty_0,0),(3,4)\}$,\\
&$\{(1,0),(\infty_1,0),(3,2)\}$,
&$\{(1,0),(2,1),(\infty_1,4)\}$,
&$\{(1,0),(3,1),(\infty_2,5)\}$,\\
&$\{(1,0),(\infty_1,1),(2,2)\}$,
&$\{(1,0),(\infty_0,2),(3,5)\}$,
&$\{(1,0),(2,3),(\infty_2,4)\}$,\\
&$\{(1,0),(3,3),(\infty_2,3)\}$,
&$\{(1,0),(\infty_0,3),(2,5)\}$,
&$\{(1,0),(2,4),(\infty_1,5)\}$,\\
&$\{(2,0),(3,0),(\infty_2,5)\}$,
&$\{(2,0),(\infty_0,0),(3,5)\}$,
&$\{(2,0),(\infty_1,0),(3,4)\}$,\\
&$\{(2,0),(3,1),(\infty_0,5)\}$,
&$\{(2,0),(3,2),(\infty_1,2)\}$,
&$\{(2,0),(3,3),(\infty_2,4)\}$.\\

$(t,r)=(2,9)$:
&$\{(0,0),(1,0),(2,0)\}$,
&$\{(0,0),(3,0),(\infty_0,3)\}$,
&$\{(0,0),(4,0),(\infty_1,4)\}$,\\
&$\{(0,0),(5,0),(\infty_1,5)\}$,
&$\{(0,0),(6,0),(1,3)\}$,
&$\{(0,0),(7,0),(\infty_0,2)\}$,\\
&$\{(0,0),(\infty_0,0),(5,1)\}$,
&$\{(0,0),(\infty_1,0),(5,5)\}$,
&$\{(0,0),(\infty_2,0),(1,5)\}$,\\
&$\{(0,0),(1,1),(2,3)\}$,
&$\{(0,0),(2,1),(1,4)\}$,
&$\{(0,0),(3,1),(7,2)\}$,\\
&$\{(0,0),(4,1),(\infty_1,2)\}$,
&$\{(0,0),(6,1),(5,4)\}$,
&$\{(0,0),(7,1),(\infty_0,5)\}$,\\
&$\{(0,0),(\infty_0,1),(6,5)\}$,
&$\{(0,0),(\infty_1,1),(3,3)\}$,
&$\{(0,0),(\infty_2,1),(3,2)\}$,\\
&$\{(0,0),(1,2),(7,5)\}$,
&$\{(0,0),(2,2),(4,2)\}$,
&$\{(0,0),(5,2),(6,2)\}$,\\
&$\{(0,0),(\infty_2,2),(7,4)\}$,
&$\{(0,0),(4,3),(5,3)\}$,
&$\{(0,0),(6,3),(7,3)\}$,\\
&$\{(0,0),(\infty_1,3),(2,4)\}$,
&$\{(0,0),(\infty_2,3),(4,4)\}$,
&$\{(0,0),(3,4),(\infty_2,5)\}$,\\
&$\{(0,0),(6,4),(\infty_0,4)\}$,
&$\{(0,0),(\infty_2,4),(2,5)\}$,
&$\{(0,0),(3,5),(4,5)\}$,\\
&$\{(1,0),(3,0),(2,4)\}$,
&$\{(1,0),(4,0),(5,2)\}$,
&$\{(1,0),(5,0),(2,5)\}$,\\
&$\{(1,0),(6,0),(\infty_1,0)\}$,
&$\{(1,0),(7,0),(\infty_0,0)\}$,
&$\{(1,0),(\infty_2,0),(5,1)\}$,\\
&$\{(1,0),(2,1),(3,5)\}$,
&$\{(1,0),(3,1),(4,5)\}$,
&$\{(1,0),(4,1),(3,2)\}$,\\
&$\{(1,0),(6,1),(\infty_1,3)\}$,
&$\{(1,0),(7,1),(5,5)\}$,
&$\{(1,0),(\infty_0,1),(4,2)\}$,\\
&$\{(1,0),(\infty_1,1),(3,4)\}$,
&$\{(1,0),(6,2),(\infty_2,2)\}$,
&$\{(1,0),(7,2),(\infty_1,4)\}$,\\
&$\{(1,0),(\infty_0,2),(4,4)\}$,
&$\{(1,0),(\infty_1,2),(6,5)\}$,
&$\{(1,0),(3,3),(\infty_1,5)\}$,\\
&$\{(1,0),(4,3),(\infty_0,3)\}$,
&$\{(1,0),(5,3),(\infty_0,5)\}$,
&$\{(1,0),(\infty_2,3),(6,4)\}$,\\
&$\{(1,0),(5,4),(\infty_2,4)\}$,
&$\{(1,0),(7,4),(\infty_2,5)\}$,
&$\{(1,0),(\infty_0,4),(7,5)\}$,\\
&$\{(2,0),(3,0),(6,2)\}$,
&$\{(2,0),(5,0),(\infty_2,2)\}$,
&$\{(2,0),(6,0),(4,5)\}$,\\
&$\{(2,0),(7,0),(\infty_1,0)\}$,
&$\{(2,0),(\infty_0,0),(4,3)\}$,
&$\{(2,0),(\infty_2,0),(4,4)\}$,\\
&$\{(2,0),(3,1),(5,3)\}$,
&$\{(2,0),(4,1),(6,4)\}$,
&$\{(2,0),(6,1),(7,2)\}$,\\
&$\{(2,0),(7,1),(\infty_0,2)\}$,
&$\{(2,0),(\infty_0,1),(3,3)\}$,
&$\{(2,0),(\infty_1,1),(4,2)\}$,\\
&$\{(2,0),(\infty_2,1),(5,4)\}$,
&$\{(2,0),(5,2),(\infty_1,2)\}$,
&$\{(2,0),(6,3),(\infty_0,4)\}$,\\
&$\{(2,0),(7,3),(\infty_2,3)\}$,
&$\{(2,0),(\infty_0,3),(5,5)\}$,
&$\{(2,0),(\infty_1,3),(7,4)\}$,\\
&$\{(2,0),(\infty_1,4),(6,5)\}$,
&$\{(2,0),(\infty_2,4),(7,5)\}$,
&$\{(2,0),(3,5),(\infty_0,5)\}$,\\
&$\{(3,0),(5,0),(4,2)\}$,
&$\{(3,0),(6,0),(\infty_2,4)\}$,
&$\{(3,0),(7,0),(\infty_2,3)\}$,\\
&$\{(3,0),(\infty_1,0),(6,5)\}$,
&$\{(3,0),(\infty_2,0),(5,5)\}$,
&$\{(3,0),(4,1),(7,5)\}$,\\
&$\{(3,0),(5,1),(\infty_0,1)\}$,
&$\{(3,0),(6,1),(7,4)\}$,
&$\{(3,0),(\infty_1,1),(5,3)\}$,\\
&$\{(3,0),(7,2),(\infty_0,5)\}$,
&$\{(3,0),(\infty_0,2),(6,3)\}$,
&$\{(3,0),(\infty_2,2),(5,4)\}$,\\
&$\{(3,0),(4,3),(\infty_1,5)\}$,
&$\{(3,0),(7,3),(6,4)\}$,
&$\{(4,0),(6,0),(\infty_2,3)\}$,\\
&$\{(4,0),(7,0),(5,5)\}$,
&$\{(4,0),(\infty_1,0),(7,3)\}$,
&$\{(4,0),(\infty_2,0),(6,5)\}$,\\
&$\{(4,0),(5,1),(\infty_0,2)\}$,
&$\{(4,0),(7,1),(5,3)\}$,
&$\{(4,0),(\infty_0,1),(6,4)\}$,\\
&$\{(4,0),(\infty_2,1),(7,5)\}$,
&$\{(4,0),(6,2),(\infty_2,4)\}$,
&$\{(4,0),(7,2),(\infty_1,3)\}$,\\
&$\{(5,0),(7,0),(6,2)\}$,
&$\{(5,0),(6,1),(7,3)\}$,
&$\{(5,0),(\infty_1,2),(6,4)\}$,\\
&$\{(5,0),(\infty_0,3),(6,5)\}$,
&$\{(5,0),(\infty_1,3),(7,5)\}$.
\end{longtable}
\end{center}

\section{Appendix: Base blocks in the proof of Lemma \ref{scigdd}}\label{2^12t16^1:t=1,2}

Let $t\in\{1,2\}$ and take $X=(\mathbb{Z}_{12t}\bigcup\{\infty_0,\infty_1,\ldots,\infty_{7}\})\times\mathbb{Z}_2$ with the group set $\mathcal{G}=\{\{i\}\times\mathbb{Z}_2:0\leq i\leq 12t-1\}\bigcup\{\{\infty_0,\infty_1,\ldots,\infty_{7}\}\times\mathbb{Z}_2\}$. All the $12t(4t+5)$ base blocks are obtained by developing the following $4t(4t+5)$ initial codewords by $(+4t\pmod{12t}, -)$.

\begin{center}
\begin{longtable}{lllll}
$t=1$:
&$\{(0,0),(1,0),(\infty_1,0)\}$,
&$\{(0,0),(2,0),(\infty_0,0)\}$,
&$\{(0,0),(3,0),(\infty_0,1)\}$,\\
&$\{(0,0),(4,0),(5,1)\}$,
&$\{(0,0),(5,0),(\infty_6,0)\}$,
&$\{(0,0),(6,0),(\infty_3,1)\}$,\\
&$\{(0,0),(7,0),(\infty_2,0)\}$,
&$\{(0,0),(9,0),(\infty_4,0)\}$,
&$\{(0,0),(10,0),(\infty_4,1)\}$,\\
&$\{(0,0),(11,0),(\infty_1,1)\}$,
&$\{(0,0),(\infty_3,0),(9,1)\}$,
&$\{(0,0),(\infty_5,0),(10,1)\}$,\\
&$\{(0,0),(\infty_7,0),(8,1)\}$,
&$\{(0,0),(2,1),(\infty_5,1)\}$,
&$\{(0,0),(3,1),(11,1)\}$,\\
&$\{(0,0),(6,1),(\infty_2,1)\}$,
&$\{(0,0),(7,1),(\infty_6,1)\}$,
&$\{(1,0),(2,0),(\infty_1,1)\}$,\\
&$\{(1,0),(3,0),(\infty_4,1)\}$,
&$\{(1,0),(5,0),(10,1)\}$,
&$\{(1,0),(6,0),(\infty_2,1)\}$,\\
&$\{(1,0),(7,0),(\infty_7,0)\}$,
&$\{(1,0),(10,0),(\infty_3,0)\}$,
&$\{(1,0),(11,0),(\infty_7,1)\}$,\\
&$\{(1,0),(\infty_0,0),(5,1)\}$,
&$\{(1,0),(\infty_2,0),(11,1)\}$,
&$\{(1,0),(\infty_5,0),(3,1)\}$,\\
&$\{(1,0),(2,1),(\infty_6,1)\}$,
&$\{(1,0),(7,1),(\infty_5,1)\}$,
&$\{(2,0),(3,0),(\infty_4,0)\}$,\\
&$\{(2,0),(6,0),(3,1)\}$,
&$\{(2,0),(7,0),(\infty_1,0)\}$,
&$\{(2,0),(11,0),(\infty_6,1)\}$,\\
&$\{(2,0),(\infty_7,0),(10,1)\}$,
&$\{(2,0),(7,1),(\infty_0,1)\}$,
&$\{(3,0),(\infty_3,0),(7,1)\}$.\\

$t=2$:
&$\{(0,0),(1,0),(2,0)\}$,
&$\{(0,0),(3,0),(9,1)\}$,
&$\{(0,0),(4,0),(3,1)\}$,\\
&$\{(0,0),(5,0),(\infty_6,1)\}$,
&$\{(0,0),(6,0),(7,0)\}$,
&$\{(0,0),(8,0),(1,1)\}$,\\
&$\{(0,0),(9,0),(12,1)\}$,
&$\{(0,0),(10,0),(22,0)\}$,
&$\{(0,0),(11,0),(14,1)\}$,\\
&$\{(0,0),(12,0),(18,0)\}$,
&$\{(0,0),(13,0),(17,0)\}$,
&$\{(0,0),(14,0),(19,0)\}$,\\
&$\{(0,0),(15,0),(18,1)\}$,
&$\{(0,0),(20,0),(21,1)\}$,
&$\{(0,0),(21,0),(\infty_3,1)\}$,\\
&$\{(0,0),(23,0),(10,1)\}$,
&$\{(0,0),(\infty_0,0),(11,1)\}$,
&$\{(0,0),(\infty_1,0),(19,1)\}$,\\
&$\{(0,0),(\infty_2,0),(6,1)\}$,
&$\{(0,0),(\infty_3,0),(4,1)\}$,
&$\{(0,0),(\infty_4,0),(5,1)\}$,\\
&$\{(0,0),(\infty_5,0),(7,1)\}$,
&$\{(0,0),(\infty_6,0),(15,1)\}$,
&$\{(0,0),(\infty_7,0),(8,1)\}$,\\
&$\{(0,0),(2,1),(\infty_5,1)\}$,
&$\{(0,0),(13,1),(\infty_1,1)\}$,
&$\{(0,0),(20,1),(\infty_4,1)\}$,\\
&$\{(0,0),(22,1),(\infty_0,1)\}$,
&$\{(0,0),(23,1),(\infty_2,1)\}$,
&$\{(1,0),(3,0),(17,0)\}$,\\
&$\{(1,0),(4,0),(5,0)\}$,
&$\{(1,0),(6,0),(20,0)\}$,
&$\{(1,0),(7,0),(2,1)\}$,\\
&$\{(1,0),(10,0),(13,0)\}$,
&$\{(1,0),(12,0),(\infty_6,1)\}$,
&$\{(1,0),(14,0),(\infty_3,1)\}$,\\
&$\{(1,0),(15,0),(18,0)\}$,
&$\{(1,0),(19,0),(22,0)\}$,
&$\{(1,0),(23,0),(\infty_7,0)\}$,\\
&$\{(1,0),(\infty_0,0),(5,1)\}$,
&$\{(1,0),(\infty_1,0),(6,1)\}$,
&$\{(1,0),(\infty_2,0),(3,1)\}$,\\
&$\{(1,0),(\infty_3,0),(18,1)\}$,
&$\{(1,0),(\infty_4,0),(7,1)\}$,
&$\{(1,0),(\infty_5,0),(9,1)\}$,\\
&$\{(1,0),(\infty_6,0),(10,1)\}$,
&$\{(1,0),(11,1),(12,1)\}$,
&$\{(1,0),(13,1),(\infty_4,1)\}$,\\
&$\{(1,0),(14,1),(\infty_2,1)\}$,
&$\{(1,0),(15,1),(20,1)\}$,
&$\{(1,0),(21,1),(\infty_7,1)\}$,\\
&$\{(1,0),(22,1),(\infty_1,1)\}$,
&$\{(1,0),(23,1),(\infty_0,1)\}$,
&$\{(2,0),(3,0),(5,1)\}$,\\
&$\{(2,0),(4,0),(6,0)\}$,
&$\{(2,0),(7,0),(\infty_7,1)\}$,
&$\{(2,0),(10,0),(11,1)\}$,\\
&$\{(2,0),(11,0),(18,1)\}$,
&$\{(2,0),(12,0),(\infty_2,0)\}$,
&$\{(2,0),(13,0),(14,1)\}$,\\
&$\{(2,0),(15,0),(\infty_3,0)\}$,
&$\{(2,0),(19,0),(\infty_4,1)\}$,
&$\{(2,0),(21,0),(\infty_1,1)\}$,\\
&$\{(2,0),(22,0),(\infty_4,0)\}$,
&$\{(2,0),(\infty_0,0),(4,1)\}$,
&$\{(2,0),(\infty_1,0),(12,1)\}$,\\
&$\{(2,0),(\infty_6,0),(22,1)\}$,
&$\{(2,0),(\infty_7,0),(6,1)\}$,
&$\{(2,0),(13,1),(\infty_5,1)\}$,\\
&$\{(2,0),(20,1),(\infty_0,1)\}$,
&$\{(2,0),(21,1),(\infty_2,1)\}$,
&$\{(3,0),(5,0),(7,0)\}$,\\
&$\{(3,0),(11,0),(15,1)\}$,
&$\{(3,0),(12,0),(14,1)\}$,
&$\{(3,0),(13,0),(\infty_7,1)\}$,\\
&$\{(3,0),(14,0),(\infty_6,0)\}$,
&$\{(3,0),(15,0),(13,1)\}$,
&$\{(3,0),(20,0),(\infty_7,0)\}$,\\
&$\{(3,0),(21,0),(\infty_0,0)\}$,
&$\{(3,0),(23,0),(\infty_5,0)\}$,
&$\{(3,0),(\infty_1,0),(23,1)\}$,\\
&$\{(3,0),(\infty_2,0),(12,1)\}$,
&$\{(3,0),(\infty_3,0),(11,1)\}$,
&$\{(3,0),(\infty_4,0),(22,1)\}$,\\
&$\{(3,0),(20,1),(\infty_5,1)\}$,
&$\{(3,0),(21,1),(\infty_6,1)\}$,
&$\{(4,0),(7,0),(\infty_1,0)\}$,\\
&$\{(4,0),(12,0),(15,1)\}$,
&$\{(4,0),(13,0),(\infty_5,1)\}$,
&$\{(4,0),(15,0),(\infty_6,0)\}$,\\
&$\{(4,0),(21,0),(\infty_3,0)\}$,
&$\{(4,0),(22,0),(12,1)\}$,
&$\{(4,0),(13,1),(21,1)\}$,\\
&$\{(4,0),(22,1),(\infty_7,1)\}$,
&$\{(4,0),(23,1),(\infty_4,1)\}$,
&$\{(5,0),(6,0),(22,0)\}$,\\
&$\{(5,0),(14,0),(21,1)\}$,
&$\{(5,0),(15,0),(\infty_2,1)\}$,
&$\{(5,0),(23,0),(14,1)\}$,\\
&$\{(5,0),(15,1),(23,1)\}$,
&$\{(6,0),(15,0),(7,1)\}$,
&$\{(6,0),(23,0),(\infty_0,1)\}$,\\
&$\{(6,0),(\infty_3,0),(23,1)\}$,
&$\{(6,0),(\infty_5,0),(14,1)\}$.
\end{longtable}
\end{center}

\section{Appendix: Codewords in the proof of Lemma \ref{[10:4]}}\label{initial codewords of [10:4]}

\begin{center}
\begin{longtable}{lll}
$(t,r)=(1,1)$:\\
$\{(0,0),(2,0),(4,0)\}$,
&$\{(0,0),(3,0),(5,0)\}$,
&$\{(0,0),(\infty_0,0),(3,n/2)\}$,\\
$\{(0,0),(1,n/2),(4,n/2)\}$,
&$\{(0,0),(2,n/2),(5,n/2)\}$,
&$\{(1,0),(2,0),(3,n/2)\}$,\\
$\{(1,0),(3,0),(4,n/2)\}$,
&$\{(1,0),(5,0),(2,n/2)\}$,
&$\{(1,0),(\infty_0,0),(5,n/2)\}$,\\
$\{(2,0),(\infty_0,0),(4,n/2)\}$,
&$\{(3,0),(4,0),(5,n/2)\}$.\\

$(t,r)=(1,2)$:\\
$\{(0,0),(2,0),(4,0)\}$,
&$\{(0,0),(3,0),(5,0)\}$,
&$\{(0,0),(\infty_0,0),(1,n/2)\}$,\\
$\{(0,0),(\infty_1,0),(3,n/2)\}$,
&$\{(0,0),(2,n/2),(5,n/2)\}$,
&$\{(0,0),(4,n/2),(\infty_0,n/2)\}$,\\
$\{(1,0),(2,0),(\infty_0,0)\}$,
&$\{(1,0),(3,0),(2,n/2)\}$,
&$\{(1,0),(4,0),(3,n/2)\}$,\\
$\{(1,0),(5,0),(4,n/2)\}$,
&$\{(1,0),(\infty_1,0),(5,n/2)\}$,
&$\{(2,0),(\infty_1,0),(4,n/2)\}$,\\
$\{(2,0),(5,n/2),(\infty_0,n/2)\}$,
&$\{(3,0),(4,0),(\infty_0,n/2)\}$,
&$\{(3,0),(\infty_0,0),(5,n/2)\}$.\\

$(t,r)=(1,4)$:\\
$\{(0,0),(2,0),(4,0)\}$,
&$\{(0,0),(3,0),(\infty_0,0)\}$,
&$\{(0,0),(5,0),(\infty_1,0)\}$,\\
$\{(0,0),(\infty_2,0),(1,n/2)\}$,
&$\{(0,0),(\infty_3,0),(3,n/2)\}$,
&$\{(0,0),(2,n/2),(\infty_0,n/2)\}$,\\
$\{(0,0),(4,n/2),(\infty_1,n/2)\}$,
&$\{(0,0),(5,n/2),(\infty_2,n/2)\}$,
&$\{(1,0),(2,0),(\infty_1,0)\}$,\\
$\{(1,0),(3,0),(5,0)\}$,
&$\{(1,0),(4,0),(\infty_0,n/2)\}$,
&$\{(1,0),(\infty_0,0),(2,n/2)\}$,\\
$\{(1,0),(\infty_2,0),(4,n/2)\}$,
&$\{(1,0),(\infty_3,0),(5,n/2)\}$,
&$\{(1,0),(3,n/2),(\infty_1,n/2)\}$,\\
$\{(2,0),(5,0),(\infty_1,n/2)\}$,
&$\{(2,0),(\infty_2,0),(5,n/2)\}$,
&$\{(2,0),(\infty_3,0),(4,n/2)\}$,\\
$\{(2,0),(3,n/2),(\infty_2,n/2)\}$,
&$\{(3,0),(4,0),(\infty_1,n/2)\}$,
&$\{(3,0),(4,n/2),(\infty_2,n/2)\}$,\\
$\{(3,0),(5,n/2),(\infty_0,n/2)\}$,
&$\{(4,0),(\infty_0,0),(5,n/2)\}$.\\
$(t,r)=(1,5)$:\\
$\{(0,0),(2,0),(\infty_0,0)\}$,
&$\{(0,0),(3,0),(\infty_1,0)\}$,
&$\{(0,0),(4,0),(\infty_2,0)\}$,\\
$\{(0,0),(5,0),(\infty_3,0)\}$,
&$\{(0,0),(\infty_4,0),(3,n/2)\}$,
&$\{(0,0),(1,n/2),(\infty_0,n/2)\}$,\\
$\{(0,0),(2,n/2),(\infty_1,n/2)\}$,
&$\{(0,0),(4,n/2),(\infty_3,n/2)\}$,
&$\{(0,0),(5,n/2),(\infty_2,n/2)\}$,\\
$\{(1,0),(2,0),(\infty_2,0)\}$,
&$\{(1,0),(3,0),(\infty_3,0)\}$,
&$\{(1,0),(4,0),(\infty_0,n/2)\}$,\\
$\{(1,0),(5,0),(\infty_1,n/2)\}$,
&$\{(1,0),(\infty_1,0),(4,n/2)\}$,
&$\{(1,0),(\infty_4,0),(5,n/2)\}$,\\
$\{(1,0),(2,n/2),(\infty_3,n/2)\}$,
&$\{(1,0),(3,n/2),(\infty_2,n/2)\}$,
&$\{(2,0),(4,0),(\infty_2,n/2)\}$,\\
$\{(2,0),(5,0),(\infty_3,n/2)\}$,
&$\{(2,0),(\infty_4,0),(4,n/2)\}$,
&$\{(2,0),(3,n/2),(\infty_0,n/2)\}$,\\
$\{(2,0),(5,n/2),(\infty_1,n/2)\}$,
&$\{(3,0),(4,0),(\infty_3,n/2)\}$,
&$\{(3,0),(5,0),(\infty_2,n/2)\}$,\\
$\{(3,0),(4,n/2),(\infty_1,n/2)\}$,
&$\{(3,0),(5,n/2),(\infty_0,n/2)\}$,
&$\{(4,0),(\infty_0,0),(5,n/2)\}$.\\
$(t,r)=(2,1)$:\\
$\{(0,0),(2,0),(4,0)\}$,
&$\{(0,0),(3,0),(5,0)\}$,
&$\{(0,0),(6,0),(8,0)\}$,\\
$\{(0,0),(7,0),(9,0)\}$,
&$\{(0,0),(10,0),(1,n/2)\}$,
&$\{(0,0),(11,0),(2,n/2)\}$,\\
$\{(0,0),(\infty_0,0),(6,n/2)\}$,
&$\{(0,0),(3,n/2),(4,n/2)\}$,
&$\{(0,0),(5,n/2),(7,n/2)\}$,\\
$\{(0,0),(8,n/2),(10,n/2)\}$,
&$\{(0,0),(9,n/2),(11,n/2)\}$,
&$\{(1,0),(2,0),(5,0)\}$,\\
$\{(1,0),(3,0),(6,0)\}$,
&$\{(1,0),(4,0),(7,0)\}$,
&$\{(1,0),(8,0),(11,0)\}$,\\
$\{(1,0),(9,0),(10,0)\}$,
&$\{(1,0),(\infty_0,0),(7,n/2)\}$,
&$\{(1,0),(2,n/2),(6,n/2)\}$,\\
$\{(1,0),(3,n/2),(8,n/2)\}$,
&$\{(1,0),(4,n/2),(9,n/2)\}$,
&$\{(1,0),(5,n/2),(11,n/2)\}$,\\
$\{(2,0),(7,0),(8,0)\}$,
&$\{(2,0),(9,0),(3,n/2)\}$,
&$\{(2,0),(10,0),(4,n/2)\}$,\\
$\{(2,0),(11,0),(5,n/2)\}$,
&$\{(2,0),(\infty_0,0),(8,n/2)\}$,
&$\{(2,0),(6,n/2),(9,n/2)\}$,\\
$\{(2,0),(7,n/2),(10,n/2)\}$,
&$\{(3,0),(7,0),(4,n/2)\}$,
&$\{(3,0),(9,0),(7,n/2)\}$,\\
$\{(3,0),(10,0),(11,n/2)\}$,
&$\{(3,0),(11,0),(6,n/2)\}$,
&$\{(3,0),(\infty_0,0),(10,n/2)\}$,\\
$\{(3,0),(5,n/2),(8,n/2)\}$,
&$\{(4,0),(6,0),(5,n/2)\}$,
&$\{(4,0),(8,0),(6,n/2)\}$,\\
$\{(4,0),(10,0),(8,n/2)\}$,
&$\{(4,0),(11,0),(9,n/2)\}$,
&$\{(4,0),(\infty_0,0),(11,n/2)\}$,\\
$\{(5,0),(6,0),(10,n/2)\}$,
&$\{(5,0),(9,0),(8,n/2)\}$,
&$\{(5,0),(10,0),(7,n/2)\}$,\\
$\{(5,0),(\infty_0,0),(9,n/2)\}$,
&$\{(6,0),(10,0),(9,n/2)\}$,
&$\{(6,0),(11,0),(7,n/2)\}$,\\
$\{(7,0),(11,0),(8,n/2)\}$.\\
$(t,r)=(2,2)$:\\
$\{(0,0),(2,0),(4,0)\}$,
&$\{(0,0),(3,0),(5,0)\}$,
&$\{(0,0),(6,0),(8,0)\}$,\\
$\{(0,0),(7,0),(9,0)\}$,
&$\{(0,0),(10,0),(\infty_0,0)\}$,
&$\{(0,0),(11,0),(1,n/2)\}$,\\
$\{(0,0),(\infty_1,0),(6,n/2)\}$,
&$\{(0,0),(2,n/2),(5,n/2)\}$,
&$\{(0,0),(3,n/2),(4,n/2)\}$,\\
$\{(0,0),(7,n/2),(8,n/2)\}$,
&$\{(0,0),(9,n/2),(10,n/2)\}$,
&$\{(0,0),(11,n/2),(\infty_0,n/2)\}$,\\
$\{(1,0),(2,0),(6,0)\}$,
&$\{(1,0),(3,0),(7,0)\}$,
&$\{(1,0),(4,0),(8,0)\}$,\\
$\{(1,0),(5,0),(9,0)\}$,
&$\{(1,0),(10,0),(2,n/2)\}$,
&$\{(1,0),(11,0),(3,n/2)\}$,\\
$\{(1,0),(\infty_0,0),(4,n/2)\}$,
&$\{(1,0),(\infty_1,0),(7,n/2)\}$,
&$\{(1,0),(5,n/2),(6,n/2)\}$,\\
$\{(1,0),(8,n/2),(10,n/2)\}$,
&$\{(1,0),(9,n/2),(\infty_0,n/2)\}$,
&$\{(2,0),(7,0),(10,0)\}$,\\
$\{(2,0),(8,0),(11,0)\}$,
&$\{(2,0),(9,0),(3,n/2)\}$,
&$\{(2,0),(\infty_0,0),(5,n/2)\}$,\\
$\{(2,0),(\infty_1,0),(8,n/2)\}$,
&$\{(2,0),(4,n/2),(6,n/2)\}$,
&$\{(2,0),(7,n/2),(\infty_0,n/2)\}$,\\
$\{(2,0),(9,n/2),(11,n/2)\}$,
&$\{(3,0),(6,0),(9,0)\}$,
&$\{(3,0),(8,0),(4,n/2)\}$,\\
$\{(3,0),(10,0),(6,n/2)\}$,
&$\{(3,0),(11,0),(7,n/2)\}$,
&$\{(3,0),(\infty_0,0),(8,n/2)\}$,\\
$\{(3,0),(\infty_1,0),(10,n/2)\}$,
&$\{(3,0),(5,n/2),(\infty_0,n/2)\}$,
&$\{(4,0),(7,0),(5,n/2)\}$,\\
$\{(4,0),(9,0),(6,n/2)\}$,
&$\{(4,0),(10,0),(7,n/2)\}$,
&$\{(4,0),(11,0),(10,n/2)\}$,\\
$\{(4,0),(\infty_0,0),(11,n/2)\}$,
&$\{(4,0),(\infty_1,0),(9,n/2)\}$,
&$\{(5,0),(7,0),(8,n/2)\}$,\\
$\{(5,0),(8,0),(10,n/2)\}$,
&$\{(5,0),(10,0),(9,n/2)\}$,
&$\{(5,0),(11,0),(6,n/2)\}$,\\
$\{(5,0),(\infty_1,0),(11,n/2)\}$,
&$\{(6,0),(10,0),(\infty_0,n/2)\}$,
&$\{(6,0),(11,0),(8,n/2)\}$,\\
$\{(6,0),(\infty_0,0),(7,n/2)\}$,
&$\{(7,0),(11,0),(9,n/2)\}$,
&$\{(8,0),(\infty_0,0),(9,n/2)\}$.\\
$(t,r)=(2,4)$:\\
$\{(0,0),(2,0),(4,0)\}$,
&$\{(0,0),(3,0),(5,0)\}$,
&$\{(0,0),(6,0),(8,0)\}$,\\
$\{(0,0),(7,0),(9,0)\}$,
&$\{(0,0),(10,0),(\infty_0,0)\}$,
&$\{(0,0),(11,0),(\infty_1,0)\}$,\\
$\{(0,0),(\infty_2,0),(1,n/2)\}$,
&$\{(0,0),(\infty_3,0),(6,n/2)\}$,
&$\{(0,0),(2,n/2),(5,n/2)\}$,\\
$\{(0,0),(3,n/2),(4,n/2)\}$,
&$\{(0,0),(7,n/2),(8,n/2)\}$,
&$\{(0,0),(9,n/2),(\infty_0,n/2)\}$,\\
$\{(0,0),(10,n/2),(\infty_1,n/2)\}$,
&$\{(0,0),(11,n/2),(\infty_2,n/2)\}$,
&$\{(1,0),(2,0),(6,0)\}$,\\
$\{(1,0),(3,0),(7,0)\}$,
&$\{(1,0),(4,0),(8,0)\}$,
&$\{(1,0),(5,0),(9,0)\}$,\\
$\{(1,0),(10,0),(\infty_2,0)\}$,
&$\{(1,0),(11,0),(\infty_0,0)\}$,
&$\{(1,0),(\infty_1,0),(2,n/2)\}$,\\
$\{(1,0),(\infty_3,0),(7,n/2)\}$,
&$\{(1,0),(3,n/2),(6,n/2)\}$,
&$\{(1,0),(4,n/2),(10,n/2)\}$,\\
$\{(1,0),(5,n/2),(11,n/2)\}$,
&$\{(1,0),(8,n/2),(\infty_0,n/2)\}$,
&$\{(1,0),(9,n/2),(\infty_1,n/2)\}$,\\
$\{(2,0),(7,0),(10,0)\}$,
&$\{(2,0),(8,0),(11,0)\}$,
&$\{(2,0),(9,0),(\infty_2,0)\}$,\\
$\{(2,0),(\infty_0,0),(3,n/2)\}$,
&$\{(2,0),(\infty_1,0),(4,n/2)\}$,
&$\{(2,0),(\infty_3,0),(8,n/2)\}$,\\
$\{(2,0),(5,n/2),(10,n/2)\}$,
&$\{(2,0),(6,n/2),(\infty_0,n/2)\}$,
&$\{(2,0),(7,n/2),(\infty_2,n/2)\}$,\\
$\{(2,0),(9,n/2),(11,n/2)\}$,
&$\{(3,0),(8,0),(10,0)\}$,
&$\{(3,0),(9,0),(4,n/2)\}$,\\
$\{(3,0),(11,0),(5,n/2)\}$,
&$\{(3,0),(\infty_0,0),(6,n/2)\}$,
&$\{(3,0),(\infty_1,0),(9,n/2)\}$,\\
$\{(3,0),(\infty_2,0),(10,n/2)\}$,
&$\{(3,0),(\infty_3,0),(11,n/2)\}$,
&$\{(3,0),(7,n/2),(\infty_1,n/2)\}$,\\
$\{(3,0),(8,n/2),(\infty_2,n/2)\}$,
&$\{(4,0),(6,0),(5,n/2)\}$,
&$\{(4,0),(7,0),(8,n/2)\}$,\\
$\{(4,0),(9,0),(\infty_0,n/2)\}$,
&$\{(4,0),(11,0),(\infty_2,n/2)\}$,
&$\{(4,0),(\infty_0,0),(11,n/2)\}$,\\
$\{(4,0),(\infty_1,0),(6,n/2)\}$,
&$\{(4,0),(\infty_2,0),(7,n/2)\}$,
&$\{(4,0),(\infty_3,0),(10,n/2)\}$,\\
$\{(5,0),(6,0),(\infty_2,n/2)\}$,
&$\{(5,0),(7,0),(\infty_1,n/2)\}$,
&$\{(5,0),(8,0),(\infty_0,n/2)\}$,\\
$\{(5,0),(\infty_0,0),(7,n/2)\}$,
&$\{(5,0),(\infty_1,0),(10,n/2)\}$,
&$\{(5,0),(\infty_2,0),(8,n/2)\}$,\\
$\{(5,0),(\infty_3,0),(9,n/2)\}$,
&$\{(6,0),(9,0),(10,n/2)\}$,
&$\{(6,0),(10,0),(11,n/2)\}$,\\
$\{(6,0),(11,0),(7,n/2)\}$,
&$\{(6,0),(\infty_1,0),(8,n/2)\}$,
&$\{(6,0),(\infty_2,0),(9,n/2)\}$,\\
$\{(7,0),(11,0),(9,n/2)\}$,
&$\{(7,0),(\infty_0,0),(10,n/2)\}$,
&$\{(8,0),(\infty_1,0),(11,n/2)\}$,\\
$\{(8,0),(9,n/2),(10,n/2)\}$.\\
$(t,r)=(2,5)$:\\
$\{(0,0),(2,0),(4,0)\}$,
&$\{(0,0),(3,0),(7,0)\}$,
&$\{(0,0),(5,0),(6,0)\}$,\\
$\{(0,0),(8,0),(10,0)\}$,
&$\{(0,0),(9,0),(\infty_2,0)\}$,
&$\{(0,0),(11,0),(10,n/2)\}$,\\
$\{(0,0),(\infty_0,0),(1,n/2)\}$,
&$\{(0,0),(\infty_1,0),(3,n/2)\}$,
&$\{(0,0),(\infty_3,0),(2,n/2)\}$,\\
$\{(0,0),(\infty_4,0),(6,n/2)\}$,
&$\{(0,0),(4,n/2),(7,n/2)\}$,
&$\{(0,0),(5,n/2),(\infty_0,n/2)\}$,\\
$\{(0,0),(8,n/2),(\infty_2,n/2)\}$,
&$\{(0,0),(9,n/2),(\infty_1,n/2)\}$,
&$\{(0,0),(11,n/2),(\infty_3,n/2)\}$,\\
$\{(1,0),(2,0),(\infty_3,0)\}$,
&$\{(1,0),(3,0),(\infty_1,0)\}$,
&$\{(1,0),(4,0),(5,n/2)\}$,\\
$\{(1,0),(5,0),(\infty_2,n/2)\}$,
&$\{(1,0),(6,0),(2,n/2)\}$,
&$\{(1,0),(7,0),(8,0)\}$,\\
$\{(1,0),(9,0),(10,0)\}$,
&$\{(1,0),(11,0),(\infty_0,0)\}$,
&$\{(1,0),(\infty_2,0),(3,n/2)\}$,\\
$\{(1,0),(\infty_4,0),(7,n/2)\}$,
&$\{(1,0),(4,n/2),(6,n/2)\}$,
&$\{(1,0),(8,n/2),(11,n/2)\}$,\\
$\{(1,0),(9,n/2),(\infty_3,n/2)\}$,
&$\{(1,0),(10,n/2),(\infty_1,n/2)\}$,
&$\{(2,0),(5,0),(11,n/2)\}$,\\
$\{(2,0),(6,0),(8,0)\}$,
&$\{(2,0),(7,0),(\infty_1,0)\}$,
&$\{(2,0),(9,0),(11,0)\}$,\\
$\{(2,0),(10,0),(5,n/2)\}$,
&$\{(2,0),(\infty_0,0),(3,n/2)\}$,
&$\{(2,0),(\infty_2,0),(4,n/2)\}$,\\
$\{(2,0),(\infty_4,0),(9,n/2)\}$,
&$\{(2,0),(7,n/2),(\infty_0,n/2)\}$,
&$\{(2,0),(8,n/2),(\infty_1,n/2)\}$,\\
$\{(2,0),(10,n/2),(\infty_2,n/2)\}$,
&$\{(3,0),(4,0),(8,0)\}$,
&$\{(3,0),(5,0),(9,0)\}$,\\
$\{(3,0),(6,0),(\infty_0,0)\}$,
&$\{(3,0),(10,0),(\infty_3,n/2)\}$,
&$\{(3,0),(11,0),(9,n/2)\}$,\\
$\{(3,0),(\infty_2,0),(6,n/2)\}$,
&$\{(3,0),(\infty_3,0),(4,n/2)\}$,
&$\{(3,0),(\infty_4,0),(10,n/2)\}$,\\
$\{(3,0),(5,n/2),(8,n/2)\}$,
&$\{(3,0),(7,n/2),(11,n/2)\}$,
&$\{(4,0),(9,0),(\infty_0,n/2)\}$,\\
$\{(4,0),(10,0),(7,n/2)\}$,
&$\{(4,0),(11,0),(\infty_2,0)\}$,
&$\{(4,0),(\infty_0,0),(10,n/2)\}$,\\
$\{(4,0),(\infty_1,0),(8,n/2)\}$,
&$\{(4,0),(\infty_3,0),(9,n/2)\}$,
&$\{(4,0),(\infty_4,0),(11,n/2)\}$,\\
$\{(4,0),(6,n/2),(\infty_1,n/2)\}$,
&$\{(5,0),(7,0),(\infty_0,n/2)\}$,
&$\{(5,0),(10,0),(\infty_1,n/2)\}$,\\
$\{(5,0),(11,0),(\infty_1,0)\}$,
&$\{(5,0),(\infty_2,0),(9,n/2)\}$,
&$\{(5,0),(\infty_3,0),(7,n/2)\}$,\\
$\{(5,0),(\infty_4,0),(8,n/2)\}$,
&$\{(5,0),(6,n/2),(\infty_3,n/2)\}$,
&$\{(6,0),(9,0),(7,n/2)\}$,\\
$\{(6,0),(10,0),(9,n/2)\}$,
&$\{(6,0),(11,0),(\infty_1,n/2)\}$,
&$\{(6,0),(\infty_2,0),(11,n/2)\}$,\\
$\{(6,0),(8,n/2),(\infty_3,n/2)\}$,
&$\{(6,0),(10,n/2),(\infty_0,n/2)\}$,
&$\{(7,0),(9,0),(\infty_1,n/2)\}$,\\
$\{(7,0),(10,0),(\infty_2,n/2)\}$,
&$\{(7,0),(\infty_2,0),(8,n/2)\}$,
&$\{(7,0),(\infty_3,0),(11,n/2)\}$,\\
$\{(8,0),(\infty_0,0),(11,n/2)\}$,
&$\{(8,0),(9,n/2),(\infty_0,n/2)\}$,
&$\{(8,0),(10,n/2),(\infty_3,n/2)\}$.
\end{longtable}
\end{center}

\section{Appendix: Codewords in the proof of Lemma \ref{[m:r],n=4}}\label{initial codewords of [m:r]}

The required $2$-D $([6t+r:r]\times4,3,1)$-OOCs are defined on $(\mathbb{Z}_{6t}\cup\{\infty_0,\infty_1,\ldots,\infty_{r-1}\})\times \mathbb{Z}_4$ with the hole $\{\infty_0,\infty_1,\ldots,\infty_{r-1}\}\times \mathbb{Z}_4$. For each given $t$ and $r$, we only list initial codewords. All the $24t^2+8tr-2t$ codewords can be obtained by developing these initial codewords by $(+2t\pmod{6t},-)$, where $\infty_i+2t=\infty_i$ for $i\in I_r$. Each of the initial codewords marked with a star only generates one codeword.

\begin{center}
\begin{longtable}{llll}
$(t,r)=(1,1)$: & $\{(0,0),(1,0),(2,0)\}$,&$\{(0,0),(3,0),(2,2)\}$,&$\{(0,0),(\infty_0,0),(1,3)\}$,\\
&$\{(0,0),(0,1),(\infty_0,2)\}$,&$\{(0,0),(1,1),(2,3)\}$,&$\{(0,0),(2,1),(5,3)\}$,\\
&$\{(0,0),(3,1),(\infty_0,3)\}$,&$\{(0,0),(5,1),(3,3)\}$,&$\{(1,0),(3,0),(1,3)\}$,\\
&$\{(1,0),(\infty_0,0),(5,1)\}$.\\

$(t,r)=(1,2)$: & $\{(0,0),(1,0),(\infty_0,0)\}$, &$\{(0,0),(3,0),(1,1)\}$, &$\{(0,0),(5,0),(\infty_1,0)\}$,\\
&$\{(0,0),(0,1),(\infty_0,3)\}$, &$\{(0,0),(2,1),(1,2)\}$, &$\{(0,0),(3,1),(2,2)\}$,\\
&$\{(0,0),(4,1),(\infty_1,2)\}$, &$\{(0,0),(\infty_0,1),(3,3)\}$, &$\{(0,0),(3,2),(5,3)\}$,\\
&$\{(0,0),(5,2),(\infty_1,3)\}$, &$\{(1,0),(1,1),(\infty_1,3)\}$, &$\{(1,0),(\infty_0,1),(3,2)\}$,\\
&$\{(0,0),(2,0),(4,0)\}^*$, &$\{(1,0),(3,0),(5,0)\}^*$.\\

$(t,r)=(1,3)$: &$\{(0,0),(1,0),(3,0)\}$,
&$\{(0,0),(5,0),(3,1)\}$,
&$\{(0,0),(\infty_0,0),(1,1)\}$,\\
&$\{(0,0),(\infty_1,0),(3,3)\}$,
&$\{(0,0),(\infty_2,0),(5,2)\}$,
&$\{(0,0),(0,1),(1,3)\}$,\\
&$\{(0,0),(2,1),(\infty_0,3)\}$,
&$\{(0,0),(4,1),(\infty_1,3)\}$,
&$\{(0,0),(5,1),(\infty_2,2)\}$,\\
&$\{(0,0),(\infty_0,1),(5,3)\}$,
&$\{(0,0),(\infty_1,1),(3,2)\}$,
&$\{(0,0),(\infty_2,1),(2,2)\}$,\\
&$\{(1,0),(\infty_0,0),(5,3)\}$,
&$\{(1,0),(\infty_1,0),(3,2)\}$,
&$\{(1,0),(\infty_2,0),(1,1)\}$,\\
&$\{(0,0),(2,0),(4,0)\}^*$.\\

$(t,r)=(1,4)$:
&$\{(0,0),(1,0),(2,0)\}$,
&$\{(0,0),(3,0),(\infty_0,0)\}$,
&$\{(0,0),(\infty_1,0),(4,3)\}$,\\
&$\{(0,0),(\infty_2,0),(3,2)\}$,
&$\{(0,0),(\infty_3,0),(1,2)\}$,
&$\{(0,0),(0,1),(\infty_2,3)\}$,\\
&$\{(0,0),(1,1),(\infty_3,1)\}$,
&$\{(0,0),(3,1),(\infty_1,2)\}$,
&$\{(0,0),(4,1),(\infty_3,3)\}$,\\
&$\{(0,0),(5,1),(\infty_0,2)\}$,
&$\{(0,0),(\infty_0,1),(4,2)\}$,
&$\{(0,0),(\infty_2,1),(5,2)\}$,\\
&$\{(0,0),(1,3),(5,3)\}$,
&$\{(0,0),(3,3),(\infty_1,3)\}$,
&$\{(1,0),(\infty_2,0),(3,3)\}$,\\
&$\{(1,0),(1,1),(\infty_1,3)\}$,
&$\{(1,0),(3,1),(\infty_0,3)\}$,
&$\{(1,0),(\infty_3,1),(5,2)\}$,\\

$(t,r)=(1,5)$: & $\{(0,0),(1,0),(\infty_1,0)\}$,&$\{(0,0),(3,0),(\infty_0,3)\}$,&$\{(0,0),(5,0),(\infty_2,1)\}$,\\
&$\{(0,0),(\infty_0,0),(3,3)\}$,&$\{(0,0),(\infty_2,0),(5,1)\}$,&$\{(0,0),(\infty_3,0),(4,3)\}$,\\
&$\{(0,0),(\infty_4,0),(2,2)\}$,&$\{(0,0),(0,1),(\infty_1,3)\}$,&$\{(0,0),(1,1),(\infty_4,1)\}$,\\
&$\{(0,0),(3,1),(\infty_2,3)\}$,&$\{(0,0),(4,1),(\infty_3,3)\}$,&$\{(0,0),(\infty_0,1),(1,3)\}$,\\
&$\{(0,0),(\infty_1,1),(5,3)\}$,&$\{(0,0),(1,2),(\infty_4,3)\}$,&$\{(0,0),(3,2),(\infty_0,2)\}$,\\
&$\{(0,0),(5,2),(\infty_2,2)\}$,&$\{(1,0),(\infty_3,0),(1,3)\}$,&$\{(1,0),(3,1),(\infty_3,3)\}$,\\
&$\{(1,0),(5,1),(\infty_4,3)\}$,&$\{(1,0),(\infty_1,1),(5,2)\}$,&$\{(0,0),(2,0),(4,0)\}^*$,\\
&$\{(1,0),(3,0),(5,0)\}^*$.\\

$(t,r)=(2,1)$:
&$\{(0,0),(1,0),(2,0)\}$,
&$\{(0,0),(3,0),(5,3)\}$,
&$\{(0,0),(4,0),(11,0)\}$,\\
&$\{(0,0),(5,0),(8,3)\}$,
&$\{(0,0),(6,0),(\infty_0,2)\}$,
&$\{(0,0),(9,0),(3,3)\}$,\\
&$\{(0,0),(10,0),(4,2)\}$,
&$\{(0,0),(\infty_0,0),(11,3)\}$,
&$\{(0,0),(0,1),(7,3)\}$,\\
&$\{(0,0),(1,1),(2,3)\}$,
&$\{(0,0),(2,1),(6,3)\}$,
&$\{(0,0),(3,1),(9,3)\}$,\\
&$\{(0,0),(5,1),(9,2)\}$,
&$\{(0,0),(6,1),(11,2)\}$,
&$\{(0,0),(7,1),(10,2)\}$,\\
&$\{(0,0),(8,1),(10,3)\}$,
&$\{(0,0),(10,1),(1,3)\}$,
&$\{(0,0),(11,1),(\infty_0,3)\}$,\\
&$\{(0,0),(\infty_0,1),(1,2)\}$,
&$\{(0,0),(3,2),(5,2)\}$,
&$\{(1,0),(3,0),(5,2)\}$,\\
&$\{(1,0),(5,0),(10,0)\}$,
&$\{(1,0),(7,0),(1,3)\}$,
&$\{(1,0),(\infty_0,0),(6,3)\}$,\\
&$\{(1,0),(2,1),(3,3)\}$,
&$\{(1,0),(3,1),(10,3)\}$,
&$\{(1,0),(6,1),(6,2)\}$,\\
&$\{(1,0),(9,1),(11,3)\}$,
&$\{(1,0),(10,1),(\infty_0,1)\}$,
&$\{(1,0),(\infty_0,2),(2,3)\}$,\\
&$\{(2,0),(3,0),(6,3)\}$,
&$\{(2,0),(6,0),(7,3)\}$,
&$\{(2,0),(7,0),(11,2)\}$,\\
&$\{(2,0),(11,0),(10,3)\}$,
&$\{(3,0),(7,0),(3,3)\}$,
&$\{(3,0),(\infty_0,0),(11,1)\}$.\\

$(t,r)=(2,2)$:
&$\{(0,0),(1,0),(2,0)\}$,
&$\{(0,0),(3,0),(2,3)\}$,
&$\{(0,0),(5,0),(6,3)\}$,\\
&$\{(0,0),(6,0),(\infty_0,0)\}$,
&$\{(0,0),(7,0),(5,1)\}$,
&$\{(0,0),(9,0),(\infty_0,1)\}$,\\
&$\{(0,0),(10,0),(\infty_1,3)\}$,
&$\{(0,0),(11,0),(11,1)\}$,
&$\{(0,0),(\infty_1,0),(1,3)\}$,\\
&$\{(0,0),(0,1),(7,3)\}$,
&$\{(0,0),(1,1),(5,3)\}$,
&$\{(0,0),(2,1),(3,3)\}$,\\
&$\{(0,0),(3,1),(4,2)\}$,
&$\{(0,0),(4,1),(1,2)\}$,
&$\{(0,0),(6,1),(6,2)\}$,\\
&$\{(0,0),(7,1),(9,3)\}$,
&$\{(0,0),(8,1),(\infty_1,2)\}$,
&$\{(0,0),(10,1),(3,2)\}$,\\
&$\{(0,0),(2,2),(\infty_0,3)\}$,
&$\{(0,0),(5,2),(\infty_0,2)\}$,
&$\{(0,0),(9,2),(10,3)\}$,\\
&$\{(0,0),(10,2),(11,2)\}$,
&$\{(1,0),(3,0),(5,0)\}$,
&$\{(1,0),(6,0),(9,3)\}$,\\
&$\{(1,0),(7,0),(6,1)\}$,
&$\{(1,0),(10,0),(6,2)\}$,
&$\{(1,0),(\infty_1,0),(10,3)\}$,\\
&$\{(1,0),(1,1),(7,3)\}$,
&$\{(1,0),(3,1),(\infty_1,3)\}$,
&$\{(1,0),(7,1),(2,2)\}$,\\
&$\{(1,0),(9,1),(6,3)\}$,
&$\{(1,0),(11,1),(\infty_0,3)\}$,
&$\{(1,0),(3,2),(\infty_0,2)\}$,\\
&$\{(1,0),(\infty_1,2),(11,3)\}$,
&$\{(2,0),(6,0),(10,3)\}$,
&$\{(2,0),(7,0),(\infty_1,0)\}$,\\
&$\{(2,0),(11,0),(7,2)\}$,
&$\{(2,0),(11,1),(\infty_1,2)\}$,
&$\{(2,0),(11,2),(\infty_0,3)\}$,\\
&$\{(2,0),(\infty_0,2),(11,3)\}$,
&$\{(3,0),(7,0),(11,3)\}$,
&$\{(0,0),(4,0),(8,0)\}^*$.\\

$(t,r)=(2,3)$: &$\{(0,0),(1,0),(2,0)\}$,
&$\{(0,0),(3,0),(\infty_2,2)\}$,
&$\{(0,0),(5,0),(7,3)\}$,\\
&$\{(0,0),(6,0),(10,2)\}$,
&$\{(0,0),(7,0),(2,3)\}$,
&$\{(0,0),(9,0),(8,2)\}$,\\
&$\{(0,0),(10,0),(\infty_2,0)\}$,
&$\{(0,0),(11,0),(\infty_2,1)\}$,
&$\{(0,0),(\infty_0,0),(2,2)\}$,\\
&$\{(0,0),(\infty_1,0),(6,2)\}$,
&$\{(0,0),(0,1),(3,2)\}$,
&$\{(0,0),(1,1),(6,1)\}$,\\
&$\{(0,0),(2,1),(5,1)\}$,
&$\{(0,0),(4,1),(1,3)\}$,
&$\{(0,0),(7,1),(9,1)\}$,\\
&$\{(0,0),(8,1),(\infty_0,2)\}$,
&$\{(0,0),(10,1),(\infty_2,3)\}$,
&$\{(0,0),(11,1),(\infty_0,3)\}$,\\
&$\{(0,0),(\infty_1,1),(11,2)\}$,
&$\{(0,0),(5,2),(3,3)\}$,
&$\{(0,0),(7,2),(10,3)\}$,\\
&$\{(0,0),(\infty_1,2),(9,3)\}$,
&$\{(0,0),(5,3),(\infty_1,3)\}$,
&$\{(0,0),(6,3),(11,3)\}$,\\
&$\{(1,0),(3,0),(7,0)\}$,
&$\{(1,0),(\infty_0,0),(10,3)\}$,
&$\{(1,0),(\infty_2,0),(10,1)\}$,\\
&$\{(1,0),(1,1),(\infty_2,2)\}$,
&$\{(1,0),(2,1),(\infty_1,2)\}$,
&$\{(1,0),(3,1),(5,3)\}$,\\
&$\{(1,0),(5,1),(2,3)\}$,
&$\{(1,0),(6,1),(7,2)\}$,
&$\{(1,0),(7,1),(11,3)\}$,\\
&$\{(1,0),(\infty_0,1),(9,2)\}$,
&$\{(1,0),(\infty_1,1),(7,3)\}$,
&$\{(1,0),(2,2),(6,3)\}$,\\
&$\{(1,0),(3,2),(\infty_0,2)\}$,
&$\{(1,0),(6,2),(\infty_2,3)\}$,
&$\{(2,0),(3,0),(\infty_0,3)\}$,\\
&$\{(2,0),(6,0),(11,2)\}$,
&$\{(2,0),(11,0),(2,3)\}$,
&$\{(2,0),(\infty_0,0),(7,3)\}$,\\
&$\{(2,0),(\infty_1,0),(10,1)\}$,
&$\{(2,0),(3,2),(3,3)\}$,
&$\{(3,0),(\infty_1,0),(7,3)\}$,\\
&$\{(3,0),(\infty_2,0),(7,1)\}$,
&$\{(0,0),(4,0),(8,0)\}^*$,
&$\{(1,0),(5,0),(9,0)\}^*$.\\

$(t,r)=(2,4)$:
&$\{(0,0),(1,0),(2,0)\}$,
&$\{(0,0),(3,0),(\infty_0,2)\}$,
&$\{(0,0),(4,0),(9,2)\}$,\\
&$\{(0,0),(5,0),(9,0)\}$,
&$\{(0,0),(6,0),(7,2)\}$,
&$\{(0,0),(7,0),(3,3)\}$,\\
&$\{(0,0),(10,0),(1,2)\}$,
&$\{(0,0),(11,0),(9,3)\}$,
&$\{(0,0),(\infty_0,0),(2,2)\}$,\\
&$\{(0,0),(\infty_1,0),(3,2)\}$,
&$\{(0,0),(\infty_2,0),(10,1)\}$,
&$\{(0,0),(\infty_3,0),(8,3)\}$,\\
&$\{(0,0),(0,1),(\infty_1,3)\}$,
&$\{(0,0),(1,1),(7,3)\}$,
&$\{(0,0),(2,1),(\infty_3,2)\}$,\\
&$\{(0,0),(3,1),(11,1)\}$,
&$\{(0,0),(5,1),(6,2)\}$,
&$\{(0,0),(6,1),(2,3)\}$,\\
&$\{(0,0),(7,1),(\infty_3,3)\}$,
&$\{(0,0),(8,1),(\infty_2,3)\}$,
&$\{(0,0),(9,1),(8,2)\}$,\\
&$\{(0,0),(\infty_0,1),(5,3)\}$,
&$\{(0,0),(\infty_1,1),(10,3)\}$,
&$\{(0,0),(\infty_2,1),(6,3)\}$,\\
&$\{(0,0),(10,2),(11,3)\}$,
&$\{(0,0),(11,2),(\infty_0,3)\}$,
&$\{(1,0),(3,0),(9,1)\}$,\\
&$\{(1,0),(6,0),(11,2)\}$,
&$\{(1,0),(7,0),(\infty_1,1)\}$,
&$\{(1,0),(10,0),(\infty_0,3)\}$,\\
&$\{(1,0),(11,0),(\infty_1,0)\}$,
&$\{(1,0),(\infty_0,0),(6,3)\}$,
&$\{(1,0),(\infty_2,0),(3,3)\}$,\\
&$\{(1,0),(\infty_3,0),(1,3)\}$,
&$\{(1,0),(5,1),(\infty_3,3)\}$,
&$\{(1,0),(6,1),(\infty_2,2)\}$,\\
&$\{(1,0),(7,1),(9,2)\}$,
&$\{(1,0),(10,1),(\infty_0,1)\}$,
&$\{(1,0),(11,1),(\infty_2,1)\}$,\\
&$\{(1,0),(2,2),(\infty_1,2)\}$,
&$\{(1,0),(3,2),(10,3)\}$,
&$\{(1,0),(6,2),(\infty_1,3)\}$,\\
&$\{(1,0),(2,3),(\infty_2,3)\}$,
&$\{(2,0),(3,0),(\infty_1,3)\}$,
&$\{(2,0),(6,0),(10,3)\}$,\\
&$\{(2,0),(7,0),(\infty_3,3)\}$,
&$\{(2,0),(11,0),(2,3)\}$,
&$\{(2,0),(\infty_3,0),(3,3)\}$,\\
&$\{(2,0),(7,1),(11,3)\}$,
&$\{(2,0),(11,2),(\infty_3,2)\}$,
&$\{(3,0),(\infty_0,0),(3,1)\}$,\\
&$\{(3,0),(11,1),(\infty_2,3)\}$.\\

$(t,r)=(2,5)$: &$\{(0,0),(1,0),(2,0)\}$,
&$\{(0,0),(3,0),(10,2)\}$,
&$\{(0,0),(5,0),(11,2)\}$,\\
&$\{(0,0),(6,0),(\infty_4,0)\}$,
&$\{(0,0),(7,0),(5,3)\}$,
&$\{(0,0),(9,0),(8,1)\}$,\\
&$\{(0,0),(10,0),(\infty_1,3)\}$,
&$\{(0,0),(11,0),(10,3)\}$,
&$\{(0,0),(\infty_0,0),(6,3)\}$,\\
&$\{(0,0),(\infty_1,0),(4,2)\}$,
&$\{(0,0),(\infty_2,0),(0,3)\}$,
&$\{(0,0),(\infty_3,0),(2,3)\}$,\\
&$\{(0,0),(1,1),(\infty_4,1)\}$,
&$\{(0,0),(2,1),(5,1)\}$,
&$\{(0,0),(3,1),(\infty_2,3)\}$,\\
&$\{(0,0),(4,1),(\infty_3,2)\}$,
&$\{(0,0),(6,1),(\infty_4,2)\}$,
&$\{(0,0),(7,1),(7,2)\}$,\\
&$\{(0,0),(9,1),(\infty_0,2)\}$,
&$\{(0,0),(10,1),(3,2)\}$,
&$\{(0,0),(11,1),(6,2)\}$,\\
&$\{(0,0),(\infty_0,1),(11,3)\}$,
&$\{(0,0),(\infty_1,1),(7,3)\}$,
&$\{(0,0),(1,2),(\infty_2,2)\}$,\\
&$\{(0,0),(2,2),(9,3)\}$,
&$\{(0,0),(5,2),(\infty_4,3)\}$,
&$\{(0,0),(9,2),(\infty_3,3)\}$,\\
&$\{(0,0),(3,3),(\infty_0,3)\}$,
&$\{(1,0),(3,0),(7,1)\}$,
&$\{(1,0),(5,0),(6,2)\}$,\\
&$\{(1,0),(6,0),(7,3)\}$,
&$\{(1,0),(7,0),(\infty_1,3)\}$,
&$\{(1,0),(11,0),(\infty_4,2)\}$,\\
&$\{(1,0),(\infty_0,0),(10,2)\}$,
&$\{(1,0),(\infty_1,0),(11,3)\}$,
&$\{(1,0),(\infty_3,0),(3,3)\}$,\\
&$\{(1,0),(1,1),(9,3)\}$,
&$\{(1,0),(2,1),(\infty_2,2)\}$,
&$\{(1,0),(6,1),(\infty_2,3)\}$,\\
&$\{(1,0),(9,1),(\infty_3,3)\}$,
&$\{(1,0),(10,1),(\infty_4,3)\}$,
&$\{(1,0),(11,1),(\infty_2,1)\}$,\\
&$\{(1,0),(\infty_1,1),(10,3)\}$,
&$\{(1,0),(3,2),(\infty_1,2)\}$,
&$\{(1,0),(11,2),(\infty_0,3)\}$,\\
&$\{(1,0),(\infty_0,2),(2,3)\}$,
&$\{(2,0),(3,0),(\infty_2,3)\}$,
&$\{(2,0),(6,0),(6,3)\}$,\\
&$\{(2,0),(7,0),(\infty_3,3)\}$,
&$\{(2,0),(11,0),(\infty_4,3)\}$,
&$\{(2,0),(\infty_0,0),(11,1)\}$,\\
&$\{(2,0),(\infty_1,0),(10,3)\}$,
&$\{(2,0),(\infty_2,0),(11,3)\}$,
&$\{(2,0),(\infty_3,0),(10,2)\}$,\\
&$\{(2,0),(3,2),(11,2)\}$,
&$\{(3,0),(\infty_3,0),(11,2)\}$,
&$\{(3,0),(\infty_4,0),(7,3)\}$,\\
&$\{(0,0),(4,0),(8,0)\}^*$.
\end{longtable}
\end{center}

\section{Appendix: Codewords in the proof of Lemma \ref{n=6}}\label{Appendix:n=6}

\begin{center}
\begin{longtable}{lllll}
$m=7$:

&$\{(0,0),(1,0),(2,0)\}$,
&$\{(0,0),(3,0),(6,5)\}$,
&$\{(0,0),(4,0),(1,4)\}$,\\
&$\{(0,0),(5,0),(6,4)\}$,
&$\{(0,0),(6,0),(5,5)\}$,
&$\{(0,0),(0,1),(3,2)\}$,\\
&$\{(0,0),(1,1),(2,2)\}$,
&$\{(0,0),(2,1),(5,1)\}$,
&$\{(0,0),(4,1),(0,2)\}$,\\
&$\{(0,0),(6,1),(6,2)\}$,
&$\{(0,0),(1,2),(2,5)\}$,
&$\{(0,0),(4,2),(5,2)\}$,\\
&$\{(0,0),(1,3),(3,3)\}$,
&$\{(0,0),(2,3),(6,3)\}$,
&$\{(0,0),(4,3),(2,4)\}$,\\
&$\{(0,0),(5,3),(4,4)\}$,
&$\{(0,0),(3,4),(1,5)\}$,
&$\{(0,0),(5,4),(3,5)\}$,\\
&$\{(1,0),(4,0),(4,5)\}$,
&$\{(1,0),(5,0),(1,4)\}$,
&$\{(1,0),(6,0),(6,4)\}$,\\
&$\{(1,0),(1,1),(5,4)\}$,
&$\{(1,0),(3,1),(6,3)\}$,
&$\{(1,0),(4,1),(6,2)\}$,\\
&$\{(1,0),(5,1),(2,4)\}$,
&$\{(1,0),(6,1),(3,4)\}$,
&$\{(1,0),(2,2),(3,2)\}$,\\
&$\{(1,0),(3,3),(4,4)\}$,
&$\{(1,0),(4,3),(2,5)\}$,
&$\{(1,0),(5,5),(6,5)\}$,\\
&$\{(2,0),(4,0),(3,1)\}$,
&$\{(2,0),(2,1),(4,2)\}$,
&$\{(2,0),(5,1),(5,2)\}$,\\
&$\{(2,0),(6,1),(3,3)\}$,
&$\{(2,0),(2,2),(6,4)\}$,
&$\{(2,0),(3,2),(5,5)\}$,\\
&$\{(2,0),(4,3),(3,5)\}$,
&$\{(2,0),(6,3),(5,4)\}$,
&$\{(2,0),(3,4),(6,5)\}$,\\
&$\{(3,0),(4,0),(3,4)\}$,
&$\{(3,0),(5,0),(3,5)\}$,
&$\{(3,0),(6,0),(4,3)\}$,\\
&$\{(3,0),(5,2),(5,4)\}$,
&$\{(4,0),(6,0),(5,4)\}$,
&$\{(4,0),(5,1),(4,4)\}$,\\
&$\{(4,0),(5,2),(6,5)\}$.\\

$m=8$:

&$\{(0,0),(1,0),(2,0)\}$,
&$\{(0,0),(3,0),(1,5)\}$,
&$\{(0,0),(4,0),(3,4)\}$,\\
&$\{(0,0),(5,0),(6,5)\}$,
&$\{(0,0),(6,0),(7,0)\}$,
&$\{(0,0),(0,1),(1,2)\}$,\\
&$\{(0,0),(2,1),(4,3)\}$,
&$\{(0,0),(3,1),(4,1)\}$,
&$\{(0,0),(5,1),(6,3)\}$,\\
&$\{(0,0),(6,1),(2,4)\}$,
&$\{(0,0),(7,1),(0,4)\}$,
&$\{(0,0),(2,2),(3,5)\}$,\\
&$\{(0,0),(3,2),(5,2)\}$,
&$\{(0,0),(4,2),(6,2)\}$,
&$\{(0,0),(7,2),(3,3)\}$,\\
&$\{(0,0),(1,3),(5,3)\}$,
&$\{(0,0),(2,3),(1,4)\}$,
&$\{(0,0),(4,4),(5,4)\}$,\\
&$\{(0,0),(6,4),(2,5)\}$,
&$\{(0,0),(7,4),(4,5)\}$,
&$\{(0,0),(5,5),(7,5)\}$,\\
&$\{(1,0),(3,0),(7,4)\}$,
&$\{(1,0),(4,0),(4,1)\}$,
&$\{(1,0),(6,0),(6,1)\}$,\\
&$\{(1,0),(7,0),(1,1)\}$,
&$\{(1,0),(2,1),(5,1)\}$,
&$\{(1,0),(7,1),(6,5)\}$,\\
&$\{(1,0),(1,2),(5,5)\}$,
&$\{(1,0),(2,2),(4,5)\}$,
&$\{(1,0),(3,2),(6,4)\}$,\\
&$\{(1,0),(4,2),(7,2)\}$,
&$\{(1,0),(5,2),(2,3)\}$,
&$\{(1,0),(6,2),(7,3)\}$,\\
&$\{(1,0),(3,3),(3,4)\}$,
&$\{(1,0),(4,3),(5,4)\}$,
&$\{(1,0),(6,3),(3,5)\}$,\\
&$\{(1,0),(2,4),(4,4)\}$,
&$\{(2,0),(3,0),(4,5)\}$,
&$\{(2,0),(6,0),(3,5)\}$,\\
&$\{(2,0),(7,0),(5,1)\}$,
&$\{(2,0),(2,1),(3,2)\}$,
&$\{(2,0),(4,1),(5,3)\}$,\\
&$\{(2,0),(6,1),(7,5)\}$,
&$\{(2,0),(7,1),(7,2)\}$,
&$\{(2,0),(2,2),(5,4)\}$,\\
&$\{(2,0),(6,2),(4,4)\}$,
&$\{(2,0),(7,3),(6,4)\}$,
&$\{(2,0),(3,4),(7,4)\}$,\\
&$\{(3,0),(6,0),(5,2)\}$,
&$\{(3,0),(4,1),(7,3)\}$,
&$\{(3,0),(5,1),(3,2)\}$,\\
&$\{(3,0),(7,1),(5,4)\}$,
&$\{(3,0),(7,2),(4,4)\}$,
&$\{(3,0),(4,3),(6,5)\}$,\\
&$\{(3,0),(5,3),(6,3)\}$,
&$\{(4,0),(6,1),(5,4)\}$,
&$\{(4,0),(7,1),(7,3)\}$,\\
&$\{(4,0),(4,2),(6,5)\}$,
&$\{(4,0),(5,3),(5,5)\}$,
&$\{(5,0),(5,1),(7,2)\}$,\\
&$\{(5,0),(6,1),(7,4)\}$.\\

$m=10$:
&$\{(0,0),(1,0),(2,0)\}$,
&$\{(0,0),(3,0),(1,4)\}$,
&$\{(0,0),(4,0),(7,4)\}$,\\
&$\{(0,0),(5,0),(4,3)\}$,
&$\{(0,0),(6,0),(0,4)\}$,
&$\{(0,0),(7,0),(9,2)\}$,\\
&$\{(0,0),(8,0),(9,0)\}$,
&$\{(0,0),(0,1),(1,2)\}$,
&$\{(0,0),(2,1),(8,1)\}$,\\
&$\{(0,0),(3,1),(2,2)\}$,
&$\{(0,0),(4,1),(6,4)\}$,
&$\{(0,0),(5,1),(6,1)\}$,\\
&$\{(0,0),(7,1),(9,1)\}$,
&$\{(0,0),(3,2),(4,2)\}$,
&$\{(0,0),(5,2),(7,2)\}$,\\
&$\{(0,0),(8,2),(1,3)\}$,
&$\{(0,0),(2,3),(3,3)\}$,
&$\{(0,0),(5,3),(5,5)\}$,\\
&$\{(0,0),(6,3),(7,3)\}$,
&$\{(0,0),(8,3),(2,4)\}$,
&$\{(0,0),(9,3),(3,4)\}$,\\
&$\{(0,0),(4,4),(1,5)\}$,
&$\{(0,0),(5,4),(8,4)\}$,
&$\{(0,0),(9,4),(2,5)\}$,\\
&$\{(0,0),(3,5),(6,5)\}$,
&$\{(0,0),(4,5),(9,5)\}$,
&$\{(0,0),(7,5),(8,5)\}$,\\
&$\{(1,0),(3,0),(8,3)\}$,
&$\{(1,0),(4,0),(7,3)\}$,
&$\{(1,0),(5,0),(7,4)\}$,\\
&$\{(1,0),(6,0),(8,0)\}$,
&$\{(1,0),(7,0),(1,1)\}$,
&$\{(1,0),(9,0),(8,1)\}$,\\
&$\{(1,0),(2,1),(4,1)\}$,
&$\{(1,0),(3,1),(2,5)\}$,
&$\{(1,0),(5,1),(8,4)\}$,\\
&$\{(1,0),(6,1),(9,1)\}$,
&$\{(1,0),(7,1),(5,5)\}$,
&$\{(1,0),(1,2),(6,5)\}$,\\
&$\{(1,0),(2,2),(5,2)\}$,
&$\{(1,0),(4,2),(3,5)\}$,
&$\{(1,0),(6,2),(2,3)\}$,\\
&$\{(1,0),(7,2),(3,3)\}$,
&$\{(1,0),(8,2),(4,3)\}$,
&$\{(1,0),(9,2),(4,4)\}$,\\
&$\{(1,0),(5,3),(2,4)\}$,
&$\{(1,0),(9,3),(5,4)\}$,
&$\{(1,0),(3,4),(9,4)\}$,\\
&$\{(1,0),(6,4),(9,5)\}$,
&$\{(2,0),(6,0),(2,5)\}$,
&$\{(2,0),(7,0),(7,1)\}$,\\
&$\{(2,0),(9,0),(5,3)\}$,
&$\{(2,0),(3,1),(4,5)\}$,
&$\{(2,0),(4,1),(8,4)\}$,\\
&$\{(2,0),(5,1),(6,4)\}$,
&$\{(2,0),(8,1),(2,4)\}$,
&$\{(2,0),(9,1),(4,2)\}$,\\
&$\{(2,0),(5,2),(9,2)\}$,
&$\{(2,0),(6,2),(3,3)\}$,
&$\{(2,0),(7,2),(4,3)\}$,\\
&$\{(2,0),(8,2),(7,4)\}$,
&$\{(2,0),(6,3),(4,4)\}$,
&$\{(2,0),(7,3),(5,4)\}$,\\
&$\{(2,0),(9,3),(9,4)\}$,
&$\{(2,0),(3,4),(7,5)\}$,
&$\{(3,0),(5,0),(3,1)\}$,\\
&$\{(3,0),(7,0),(3,2)\}$,
&$\{(3,0),(8,0),(4,2)\}$,
&$\{(3,0),(4,1),(5,3)\}$,\\
&$\{(3,0),(5,1),(6,3)\}$,
&$\{(3,0),(6,1),(5,2)\}$,
&$\{(3,0),(8,1),(9,4)\}$,\\
&$\{(3,0),(9,1),(6,2)\}$,
&$\{(3,0),(7,2),(9,3)\}$,
&$\{(3,0),(8,2),(7,3)\}$,\\
&$\{(3,0),(9,2),(6,4)\}$,
&$\{(3,0),(5,4),(4,5)\}$,
&$\{(3,0),(8,4),(8,5)\}$,\\
&$\{(4,0),(5,0),(4,5)\}$,
&$\{(4,0),(6,0),(6,4)\}$,
&$\{(4,0),(7,0),(8,1)\}$,\\
&$\{(4,0),(8,0),(4,4)\}$,
&$\{(4,0),(6,1),(7,2)\}$,
&$\{(4,0),(7,1),(6,2)\}$,\\
&$\{(4,0),(9,1),(9,3)\}$,
&$\{(4,0),(9,2),(5,4)\}$,
&$\{(5,0),(5,1),(9,2)\}$,\\
&$\{(5,0),(6,1),(7,3)\}$,
&$\{(5,0),(7,1),(8,4)\}$,
&$\{(5,0),(8,1),(8,5)\}$,\\
&$\{(5,0),(8,2),(6,4)\}$,
&$\{(6,0),(6,1),(8,3)\}$,
&$\{(6,0),(8,1),(9,3)\}$,\\
&$\{(6,0),(9,2),(7,4)\}$,
&$\{(6,0),(7,3),(8,5)\}$,
&$\{(7,0),(7,2),(9,5)\}$.\\
$m=11$:
&$\{(0,0),(1,0),(2,0)\}$,
&$\{(0,0),(3,0),(8,3)\}$,
&$\{(0,0),(4,0),(9,4)\}$,\\
&$\{(0,0),(5,0),(9,5)\}$,
&$\{(0,0),(6,0),(1,3)\}$,
&$\{(0,0),(7,0),(7,2)\}$,\\
&$\{(0,0),(8,0),(5,1)\}$,
&$\{(0,0),(9,0),(5,5)\}$,
&$\{(0,0),(10,0),(10,4)\}$,\\
&$\{(0,0),(0,1),(2,3)\}$,
&$\{(0,0),(1,1),(2,4)\}$,
&$\{(0,0),(2,1),(8,2)\}$,\\
&$\{(0,0),(3,1),(9,2)\}$,
&$\{(0,0),(4,1),(5,4)\}$,
&$\{(0,0),(6,1),(8,5)\}$,\\
&$\{(0,0),(7,1),(6,5)\}$,
&$\{(0,0),(8,1),(10,2)\}$,
&$\{(0,0),(9,1),(3,2)\}$,\\
&$\{(0,0),(10,1),(7,5)\}$,
&$\{(0,0),(0,2),(1,4)\}$,
&$\{(0,0),(4,2),(5,2)\}$,\\
&$\{(0,0),(6,2),(8,4)\}$,
&$\{(0,0),(3,3),(4,3)\}$,
&$\{(0,0),(5,3),(6,3)\}$,\\
&$\{(0,0),(7,3),(9,3)\}$,
&$\{(0,0),(10,3),(3,4)\}$,
&$\{(0,0),(4,4),(6,4)\}$,\\
&$\{(0,0),(7,4),(10,5)\}$,
&$\{(0,0),(1,5),(3,5)\}$,
&$\{(0,0),(2,5),(4,5)\}$,\\
&$\{(1,0),(4,0),(7,2)\}$,
&$\{(1,0),(5,0),(3,3)\}$,
&$\{(1,0),(6,0),(1,1)\}$,\\
&$\{(1,0),(7,0),(9,4)\}$,
&$\{(1,0),(8,0),(10,4)\}$,
&$\{(1,0),(9,0),(10,1)\}$,\\
&$\{(1,0),(10,0),(2,4)\}$,
&$\{(1,0),(2,1),(4,5)\}$,
&$\{(1,0),(3,1),(10,2)\}$,\\
&$\{(1,0),(4,1),(7,1)\}$,
&$\{(1,0),(5,1),(5,5)\}$,
&$\{(1,0),(6,1),(7,5)\}$,\\
&$\{(1,0),(8,1),(7,4)\}$,
&$\{(1,0),(9,1),(5,3)\}$,
&$\{(1,0),(1,2),(3,4)\}$,\\
&$\{(1,0),(2,2),(8,4)\}$,
&$\{(1,0),(4,2),(4,4)\}$,
&$\{(1,0),(5,2),(8,2)\}$,\\
&$\{(1,0),(6,2),(2,5)\}$,
&$\{(1,0),(9,2),(4,3)\}$,
&$\{(1,0),(7,3),(8,3)\}$,\\
&$\{(1,0),(9,3),(10,3)\}$,
&$\{(1,0),(5,4),(3,5)\}$,
&$\{(1,0),(6,4),(9,5)\}$,\\
&$\{(1,0),(8,5),(10,5)\}$,
&$\{(2,0),(3,0),(2,5)\}$,
&$\{(2,0),(5,0),(4,1)\}$,\\
&$\{(2,0),(6,0),(7,3)\}$,
&$\{(2,0),(7,0),(10,0)\}$,
&$\{(2,0),(8,0),(8,4)\}$,\\
&$\{(2,0),(9,0),(8,3)\}$,
&$\{(2,0),(5,1),(10,5)\}$,
&$\{(2,0),(6,1),(7,1)\}$,\\
&$\{(2,0),(9,1),(5,5)\}$,
&$\{(2,0),(10,1),(4,2)\}$,
&$\{(2,0),(2,2),(3,4)\}$,\\
&$\{(2,0),(5,2),(9,5)\}$,
&$\{(2,0),(6,2),(3,3)\}$,
&$\{(2,0),(7,2),(3,5)\}$,\\
&$\{(2,0),(9,2),(9,3)\}$,
&$\{(2,0),(4,3),(6,5)\}$,
&$\{(2,0),(5,3),(5,4)\}$,\\
&$\{(2,0),(10,3),(7,4)\}$,
&$\{(2,0),(6,4),(10,4)\}$,
&$\{(2,0),(9,4),(7,5)\}$,\\
&$\{(2,0),(4,5),(8,5)\}$,
&$\{(3,0),(5,0),(6,1)\}$,
&$\{(3,0),(6,0),(7,5)\}$,\\
&$\{(3,0),(7,0),(3,2)\}$,
&$\{(3,0),(8,0),(3,1)\}$,
&$\{(3,0),(9,0),(6,2)\}$,\\
&$\{(3,0),(10,0),(5,1)\}$,
&$\{(3,0),(4,1),(6,4)\}$,
&$\{(3,0),(7,1),(4,2)\}$,\\
&$\{(3,0),(8,1),(9,3)\}$,
&$\{(3,0),(5,2),(4,4)\}$,
&$\{(3,0),(7,2),(5,4)\}$,\\
&$\{(3,0),(8,2),(9,2)\}$,
&$\{(3,0),(10,2),(10,3)\}$,
&$\{(3,0),(4,3),(9,4)\}$,\\
&$\{(3,0),(6,3),(10,4)\}$,
&$\{(3,0),(8,4),(4,5)\}$,
&$\{(4,0),(9,0),(8,1)\}$,\\
&$\{(4,0),(10,0),(9,2)\}$,
&$\{(4,0),(4,1),(10,2)\}$,
&$\{(4,0),(5,1),(7,4)\}$,\\
&$\{(4,0),(6,1),(9,3)\}$,
&$\{(4,0),(7,1),(10,4)\}$,
&$\{(4,0),(5,2),(7,3)\}$,\\
&$\{(4,0),(8,2),(6,5)\}$,
&$\{(4,0),(8,3),(8,4)\}$,
&$\{(4,0),(10,3),(6,4)\}$,\\
&$\{(5,0),(7,0),(7,5)\}$,
&$\{(5,0),(9,0),(6,3)\}$,
&$\{(5,0),(10,0),(8,3)\}$,\\
&$\{(5,0),(8,1),(10,3)\}$,
&$\{(5,0),(10,1),(8,2)\}$,
&$\{(5,0),(6,2),(6,4)\}$,\\
&$\{(5,0),(7,2),(8,4)\}$,
&$\{(5,0),(10,2),(6,5)\}$,
&$\{(6,0),(8,0),(6,5)\}$,\\
&$\{(6,0),(9,0),(10,2)\}$,
&$\{(6,0),(7,1),(8,5)\}$,
&$\{(6,0),(10,4),(9,5)\}$,\\
&$\{(7,0),(8,1),(9,2)\}$,
&$\{(7,0),(9,1),(10,4)\}$,
&$\{(7,0),(9,3),(8,5)\}$.
\end{longtable}
\end{center}

\section{Appendix: Codewords in the proof of Lemma \ref{n=10}}\label{Appendix:n=10}

\begin{center}
\begin{longtable}{lllll}
$m=7$:
&$\{(0,0),(1,0),(2,0)\}$,
&$\{(0,0),(3,0),(6,5)\}$,
&$\{(0,0),(4,0),(1,8)\}$,\\
&$\{(0,0),(5,0),(6,4)\}$,
&$\{(0,0),(6,0),(5,5)\}$,
&$\{(0,0),(0,1),(3,8)\}$,\\
&$\{(0,0),(1,1),(2,6)\}$,
&$\{(0,0),(2,1),(5,9)\}$,
&$\{(0,0),(3,1),(1,4)\}$,\\
&$\{(0,0),(4,1),(5,1)\}$,
&$\{(0,0),(6,1),(1,5)\}$,
&$\{(0,0),(0,2),(2,5)\}$,\\
&$\{(0,0),(1,2),(3,2)\}$,
&$\{(0,0),(2,2),(1,6)\}$,
&$\{(0,0),(4,2),(6,3)\}$,\\
&$\{(0,0),(5,2),(1,3)\}$,
&$\{(0,0),(6,2),(0,3)\}$,
&$\{(0,0),(3,3),(6,7)\}$,\\
&$\{(0,0),(4,3),(0,6)\}$,
&$\{(0,0),(5,3),(4,9)\}$,
&$\{(0,0),(2,4),(4,6)\}$,\\
&$\{(0,0),(3,4),(2,7)\}$,
&$\{(0,0),(4,4),(3,6)\}$,
&$\{(0,0),(5,4),(5,7)\}$,\\
&$\{(0,0),(3,5),(4,5)\}$,
&$\{(0,0),(5,6),(6,6)\}$,
&$\{(0,0),(1,7),(6,8)\}$,\\
&$\{(0,0),(2,8),(5,8)\}$,
&$\{(0,0),(4,8),(1,9)\}$,
&$\{(0,0),(2,9),(3,9)\}$,\\
&$\{(1,0),(4,0),(1,2)\}$,
&$\{(1,0),(5,0),(2,8)\}$,
&$\{(1,0),(6,0),(5,3)\}$,\\
&$\{(1,0),(1,1),(2,2)\}$,
&$\{(1,0),(3,1),(3,8)\}$,
&$\{(1,0),(4,1),(1,7)\}$,\\
&$\{(1,0),(5,1),(6,4)\}$,
&$\{(1,0),(3,2),(5,6)\}$,
&$\{(1,0),(5,2),(2,7)\}$,\\
&$\{(1,0),(6,2),(4,3)\}$,
&$\{(1,0),(2,3),(6,3)\}$,
&$\{(1,0),(3,3),(1,4)\}$,\\
&$\{(1,0),(2,4),(3,5)\}$,
&$\{(1,0),(3,4),(5,4)\}$,
&$\{(1,0),(4,5),(6,5)\}$,\\
&$\{(1,0),(5,5),(3,6)\}$,
&$\{(1,0),(4,6),(4,7)\}$,
&$\{(1,0),(5,7),(5,8)\}$,\\
&$\{(1,0),(6,7),(2,9)\}$,
&$\{(1,0),(6,8),(6,9)\}$,
&$\{(2,0),(4,0),(5,9)\}$,\\
&$\{(2,0),(2,1),(3,9)\}$,
&$\{(2,0),(4,1),(2,6)\}$,
&$\{(2,0),(5,1),(4,9)\}$,\\
&$\{(2,0),(6,1),(3,2)\}$,
&$\{(2,0),(2,2),(6,7)\}$,
&$\{(2,0),(6,2),(3,6)\}$,\\
&$\{(2,0),(2,3),(4,6)\}$,
&$\{(2,0),(3,3),(6,3)\}$,
&$\{(2,0),(5,3),(3,5)\}$,\\
&$\{(2,0),(3,4),(5,6)\}$,
&$\{(2,0),(4,4),(6,6)\}$,
&$\{(2,0),(5,4),(4,7)\}$,\\
&$\{(2,0),(6,4),(4,8)\}$,
&$\{(2,0),(5,7),(6,9)\}$,
&$\{(3,0),(3,1),(4,2)\}$,\\
&$\{(3,0),(5,1),(5,7)\}$,
&$\{(3,0),(6,1),(5,3)\}$,
&$\{(3,0),(3,2),(4,6)\}$,\\
&$\{(3,0),(6,2),(5,6)\}$,
&$\{(3,0),(4,3),(3,6)\}$,
&$\{(3,0),(6,3),(6,7)\}$,\\
&$\{(3,0),(4,5),(6,8)\}$,
&$\{(3,0),(5,5),(4,9)\}$,
&$\{(4,0),(5,1),(5,3)\}$,\\
&$\{(4,0),(4,2),(6,7)\}$,
&$\{(4,0),(4,3),(5,8)\}$,
&$\{(4,0),(4,4),(6,8)\}$,\\
&$\{(5,0),(6,1),(6,9)\}$.\\

$m=10$:
&$\{(0,0),(1,0),(2,0)\}$,
&$\{(0,0),(3,0),(1,4)\}$,
&$\{(0,0),(4,0),(7,6)\}$,\\
&$\{(0,0),(5,0),(4,3)\}$,
&$\{(0,0),(6,0),(7,0)\}$,
&$\{(0,0),(8,0),(9,6)\}$,\\
&$\{(0,0),(9,0),(4,2)\}$,
&$\{(0,0),(0,1),(8,7)\}$,
&$\{(0,0),(1,1),(8,5)\}$,\\
&$\{(0,0),(2,1),(3,6)\}$,
&$\{(0,0),(3,1),(4,6)\}$,
&$\{(0,0),(4,1),(5,1)\}$,\\
&$\{(0,0),(6,1),(5,4)\}$,
&$\{(0,0),(7,1),(1,8)\}$,
&$\{(0,0),(8,1),(7,2)\}$,\\
&$\{(0,0),(9,1),(1,6)\}$,
&$\{(0,0),(0,2),(1,9)\}$,
&$\{(0,0),(1,2),(5,9)\}$,\\
&$\{(0,0),(2,2),(2,4)\}$,
&$\{(0,0),(3,2),(8,2)\}$,
&$\{(0,0),(5,2),(6,3)\}$,\\
&$\{(0,0),(6,2),(2,9)\}$,
&$\{(0,0),(9,2),(0,3)\}$,
&$\{(0,0),(1,3),(3,3)\}$,\\
&$\{(0,0),(2,3),(3,5)\}$,
&$\{(0,0),(5,3),(3,9)\}$,
&$\{(0,0),(7,3),(2,8)\}$,\\
&$\{(0,0),(8,3),(9,3)\}$,
&$\{(0,0),(0,4),(3,8)\}$,
&$\{(0,0),(4,4),(6,4)\}$,\\
&$\{(0,0),(7,4),(6,9)\}$,
&$\{(0,0),(8,4),(1,5)\}$,
&$\{(0,0),(9,4),(2,5)\}$,\\
&$\{(0,0),(4,5),(2,7)\}$,
&$\{(0,0),(5,5),(6,5)\}$,
&$\{(0,0),(7,5),(3,7)\}$,\\
&$\{(0,0),(9,5),(5,6)\}$,
&$\{(0,0),(2,6),(5,7)\}$,
&$\{(0,0),(6,6),(4,7)\}$,\\
&$\{(0,0),(6,7),(4,9)\}$,
&$\{(0,0),(7,7),(9,7)\}$,
&$\{(0,0),(4,8),(7,8)\}$,\\
&$\{(0,0),(5,8),(8,8)\}$,
&$\{(0,0),(6,8),(9,8)\}$,
&$\{(0,0),(7,9),(8,9)\}$,\\
&$\{(1,0),(4,0),(4,6)\}$,
&$\{(1,0),(5,0),(1,4)\}$,
&$\{(1,0),(6,0),(1,1)\}$,\\
&$\{(1,0),(7,0),(3,5)\}$,
&$\{(1,0),(8,0),(8,6)\}$,
&$\{(1,0),(9,0),(7,4)\}$,\\
&$\{(1,0),(2,1),(4,4)\}$,
&$\{(1,0),(3,1),(6,6)\}$,
&$\{(1,0),(4,1),(7,5)\}$,\\
&$\{(1,0),(5,1),(8,3)\}$,
&$\{(1,0),(6,1),(2,6)\}$,
&$\{(1,0),(7,1),(3,2)\}$,\\
&$\{(1,0),(8,1),(2,4)\}$,
&$\{(1,0),(9,1),(9,2)\}$,
&$\{(1,0),(1,2),(8,7)\}$,\\
&$\{(1,0),(2,2),(4,2)\}$,
&$\{(1,0),(5,2),(5,3)\}$,
&$\{(1,0),(6,2),(3,9)\}$,\\
&$\{(1,0),(7,2),(5,4)\}$,
&$\{(1,0),(8,2),(2,9)\}$,
&$\{(1,0),(1,3),(9,6)\}$,\\
&$\{(1,0),(2,3),(3,4)\}$,
&$\{(1,0),(3,3),(6,4)\}$,
&$\{(1,0),(4,3),(7,6)\}$,\\
&$\{(1,0),(6,3),(9,4)\}$,
&$\{(1,0),(2,5),(5,5)\}$,
&$\{(1,0),(4,5),(3,7)\}$,\\
&$\{(1,0),(6,5),(4,9)\}$,
&$\{(1,0),(2,7),(6,7)\}$,
&$\{(1,0),(4,7),(9,7)\}$,\\
&$\{(1,0),(7,7),(2,8)\}$,
&$\{(1,0),(3,8),(4,8)\}$,
&$\{(1,0),(5,8),(7,8)\}$,\\
&$\{(1,0),(6,8),(5,9)\}$,
&$\{(1,0),(8,8),(9,9)\}$,
&$\{(1,0),(9,8),(7,9)\}$,\\
&$\{(2,0),(3,0),(4,2)\}$,
&$\{(2,0),(7,0),(5,8)\}$,
&$\{(2,0),(8,0),(4,5)\}$,\\
&$\{(2,0),(9,0),(8,5)\}$,
&$\{(2,0),(2,1),(2,4)\}$,
&$\{(2,0),(4,1),(6,6)\}$,\\
&$\{(2,0),(6,1),(6,7)\}$,
&$\{(2,0),(7,1),(7,3)\}$,
&$\{(2,0),(8,1),(5,2)\}$,\\
&$\{(2,0),(9,1),(4,4)\}$,
&$\{(2,0),(6,2),(3,4)\}$,
&$\{(2,0),(7,2),(5,3)\}$,\\
&$\{(2,0),(8,2),(3,3)\}$,
&$\{(2,0),(9,2),(5,4)\}$,
&$\{(2,0),(9,3),(6,4)\}$,\\
&$\{(2,0),(7,4),(9,5)\}$,
&$\{(2,0),(8,4),(3,8)\}$,
&$\{(2,0),(9,4),(3,6)\}$,\\
&$\{(2,0),(5,5),(4,6)\}$,
&$\{(2,0),(5,6),(9,6)\}$,
&$\{(2,0),(7,6),(4,7)\}$,\\
&$\{(2,0),(8,6),(6,8)\}$,
&$\{(2,0),(3,7),(7,7)\}$,
&$\{(2,0),(5,7),(7,8)\}$,\\
&$\{(2,0),(9,7),(8,8)\}$,
&$\{(2,0),(9,8),(4,9)\}$,
&$\{(2,0),(3,9),(5,9)\}$,\\
&$\{(2,0),(6,9),(8,9)\}$,
&$\{(3,0),(6,0),(3,1)\}$,
&$\{(3,0),(9,0),(9,6)\}$,\\
&$\{(3,0),(4,1),(8,1)\}$,
&$\{(3,0),(5,1),(4,9)\}$,
&$\{(3,0),(7,1),(4,6)\}$,\\
&$\{(3,0),(9,1),(5,5)\}$,
&$\{(3,0),(3,2),(6,4)\}$,
&$\{(3,0),(5,2),(9,5)\}$,\\
&$\{(3,0),(7,2),(7,3)\}$,
&$\{(3,0),(8,2),(8,3)\}$,
&$\{(3,0),(9,2),(8,4)\}$,\\
&$\{(3,0),(3,3),(8,8)\}$,
&$\{(3,0),(4,3),(5,9)\}$,
&$\{(3,0),(5,3),(9,4)\}$,\\
&$\{(3,0),(9,3),(3,6)\}$,
&$\{(3,0),(4,4),(9,9)\}$,
&$\{(3,0),(7,4),(5,7)\}$,\\
&$\{(3,0),(5,6),(8,7)\}$,
&$\{(3,0),(6,6),(7,7)\}$,
&$\{(3,0),(7,6),(6,7)\}$,\\
&$\{(3,0),(4,7),(5,8)\}$,
&$\{(4,0),(4,1),(8,9)\}$,
&$\{(4,0),(6,1),(7,8)\}$,\\
&$\{(4,0),(7,1),(6,3)\}$,
&$\{(4,0),(8,1),(9,4)\}$,
&$\{(4,0),(9,1),(5,4)\}$,\\
&$\{(4,0),(4,2),(8,4)\}$,
&$\{(4,0),(6,2),(6,4)\}$,
&$\{(4,0),(7,2),(5,8)\}$,\\
&$\{(4,0),(9,2),(7,7)\}$,
&$\{(4,0),(4,3),(8,6)\}$,
&$\{(4,0),(5,3),(8,7)\}$,\\
&$\{(4,0),(9,3),(9,6)\}$,
&$\{(4,0),(5,5),(6,7)\}$,
&$\{(5,0),(5,2),(9,4)\}$,\\
&$\{(5,0),(5,3),(6,6)\}$,
&$\{(5,0),(7,3),(8,5)\}$,
&$\{(5,0),(8,3),(5,6)\}$,\\
&$\{(5,0),(6,4),(7,6)\}$,
&$\{(5,0),(6,5),(8,6)\}$,
&$\{(5,0),(7,5),(8,8)\}$,\\
&$\{(5,0),(9,5),(6,8)\}$,
&$\{(6,0),(6,1),(8,3)\}$,
&$\{(6,0),(9,2),(6,7)\}$,\\
&$\{(6,0),(7,3),(8,7)\}$,
&$\{(6,0),(9,3),(8,9)\}$,
&$\{(6,0),(7,4),(8,5)\}$,\\
&$\{(6,0),(8,4),(7,6)\}$,
&$\{(6,0),(9,4),(9,6)\}$,
&$\{(6,0),(8,6),(9,8)\}$,\\
&$\{(7,0),(9,2),(7,4)\}$,
&$\{(7,0),(7,3),(9,7)\}$,
&$\{(7,0),(9,3),(8,6)\}$,\\
&$\{(7,0),(8,5),(8,7)\}$.
\end{longtable}
\end{center}

\section{Appendix: The fourth part in the proof of Lemma \ref{m<12,n=2mod8}}\label{fourthpart m=5}

All the codewords are of the form $\{(\infty,0),(\infty,a),(\infty,b)\}$. We only list the second component of
each point of these codewords. For $n=26$, the fourth part consists of $\{0,2,8\}$ and $\{0,4,14\}$. For $n\neq26$,
the fourth part consists of $\{0,2,6\}$ and the following $\lfloor(n-16)/6\rfloor$ codewords. Note that $\lfloor(n-16)/6\rfloor=0$
if $n=18,20$.
\begin{center}
\begin{longtable}{lllllll}
$n=50$:
&$\{0,8,17\}$,
&$\{0,10,21\}$,
&$\{0,12,27\}$,
&$\{0,13,31\}$,
&$\{0,14,30\}$.\\
$n=74$:
&$\{0,8,17\}$,
&$\{0,10,21\}$,
&$\{0,12,25\}$,
&$\{0,14,38\}$,
&$\{0,15,42\}$,
&$\{0,16,44\}$,\\
&$\{0,18,40\}$,
&$\{0,19,45\}$,
&$\{0,20,43\}$.\\
$n=98$:
&$\{0,8,17\}$,
&$\{0,10,21\}$,
&$\{0,12,25\}$,
&$\{0,14,29\}$,
&$\{0,16,43\}$,
&$\{0,18,51\}$,\\
&$\{0,19,53\}$,
&$\{0,20,56\}$,
&$\{0,22,54\}$,
&$\{0,23,60\}$,
&$\{0,24,59\}$,
&$\{0,26,57\}$,\\
&$\{0,28,58\}$.\\
$n=44$:
&$\{0,8,17\}$,
&$\{0,10,21\}$,
&$\{0,12,26\}$,
&$\{0,13,28\}$.\\
$n=68$:
&$\{0,8,17\}$,
&$\{0,10,21\}$,
&$\{0,12,26\}$,
&$\{0,13,37\}$,
&$\{0,15,38\}$,
&$\{0,16,41\}$,\\
&$\{0,18,40\}$,
&$\{0,19,39\}$.\\
$n=92$:
&$\{0,8,17\}$,
&$\{0,10,21\}$,
&$\{0,12,25\}$,
&$\{0,14,29\}$,
&$\{0,16,47\}$,
&$\{0,18,50\}$,\\
&$\{0,19,52\}$,
&$\{0,20,55\}$,
&$\{0,22,56\}$,
&$\{0,23,51\}$,
&$\{0,24,54\}$,
&$\{0,26,53\}$.\\
$n=42$:
&$\{0,8,18\}$,
&$\{0,9,23\}$,
&$\{0,11,26\}$,
&$\{0,12,25\}$.\\
$n=66$:
&$\{0,8,17\}$,
&$\{0,10,21\}$,
&$\{0,12,31\}$,
&$\{0,13,36\}$,
&$\{0,14,39\}$,
&$\{0,15,37\}$,\\
&$\{0,16,40\}$,
&$\{0,18,38\}$.\\
$n=90$:
&$\{0,8,17\}$,
&$\{0,10,21\}$,
&$\{0,12,25\}$,
&$\{0,14,36\}$,
&$\{0,15,47\}$,
&$\{0,16,49\}$,\\
&$\{0,18,48\}$,
&$\{0,19,53\}$,
&$\{0,20,51\}$,
&$\{0,23,50\}$,
&$\{0,24,52\}$,
&$\{0,26,55\}$.\\
$n=36$:
&$\{0,8,20\}$,
&$\{0,9,22\}$,
&$\{0,10,21\}$.\\
$n=60$:
&$\{0,8,17\}$,
&$\{0,10,22\}$,
&$\{0,11,32\}$,
&$\{0,13,36\}$,
&$\{0,14,33\}$,
&$\{0,15,35\}$,\\
&$\{0,16,34\}$.\\
$n=84$:
&$\{0,8,17\}$,
&$\{0,10,21\}$,
&$\{0,12,25\}$,
&$\{0,14,40\}$,
&$\{0,15,43\}$,
&$\{0,16,46\}$,\\
&$\{0,18,50\}$,
&$\{0,19,48\}$,
&$\{0,20,51\}$,
&$\{0,22,49\}$,
&$\{0,23,47\}$.\\
$n=34$:
&$\{0,7,16\}$,
&$\{0,8,20\}$,
&$\{0,10,21\}$.\\
$n=58$:
&$\{0,7,15\}$,
&$\{0,9,19\}$,
&$\{0,11,32\}$,
&$\{0,12,30\}$,
&$\{0,13,35\}$,
&$\{0,14,34\}$,\\
&$\{0,16,33\}$.\\
$n=82$:
&$\{0,7,15\}$,
&$\{0,9,19\}$,
&$\{0,11,23\}$,
&$\{0,13,37\}$,
&$\{0,14,40\}$,
&$\{0,16,44\}$,\\
&$\{0,17,46\}$,
&$\{0,18,49\}$,
&$\{0,20,50\}$,
&$\{0,21,48\}$,
&$\{0,22,47\}$.\\
\end{longtable}
\end{center}


\begin{thebibliography}{99}

\bibitem{ab}R. J. R. Abel, M. Buratti, Some progress on $(v, 4,
1)$ difference families and optical orthogonal codes, J. Combin.
Theory, Ser. A, 106 (2004), 59-75.


\bibitem{am4}
T.L. Alderson, Optical orthogonal codes and arcs in PG$(d,q)$, Finite Fields Appl., 13
(2007), 762-768.

\bibitem{am3}T. L. Alderson, K. E. Mellinger, Constructions of optical
orthogonal codes from finite geometry, SIAM J. Discrete Math., 21
(2007), 785-793.

\bibitem{am1}T. L. Alderson, K. E. Mellinger, Families of optimal OOCs with $\lambda=2$, IEEE Trans. Inf. Theory, 54 (2008),
3722-3724.

\bibitem{am2}T. L. Alderson, K. E. Mellinger, Geometric constructions of optimal
optical orthogonal codes, Adv. Math. Commun., 2 (2008), 451-467.

\bibitem{am}
T.L.~Alderson and K.E.~Mellinger, $2$-Dimensional optical orthogonal codes from Singer groups,
Discrete Appl. Math., 157 (2009), 3008-3019.



\bibitem{Baker}
C.A. Baker, Extended Skolem sequences, J. Combin. Des., 3 (1995), 363-379.

\bibitem{bj}J. Bao, L. Ji, Constructions of strictly $m$-cyclic and semi-cyclic H$(m, n, 4, 3)$, J. Combin. Des., 24 (2016), 249-264.

\bibitem{bt}
S. Bitan and T. Etzion, The last packing number of quadruple, and cyclic SQS, Des. Codes
Cryptogr., 3 (1993), 283-313.

\bibitem{be}S. Bitan, T. Etzion, Constructions for optimal constant weight cyclically permutable codes and difference families,
IEEE Trans. Inf. Theory, 41 (1995), 77-87.

\bibitem{b}
R.C.~Bose, On the construction of balanced incomplete block designs,
Ann. Eugenics., 9 (1939), 353-399.

\bibitem{bw}
E.F. Brickell and V.K. Wei, Optical orthogonal codes and cyclic block designs, Congr. Numer., 58 (1987), 175-192.

\bibitem{b2}
M. Buratti, Old and new designs via difference multisets and strong difference families, J.
Combin. Des., 7 (1999), 406-425.

\bibitem{b3}
M. Buratti, On silver and golden optical orthogonal codes, Art Discrete Appl. Math., 1
(2018), \#P2.02.

\bibitem{b4}
M. Buratti, Cyclic designs with block size 4 and related
optimal optical orthogonal codes, Des. Codes Cryptogr., 26 (2002),
111-125.

\bibitem{bp}M. Buratti and A. Pasotti, Combinatorial designs and the theorem of Weil on multiplicative
character sums, Finite Fields Appl., 15 (2009), 332-344.

\bibitem{bp1}M. Buratti, A. Pasotti, Further progress on difference families
with block size 4 or 5, Des. Codes Cryptogr., 56 (2010), 1-20.

\bibitem{bs}M. Buratti and D.R. Stinson, New results on modular Golomb rulers, optical orthogonal
codes and related structures, Ars Math. Contemp., 20 (2021), 1-27.

\bibitem{cw}
H. Cao and R. Wei, Combinatorial constructions for optimal two-dimensional optical
orthogonal codes, IEEE Trans. Inf. Theory, 55 (2009), 1387-1394.

\bibitem{csfw}Y. Chang, S. Costa, T. Feng, and X.Wang, Strong difference families with special patterns,
Discrete Math., 343 (2020), 111776.

\bibitem{csfw1}Y. Chang, S. Costa, T. Feng, and X. Wang, Partitionable sets, almost partitionable sets
and their applications, J. Combin. Des., 28 (2020), 783-813.

\bibitem{cfm}Y. Chang, R. Fuji-Hara and Y. Miao, Combinatorial constructions
of optimal optical orthogonal codes with weight 4, IEEE Trans. Inf.
Theory, 49 (2003), 1283-1292.

\bibitem{cj} Y. Chang, L. Ji, Optimal $(4up, 5, 1)$ optical orthogonal
codes, J. Combin. Des., 12 (2004), 346-361.

\bibitem{cm1} Y. Chang, Y. Miao, Constructions for optimal optical orthogonal
codes, Discrete Math., 261 (2003), 127-139.

\bibitem{cy} Y. Chang, J. Yin, Further results on optimal optical
orthogonal codes with weight 4, Discrete Math., 279 (2004), 135-151.

\bibitem{cgz}
K. Chen, G. Ge, and L. Zhu, Starters and related codes, J. Statist. Plann. Inference, 86
(2000), 379-395.

\bibitem{cc1}
W. Chu and C.J. Colbourn, Optimal $(v,4,2)$-OOC of small orders, Discrete Math., 279
(2004), 163-172.

\bibitem{cc}
W. Chu, C. J. Colbourn, Recursive constructions for optimal $(n, 4,
2)$-OOCs, J. Combin. Des., 12 (2004), 333-345.

\bibitem{cc2}
W. Chu and S.W. Golomb, A new recursive construction for optical orthogonal codes,
IEEE Trans. Inf. Theory, 49 (2003), 3072-3076.

\bibitem{ck}
H. Chung and P.V. Kumar, Optical orthogonal codes - new bounds and an optimal construction,
IEEE Trans. Inf. Theory, 36 (1990), 866-873.

\bibitem{csw}
F.R.K.~Chung, J.A.~Salehi, and V.K.~Wei, Optical orthogonal
codes: design, analysis and applications, IEEE Trans. Inf.
Theory, 35 (1989), 595-604.


\bibitem{dx}
C. Ding and C. Xing, Several classes of $(2^m-1,w,2)$ optical orthogonal codes, Discrete
Appl. Math., 128 (2003), 103-120.


\bibitem{fc}
T. Feng, Y. Chang, Combinatorial constructions for optimal
two-dimensional optical orthogonal codes with $\lambda=2$, IEEE
Trans. Inf. Theory, 57 (2011), 6796-6819.

\bibitem{fcj}
T. Feng, Y. Chang and L. Ji, Constructions for strictly cyclic
3-designs and applications to optimal OOCs with $\lambda=2$, J.
Combin. Theory, Ser. A, 115 (2008), 1527-1551.

\bibitem{fcj1}
T. Feng, Y. Chang, and L. Ji, Constructions for rotational Steiner quadruple systems, J.
Combin. Des., 17 (2009), 353-368.

\bibitem{fwwzh}
T.~Feng, L.~Wang, X.~Wang, and Y.~Zhao, Optimal two dimensional optical orthogonal
codes with the best cross-correlation constraint, J. Combin. Des., 25 (2017), 349-380.

\bibitem{fww}
T.~Feng, L.~Wang, and X.~Wang, Optimal $2$-D $(n\times m,3,2,1)$ optical orthogonal
codes and related equi-difference conflict avoiding codes, Des. Codes Cryptogr., 87 (2019), 1499-1520.

\bibitem{fwc}
T.~Feng, X.~Wang, and Y.~Chang, Semi-cyclic holey group divisible designs with block size
three, Des. Codes Cryptogr., 74 (2015), 301-324.

\bibitem{fwwei}
T.~Feng, X.~Wang, and R.~Wei, Semi-cyclic holey group divisible designs and applications to sampling designs and optical orthogonal codes, J. Combin. Des., 24 (2016), 201-222.

\bibitem{fm}R. Fuji-Hara and Y. Miao, Optical orthogonal codes: their bounds and new optimal constructions,
IEEE Trans. Inf. Theory, 46 (2000), 2396-2406.

\bibitem{fmy}R. Fuji-Hara, Y. Miao, and J. Yin, Optimal $(9v,4,1)$ optical orthogonal codes, SIAM J.
Discrete Math., 14 (2001), 256-266.

\bibitem{gjl}
R.P. Gallant, Z. Jiang, A.C.H. Ling, The spectrum of cyclic group divisible designs with
block size three. J. Combin. Des., 7 (1999), 95-105.

\bibitem{gms}
G. Ge, Y. Miao, and X. Sun, Perfect difference families, perfect difference matrices, and
related combinatorial structures, J. Combin. Des., 18 (2010), 415-449.

\bibitem{gy} 
G. Ge and J. Yin, Constructions for optimal $(v,4,1)$ optical orthogonal codes, IEEE Trans.
Inf. Theory, 47 (2001), 2998-3004.

\bibitem{hc} 
Y. Huang and Y. Chang, Two classes of optimal two-dimensional OOCs, Des.
Codes Cryptogr., 63 (2012), 357-363.

\bibitem{km}S. Kageyama and Y. Miao, Optical orthogonal codes derived from difference triangle sets,
Congr. Numer., 143 (2000), 129-139.

\bibitem{kpp}
W.C.~Kwong, P.A.~Perrier, and P.R.~Prucnal, Performance
comparison of asynchronous and synchronous code-division
multiple-access techniques for fiber-optic local area networks,
IEEE Trans. Commun., 39 (1991), 1625-1634.

\bibitem{lwl}
Y. Liu, L. Wang, and J. Lei, Constructions of optimal two-dimensional optical orthogonal codes with AM-OPPW restriction for $\lambda=2$, Discrete Math., 346 (2023), 113424.

\bibitem{mc1}S. Ma, Y. Chang, A new class of optimal optical orthogonal
codes with weight five, IEEE Trans. Inf. Theory, 50 (2004),
1848-1850.

\bibitem{mc2} S. Ma, Y. Chang,
Constructions of optimal optical orthogonal codes with weight five,
J. Combin. Des., 13 (2005), 54-69.

\bibitem{ml}
S.V.~Maric and V.K.N.~Lau, Multirate fiber-optic CDMA:
System design and performance analysis, J. Lightwave Technol.,
16 (1998), 9-17.

\bibitem{mmc}
S.V.~Maric, O.~Moreno, and C.~Corrada, Multimedia transmission
in fiber-optic LANs using optical CDMA, J. Lightwave Technol.,
14 (1996), 2149-2153.

\bibitem{ms}
S. Mashhadi and J.A. Salehi, Code-division multiple-access techniques in optical fiber
networks-part III: optical AND logic gate receiver structure with generalized optical orthogonal
codes, IEEE Trans. Commun., 54 (2006), 1457-1468.

\bibitem{mms}
N. Miyamoto, H. Mizuno, and S. Shinohara, Optical orthogonal codes obtained from conics
on finite projective planes, Finite Fields Appl., 10 (2004), 405-411.

\bibitem{okeb}R. Omrani, G. Garg, P.V. Kumar, P. Elia and P. Bhambhani, Large families of asymptotically optimal two-dimensional optical orthogonal codes, IEEE Trans. Inf. Theory 58, (2012), 1163-1185.


\bibitem{rs}
R. Rees and D.R. Stinson, Frames with block size four, Can. J. Math., 44 (1992), 1030-1049.


\bibitem{sa}
J.A.~Salehi, Code division multiple-access techniques in
optical fiber networks-Part I: Fundamental principles, IEEE Trans.
Commun., 37 (1989), 824-833.

\bibitem{sb}J.A.~Salehi and C.A.~Brackett, Code division multiple-access techniques in optical fiber networks-Part II:
Systems performance analysis, IEEE Trans. Commun., 37 (1989), 834-842.

\bibitem{s}J.E. Simpson, Langford sequence: perfect and hooked, Discrete Math., 44
(1983), 97-104.

\bibitem{s2}N. Shalaby, The existence of near-Skolem and hooked near-Skolem
sequences, Discrete Math., 135 (1994), 303-319.

\bibitem{s3}
N. Shalaby, Skolem and Langford Sequences, CRC Handbook of Combinatorial Designs (C.J.~Colbourn and J.H.~Dinitz, eds.), CRC Press, Boca Raton, FL, 2007, 612-616.



\bibitem{wsy}
J.~Wang, X.~Shan, and J.~Yin, On constructions for optimal two-dimentional optical orthogonal codes, Des.
Codes Cryptogr., 54 (2010), 43-60.


\bibitem{wc}
L.~Wang and Y.~Chang, Combinatorial constructions of optimal three-dimensional optical
orthogonal codes, IEEE Trans. Inf. Theory, 61 (2015), 671-687.

\bibitem{wc3}
L.~Wang, Y.~Chang, Determination of sizes of optimal three-dimensional optical orthogonal
codes of weight three with AM-OPP property, J. Combin. Des., 25 (2017), 310-334.

\bibitem{wfpw}
L.~Wang, T.~Feng, R.~Pan, and X.~Wang, The spectrum of semicyclic holey group divisible designs with
block size three, J. Combin. Des., 28 (2020), 49-74.

\bibitem{wxc}X. Wang and Y. Chang, The spectrum of $(gv,g,3,\lambda)$-DF in $\mathbb{Z}_{gv}$, Science in China (A), 52 (2009), 1004-1016.

\bibitem{wc2}X. Wang and Y. Chang, Further results on $(v,4,1)$-perfect difference families, Discrete
Math., 310 (2010), 1995-2006.

\bibitem{w}
R.~Wei, Group divisible designs with equal-sized holes, Ars Combin., 35 (1993), 315-323.

\bibitem{yang}
Y. Yang, New enumeration results about the optical orthogonal codes, Inf. Process.
Lett., 40 (1991), 85-87.


\bibitem{yk}
G.C.~Yang and W.C.~Kwong, Performance comparison of multiwavelength CDMA and WDMA+CDMA for fiber-optic networks, IEEE Trans. Commun., 45 (1997), 1426-1436.

\bibitem{yin}
J. Yin, Some combinatorial constructions for optical orthogonal codes, Discrete Math., 185
(1998), 201-219.

\bibitem{yyl} J. Yin, X. Yang, and Y. Li, Some 20-regular CDP$(5,1,20u)$ and their applications, Finite
Fields Appl., 17 (2011), 317-328.

\bibitem{zc}
J. Zhang, Y. Chang, The spectrum of cyclic BSA$(v,3,\lambda;\alpha)$ with
$\alpha=2,3$, J. Combin. Des., 13 (2005), 313-335.

\bibitem{zfw}
M. Zhang, T. Feng, and X. Wang, The existence of cyclic $(v,4,1)$-designs, Des. Codes,
Cryptogr., 90 (2022), 1611-1628.

\bibitem{zcf}
C. Zhao, Y. Chang, T. Feng,
The existence of optimal $(v,4,1)$ optical orthogonal codes
achieving the Johnson bound, IEEE Trans. Inf. Theory, 70 (2024), 8746-8757.

\bibitem{zzcfwz}
C. Zhao, B. Zhao, Y. Chang, T. Feng, X. Wang, and M. Zhang,
Cyclic relative difference families with block size four and their applications, J. Combin. Thorey, Ser. A, 206 (2024),105890

\end{thebibliography}
\end{document}